\theoremstyle{change}
\newtheorem{Theorem}{Theorem}[section]
\newtheorem{Def}[Theorem]{Definition}
\newtheorem{Lem}[Theorem]{Lemma}
\newtheorem{Prop}[Theorem]{Proposition}
\newtheorem{Cor}[Theorem]{Corollary}
\title{Galois objects and cocycle twisting for locally compact quantum groups}
\author{Kenny De Commer\footnote{Research Assistant of the Research Foundation - Flanders (FWO -
Vlaanderen).}\\\small Department of Mathematics,  K.U. Leuven\\
\small Celestijnenlaan 200B, 3001 Leuven, Belgium\\ \\ \small e-mail: kenny.decommer@wis.kuleuven.be}
\date{}
\begin{document}

\newcommand{\acnabla}{\nabla\!\!\!{^\shortmid}}
\newcommand{\acsigma}{\sigma\!\!\!{^\shortmid}}

\maketitle

\abstract{\noindent In this article, we investigate the notion of a Galois object for a locally compact quantum group. Such an object consists of a von Neumann algebra $N$ with an ergodic integrable coaction $\alpha$ of $M$ on $N$, such that the crossed product is a type I factor. We show how to construct from $(N,\alpha)$ a possibly different locally compact quantum group $(\widehat{P},\Delta_{\widehat{P}})$. By way of application, we prove the following statement: any twisting of a locally compact quantum group by a unitary 2-cocycle is again a locally compact quantum group.\\

 \noindent \textit{Key words and phrases}: Locally compact quantum groups, Galois objects, 2-cocycles, projective representations.\\

 \noindent MSC (2000): 46L65 (Primary), 16S35 (Secondary).}

\section*{Introduction}

\noindent In commutative geometry, the importance of principal fiber bundles can hardly be overestimated. When passing to non-commutative geometry, they become even more intriguing: one can have interesting principal bundles over the point! In this article, we investigate this phenomenon in the framework of locally compact quantum groups.\\

\noindent In Hopf algebra theory, `non-commutative principal bundles' are known under the name `\emph{faithfully flat Hopf-Galois extensions}'. A \emph{Hopf-Galois extension} consists of the following data: a Hopf algebra $(H,\Delta_H)$ (say over a field $k$), a unital $k$-algebra $A$, and a coaction $\alpha: A\rightarrow A\underset{k}{\otimes} H$. These have to satisfy the following property: with $B$ the fixed point algebra of $\alpha$, the map \[G: A\underset{B}{\otimes} A\rightarrow A\underset{k}{\otimes} H: x\otimes y\rightarrow \alpha(x)(y\otimes 1),\] called \emph{the Galois map}, must be a bijection. Here the surjectivity corresponds geometrically to the freeness of the action, while the injectivity corresponds to the action being proper (actually, `to the action being Cartan' is the more accurate analogy). Saying that the Hopf Galois extension is \emph{faithfully flat}, means that $A$ is faithfully flat as a right $B$-module (in which case the injectivity of the map $G$ comes for free). This corresponds to the local triviality of the bundle. Finally, if we want to have a fiber bundle over a point, we should ask that $B=k\cdot 1_A$ (the condition of being `faithfully flat' becoming obsolete). The couple $(A,\alpha)$ is then called a \emph{(right) Galois object for $(H,\Delta_H)$}. See \cite{Sch3} for a nice overview of these concepts.\\

\noindent We now briefly indicate how the above definitions have to be adapted in the setting of locally compact quantum groups. We will work only in the von Neumann algebra framework. While this is certainly not sufficient to study `locally compact quantum principal fiber bundles', it turns out to be sufficient if one considers a bundle over a point (i.e., there is automatically a C$^*$-algebraic picture available). So let $(M,\Delta)$ be a von Neumann algebraic quantum group (see \cite{Kus3} and \cite{VDae2}). Let $N$ be a von Neumann algebra, and $\alpha: N\rightarrow N\otimes M$ a right coaction. Denote by $N^{\alpha}$ the subalgebra of fixed points. The map $\alpha$ is called \emph{integrable}, if the operator-valued weight $(\iota\otimes \varphi)\alpha$ from $N$ to $N^{\alpha}$ is semi-finite, where $\varphi$ denotes the left invariant nsf weight for $(M,\Delta)$. This is our non-commutative analogue of the action being proper. (In fact, one would also like to use the notion of integrability to define properness on the level of C$^*$-algebras, but the situation there is much more subtle, see e.g. \cite{Rie2}.) When this condition is satisfied, one is able to construct an analogue of the Galois map on the level of $\mathscr{L}^2$-spaces. It will automatically be isometric. When it is actually a unitary, then we call $\alpha$ a \emph{Galois coaction}. Finally, when also $N^{\alpha}=\mathbb{C}$, i.e. when $\alpha$ is ergodic, we call $(N,\alpha)$ a Galois object. This turns out to be equivalent with the condition given in the abstract. \\

\noindent One reason which makes Galois objects so interesting, is that in general they carry with them not one, but two Hopf algebras: if $(A,\alpha)$ is a (right) Galois object for a Hopf algebra $(H,\Delta_H)$, then one can construct from this a \emph{second} `reflected' Hopf algebra $(L,\Delta_L)$ and a (left) coaction $\gamma$ of $L$ on $A$, such that $(A,\gamma)$ becomes a left Galois object, and such that $\gamma$ and $\alpha$ commute. This turns out to be a (part of a) non-commutative generalization of the \emph{Ehresmann construction}, where one lets a locally compact group act freely and properly on a locally compact space, and constructs from this a locally compact groupoid with an action on this same space, commuting with the group action (see e.g. \cite{Mac1}, Example 1.1.5).\\

\noindent We show in this article that such a reflected quantum group also exists when dealing with Galois objects for locally compact quantum groups. While the new locally compact quantum group can be \emph{constructed} more or less as on the algebraic level, there is one technical point which is much less straightforward to establish: namely, the construction gives a priori only a Hopf-von Neumann algebra, and one still has to see if there are invariant weights available. The existence of these weights is the main theorem of this paper (Theorem \ref{corinv}). In fact, we prefer an approach \emph{dual} to the one in Hopf algebra theory, so we rather construct a locally compact quantum group $(\widehat{P},\Delta_{\widehat{P}})$, whose dual then plays the role of $(L,\Delta_L)$.\\

\noindent An important corollary of our results, is that any cocycle twist of a locally compact quantum group is again a locally compact quantum group. I.e.: if $\Omega$ is a unitary 2-cocycle for a locally compact quantum group $(\widehat{M},\widehat{\Delta})$ (so $\Omega\in \widehat{M}\otimes \widehat{M}$ and $(\Omega\otimes 1)(\widehat{\Delta}\otimes \iota)(\Omega) = (1\otimes \Omega)(\iota\otimes \widehat{\Delta})(\Omega)$), then one can show that the cocycle twisted convolution algebra $\widehat{M}\underset{\Omega}{\ltimes} \mathbb{C}$ together with its dual coaction constitutes a Galois object for $(M,\Delta)$, and the new locally compact quantum group $(\widehat{P},\Delta_{\widehat{P}})$ which we obtain is precisely $\widehat{M}$ itself with the new coproduct $\widehat{\Delta}_{\Omega} := \Omega \widehat{\Delta}(\cdot )\Omega^*$. We want to note that in \cite{Fim1}, a special type of such cocycle deformations is discussed: a cocycle on a classical subobject, satisfying certain conditions, is lifted to the whole locally compact quantum group. In this case, more concrete formulas are available for describing the weights on the twisted locally compact quantum group.\\


\noindent As mentioned already, the theory of Galois objects is well-developed for Hopf algebras. It was also investigated for compact quantum groups in \cite{BDV1}, which was in turn based on the work of Wassermann on ergodic actions of compact groups on von Neumann algebras (\cite{Was1},\cite{Was2}). We then investigated this notion for algebraic quantum groups in \cite{DeC1}. It can be shown that the $^*$-Galois objects of \cite{DeC1} can be completed to analytic objects of the kind discussed in this paper (similar to the completion of $^*$-algebraic quantum groups to locally compact quantum groups, as is done in \cite{Kus}), although we have not included a detailed exposition of this fact in this paper.\\

\noindent The specific content of this paper is as follows: in the first two sections, we treat the notion of a (right) \textit{Galois coaction} for a von Neumann algebraic quantum group, as briefly explained above. This notion already appeared implicitly at various places in the literature, for it turns out to be equivalent with the following property: with $\alpha$ denoting the coaction of a locally compact quantum group $(M,\Delta)$ on a von Neumann algebra $N$, being Galois is the same as saying that $N\rtimes M$ can be represented \textit{faithfully} on $\mathscr{L}^2(N)$ by a certain canonical map $\rho$. Our general references for this part are the first four chapters of \cite{Tak1} for the theory of non-commutative integration, section 10 of \cite{Eno1} for some results about inclusions of von Neumann algebras, \cite{Kus3} en \cite{VDae2} for the theory of locally compact quantum groups in the von Neumann algebraic setting, and \cite{Vae1} for the theory of coactions for locally compact quantum groups (which are there just termed `actions').\\

\noindent In the third section, we study \textit{Galois objects}, i.e. Galois coactions $(N,\alpha)$ for which $\alpha$ is \emph{ergodic}. We show that a Galois object has as rich a structure as a locally compact quantum group: we can associate to $N$ certain invariant weights, related by a modular element, and also a one-parameter scaling group. The references for this part are \cite{Kus3}, \cite{VDae2} and \cite{Vae1}. \\

\noindent In the fourth section, we use these results to construct a (possibly new) locally compact quantum group $(\widehat{P},\Delta_{\widehat{P}})$ from such a Galois object, with underlying von Neumann algebra $\widehat{P}=\rho(1\otimes \widehat{M}')'$. The reference for this part is section IX.3 of \cite{Tak1}.\\

\noindent In the fifth section, we consider the special case of cocycle twisted locally compact quantum groups. In the sixth section, we define the notion of a projective corepresentation for a locally compact quantum group, and we show the connection with coactions on type $I$-factors. \\

\noindent \qquad \textit{Preliminaries and notations}\\

\noindent The scalar product of a Hilbert space will be anti-linear in the second argument. If $\mathscr{H},\mathscr{K}$ are Hilbert spaces, we denote by $B(\mathscr{H},\mathscr{K})$ the Banach space of all bounded operators between $\mathscr{H}$ and $\mathscr{K}$, by $B(\mathscr{H})$ the algebra of all bounded operators on $\mathscr{H}$, and by $B_0(\mathscr{H})$ the algebra of all compact operators. If $\xi,\eta\in \mathscr{H}$, we write \[\omega_{\xi,\eta}: B(\mathscr{H})\rightarrow \mathbb{C}: x\rightarrow \langle x\xi,\eta\rangle.\] If $u$ is a unitary on  $\mathscr{H}$, we will denote \[\textrm{Ad}(u):B(\mathscr{H})\rightarrow B(\mathscr{H}):x\rightarrow uxu^*.\] If $\mathscr{H}_1,\mathscr{H}_2$ are two Hilbert spaces, we will denote by $\Sigma$ the flip map \[\mathscr{H}_1\otimes \mathscr{H}_2\rightarrow \mathscr{H}_2\otimes \mathscr{H}_1: \xi\otimes \eta\rightarrow \eta\otimes \xi.\]

\noindent We will also frequently use leg numbering notation: if $\mathscr{H}_i$ are Hilbert spaces and \[u:\mathscr{H}_1\otimes \mathscr{H}_2\rightarrow \mathscr{H}_3\otimes \mathscr{H}_4\] is an operator, we denote for example by $u_{12}$ the operator \[u\otimes 1:\mathscr{H}_1\otimes \mathscr{H}_2\otimes \mathscr{H}_5\rightarrow \mathscr{H}_3\otimes \mathscr{H}_4\otimes \mathscr{H}_5,\] and by $u_{13}$ the operator \[\Sigma_{23}u_{12}\Sigma_{23}:\mathscr{H}_1\otimes \mathscr{H}_5\otimes \mathscr{H}_2\rightarrow \mathscr{H}_3\otimes \mathscr{H}_5\otimes \mathscr{H}_4.\] If $u$ is already indexed, say $u=u_1$, then we write $u_{1,13}$ for $u_{13}$.\\

\noindent If $N$ is a von Neumann algebra, we denote by $N_*$ its predual. We denote by $\mathscr{L}^2(N)$ the universal Hilbert space for GNS-constructions. We denote the spatial tensor product of two von Neumann algebras by $\otimes$.\\

\noindent Let $\varphi_N$ be a fixed normal semi-finite faithful (nsf) weight on $N$. We will then sometimes index the modular structure by $N$ instead of $\varphi_N$ (so the modular automorphism group for example is written as $\sigma_t^N$). We will then write the modular operator as $\nabla_N$ (since the symbol $\Delta$ will be used for the comultiplication of a quantum group).  When we work with another weight $\psi_N$, we will then always use $\psi_N$ as an index. No confusion should arise as to what is meant. We will always write $\mathscr{N}_{\varphi_N}=\{n\in N\mid \varphi_N(n^*n)<\infty\}$ for the space of square integrable elements for $\varphi_N$, we write $\mathscr{M}^+_{\varphi_N}=\{n\in N^+\mid \varphi_N(n)<\infty\}$ for the space of positive integrable elements, and $\mathscr{M}_{\varphi_N}= \textrm{span}\{\mathscr{M}^+_{\varphi_N}\}=\mathscr{N}_{\varphi_N}^*\mathscr{N}_{\varphi_N}$ for the space of integrable elements. The GNS map $\mathscr{N}_{\varphi_N}\rightarrow \mathscr{L}^2(N)$ for $\varphi_N$ is denoted by $\Lambda_N$. We denote by $\mathscr{T}_{\varphi_N}$ the canonical Tomita algebra for $\varphi_N$ (inside $N$): \[\mathscr{T}_{\varphi_N}=\{x\in N\mid x\textrm{ analytic for } \sigma_t^N \textrm{ and } \sigma^N_z(x)\in \mathscr{N}_{\varphi_N}\textrm{ for all }z\in \mathbb{C}\}.\] We then also call $\Lambda_N(\mathscr{T}_{\varphi_N})$ the Tomita algebra for $\varphi_N$ (inside $\mathscr{L}^2(N)$). \\

\noindent The opposite weight of $\varphi_N$ will be denoted by $\varphi_N^{\textrm{op}}$. We see it as a weight on the commutant $N'\subseteq B(\mathscr{L}^2(N))$. It has a natural GNS-construction in $\mathscr{L}^2(N)$: with $J_N$ denoting the modular conjugation of $\varphi_N$, we have a GNS map \[\Lambda_N^{\textrm{op}}: \mathscr{N}_{\varphi_N^{\textrm{op}}}\rightarrow \mathscr{L}^2(N): J_N n^*J_N\rightarrow J_N\Lambda_N(n^*).\] Sometimes however, we will also allow elements of $N$ as input of $\Lambda_N^{\textrm{op}}$: then in fact we first identify $N$ with the opposite von Neumann algebra $N^{\textrm{op}}$ as a linear space (an operation we will write as $n\rightarrow n^{\textrm{op}}$), and then we identify $N^{\textrm{op}}$ with $N'$ as a $^*$-algebra by sending $n^{\textrm{op}}$ to $J_Nn^*J_N$. So for $n\in \mathscr{N}_{\varphi_N}^*$, we will also write $\Lambda_N^{\textrm{op}}(n)=J_N\Lambda_N(n^*)$. This notation is consistent, since $J_Nn^*J_N=n$ for elements in the center.\\

\noindent When $N_1$ and $N_2$ are two von Neumann algebras, and $\varphi_{N_i}$ an nsf weight on $N_i$, we denote by $\varphi_{N_1}\otimes \varphi_{N_2}$ their tensor product (Definition 4.2 in \cite{Tak1}), which is an nsf weight on $N_1\otimes N_2$. We denote its GNS-map with $\Lambda_{N_1}\otimes \Lambda_{N_2}$. One can show that \[\varphi_{N_1} \otimes \varphi_{N_2} = \varphi_{N_1} \circ (\iota\otimes \varphi_{N_2}),\] where $(\iota\otimes \varphi_{N_2})$ is an nsf operator valued weight from $N_1\otimes N_2$ to $N_1$, defined as \[\omega((\iota\otimes \varphi_2)(x)) := \varphi_2 ((\omega\otimes \iota)(x))\] for $x\in (N_1\otimes N_2)^+$ and $\omega\in (N_1\otimes N_2)_*^+$. By symmetry, this gives us a Fubini theorem.\\

\noindent We recall the definition of the Connes-Sauvageot tensor product.\\

\noindent If $\mathscr{H}$ is a left $N$-module, by which we mean a Hilbert space carrying a unital normal representation $\pi_l$ of $N$, and $\varphi_N$ is a nsf weight on $N$, a vector $\xi \in \mathscr{H}$ is called \emph{right bounded} w.r.t. $\varphi_N$ if the map \[ \Lambda_N(\mathscr{N}_{\varphi_N})\rightarrow \mathscr{H}: \Lambda_{N}(x)\rightarrow \pi_l(x)\xi\] is bounded, in which case we denote its closure by $R^{\pi_l,\varphi_N}(\xi)$ (or $R_\xi$ if $\pi_l$ and $\varphi_N$ are fixed). We denote by $_{\varphi_N}\mathscr{H}$ the space of right bounded vectors for $\pi_l$. Similarly, if $\mathscr{H}$ is a right $N$-module, by which we mean a Hilbert space carrying a unital normal anti-representation $\pi_r$ of $N$, a vector $\xi \in \mathscr{H}$ is called \emph{left bounded} w.r.t. $\varphi_N$ if the map \[ \Lambda_N^{\textrm{op}}(\mathscr{N}_{\varphi_N}^*)\rightarrow \mathscr{H}: J_N\Lambda_N(x^*)\rightarrow \pi_r(x)\xi\] is bounded, in which case we denote its closure by $L^{\pi_r,\varphi_N}(\xi)$ (or $L_\xi$ if $\pi_r$ and $\varphi_N$ are fixed). We denote by $\mathscr{H}_{\varphi_N}$ the space of left bounded vectors for $\pi_r$. Remark that when we regard $\mathscr{H}$ as a left $N^{\textrm{op}}$-module in the natural way, then the right bounded vectors with respect to $\varphi_N^{\textrm{op}}$ are exactly the left bounded vectors with respect to $\varphi_N$.\\

\noindent If $(\mathscr{H}^r,\pi_r)$ is a faithful right $N$-module, $(\mathscr{H}^l,\pi_l)$ a faithful left $N$-module, and $\varphi_{N}$ a nsf weight on $N$, we denote by $\mathscr{H}^ r\underset{\varphi_{N}}{_{\pi_r}\otimes_{\pi_l}} \mathscr{H}^l$ (or simply $\mathscr{H}^r \underset{\varphi_N}{\otimes} \mathscr{H}^l$ when $\pi_l,\pi_r$ are clear) their Connes-Sauvageot tensor product with respect to $\pi_l,\pi_r$ and $\varphi_{N}$. It is the Hilbert space closure of the algebraic tensor product of $\mathscr{H}^r_{\varphi_N}$ and $\mathscr{H}^l$ with respect to the scalar product \begin{eqnarray*}  \langle \xi_1\otimes \xi_2,\eta_1\otimes \eta_2\rangle  &=& \langle \pi_l(L_{\eta_1}^*L_{\xi_1}) \xi_2,\eta_2\rangle,\end{eqnarray*} modulo vectors of norm zero. In fact, we could as well start with the algebraic tensor product of $\mathscr{H}^r_{\varphi_N}$ and $_{\varphi_N}\mathscr{H}^l$, since the image of this tensor product in the previous Hilbert space will be dense. On elementary tensors of the last space, we can give a different form of the scalar product, namely \begin{eqnarray*}  \langle \xi_1\otimes \xi_2,\eta_1\otimes \eta_2\rangle  &=& \langle \pi_r(R_{\eta_2}^* R_{\xi_2})\xi_1,\eta_1\rangle.\end{eqnarray*} The image of such an elementary tensor in $\mathscr{H}^r \underset{\varphi_N}{\otimes} \mathscr{H}^l$ will then be denoted by the same symbol, with $\otimes$ replaced by $\underset{\varphi_{N}}{_{\pi_r}\otimes_{\pi_l}}$ or simply $\underset{\varphi_N}{\otimes}$.\\

\noindent Note that these spaces carry faithful normal left representations $\pi_r'$ and $\pi_l'$ of respectively $\pi_r(N)'$ and $\pi_l(N)'$, determined by \[\pi_r'(n_1)\pi_l'(n_2)(\xi_1\underset{\varphi_N}{\otimes} \xi_2)=(n_1 \xi_1)\underset{\varphi_N}{\otimes} (n_2\xi_2),\qquad n_1\in \pi_r(N)',n_2\in \pi_l(N)', \xi_1 \in \mathscr{H}^r_{\varphi_N}, \xi_2 \in  {_{\varphi_N}\mathscr{H}^l}.\] If $N_1 \subseteq B(\mathscr{H}^r)$ is a von Neumann algebra containing $\pi_r(N)$, and $N_2 \subseteq B(\mathscr{H}^l)$ is a von Neumann algebra containing $\pi_l(N)$, the von Neumann algebra $N_1 \underset{N}{_{\pi_r}* _{\pi_l}} N_2:= ( \pi_r'(N_1')\cup\pi_l'(N_2'))'$ is called the \emph{fiber product} of $N_1$ and $N_2$. As an abstract von Neumann algebra, it only depends on $N,N_1,N_2$ and the maps $\pi_r:N\rightarrow N_1$ and $\pi_l:N\rightarrow N_2$. For further properties of the fiber product, see \cite{Eno4}. \\

\noindent We will also need the notion of \emph{intertwiners} and a \emph{linking algebra}. Suppose  $(\mathscr{H}_2,\pi_{r,2})$ and $(\mathscr{H}_1,\pi_{r,1})$ are two right $N$-modules. Denote $Q_{ij}=\{x\in B(\mathscr{H}_j,\mathscr{H}_i)\mid x\pi_{r,j}(n)=\pi_{r,i}(n)x \textrm{ for all }n\in N\}$. We call $Q_{12}$ the space of $\emph{intertwiners}$ between the right $N$-modules $(\mathscr{H}_2,\pi_{r,2})$ and $(\mathscr{H}_1,\pi_{r,1})$. In fact, it is a self-dual $Q_{11}$-$Q_{22}$-Hilbert $W^*$-bimodule (see \cite{Pas1}). The \emph{linking algebra} between $(\mathscr{H}_2,\pi_{r,2})$ and $(\mathscr{H}_1,\pi_{r,1})$ is the von Neumann algebra $Q=\left(\begin{array}{ll} Q_{11} & Q_{12}\\ Q_{21}& Q_{22}\end{array} \right)$, acting on $\left(\begin{array}{l} \mathscr{H}_1\\ \mathscr{H}_2\end{array}\right)=\mathscr{H}_1\oplus \mathscr{H}_2$ in the obvious way. It is the commutant of the direct sum right representation $\pi_{r,1}\oplus \pi_{r,2}$. Most of the time, we will identify the $Q_{ij}$ as subspaces of $Q$, indexing the units of the $Q_{ii}$ then to emphasize that we consider them as projections in $Q$. If $\theta_1$ is a weight on $Q_{11}$ and $\theta_2$ a weight on $Q_{22}$, the \emph{balanced weight} $\theta_1\oplus \theta_2$ is the weight $Q^+\rightarrow \lbrack 0,+\infty\rbrack:\left(\begin{array}{ll} x & y\\ z& w\end{array} \right)\rightarrow \theta_{1}(x)+\theta_{2}(w)$.\\

\noindent  We now briefly recall the definition of a locally compact quantum group, mainly to fix notation.\\

\noindent Let $M$ be a von Neumann algebra, and $\Delta$ a faithful normal unital $^*$-homomorphism $M\rightarrow M\otimes M$, which satisfies coassociativity: \[(\Delta\otimes \iota)\circ \Delta= (\iota\otimes \Delta)\circ \Delta,\] where $\iota$ denotes the identity map. Then the pair $(M,\Delta)$ is called a \textit{Hopf-von Neumann algebra}. A Hopf von Neumann algebra is called \textit{coinvolutive} if there exists an anti-multiplicative $^*$-involution $R:M\rightarrow M$ such that \[\Delta\circ R=(R\otimes R)\circ \Delta^{\textrm{op}},\] where $\Delta^{\textrm{op}}= \textrm{Ad}(\Sigma)\circ \Delta$. A Hopf-von Neumann algebra is called a \textit{locally compact quantum group} if there exist nsf weights $\varphi$ and $\psi$ on $M$ such that \[(\iota\otimes \varphi)\circ\Delta = \varphi,\]\[(\psi\otimes \iota)\circ \Delta= \psi.\] These identities should be interpreted as follows: for any $\omega\in M_*^+$, the weight $\psi\circ (\iota\otimes \omega)\Delta$ should equal the weight $\omega(1)\psi$, and similarly for $\varphi$. (These are in fact the \textit{strong forms} of invariance, and they follow from weaker ones (see Proposition 3.1 of \cite{Kus3}).) A locally compact quantum group will automatically be a coinvolutive Hopf-von Neumann algebra for a canonical map $R$.\\

\noindent We refer to \cite{Kus3} and \cite{VDae2} for further definitions and formulas. We shall also use notations as in those papers. Specifically, we denote by $\varphi$ a (fixed) left invariant nsf weight, by $S$ the antipode, by $\tau_t$ the one-parameter scaling group and by $R$ the unitary antipode (so that $S=R\circ \tau_{-i/2}$). We scale the right invariant weight $\psi$ such that $\psi=\varphi\circ R$. We establish the GNS-constructions for $\varphi$ in the standard form $\mathscr{L}^2(M)$, writing just $\Lambda$ for the GNS-map associated with $\varphi$. We follow the convention of \cite{Kus3} by taking a GNS-construction $\Lambda_{\delta}$ for $\psi$ in $\mathscr{L}^2(N)$ by defining $\Lambda_{\delta}(x) := \Lambda(x\delta^{1/2})$ for $x\in M$ a left multiplier of the square root of the modular element $\delta$ \emph{such that} $x\delta^{1/2} \in \mathscr{N}_{\varphi}$, and then closing $\Lambda_{\delta}$. If $\nu$ is the scaling constant of $(M,\Delta)$, then $\Lambda_{\delta}$ and $\Lambda_{\psi}$ are related by $\Lambda_{\delta} = \nu^{i/4}\Lambda_{\psi}$. The modular one-parametergroup for $\varphi$ is denoted simply by $\sigma_t$, and its corresponding modular operator by $\nabla$. The modular one-parametergroup for $\psi$ is denoted by $\sigma'_t=\textrm{Ad}(\delta^{it})\circ \sigma_t$, its modular operator by $\acnabla$. The canonical (self-dual) one-parametergroup of unitaries implementing the scaling group will be denoted by $P^{it}$.\\

\noindent We denote the dual locally compact quantum group by $(\widehat{M},\widehat{\Delta})$, and also all its other structures are denoted as for $(M,\Delta)$, but with a $\,\,\widehat{}\,\,$ on top. By $W$ and $\widehat{W}=\Sigma W^*\Sigma$ we denote the left regular corepresentation of respectively $(M,\Delta)$ and $(\widehat{M},\widehat{\Delta})$. We write $V$ and $\widehat{V}$ for the right regular corepresentation of respectively $(M,\Delta)$ and $(\widehat{M},\widehat{\Delta})$. We will also from time to time work with the commutant locally compact quantum groups $(M',\Delta')$ and $(\widehat{M}',\widehat{\Delta}')$. For the relationship between all these quantum groups, we refer again to \cite{Kus3}.\\

\noindent For most of the paper, we will work with a fixed locally compact quantum group $(M,\Delta)$. When $(P,\Delta_P)$ is the von Neumann algebraic realization of another locally compact quantum group, we will use the same notations but with a subscript $P$. Note that we also use the symbol $P$ for the scaling operator, since this is standard notation, but in any case, there should not arise any occasion where a von Neumann algebra could get mixed up with an operator!\\

\section{Preliminaries on the basic construction for operator valued weights}

\noindent We collect in this preliminary section some results about operator valued weights. While they are well-known to specialists, we have chosen to present them here in considerable detail, as we do not know a convenient reference for the specific results we need.\\

\noindent Let $N_0\subseteq N$ be a unital inclusion of von Neumann algebras, and
$T$ a normal semi-finite faithful operator valued weight from $N^+$ to the positive extended cone $(N_0)^{+,\textrm{ext}}$ of $N_0$. Let $\mu$ be a fixed nsf
weight on $N_0$, and denote by $\varphi_{N}$ the nsf weight
$\mu\circ T$. Denote the semi-cyclic representation associated to $\varphi_N$ by
$(\mathscr{L}^2(N),\Lambda_{N},\pi_l)$, realized in the standard form. Most of the time, we will write $n$ instead of $\pi_l(n)$ for $n\in N$. Denote
by $\pi_r$ (or $\pi_r^N$ for emphasis) the anti-representation $n\rightarrow J_Nn^*J_N$ of $N$, where $J_N$ is the modular conjugation.
Denote $N_2=\pi_r(N_0)'$,
then $N_0\subseteq N\subseteq N_2$ is called the basic construction. We will also use $\pi_l$ for the natural representation of $N_2$ on $\mathscr{L}^2(N)$, and $\theta_r$ for the natural anti-representation $\theta_r(x)=J_Nx^*J_N$ on $\mathscr{L}^2(N)$ (but this will of course \emph{not} turn $\mathscr{L}^2(N)$ into a $N_2$-$N_2$-bimodule in general).\\

\noindent Consider $x\in \mathscr{N}_{T}=\{n\in N\mid T(n^*n)<\infty\}$. Then $xn\in \mathscr{N}_{\varphi_N}$ when $n\in \mathscr{N}_\mu$, and $\Lambda_{\mu}(n)\rightarrow
\Lambda_N(xn)$ extends from $\Lambda_{\mu}(\mathscr{N}_{\mu})$
to a bounded operator $\mathscr{L}^2(N_0)\rightarrow
\mathscr{L}^2(N)$, which we will denote by
$\Lambda_T(x)$ (following the notations of Theorem 10.6 of \cite{Eno1}). Its adjoint is then determined by
$\Lambda_T(x)^*\Lambda_N(y) =
\Lambda_\mu(T(x^*y))$ for $y\in \mathscr{N}_{\varphi_N}\cap
\mathscr{N}_{T}$. The operators of the form $\Lambda_T(x)\Lambda_T(y)^*$, with $x,y\in\mathscr{N}_T$, will generate a $\sigma$-weakly-dense sub-$^*$-algebra of $N_2$, and if we denote by $T_2$ the canonical operator valued weight from $N_2$ onto $N$ associated to $T$, then $\Lambda_T(x)\Lambda_T(y)^*\in \mathscr{M}_{T_2}:=\mathscr{N}_{T_2}^*\mathscr{N}_{T_2}$ with $T_2(\Lambda_T(x)\Lambda_T(y)^*)=xy^*$  (\cite{Eno1}, Theorem 10.7).\\

\noindent Consider now $\mathscr{L}^2(N)$ as an $N_2$-$N_0$-bimodule, and denote by $\overline{\mathscr{L}^2(N)}$ the conjugate bimodule. Then it is well-known that there is a unitary $N_2$-$N_2$-bimodule map \[\mathscr{L}^2(N)\underset{\mu}{\otimes}  \overline{\mathscr{L}^2(N)}\rightarrow \mathscr{L}^2(N_2):\Lambda_N(x)\underset{\mu}{\otimes} \overline{\Lambda_N^{\textrm{op}}(y)}\rightarrow \Lambda_{\varphi_2}(\Lambda_T(x)\Lambda_T(y)^*)\] for $x,y\in \mathscr{N}_{\varphi_N}\cap\mathscr{N}_{\varphi_N}^*\cap  \mathscr{N}_T$, where $\varphi_2=\varphi_N\circ T_2$. As said, since we will need some more information about this statement, of which we know no appropriate reference in the literature, we will give a proof of it. \\

\noindent We first prove a lemma about interchanging the analytic continuation of a modular one-parametergroup with an operator valued weight.\\

\begin{Lem} Let $Q$ be the linking algebra between the right $N_0$-modules $\mathscr{L}^2(N)$ and $\mathscr{L}^2(N_0)$, and consider the balanced weight $\varphi_2\oplus \mu$ on $Q$. Let $x\in N$ be such that $x$ is analytic for $\sigma_t^N$ and $\sigma_z^N(x)\in \mathscr{N}_T$ for all $z\in \mathbb{C}$. Then $\Lambda_T(x)$ is analytic for $\sigma_t^{Q}$, with $\sigma_z^{Q}(\Lambda_T(x))=\Lambda_T(\sigma_z^{N}(x))$ for all $z\in \mathbb{C}$.

\end{Lem}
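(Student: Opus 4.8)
```latex
\textit{Proof sketch.}

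The plan is to compute the modular automorphism group $\sigma_t^Q$ of the balanced weight $\varphi_2 \oplus \mu$ explicitly on the off-diagonal corner $Q_{12}$ containing the $\Lambda_T(x)$, and then to verify the analyticity claim by reducing everything to the modular structure of $\varphi_N$ on $N$. First I would recall the standard description of the modular group of a balanced weight: on the linking algebra $Q$, the map $\sigma_t^{\varphi_2\oplus\mu}$ restricted to $Q_{12}$ is governed by the Connes cocycle, and acts as $\sigma_t^{\varphi_2\oplus\mu}(q) = u_t\, q\, v_t$ for $q\in Q_{12}$, where $u_t$ and $v_t$ are the modular unitaries of $\varphi_2$ on $N_2$ and of $\mu$ on $N_0$ respectively (viewed through the corner projections). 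Concretely, for $q = \Lambda_T(x)$, an element of $B(\mathscr{L}^2(N_0),\mathscr{L}^2(N))$ intertwining $\pi_{r}^{N_0}$ on both sides, I expect $\sigma_t^Q(\Lambda_T(x)) = \nabla_{\varphi_2}^{it}\,\Lambda_T(x)\,\nabla_\mu^{-it}$, where $\nabla_{\varphi_2}$ is the modular operator of $\varphi_2 = \varphi_N\circ T_2$ on $\mathscr{L}^2(N)$ (which restricts to $\nabla_N$ on the relevant subspace) and $\nabla_\mu$ is that of $\mu$ on $\mathscr{L}^2(N_0)$.

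The key computational step is to show $\nabla_N^{it}\,\Lambda_T(x)\,\nabla_\mu^{-it} = \Lambda_T(\sigma_t^N(x))$ for real $t$, and then extend to complex $z$ by analyticity. To establish the real-$t$ identity, I would test both sides against a vector $\Lambda_\mu(n)$ with $n\in \mathscr{N}_\mu\cap \mathscr{T}_\mu$ analytic and well-behaved, using the defining property $\Lambda_T(x)\Lambda_\mu(n) = \Lambda_N(xn)$. The left-hand side gives $\nabla_N^{it}\Lambda_N(x\,\sigma_{-t}^\mu(n)) = \Lambda_N(\sigma_t^N(x\,\sigma_{-t}^\mu(n)))$; here one uses that $T$ intertwines the modular groups, i.e. $\sigma_t^{\varphi_N}$ restricts to $\sigma_t^\mu$ on $N_0$ (equivalently $T\circ\sigma_t^{\varphi_N} = \sigma_t^\mu\circ T$, the Haagerup/Enock compatibility of an operator valued weight with the modular structure of $\mu\circ T$). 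This lets $\sigma_t^N$ pass across the product so that the $\sigma_{-t}$ and $\sigma_t$ on the $N_0$-factor cancel, yielding $\Lambda_N(\sigma_t^N(x)\, n) = \Lambda_T(\sigma_t^N(x))\Lambda_\mu(n)$, which is the right-hand side. By density of such $\Lambda_\mu(n)$ the operator identity follows.

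Once the real-parameter formula $\sigma_t^Q(\Lambda_T(x)) = \Lambda_T(\sigma_t^N(x))$ is in hand, the extension to analytic continuation is the remaining point. Here I would exploit that $x$ is analytic for $\sigma^N$ with $\sigma_z^N(x)\in\mathscr{N}_T$ for every $z$, which guarantees $\Lambda_T(\sigma_z^N(x))$ is a well-defined bounded operator for each $z$ and depends analytically on $z$ in the appropriate weak sense. The map $t\mapsto \sigma_t^Q(\Lambda_T(x)) = \Lambda_T(\sigma_t^N(x))$ then extends to an entire $Q$-valued function, which by the standard characterization of analyticity for modular automorphism groups (boundedness of the continuation on horizontal strips, matched against the generator $\nabla_{\varphi_2\oplus\mu}$) forces $\Lambda_T(x)$ to be analytic for $\sigma^Q$ with $\sigma_z^Q(\Lambda_T(x)) = \Lambda_T(\sigma_z^N(x))$. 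I expect the main obstacle to be the clean justification that the intertwining property of $T$ with the modular groups is exactly what makes the two $N_0$-side modular factors cancel, together with the care needed to show the entire-function extension is genuinely bounded on strips rather than merely defined pointwise; handling the unbounded operators $\nabla_N^{z}$ and verifying the domain conditions is where the analysis must be done carefully rather than formally.
```
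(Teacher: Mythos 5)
Your overall route is the same as the paper's: identify the action of $\sigma_t^Q$ on the corner $Q_{12}$ as $q\mapsto\nabla_N^{it}q\nabla_\mu^{-it}$, establish the real-parameter identity $\nabla_N^{it}\Lambda_T(x)\nabla_\mu^{-it}=\Lambda_T(\sigma_t^N(x))$, and then continue analytically. Your verification of the real-parameter identity (testing on $\Lambda_\mu(n)$ and using that $\sigma_t^N$ restricts to $\sigma_t^\mu$ on $N_0$, equivalently $T\circ\sigma_t^N=\sigma_t^\mu\circ T$) is correct, and is in fact more detailed than the paper, which simply asserts that identity at the end of its proof.

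The genuine gap sits exactly at the step you flag but do not carry out: you assert that the hypotheses ``guarantee'' that $z\mapsto\Lambda_T(\sigma_z^N(x))$ is weakly analytic and suitably bounded on horizontal strips, but this is precisely the non-routine content of the lemma, and it does not follow formally from the analyticity of $z\mapsto\sigma_z^N(x)$. The difficulty is that to test analyticity one looks at $\langle\Lambda_T(\sigma_z^N(x))\Lambda_\mu(y),\eta\rangle=\langle\Lambda_N(\sigma_z^N(x)y),\eta\rangle$, and one cannot rewrite $\Lambda_N(\sigma_z^N(x)y)$ as $\sigma_z^N(x)$ applied to a fixed vector: $y\in\mathscr{N}_\mu$ is in general \emph{not} in $\mathscr{N}_{\varphi_N}$, since $\varphi_N(y^*y)=\mu(y^*T(1)y)$ and $T(1)$ is typically unbounded (e.g.\ $T(1)=\varphi(1)1=\infty$ for the integrable coactions this lemma is aimed at); and $\Lambda_N$ is merely a closed map, so analyticity does not pass through it. The paper's proof consists of the two devices needed to close this gap. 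First, it pairs against the dense family of vectors $J_N\sigma^N_{i/2}(v)J_N\Lambda_N(u)$ with $v\in\mathscr{T}_{\varphi_N}$, $u\in\mathscr{N}_{\varphi_N}$: then $f(z)=\langle\Lambda_T(\sigma_z^N(x))\Lambda_\mu(y),J_N\sigma^N_{i/2}(v)J_N\Lambda_N(u)\rangle=\langle\sigma_z^N(x)\Lambda_N(yv),\Lambda_N(u)\rangle$, because right multiplication by the Tomita element $v$ produces a vector $\Lambda_N(yv)$ on which $\sigma_z^N(x)$ acts boundedly, so $f$ is manifestly entire. Second, it applies the Phragm\'{e}n--Lindel\"{o}f principle to $f$ on horizontal strips, obtaining on each strip the bound $|f(z)|\le M_x\|\omega\|$ with $\omega=\omega_{\Lambda_\mu(y),J_N\sigma^N_{i/2}(v)J_N\Lambda_N(u)}$, i.e.\ a bound uniform over a norm-dense set of functionals; this yields simultaneously the local boundedness of $\|\Lambda_T(\sigma_z^N(x))\|$ and weak analyticity, hence analyticity of the operator-valued map, after which your concluding ``standard characterization'' step applies. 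Without the first device the analyticity of the matrix coefficients is unproven, and without the second the strip bound you yourself identify as the main obstacle is unavailable; your sketch acknowledges both issues but supplies neither idea.
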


\noindent Recall that the notion of a linking algebra between two right von Neumann modules, and the notion of balanced weight, were given in the preliminaries' section of the introduction, of which we also use the notation.

\begin{proof}

First remark that $\Lambda_T(x)\in Q_{12}$ by \cite{Eno1}, Lemma 10.6.(i). Choose $y\in \mathscr{N}_\mu$ and $u,v\in \mathscr{N}_{\varphi_N}$ with $v$ in the Tomita algebra $\mathscr{T}_{\varphi_N}\subseteq N$ for $\varphi_N$. Denote $f(z)=\langle \Lambda_T(\sigma^N_z(x)) \Lambda_\mu(y),J_N\sigma^N_{i/2}(v)J_N\Lambda_N(u)\rangle$ for $z\in \mathbb{C}$. Then \begin{eqnarray*} f(z) &=& \langle J_N\sigma_{i/2}^N(v)^*J_N \Lambda_N(\sigma_z^N(x)y),\Lambda_N(u)\rangle \\ &=& \langle \sigma_z^N(x)\Lambda_N(yv),\Lambda_N(u)\rangle,\end{eqnarray*} and so $f$ is analytic. Moreover, if $z=r+is$ with $ r,s\in \mathbb{R}$, then since $\sigma_t^\mu = (\sigma^N_{t})_{\mid N_0}$,  \begin{eqnarray*} |f(z)| &=& |\langle \sigma_{is}^N(x)\nabla_N^{-ir}\Lambda_N(yv),\nabla_N^{-ir}\Lambda_N(u)\rangle |\\ &=& |\langle \Lambda_N(\sigma_{is}^N(x)\sigma_{-r}^\mu(y)),J_N\sigma_{i/2}^N(\sigma_{-r}^N(v))J_N\Lambda_N(\sigma_{-r}^N(u))\rangle |\\ &=& |\langle \Lambda_T(\sigma_{is}^N(x))\nabla_{\mu}^{-ir}\Lambda_\mu(y),\nabla_N^{-ir}J_N\sigma_{i/2}^N(v)J_N\Lambda_N(u)\rangle |,\end{eqnarray*} and so we can conclude, by the Phragm\'{e}n-Lindel\"{o}f principle, that the modulus of $f$ is bounded on every horizontal strip by $M_x\|\omega\|$, where \[\omega =\omega_{ \Lambda_\mu(y),J_N\sigma^N_{i/2}(v)J_N\Lambda_N(u)} \in B(\mathscr{L}^2(N_0),\mathscr{L}^2(N))_*,\] and $M_x$ is a number depending only on $x$ and the chosen strip. The same is of course true for linear combinations of such $\omega$, and since these span a dense subspace of $B(\mathscr{L}^2(N_0),\mathscr{L}^2(N))_*$, we get that $z\rightarrow \Lambda_T(\sigma_z^N(x))$ is bounded on compact sets. But then this function is analytic (for example by condition $A.1.(iii)$ in the appendix of $\cite{Tak1}$). Since $\sigma_t^{Q}$ is implemented by $\nabla_N^{it}\oplus \nabla_\mu^{it}$ and $\nabla_N^{it}\Lambda_T(x)\nabla_\mu^{-it}=\Lambda_T(\sigma_t^N(x))$, the result follows.

\end{proof}

\noindent We can now provide a convenient Tomita algebra for $\varphi_2$. Let $\mathscr{T}_{\varphi_N}\subseteq N$ be the Tomita algebra for $\varphi_N$, and denote \[\mathscr{T}_{\varphi_N,T}= \{ x\in \mathscr{T}_{\varphi_N}\cap \mathscr{N}_T\cap \mathscr{N}_T^*\mid \sigma_z^N(x)\in \mathscr{N}_T\cap \mathscr{N}_T^* \textrm{ for all } z\in \mathbb{C}\}.\] (This space is called the Tomita algebra for $\varphi_N$ and $T$ in Proposition 2.2.1 of \cite{Eno3}.) Denote the linear span of $\{\Lambda_T(x)\Lambda_T(y)^*\mid x,y\in\mathscr{T}_{\varphi_N,T}\}$ by $\mathfrak{A}_2$, and further denote by $(\mathscr{L}^2(N_2),\Lambda_{N_2},\pi_l^{N_2})$ the natural semi-cyclic representation for $\varphi_2$.

\begin{Prop}\label{tex11} We have $\mathfrak{A}_2\subseteq \mathscr{D}(\Lambda_{N_2})$,
and $\mathfrak{A}_2$ is a Tomita algebra for $(N_2,\varphi_{2})$.

\end{Prop}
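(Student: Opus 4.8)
The plan is to verify that $\mathfrak{A}_2$ satisfies the axioms of a Tomita algebra for the weight $\varphi_2 = \varphi_N \circ T_2$ on $N_2$. The strategy is to reduce everything about $\varphi_2$ on $N_2$ to corresponding (already-understood) statements about $\varphi_N$ on $N$, using the relation $T_2(\Lambda_T(x)\Lambda_T(y)^*) = xy^*$ and the preceding Lemma, which identifies the modular automorphism group of the balanced weight $\varphi_2 \oplus \mu$ acting on $\Lambda_T(x)$ with $\Lambda_T(\sigma_z^N(x))$. The first step is to check $\mathfrak{A}_2 \subseteq \mathscr{D}(\Lambda_{N_2})$, i.e.\ that each $\Lambda_T(x)\Lambda_T(y)^*$ with $x,y \in \mathscr{T}_{\varphi_N,T}$ lies in $\mathscr{N}_{\varphi_2}$. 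Since $T_2(\Lambda_T(x)\Lambda_T(y)^*) = xy^*$ and $x,y \in \mathscr{T}_{\varphi_N} \subseteq \mathscr{N}_{\varphi_N} \cap \mathscr{N}_{\varphi_N}^*$, one has $xy^* \in \mathscr{M}_{\varphi_N}$, so $\Lambda_T(x)\Lambda_T(y)^* \in \mathscr{M}_{\varphi_2}$, and membership in $\mathscr{N}_{\varphi_2}$ follows from the definitions together with the factorization $\Lambda_T(x)\Lambda_T(y)^* = (\Lambda_T(x)\Lambda_T(y_1)^*)(\Lambda_T(y_2)\Lambda_T(y)^*)$ obtained by inserting a suitable approximate unit or by using the $\mathscr{N}_{T_2}$-structure directly.

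Next I would establish the algebraic closure properties of $\mathfrak{A}_2$ required of a Tomita algebra. That $\mathfrak{A}_2$ is a $^*$-subalgebra of $N_2$ follows from the relations $(\Lambda_T(x)\Lambda_T(y)^*)^* = \Lambda_T(y)\Lambda_T(x)^*$ and $\Lambda_T(x)^*\Lambda_T(z) = \Lambda_\mu(T(x^*z))$ together with the known behavior $\Lambda_T(n)\pi_l(a) = \Lambda_T(na)$ type identities, so that products of two generators again expand into the span of generators once one observes $\Lambda_T(y)^*\Lambda_T(u) \in N_0$ acts by right multiplication turning $\Lambda_T(u')$ back into an element of the form $\Lambda_T(\,\cdot\,)$; here closure of $\mathscr{T}_{\varphi_N,T}$ under multiplication and under $\sigma_z^N$ is exactly what is needed. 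Stability under the modular group is the crucial point and is supplied directly by the Lemma: $\sigma_z^{\varphi_2}(\Lambda_T(x)\Lambda_T(y)^*) = \Lambda_T(\sigma_z^N(x))\Lambda_T(\sigma_{\bar z}^N(y))^*$, and since $\sigma_z^N$ preserves $\mathscr{T}_{\varphi_N,T}$ by construction of that space, $\mathfrak{A}_2$ is invariant under all $\sigma_z^{\varphi_2}$ and the elements remain analytic.

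I would then verify the Tomita-algebra axioms relating the involution, the product, and the inner product on $\Lambda_{N_2}(\mathfrak{A}_2)$: namely that $\Lambda_{N_2}(\mathfrak{A}_2)$ is dense, that left multiplication is bounded, and the compatibility $\langle \Lambda_{N_2}(a), \Lambda_{N_2}(b)\rangle = \langle \Lambda_{N_2}(\sigma_{-i/2}^{\varphi_2}(b)^* \, \cdot\,), \ldots\rangle$ identities that encode the sharp operation $a \mapsto \sigma_{-i/2}^{\varphi_2}(a^*)$. Density should follow because $\mathscr{T}_{\varphi_N,T}$ is a core-type dense subspace (a $\sigma$-strong--$\ast$ dense subalgebra whose image under $\Lambda_T$ generates $N_2$, cf.\ the remark that $\Lambda_T(x)\Lambda_T(y)^*$ generate a $\sigma$-weakly dense $^*$-subalgebra), and one propagates this density through the unitary $\mathscr{L}^2(N) \underset{\mu}{\otimes} \overline{\mathscr{L}^2(N)} \to \mathscr{L}^2(N_2)$. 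The sharp and flat operations are then computed via the Lemma: $\Lambda_{N_2}(a)^\sharp = \Lambda_{N_2}(\sigma_{-i/2}^{\varphi_2}(a)^*)$ translates, on generators, into replacing $x,y$ by $\sigma_{\mp i/2}^N$-translates, all of which stay inside $\mathscr{T}_{\varphi_N,T}$.

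I expect the main obstacle to be the bookkeeping around the \emph{analytic} generation of $\Lambda_{N_2}(\mathfrak{A}_2)$ and the verification that $\mathfrak{A}_2$ is genuinely a \emph{full} Tomita algebra (in particular that its closure recovers the full left Hilbert algebra of $\varphi_2$), rather than merely a modular-invariant $^*$-algebra inside $\mathscr{N}_{\varphi_2} \cap \mathscr{N}_{\varphi_2}^*$. Concretely, the delicate step is to show that the span of $\Lambda_{N_2}(\Lambda_T(x)\Lambda_T(y)^*)$ for $x,y \in \mathscr{T}_{\varphi_N,T}$ is dense in $\mathscr{L}^2(N_2)$; this is where one must lean on the bimodule isomorphism and on the density of $\Lambda_T(\mathscr{T}_{\varphi_N,T})$ inside the intertwiner space $Q_{12}$, approximating a general $\Lambda_T(x)\Lambda_T(y)^*$ (with $x,y$ only in $\mathscr{N}_{\varphi_N}\cap\mathscr{N}_{\varphi_N}^*\cap\mathscr{N}_T$) by elements with $x,y$ in the smaller Tomita-type space while controlling convergence in the graph norm of $\Lambda_{N_2}$. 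All other axioms reduce, via the Lemma and the identity $T_2(\Lambda_T(x)\Lambda_T(y)^*) = xy^*$, to the corresponding already-established facts about the Tomita algebra $\mathscr{T}_{\varphi_N}$ for $\varphi_N$.
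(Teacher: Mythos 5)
Your first three steps coincide with the paper's own argument: membership of $\mathfrak{A}_2$ in $\mathscr{D}(\Lambda_{N_2})$ via $T_2(\Lambda_T(x)\Lambda_T(y)^*)=xy^*\in\mathscr{M}_{\varphi_N}$, the $^*$-algebra property via $(\Lambda_T(u)\Lambda_T(v)^*)(\Lambda_T(x)\Lambda_T(y)^*)=\Lambda_T(uT(v^*x))\Lambda_T(y)^*$, and modular invariance and analyticity via the preceding Lemma (note, though, that the product step needs that Lemma too, to see that $T(v^*x)=\Lambda_T(v)^*\Lambda_T(x)$ is $\sigma^\mu_t$-analytic with $\sigma_z^Q(T(v^*x))=T(\sigma_{\bar z}^N(v)^*\sigma_z^N(x))$; mere multiplicativity of $\mathscr{T}_{\varphi_N,T}$ is not what is at stake). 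The genuine gap is at the step you yourself flag as the delicate one. You propose to obtain density of $\Lambda_{N_2}(\mathfrak{A}_2)$ in $\mathscr{L}^2(N_2)$ by propagating density through the unitary $\mathscr{L}^2(N)\underset{\mu}{\otimes}\overline{\mathscr{L}^2(N)}\rightarrow\mathscr{L}^2(N_2)$. But that unitary is exactly Theorem \ref{dens}, which in this paper is proved \emph{after} the present proposition and whose proof \emph{uses} it (both the Tomita-algebra structure of $\mathfrak{A}_2$ and the density of $\Lambda_{N_2}(\mathfrak{A}_2)$); indeed the stated purpose of this section is that the ``well-known'' bimodule isomorphism has no convenient reference and must be established here. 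So your route is circular. Even if one granted the isomorphism, your graph-norm approximation only reduces the problem to density of the images $\Lambda_{N_2}(\Lambda_T(x)\Lambda_T(y)^*)$ with $x,y$ in the larger space $\mathscr{N}_{\varphi_N}\cap\mathscr{N}_{\varphi_N}^*\cap\mathscr{N}_T$, which is again the folklore statement you were trying to import; the $\sigma$-weak density of the span of such operators in $N_2$ (Enock--Nest) does not by itself give density of their GNS images.

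What the paper does instead, and what your outline lacks a substitute for, is purely algebraic: it proves that $\mathfrak{A}_2$ is $\sigma$-weakly dense in $N_2$, by reducing to strong density of $\Lambda_T(\mathscr{T}_{\varphi_N,T})$ in $Q_{12}$ via an annihilator argument (if $z\in Q_{12}$ and $z^*\Lambda_T(x)=0$ for all $x\in\mathscr{T}_{\varphi_N,T}$, then $z=0$), using right multiplication by $\pi_r^{N_0}(\sigma_{i/2}^\mu(y))\rightarrow 1$ and Gaussian smoothing $x_n=\sqrt{\tfrac{n}{\pi}}\int_{-\infty}^{+\infty}e^{-nt^2}\sigma_t^N(x)\,dt\in\mathscr{T}_{\varphi_N,T}$. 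It then invokes Proposition VIII.3.15 of Takesaki to conclude that the weight attached to the left Hilbert algebra $\Lambda_{N_2}(\mathfrak{A}_2)$ coincides with $\varphi_2$; the Hilbert-space density you treat as the thing to be proved directly is a \emph{consequence} of this weight identification, not an input to it. This last step is not formal: passing from ``$\sigma$-weakly dense, modular-invariant $^*$-subalgebra of $\mathscr{N}_{\varphi_2}\cap\mathscr{N}_{\varphi_2}^*$'' to ``the associated weight is $\varphi_2$'' (equivalently, to dense GNS image) is precisely the content of the cited Takesaki result, and your closing claim that all remaining axioms ``reduce to facts about $\mathscr{T}_{\varphi_N}$'' silently assumes it. Without either the $\sigma$-weak density argument for $\mathfrak{A}_2$ itself (not just for the larger $\mathscr{N}_T$-algebra) or this weight-identification mechanism, the proof cannot close.
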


\noindent By the second statement, we mean that $\Lambda_{N_2}(\mathfrak{A}_2)$ is a sub-Tomita algebra of the natural Tomita algebra $\Lambda_{N_2}(\mathscr{T}_{\varphi_2})$ for $\varphi_2$, closed in $\mathscr{L}^2(N_2)$, which still has $N_2$ as its left von Neumann algebra, and also with the corresponding weight on $N_2$ coinciding with $\varphi_2$.

\begin{proof}

\noindent For $x,y\in \mathscr{T}_{\varphi_N,T}$, we know that $\Lambda_T(x)\Lambda_T(y)^*\in \mathscr{M}_{T_2}$, with $T_2(\Lambda_T(x)\Lambda_T(y)^*)=xy^*$. Since $x,y\in \mathscr{T}_{\varphi_N}$, also $xy^*\in \mathscr{M}_{\varphi_N}$. Hence $\mathfrak{A}_2\subseteq \mathscr{M}_{\varphi_2}$, and so certainly $\mathfrak{A}_2\subseteq \mathscr{D}(\Lambda_{N_2})$.\\

\noindent It is clear that $\mathfrak{A}_2$ is closed under the $^*$-involution. Now choose $x,y,u,v\in \mathscr{T}_{\varphi_N,T}$. Then \[(\Lambda_T(u)\Lambda_T(v)^*) (\Lambda_T(x)\Lambda_T(y)^*)=\Lambda_T(uT(v^*x))\Lambda_T(y)^*.\] We want to show that $uT(v^*x)\in \mathscr{T}_{\varphi_N,T}$. It is clear that $uT(v^*x) \in \mathscr{N}_{\varphi_N}^*\cap \mathscr{N}_T\cap \mathscr{N}_T^*$. By the previous lemma, we have, using notation as there, that $\Lambda_T(v)$ and $\Lambda_T(x)$ are analytic for $\sigma_t^Q$, with $\sigma_z^{Q}(\Lambda_T(v))=\Lambda_T(\sigma_z^N(v))$ and $\sigma_z^{Q}(\Lambda_T(x))=\Lambda_T(\sigma_z^N(x))$ for all $z\in \mathbb{C}$. But then also $\Lambda_T(v)^*\Lambda_T(x)=T(v^*x)$ analytic for $\sigma_t^Q$, with $\sigma_z^Q(T(v^*x))= T(\sigma_{\overline{z}}^N(v)^*\sigma_z^N(x))$ for all $z\in \mathbb{C}$. Since $\sigma_t^Q$ restricts to $\sigma_t^\mu$ on $N_0$, and also $\sigma_t^N$ restricts to $\sigma_t^\mu$ on $N_0$, we get that $uT(v^*x)$ is analytic for $\sigma_t^{N}$, with $\sigma_z^N(uT(v^*x))=\sigma_z^N(u)T(\sigma_{\overline{z}}^N(v)^*\sigma_z^N(x))$ for $z\in \mathbb{C}$. Since $\mathscr{T}_{\varphi_N,T}$ is invariant under all $\sigma_z^N$ with $z\in \mathbb{C}$, we get that $\sigma_z^N(uT(v^*x))\in \mathscr{N}_{\varphi_N}^*\cap \mathscr{N}_T\cap \mathscr{N}_T^*$ for all $z\in \mathbb{C}$. Hence $uT(v^*x)\in \mathscr{T}_{\varphi_N,T}$, and thus $(\Lambda_T(u)\Lambda_T(v)^*) (\Lambda_T(x)\Lambda_T(y)^*)\in \mathfrak{A}_2$. \\

\noindent We have shown so far that $\Lambda_{N_2}(\mathfrak{A}_2)$ is a sub-left Hilbert algebra of $\Lambda_{N_2}(\mathscr{N}_{\varphi_2}\cap \mathscr{N}_{\varphi_2}^*)$. But by the previous lemma, $\mathfrak{A}_2$ consists of analytic elements for $\sigma^{Q}_t$, which restricts to $\sigma^{N_2}_t$ on $N_2$. So in fact $\Lambda_{N_2}(\mathfrak{A}_2)$ is a sub-Tomita algebra of $\Lambda_{N_2}(\mathscr{T}_{\varphi_2})$.\\

\noindent Now we show that $\mathfrak{A}_2$ is $\sigma$-weakly dense in $N_2$. For this, it is enough to show that $\Lambda_T(\mathscr{T}_{\varphi_N,T})$ is strongly dense in $Q_{12}$. Note that $\Lambda_T(\mathscr{T}_{\varphi_N,T})$ is closed under right multiplication with elements from $\mathscr{T}_\mu\subseteq N_0$, which are $\sigma$-weakly dense in $N_0$. Then by a similar argument as in the proof of Theorem 10.6.(ii), it is sufficient to prove that if $z\in Q_{12}$ and $z^*\Lambda_T(x)=0$ for all $x\in \mathscr{T}_{\varphi_N,T}$, then $z=0$. So suppose $z$ satisfies this condition. Choose $y\in \mathscr{N}_{\mu}$ analytic for $\sigma_t^\mu$. Then \begin{eqnarray*} \pi_r^{N_0}(\sigma_{i/2}^\mu (y))z^*\Lambda_N(x) &=& z^*\pi_r^N(\sigma_{i/2}^N (y))\Lambda_N(x) \\  &=& z^* \Lambda_N(xy)\\ &=& z^* \Lambda_T(x)\Lambda_\mu(y)\\ &=& 0.\end{eqnarray*} Letting $\pi_r^{N_0}(\sigma_{i/2}^\mu(y))$ tend to 1, we see that $z^*$ vanishes on $\Lambda_N(\mathscr{T}_{\varphi_N,T})$. Now choose $x\in \mathscr{M}_{\varphi_N}\cap \mathscr{M}_T$. Then $x_n=\sqrt{\frac{n}{\pi}}\int_{-\infty}^{+\infty} e^{-nt^2}\sigma_t^N(x)dt$ is in $\mathscr{T}_{\varphi_N,T}$ by Lemma 10.12 of \cite{Eno1}, and $\Lambda_N(x_n)$ converges to $\Lambda_N(x)$. Hence $z^*$ vanishes on $\Lambda_N(\mathscr{M}_{\varphi_N}\cap \mathscr{M}_T)$. Since $\mathscr{N}_{\varphi_N}\cap \mathscr{N}_T$ is weakly dense in $N$ and $\Lambda_N(\mathscr{N}_{\varphi_N}\cap \mathscr{N}_T)$ is normdense in $\mathscr{L}^2(N)$, we get that $z^*=0$, and the density claim follows.\\

\noindent Now let $\mathscr{G}$ be the closure of $\Lambda_{N_2}(\mathfrak{A}_2)$. Then for $x\in N_2$, we get that $\pi_l^{N_2}(x)$ will restrict to an operator $\pi^{\mathfrak{A}_2}_l(x):\mathscr{G}\rightarrow \mathscr{G}$, since $\mathfrak{A}_2$ is dense in $N_2$. Then the left von Neumann algebra associated with $\Lambda_{N_2}(\mathfrak{A}_2)$ is $\pi^{\mathfrak{A}_2}_l(N_2)$. If we denote by $\varphi_{1\frac{1}{2}}$ the weight on $\pi^{\mathfrak{A}_2}_l(N_2)$ associated to $\Lambda_{N_2}(\mathfrak{A}_2)$, then it is clear that $\varphi_2$, the weight $(\varphi_{1\frac{1}{2}}\circ \pi_l^{\mathfrak{A}_2})$ and $\mathfrak{A}_2$ satisfy the conditions of Proposition VIII.3.15 of \cite{Tak1}, hence $\varphi_2=\varphi_{1\frac{1}{2}}\circ \pi_l^{\mathfrak{A}_2}$, which finishes the proof.

\end{proof}

\noindent \textit{Remark:} It also follows easily from Lemma 10.12 of \cite{Eno1} that $\mathscr{T}_{\varphi_N,T}$ itself is $\sigma$-weakly dense in $N$.\\

\noindent Let $\mathscr{L}^2(N)\underset{\mu}{\otimes}
\mathscr{L}^2(N)$ denote the Connes-Sauvageot tensor product, with its natural $N_2$-$N_2$-bimodule structure. Denote by $\mathscr{K}$ the natural image of the algebraic tensor product $\Lambda_N(\mathscr{T}_{\varphi_N,T})\odot \Lambda_N(\mathscr{T}_{\varphi_N,T})$ inside $\mathscr{L}^2(N)\underset{\mu}{\otimes}
\mathscr{L}^2(N)$.

\begin{Theorem}\label{dens} The space $\mathscr{K}$ is dense in $\mathscr{L}^2(N)\underset{\mu}{\otimes}
\mathscr{L}^2(N)$, and the map \[\mathscr{K}\rightarrow \mathscr{L}^2(N_2): \Lambda_N(x)\underset{\mu}{\otimes}
\Lambda_N(y)  \rightarrow
\Lambda_{N_2}(\Lambda_T(x)\Lambda_T(y^*)^*)\] extends to a unitary equivalence of $N_2$-$N_2$-bimodules.\end{Theorem}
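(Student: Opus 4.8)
The plan is to define the map on the dense subspace $\mathscr{K}$, to check it is isometric by reducing \emph{both} inner products to one and the same expression in terms of $T$, and then to establish separately that $\mathscr{K}$ is dense in $\mathscr{L}^2(N)\underset{\mu}{\otimes}\mathscr{L}^2(N)$ and that the image is dense in $\mathscr{L}^2(N_2)$; a surjective isometry of Hilbert spaces is unitary, and the bimodule property will follow by continuity from its validity on generators. First I would record the good vectors: for $x\in\mathscr{T}_{\varphi_N,T}$ we have $x,x^*\in\mathscr{N}_T$, so $\Lambda_N(x)$ is a left bounded vector for the right $N_0$-module $(\mathscr{L}^2(N),\pi_r^N|_{N_0})$ and a right bounded vector for the left $N_0$-module $(\mathscr{L}^2(N),\pi_l^N|_{N_0})$ (both with respect to $\mu$), and by \cite{Eno1}, Theorem 10.6, the associated left bounded vector operator $\mathscr{L}^2(N_0)\to\mathscr{L}^2(N)$ is precisely $L_{\Lambda_N(x)}=\Lambda_T(x)\in Q_{12}$. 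Hence the elementary tensors making up $\mathscr{K}$ are genuine vectors of the relative tensor product, and since $\Lambda_T(x)\Lambda_T(y^*)^*\in\mathfrak{A}_2$, the proposed image of $\mathscr{K}$ is exactly $\Lambda_{N_2}(\mathfrak{A}_2)$, which is dense in $\mathscr{L}^2(N_2)$ by Proposition \ref{tex11}.

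Next comes the isometry. On the $\mathscr{L}^2(N_2)$-side, using $\langle\Lambda_{N_2}(a),\Lambda_{N_2}(b)\rangle=\varphi_2(b^*a)$, the identity $\Lambda_T(x_2)^*\Lambda_T(x_1)=T(x_2^*x_1)$, the relation $\Lambda_T(u)\,n=\Lambda_T(un)$ for $n\in N_0$ (applied to $n=T(x_2^*x_1)$, which lies in $N_0$ and keeps $y_2^*$ in $\mathscr{N}_T$ by the module property of $T$), together with $T_2(\Lambda_T(a)\Lambda_T(b)^*)=ab^*$ and $\varphi_2=\varphi_N\circ T_2$, one computes
\[\langle\Lambda_{N_2}(\Lambda_T(x_1)\Lambda_T(y_1^*)^*),\Lambda_{N_2}(\Lambda_T(x_2)\Lambda_T(y_2^*)^*)\rangle=\varphi_2\bigl(\Lambda_T(y_2^*\,T(x_2^*x_1))\Lambda_T(y_1^*)^*\bigr)=\varphi_N\bigl(y_2^*\,T(x_2^*x_1)\,y_1\bigr).\]
On the tensor-product side, the first Connes--Sauvageot inner product formula together with $L_{\Lambda_N(x_2)}^*L_{\Lambda_N(x_1)}=\Lambda_T(x_2)^*\Lambda_T(x_1)=T(x_2^*x_1)\in N_0$ yields
\[\langle\Lambda_N(x_1)\underset{\mu}{\otimes}\Lambda_N(y_1),\Lambda_N(x_2)\underset{\mu}{\otimes}\Lambda_N(y_2)\rangle=\langle\pi_l^N(T(x_2^*x_1))\Lambda_N(y_1),\Lambda_N(y_2)\rangle=\varphi_N\bigl(y_2^*\,T(x_2^*x_1)\,y_1\bigr).\]
The two expressions coincide, so the map is well defined and isometric on $\mathscr{K}$.

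It remains to prove that $\mathscr{K}$ is dense, for which I would use the two standard norm estimates $\|\eta\underset{\mu}{\otimes}\xi\|\le\|L_\eta\|\,\|\xi\|$ (valid for left bounded $\eta$ and arbitrary $\xi$) and $\|\eta\underset{\mu}{\otimes}\xi\|\le\|R_\xi\|\,\|\eta\|$ (valid for left bounded $\eta$ and right bounded $\xi$). By the Remark following Proposition \ref{tex11} and the Gaussian mollifier argument in its proof, $\Lambda_N(\mathscr{T}_{\varphi_N,T})$ is norm dense in $\mathscr{L}^2(N)$. Fixing a left bounded $\eta$ and approximating the second leg in norm by $\Lambda_N(y)$, $y\in\mathscr{T}_{\varphi_N,T}$, the first estimate shows that the vectors $\eta\underset{\mu}{\otimes}\Lambda_N(y)$ already span a dense subspace; then, for fixed $y$, the second estimate shows that $\eta\mapsto\eta\underset{\mu}{\otimes}\Lambda_N(y)$ extends continuously from the left bounded vectors to all of $\mathscr{L}^2(N)$, so that the $\Lambda_N(x)\underset{\mu}{\otimes}\Lambda_N(y)$ with $x\in\mathscr{T}_{\varphi_N,T}$ approximate each $\eta\underset{\mu}{\otimes}\Lambda_N(y)$. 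Combining these two approximations gives density of $\mathscr{K}$. Hence the map extends to an isometry of $\mathscr{L}^2(N)\underset{\mu}{\otimes}\mathscr{L}^2(N)$ onto the closure of its image, i.e. onto $\mathscr{L}^2(N_2)$, and is therefore unitary. Finally, as the left $N_2$-action sits on the first leg (acting on $\Lambda_T(x)$ from the left) and the right $N_2$-action on the second leg (acting on $\Lambda_T(y^*)^*$ from the right), the intertwining relations hold on the generators of $\mathscr{K}$ and extend by boundedness and normality, so the unitary is an $N_2$-$N_2$-bimodule map.

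I expect the main obstacle to be the identification step of the first paragraph: verifying, with the correct normalisation and modular conventions, that the left and right bounded vector operators attached to $\Lambda_N(x)$ coincide on the nose with Enock's operators $\Lambda_T(x)$, so that $L_{\Lambda_N(x_2)}^*L_{\Lambda_N(x_1)}$ collapses to $T(x_2^*x_1)\in N_0$ and both inner products reduce to $\varphi_N(y_2^*\,T(x_2^*x_1)\,y_1)$. The algebraic identity $\Lambda_T(x_2)^*\Lambda_T(x_1)=T(x_2^*x_1)$ is convention-free (it follows from $\Lambda_T(x)^*\Lambda_N(y)=\Lambda_\mu(T(x^*y))$), which is what makes the isometry robust; once this bookkeeping is settled, the density and bimodule statements are routine.
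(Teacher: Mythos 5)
Your proposal is correct and follows essentially the same route as the paper's proof: both establish the isometry by reducing the two inner products to the common expression $\varphi_N(y_2^*\,T(x_2^*x_1)\,y_1)$ (via $L_{\Lambda_N(x)}=\Lambda_T(x)$, $\Lambda_T(x_2)^*\Lambda_T(x_1)=T(x_2^*x_1)$, $T_2(\Lambda_T(a)\Lambda_T(b)^*)=ab^*$ and $\varphi_2=\varphi_N\circ T_2$), then combine density of $\Lambda_N(\mathscr{T}_{\varphi_N,T})$ in $\mathscr{L}^2(N)$ with density of $\Lambda_{N_2}(\mathfrak{A}_2)$ in $\mathscr{L}^2(N_2)$ (Proposition \ref{tex11}) to conclude unitarity, and verify the bimodule property on generators. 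The only difference is one of detail: you make explicit, via the two norm estimates and the left/right boundedness of the vectors $\Lambda_N(x)$, $\Lambda_N(y)$, the passage from density of $\Lambda_N(\mathscr{T}_{\varphi_N,T})$ to density of $\mathscr{K}$, which the paper treats as immediate.
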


\begin{proof}

\noindent First note that the expression on the left is well-defined by Theorem 10.6.(v) of \cite{Eno1}, and then by definition, we have for $x,y,z,w\in \mathscr{T}_{\varphi_N,T}$ that \begin{eqnarray*} \langle \Lambda_N(x)\underset{\mu}{\otimes}
\Lambda_N(y),\Lambda_N(z)\underset{\mu}{\otimes}
\Lambda_N(w)\rangle &=& \langle (\Lambda_T(z)^*\Lambda_T(x))\Lambda_N(y),\Lambda_N(w)\rangle \\ &=& \varphi_N(w^*T(z^*x)y)\\ &=&  \langle \Lambda_{N_2}(\Lambda_T(x)\Lambda_T(y^*)^*),\Lambda_{N_2}(\Lambda_T(z)\Lambda_T(w^*)^*)\rangle,\end{eqnarray*} so that the given map extends to a well-defined partial isometry. Since $\Lambda_N(\mathscr{T}_{\varphi_N,T})$ is dense in $\mathscr{L}^2(N)$ (which was proven in the course of the previous proposition), we have that  $\mathscr{K}$ is dense in $\mathscr{L}^2(N)\underset{\mu}{\otimes}
\mathscr{L}^2(N)$. Since also $\Lambda_{N_2}(\mathfrak{A}_2)$ is dense in $\mathscr{L}^2(N_2)$, the extension is in fact a unitary.\\

\noindent The fact that it is a bimodule map follows from a straightforward computation (since we only have to check the bimodule property for operators in $\mathfrak{A}_2$ and vectors in $\mathscr{K}$ and $\Lambda_{N_2}(\mathfrak{A}_2)$).

\end{proof}

\noindent \textit{Remark:} If we identify $\mathscr{L}^2(N)$ with $\overline{\mathscr{L}^2(N)}$ as an $N_0$-$N_2$-bimodule by the unitary $\Lambda_N(y)\rightarrow \overline{\Lambda_N^{\textrm{op}}(y^*)}$, we get the isomorphism $\mathscr{L}^2(N)\underset{\mu}{\otimes}  \overline{\mathscr{L}^2(N)}\rightarrow \mathscr{L}^2(N_2)$ mentioned before. In some sense, this is a more natural unitary, but in our specific setting, the former one is easier to work with. \\

\noindent In the following, we will hence identify $\mathscr{L}^2(N)\underset{\mu}{\otimes}
\mathscr{L}^2(N)$ and $\mathscr{L}^2(N_2)$ in this manner.

\begin{Lem}\label{lem12} Let $x,y$ be elements of $\mathscr{T}_{\varphi_N,T}$, and let $p$ be an element of $\mathscr{N}_{\varphi_2}$. Then \[\langle \Lambda_N(x)\underset{\mu}{\otimes} \Lambda_N(y),\Lambda_{N_2}(p)\rangle = \langle \Lambda_{N}(x),p\Lambda_N(\sigma_{-i}^N(y^*))\rangle.\] Conversely, if $p\in N_2$ and $\xi \in \mathscr{L}^2(N_2)$ are such that \[\langle \Lambda_N(x)\underset{\mu}{\otimes} \Lambda_N(y),\xi\rangle = \langle \Lambda_{N}(x),p\Lambda_N(\sigma_{-i}^N(y^*))\rangle\] for all $x,y\in \mathscr{T}_{\varphi_N,T}$, then $p\in \mathscr{N}_{\varphi_2}$ and $\Lambda_{N_2}(p)=\xi$. \end{Lem}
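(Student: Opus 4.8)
The plan is to prove the first (direct) identity first on the $\sigma$-weakly dense Tomita algebra $\mathfrak{A}_2$, and then to bootstrap both the direct statement and its converse from there. So first I would take $p=\Lambda_T(z)\Lambda_T(w^*)^*$ with $z,w\in\mathscr{T}_{\varphi_N,T}$, so that $p\in\mathfrak{A}_2$ and, by Theorem \ref{dens}, $\Lambda_{N_2}(p)=\Lambda_N(z)\underset{\mu}{\otimes}\Lambda_N(w)$. The left hand side then becomes the Connes--Sauvageot inner product already computed in the proof of Theorem \ref{dens}, namely $\varphi_N(w^*T(z^*x)y)$. For the right hand side I would use the explicit actions $\Lambda_T(w^*)^*\Lambda_N(\eta)=\Lambda_\mu(T(w\eta))$ and $\Lambda_T(z)\Lambda_\mu(n)=\Lambda_N(zn)$ to rewrite $p\Lambda_N(\sigma_{-i}^N(y^*))=\Lambda_N(zT(w\sigma_{-i}^N(y^*)))$, so that the right hand side equals $\varphi_N(T(\sigma_i^N(y)w^*)z^*x)$. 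The heart of this step is the purely modular identity
\[\varphi_N(w^*T(z^*x)y)=\varphi_N(T(\sigma_i^N(y)w^*)z^*x),\]
which I would prove by writing $\varphi_N=\mu\circ T$, moving the central factor $c:=T(z^*x)\in N_0$ in and out of $T$ via the $N_0$-bimodularity of $T$, using the compatibility $T\circ\sigma_t^N=\sigma_t^\mu\circ T$ together with its analytic continuation and the KMS property of $\mu$, and finishing with the KMS property of $\varphi_N$ applied to the pair $w^*$ and $cy$. This settles the direct identity for all $p\in\mathfrak{A}_2$.

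To pass to a general $p\in\mathscr{N}_{\varphi_2}$ I would invoke Proposition \ref{tex11}: since $\Lambda_{N_2}(\mathfrak{A}_2)$ is a Tomita algebra realizing $\varphi_2$, it is a core for $\Lambda_{N_2}$, and hence there is a net $p_n\in\mathfrak{A}_2$ with $p_n\to p$ $\sigma$-strongly$^*$ and $\Lambda_{N_2}(p_n)\to\Lambda_{N_2}(p)$. The left hand side is an inner product against the fixed vector $\Lambda_N(x)\underset{\mu}{\otimes}\Lambda_N(y)$ and is therefore continuous in $\Lambda_{N_2}(p)$; the right hand side is continuous as well, either because $p_n\Lambda_N(\sigma_{-i}^N(y^*))\to p\Lambda_N(\sigma_{-i}^N(y^*))$ by the $\sigma$-strong$^*$ convergence, or, more cleanly, because it equals $\omega_{\Lambda_N(\sigma_{-i}^N(y^*)),\Lambda_N(x)}(p)$ with the functional lying in $(N_2)_*$. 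Passing to the limit yields the direct identity for every $p\in\mathscr{N}_{\varphi_2}$. (Here one checks that $\sigma_{-i}^N(y^*)=\sigma_i^N(y)^*$ lies in $\mathscr{N}_{\varphi_N}\cap\mathscr{N}_T$, so all the vectors involved are well defined.)

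For the converse I would argue through the bounded-vector characterisation of $\Lambda_{N_2}$. The vectors $\Lambda_N(z)\underset{\mu}{\otimes}\Lambda_N(w)=\Lambda_{N_2}(c)$, with $c=\Lambda_T(z)\Lambda_T(w^*)^*\in\mathfrak{A}_2$, are right bounded, and together with their right operators $\rho(c)\in N_2'$, determined by $\rho(c)\Lambda_{N_2}(b)=\Lambda_{N_2}(bc)$, they form a core of the right Hilbert algebra of $\varphi_2$. Hence it suffices to prove $p\,\big(\Lambda_N(z)\underset{\mu}{\otimes}\Lambda_N(w)\big)=\rho(c)\xi$ for all such $z,w$; by the criterion this \emph{simultaneously} gives $p\in\mathscr{N}_{\varphi_2}$ and $\Lambda_{N_2}(p)=\xi$, so that no separate appeal to the direct part is even needed. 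I would check this vector identity by testing against the dense family $\Lambda_N(x)\underset{\mu}{\otimes}\Lambda_N(y)$: the inner product $\langle p(\Lambda_N(z)\underset{\mu}{\otimes}\Lambda_N(w)),\Lambda_N(x)\underset{\mu}{\otimes}\Lambda_N(y)\rangle$ is computed directly in the tensor product, using that $p\in N_2$ acts on the first leg with $p\Lambda_T(z)=L_{p\Lambda_N(z)}$, while $\langle\rho(c)\xi,\Lambda_N(x)\underset{\mu}{\otimes}\Lambda_N(y)\rangle=\langle\xi,\rho(\sigma_{-i}^{\varphi_2}(c^*))\Lambda_{N_2}(\Lambda_T(x)\Lambda_T(y^*)^*)\rangle$ is brought, via $\rho(c)^*=\rho(\sigma_{-i}^{\varphi_2}(c^*))$ and $\rho(c')\Lambda_{N_2}(q)=\Lambda_{N_2}(qc')$, back to a pairing of $\xi$ with vectors of the form $\Lambda_N(x')\underset{\mu}{\otimes}\Lambda_N(y')$, to which the hypothesis applies. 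Matching the two sides is then the same kind of modular computation as in the first paragraph.

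The main obstacle I anticipate is the modular bookkeeping. In the direct part it is the displayed identity, which works precisely because of the interaction of $T$ with $\sigma_t^N$ and $\sigma_t^\mu$ and their analytic continuations, so that the apparently asymmetric factor $\sigma_{-i}^N(y^*)$ is exactly the one needed. In the converse the same point returns, now compounded by the need to identify the involution $c\mapsto\sigma_{-i}^{\varphi_2}(c^*)$ of the right Hilbert algebra and to keep track of the $\sigma^{\varphi_2}$-factors when rewriting $\rho(c)^*\Lambda_{N_2}(q)$, verifying along the way that the resulting indices $x',y'$ still belong to $\mathscr{T}_{\varphi_N,T}$ so that the hypothesis is genuinely applicable. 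The functional-analytic steps — the core property from Proposition \ref{tex11} and the bounded-vector criterion — are standard, but one must make sure the approximating net converges in a topology strong enough to move $p$ past the vector $\Lambda_N(\sigma_{-i}^N(y^*))$.
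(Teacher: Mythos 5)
Your proposal is correct and follows essentially the same route as the paper's proof: establish the identity for $p$ in the Tomita algebra $\mathfrak{A}_2$ (the paper does this in one line, using KMS for $\varphi_N$ to recognize $p^*\Lambda_N(x)=\Lambda_N(w^*T(z^*x))$, while you expand both sides as weight values and prove the equivalent identity $\varphi_N(w^*T(z^*x)y)=\varphi_N(T(\sigma_i^N(y)w^*)z^*x)$ by KMS plus $N_0$-bimodularity of $T$), then extend by the strong--norm core property of $\Lambda_{N_2}(\mathfrak{A}_2)$, and prove the converse by the left-bounded-vector criterion checked on the same core of the right Hilbert algebra, tested against the dense vectors $\Lambda_N(x)\otimes_\mu\Lambda_N(y)$. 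The only structural deviation is in the converse: the paper evaluates $\langle \Lambda_{N_2}(b),p\Lambda_{N_2}^{\mathrm{op}}(a)\rangle$ by invoking the first part of the lemma (applied to $pc\in\mathscr{N}_{\varphi_2}$), whereas you compute that side directly in the Connes--Sauvageot picture via $L_{p\Lambda_N(z)}=p\Lambda_T(z)$, which makes your converse logically independent of the direct part --- a harmless and valid reorganization, with your relation $\rho(c)^*=\rho(\sigma_{-i}^{\varphi_2}(c^*))$ matching the paper's use of $\pi_r^{N_2}(a)^*\Lambda_{N_2}(b)=\Lambda_{N_2}(b\sigma_{i/2}^{N_2}(a)^*)$ under the paper's conventions.
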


\begin{proof} Suppose $p= \Lambda_T(z)\Lambda_T(w^*)^*$ for some $z,w\in \mathscr{T}_{\varphi_N,T}$. Then since $w^*T(z^*x) \in \mathscr{N}_{\varphi_N}\cap\mathscr{N}_{\varphi_N}^*$, we have \begin{eqnarray*} \langle  \Lambda_N(x)\underset{\mu}{\otimes} \Lambda_N(y),\Lambda_{N_2}(p) \rangle &=&  \langle \Lambda_N(x)\underset{\mu}{\otimes} \Lambda_N(y),\Lambda_N(z)\underset{\mu}{\otimes} \Lambda_N(w)\rangle \\ &=& \varphi_N(w^*T(z^*x)y) \\ &=&
\varphi_N(\sigma_i^N (y)w^*T(z^*x))\\ &=& \langle \Lambda_N(w^*T(z^*x)),\Lambda_N(\sigma_{-i}^N(y^*))\rangle \\ &=& \langle \Lambda_T(w^*)\Lambda_T(z)^*\Lambda_N(x),\Lambda_N(\sigma_{-i}^N(y^*))\rangle \\ &=& \langle \Lambda_N(x),p\Lambda_N(\sigma_{-i}^N(y^*))\rangle.\end{eqnarray*}
As $\mathfrak{A}_2$, being a left Hilbert algebra for $\varphi_2$, is a strong-norm core for $\Lambda_{N_2}$, the result holds true for any $p\in \mathscr{N}_{\varphi_2}$.\\

\noindent Now we prove the converse statement. So let $p\in N_2$ and $\xi \in \mathscr{L}^2(N_2)$ be such that \[\langle \Lambda_N(x)\underset{\mu}{\otimes} \Lambda_N(y),\xi\rangle = \langle \Lambda_{N}(x),p\Lambda_N(\sigma_{-i}^N(y^*))\rangle\] for all $x,y\in \mathscr{T}_{\varphi_N,T}$. Then, since $\mathfrak{A}_2^{\textrm{op}}$ is a strong-norm core for $\Lambda_{N_2}^{\textrm{op}}$ (whose notation was introduced in the preliminaries' section of the introduction), it is enough to prove that $p\Lambda_{N_2}^{\textrm{op}}(a)=\pi_r^{N_2}(a)\xi$ for all $a\in \mathfrak{A}_2$. Now if $a=\Lambda_T(x)\Lambda_T(y^*)^*$, then \begin{eqnarray*} \Lambda_{N_2}^{\textrm{op}}(a)&=&J_{N_2}\Lambda_{N_2}(a^*)\\&=&\Lambda_N(\sigma_{-i/2}^N(x))\underset{\mu}{\otimes} \Lambda_N(\sigma^N_{-i/2}(y)).\end{eqnarray*} So if also $b\in \mathfrak{A}_2$ with $b=\Lambda_T(z)\Lambda_T(w^*)^*$, $w,z\in \mathscr{T}_{\varphi_N,T}$, then \[\langle \Lambda_{N_2}(b),p\Lambda_{N_2}^{\textrm{op}}(a)\rangle =\langle \Lambda_N(z),p\Lambda_T(\sigma_{-i/2}^N(x))\Lambda_T(\sigma^N_{-i/2}(y)^*)^*\Lambda_N(\sigma^N_{-i}(w^*))\rangle\] by the first part of the lemma. On the other hand, we have \begin{eqnarray*} \langle\Lambda_{N_2}(b),\pi_r^{N_2}(a)\xi\rangle &=& \langle \pi_r^{N_2}(a)^*\Lambda_{N_2}(b),\xi\rangle \\ &=& \langle \Lambda_{N_2}(b\sigma_{i/2}^{N_2}(a)^*),\xi\rangle\\ &=& \langle \Lambda_{N_2}(\Lambda_T(z)\Lambda_T(w^*)^*\Lambda_T(\sigma^N_{i/2}(y)^*)\Lambda_T(\sigma_{i/2}^N(x))^*),\xi\rangle \\ &=& \langle\Lambda_{N_2}(\Lambda_T(z)\Lambda_T(\sigma_{i/2}^N(x)T(\sigma_{i/2}^N(y)w^*))^*),\xi\rangle \\ &=&\langle \Lambda_N(z),p\Lambda_{N}(\sigma_{-i/2}^N(x)T(\sigma_{-i/2}^N(y)\sigma_{-i}^N(w^*)))\rangle,\end{eqnarray*} which equals our earlier expression, hence proving $p\Lambda_{N_2}^{\textrm{op}}(a)=\pi_r^{N_2}(a)\xi$ for all $a\in \mathfrak{A}_2$.

\end{proof}

\noindent We prove two further results which naturally belong in this section, but of which only the second one will be used in this paper (the first one will be used in another article).

\begin{Lem}\label{lemint} Let $N_0\subseteq N$ be a unital inclusion of von Neumann algebras, $T: N\rightarrow N_0$ an nsf operator valued weight, $\mu$ an nsf weight on $N_0$, and $\varphi_N$ the nsf weight $\mu\circ T$. Suppose $x\in N$ and $z\in B(\mathscr{L}^2(N_0),\mathscr{L}^2(N))$ are such, that for any $y\in \mathscr{N}_\mu$, we have $xy\in \mathscr{N}_{\varphi_N}$ and $\Lambda_N(xy)=z\Lambda_\mu(y)$. Then $x\in \mathscr{N}_T$ with $\Lambda_T(x)=z$.\end{Lem}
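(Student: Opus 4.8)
The plan is to extract from the boundedness of $z$ a single scalar inequality and then upgrade it to a boundedness statement about the extended positive element $T(x^*x)$. First, for every $y\in\mathscr{N}_\mu$ the hypothesis gives $\Lambda_N(xy)=z\Lambda_\mu(y)$ with $xy\in\mathscr{N}_{\varphi_N}$, so that
\[\mu\bigl(y^*T(x^*x)y\bigr)=\varphi_N(y^*x^*xy)=\|\Lambda_N(xy)\|^2=\|z\Lambda_\mu(y)\|^2\leq \|z\|^2\,\mu(y^*y).\]
Here the first equality uses $\varphi_N=\mu\circ T$ together with the $N_0$-bimodularity $T(y^*(x^*x)y)=y^*T(x^*x)y$, valid in the extended positive cone. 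Thus everything reduces to the following claim: if $m\in N_0^{+,\textrm{ext}}$ satisfies $\mu(y^*my)\leq C\,\mu(y^*y)$ for all $y\in\mathscr{N}_\mu$ (with $C=\|z\|^2$), then $m\leq C\cdot 1$; applied to $m=T(x^*x)$ this yields $x^*x\in\mathscr{M}_T^+$, i.e.\ $x\in\mathscr{N}_T$.

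Proving this claim is the step I expect to require the most care, precisely because $m=T(x^*x)$ is a priori only an extended positive element and not a bounded operator. I would argue as follows. Write $m=\sup_i m_i$ as an increasing net of bounded elements $m_i\in N_0^+$; since $m_i\leq m$, normality of $\mu$ gives $\mu(y^*m_iy)\leq\mu(y^*my)\leq C\,\mu(y^*y)$, that is $\mu\bigl(y^*(C-m_i)y\bigr)\geq 0$ for every $y\in\mathscr{N}_\mu$. To see that this forces $C-m_i\geq 0$, suppose not; pick $\varepsilon>0$ and the nonzero spectral projection $q$ of the self-adjoint element $C-m_i$ associated to the interval $(-\infty,-\varepsilon]$, and choose $y_0\in\mathscr{N}_\mu$ with $qy_0\neq 0$ (possible since $\mathscr{N}_\mu$ is a $\sigma$-weakly dense left ideal). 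Then $y:=qy_0\in\mathscr{N}_\mu$ and, using $q(C-m_i)q\leq -\varepsilon q$,
\[\mu\bigl(y^*(C-m_i)y\bigr)=\mu\bigl(y_0^*q(C-m_i)qy_0\bigr)\leq -\varepsilon\,\mu(y_0^*qy_0)<0\]
by faithfulness of $\mu$, a contradiction. Hence $m_i\leq C$ for all $i$, and taking the supremum gives $m\leq C\cdot 1$, so in particular $m$ is bounded.

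Once $x\in\mathscr{N}_T$ is established, the identification $z=\Lambda_T(x)$ is routine: by the very definition of $\Lambda_T(x)$ recalled before the lemma, $\Lambda_T(x)\Lambda_\mu(y)=\Lambda_N(xy)=z\Lambda_\mu(y)$ for all $y\in\mathscr{N}_\mu$, and since $\Lambda_\mu(\mathscr{N}_\mu)$ is dense in $\mathscr{L}^2(N_0)$ and both operators are bounded, they coincide. I note that an alternative to the explicit spectral argument is to regard $\xi\mapsto m(\omega_{\xi,\xi})$ as a lower semicontinuous quadratic form on $\mathscr{L}^2(N_0)$ which takes the value $\mu(y^*my)$ at $\xi=\Lambda_\mu(y)$; being bounded by $C\|\cdot\|^2$ on the dense set $\Lambda_\mu(\mathscr{N}_\mu)$, lower semicontinuity forces the same bound globally, again giving $m\leq C\cdot 1$.
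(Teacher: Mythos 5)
Your proof is correct, and it handles the crucial step --- upgrading a bound on a dense set to boundedness of $T(x^*x)$ --- by a different mechanism than the paper. The paper first shows that $z$ intertwines the right $N_0$-module structures (so $z^*z\in N_0$), then invokes Lemma 4.7 of \cite{Tak1} to represent $T(x^*x)\in N_0^{+,\textrm{ext}}$ by a closed positive operator $A$ with $\omega_{\Lambda_\mu(y),\Lambda_\mu(y)}(T(x^*x))=\|A\Lambda_\mu(y)\|^2$, and concludes from $\|A\Lambda_\mu(y)\|=\|z\Lambda_\mu(y)\|$ that $A$, hence $T(x^*x)$, is bounded. You use neither ingredient: you reduce to the scalar estimate $\mu(y^*T(x^*x)y)\le\|z\|^2\mu(y^*y)$ and prove outright that any $m\in N_0^{+,\textrm{ext}}$ satisfying such an estimate is dominated by $\|z\|^2\cdot 1$, via the increasing bounded approximants $m_i\nearrow m$ (the same approximation, from Proposition 4.17.(ii) of \cite{Tak1}, that the paper uses for the different purpose of identifying $\omega_{\Lambda_\mu(y),\Lambda_\mu(y)}(T(x^*x))$ with $\mu(y^*T(x^*x)y)$), a spectral projection of $C-m_i$, and faithfulness of $\mu$; the monotonicity you invoke is legitimate since all elements involved lie in $\mathscr{M}_\mu$, $\mathscr{N}_\mu$ being a left ideal. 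Your route is more self-contained, avoiding the unbounded-operator picture of the extended positive cone entirely; moreover, your closing lower-semicontinuity remark isolates exactly the point that makes the dense-to-global passage work, in the paper's argument as well as in yours. What it gives up is a little precision: the paper's computation is an equality of quadratic forms, effectively giving $T(x^*x)=z^*z$, whereas you obtain only the inequality $T(x^*x)\le\|z\|^2\cdot 1$ --- but boundedness is all the lemma needs, and your final identification of $z$ with $\Lambda_T(x)$ by density agrees with the paper's. (A small shortcut: since $\mu(y^*(C-m_i)y)=\langle(C-m_i)\Lambda_\mu(y),\Lambda_\mu(y)\rangle$, positivity on the dense set $\Lambda_\mu(\mathscr{N}_\mu)$ gives $C-m_i\ge 0$ directly in the standard representation, with no need for spectral projections.)
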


\begin{proof} Choose $y,w\in \mathscr{N}_\mu$ with $w$ in the Tomita algebra of $\mu$. Then \begin{eqnarray*} \pi_r(w)z\Lambda_\mu(y) &=& \pi_r(w)\Lambda_N(xy)\\ &=& \Lambda_N(xy\sigma_{-i/2}^N(w))\\ &=& z\Lambda_\mu(y\sigma_{-i/2}^\mu(w))\\ &=& z\pi_r^{N_0}(w)\Lambda_{\mu}(y),\end{eqnarray*} so that $z$ is a right $N_0$-module map. It follows that $z^*z\in N_0$. \\

\noindent Now by Lemma 4.7 of \cite{Tak1}, there exists a closed positive (possibly unbounded) operator $A$, such that $\Lambda_\mu(y)\in \mathscr{D}(A)$ and $\omega_{\Lambda_\mu(y),\Lambda_\mu(y)}(T(x^*x)) = \langle A\Lambda_\mu(y),A\Lambda_\mu(y)\rangle$. Also, since for any element $u\in N_0^{+,\textrm{ext}}$, one can find a sequence $u_n\in N_0^+$ such that $u_n \nearrow u$ pointwise on $(N_0)_*^+$ (see the proof of Proposition 4.17.(ii) in \cite{Tak1}), we get that $\omega_{\Lambda_\mu(y),\Lambda_\mu(y)}(T(x^*x)) = \mu (y^*T(x^*x)y)$, using Corollary 4.9 of \cite{Tak1} (which allows us to extend weights to the extended positive cone). Using the bimodularity of $T$, we get  \begin{eqnarray*} \langle A\Lambda_\mu(y),A\Lambda_\mu(y)\rangle &=& \omega_{\Lambda_\mu(y),\Lambda_\mu(y)}(T(x^*x)) \\&=&  \mu(y^*T(x^*x)y) \\ &=& \mu(T(y^*x^*xy)) \\ &=& \langle \Lambda_N(xy),\Lambda_N(xy)\rangle \\ &=& \langle z\Lambda_\mu(y),z\Lambda_\mu(y)\rangle,\end{eqnarray*} from which we conclude that $A$ is bounded. Hence $T(x^*x)$ is bounded, and then of course $\Lambda_T(x)=z$ follows.

\end{proof}

\begin{Lem}\label{propincl} Let $\begin{array}{lll} N_{10} &\subseteq  & N_{11}\\ \;\,\textrm{\begin{sideways} $\subseteq$\end{sideways}} & & \;\,\textrm{\begin{sideways} $\subseteq$ \end{sideways}}\\
   N_{00} &\subseteq & N_{01}\end{array}$ be unital normal inclusions of von Neumann algebras. Denote, for $i\in\{0,1\}$, by $Q_{i}$ the linking algebra between the right $N_{i0}$-modules $\mathscr{L}^2(N_{i0})$ and $\mathscr{L}^2(N_{i1})$. Suppose $T_1$ is an nsf operator valued weight $N_{11}^+\rightarrow N_{10}^{+,\textrm{ext}}$ whose restriction $T_0$ to $N_{01}^+$ is an nsf operator valued weight $N_{01}^+\rightarrow N_{00}^{+,\textrm{ext}}$. Then there is a natural normal embedding of $Q_0$ into $Q_{1}$, determined by $\Lambda_{T_0}(x)\rightarrow \Lambda_{T_1}(x)$ for $x\in \mathscr{N}_{T_0}$.

\end{Lem}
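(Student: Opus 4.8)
The plan is to build $\Phi$ on a $\sigma$-weakly dense $^*$-subalgebra, read off that it is isometric on the off-diagonal corners, and then force normality on the diagonal corner by a positivity (Gram matrix) argument that survives the inclusion $\iota:N_{00}\hookrightarrow N_{10}$. First I would fix nsf weights $\mu$ on $N_{00}$ and $\mu_1$ on $N_{10}$, put $\varphi_{01}=\mu\circ T_0$, $\varphi_{11}=\mu_1\circ T_1$, and identify the corners of $Q_i$: writing $\pi_r$ for the right actions, $Q_{i,22}=\pi_r(N_{i0})'$ on $\mathscr{L}^2(N_{i0})$ equals $N_{i0}$, the corner $Q_{i,11}=\pi_r(N_{i0})'$ on $\mathscr{L}^2(N_{i1})$ is the basic construction for $N_{i0}\subseteq N_{i1}$, and the off-diagonal $Q_{i,12}$ is the (self-dual) $Q_{i,11}$-$N_{i0}$-bimodule of intertwiners, containing the operators $\Lambda_{T_i}(x)$. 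From the computations in the proof of Proposition \ref{tex11} and from Theorem 10.7 of \cite{Eno1} I record the three structural identities $\Lambda_{T_i}(x)^*\Lambda_{T_i}(y)=T_i(x^*y)$, $\Lambda_{T_i}(x)\,a=\Lambda_{T_i}(xa)$ for $a\in N_{i0}$, and $\bigl(\Lambda_{T_i}(x)\Lambda_{T_i}(y)^*\bigr)\bigl(\Lambda_{T_i}(z)\Lambda_{T_i}(w)^*\bigr)=\Lambda_{T_i}(xT_i(y^*z))\Lambda_{T_i}(w)^*$.

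Because $T_1$ restricts to $T_0$ and the inclusions $N_{00}\subseteq N_{10}$, $N_{01}\subseteq N_{11}$ are compatible, for inputs $x,y,z,w\in\mathscr{N}_{T_0}\subseteq N_{01}$ every entry occurring above lands in $N_{01}$ or $N_{00}$ and takes the same value for $T_0$ and $T_1$ (e.g. $T_0(y^*z)=T_1(y^*z)\in N_{00}$ and $x\,T_0(y^*z)=x\,T_1(y^*z)\in N_{01}$). Hence the assignment $\Lambda_{T_0}(x)\mapsto\Lambda_{T_1}(x)$, together with $a\mapsto\iota(a)$ on $N_{00}$, respects all products and adjoints and defines a $^*$-homomorphism $\Phi$ from the $^*$-algebra $\mathfrak{B}_0$ generated by $\{\Lambda_{T_0}(x):x\in\mathscr{T}_{\varphi_{01},T_0}\}$ into $Q_1$; by Proposition \ref{tex11} and the Remark after it, $\mathfrak{B}_0$ is $\sigma$-weakly dense in $Q_0$. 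On the off-diagonal I then observe that for $z=\sum_j\Lambda_{T_0}(x_j)a_j$ one has $\Phi(z)^*\Phi(z)=\sum_{j,k}a_j^*T_1(x_j^*x_k)a_k=\iota(z^*z)$, so $\Phi$ is inner-product preserving relative to $\iota$, i.e. an isometry of right-Hilbert $W^*$-modules covering $\iota$. Since $Q_{0,12}$ is self-dual (see \cite{Pas1}) and these elements are $\sigma$-weakly dense in it, $\Phi$ extends uniquely and $\sigma$-weakly continuously to an isometric module map $\Phi_{12}:Q_{0,12}\to Q_{1,12}$ covering $\iota$, satisfying $\Phi(b)\Phi_{12}(z)=\Phi_{12}(bz)$ on the generators.

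The crux is to extend $\Phi$ normally to the diagonal corner $Q_{0,11}$. Here I would realize the extension spatially: put $\mathscr{R}=\overline{\Phi_{12}(Q_{0,12})\,\mathscr{L}^2(N_{10})}\subseteq\mathscr{L}^2(N_{11})$, and for $b\in Q_{0,11}$ define $\widetilde\Phi(b)$ to be $0$ on $\mathscr{R}^\perp$ and $\widetilde\Phi(b)\bigl(\sum_i\Phi_{12}(z_i)\zeta_i\bigr)=\sum_i\Phi_{12}(bz_i)\zeta_i$ on $\mathscr{R}$, where $z_i\in Q_{0,12}$ and $\zeta_i\in\mathscr{L}^2(N_{10})$. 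The step on which everything turns is boundedness: writing $\|b\|^2-b^*b=c^*c$, the matrix $[\,z_i^*(\|b\|^2-b^*b)z_j\,]_{ij}=[\,(cz_i)^*(cz_j)\,]_{ij}$ is a Gram matrix, hence positive in $M_n(N_{00})$, and applying the normal $^*$-homomorphism $\iota$ entrywise keeps it positive in $M_n(N_{10})$; sandwiching with $(\zeta_i)$ and using $\Phi_{12}(z_i)^*\Phi_{12}(z_j)=\iota(z_i^*z_j)$ gives exactly $\bigl\|\widetilde\Phi(b)\sum_i\Phi_{12}(z_i)\zeta_i\bigr\|^2\le\|b\|^2\bigl\|\sum_i\Phi_{12}(z_i)\zeta_i\bigr\|^2$. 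Thus $\widetilde\Phi(b)$ is well defined with $\|\widetilde\Phi(b)\|\le\|b\|$; it is multiplicative and $^*$-preserving directly from its defining formula, and it is normal because it is obtained from the normal left action $b\mapsto(z\mapsto bz)$ of $Q_{0,11}$ on the self-dual module $Q_{0,12}$ by transport through the fixed isometry $\Phi_{12}$ and the representation on $\mathscr{L}^2(N_{10})$. Finally $\widetilde\Phi$ agrees with $\Phi$ on $\mathfrak{B}_0\cap Q_{0,11}$: on $\mathscr{R}$ both satisfy the same formula, while on $\mathscr{R}^\perp$ the generator $\sum_k\Lambda_{T_1}(u_k)\Lambda_{T_1}(v_k)^*$ vanishes because $\operatorname{Ran}\Lambda_{T_1}(v_k)\subseteq\mathscr{R}$.

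Assembling the three corners yields a normal $^*$-homomorphism $\Phi:Q_0\to Q_1$; it is injective because it is isometric on $Q_{0,12}$ and the left action of $Q_{0,11}$ on $Q_{0,12}$ is faithful (if $\Phi(b)=0$ then $\Phi_{12}(bz)=\Phi(b)\Phi_{12}(z)=0$, so $bz=0$ for all $z$, hence $b=0$); and one checks that the resulting embedding does not depend on the auxiliary weights $\mu,\mu_1$, which is the asserted naturality. The one genuinely non-formal point, and the step I expect to be the main obstacle, is the boundedness and normality on the diagonal corner: I anticipate it rests precisely on the Gram-matrix positivity above, which is what allows the bare inclusion $N_{00}\hookrightarrow N_{10}$ — rather than any compatibility between the two base weights $\mu$ and $\mu_1$ — to carry the estimate.
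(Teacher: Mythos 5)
Your proof is correct, but it takes a genuinely different route from the paper's --- in fact precisely the alternative the author flags in the remark immediately following the proof (``We could also have used the results from \cite{Pas1} concerning self-dual Hilbert $W^*$-modules''). The paper's own argument is monolithic: it forms the $^*$-algebras $\mathscr{Q}_0$ and $\tilde{\mathscr{Q}}_1$ generated by the $\Lambda_{T_0}(x)$, resp.\ $\Lambda_{T_1}(x)$, for $x\in\mathscr{N}_{T_0}$, first checks that $\sum_i\Lambda_{T_1}(a_i)\Lambda_{T_1}(b_i)^*=0$ iff $\sum_i\Lambda_{T_0}(a_i)\Lambda_{T_0}(b_i)^*=0$ (yielding an algebraic $^*$-isomorphism $F$ between the dense subalgebras), and then gets $\sigma$-weak continuity of $F$ and $F^{-1}$ on bounded nets in one stroke by a compression trick: if $x_i\to 0$ $\sigma$-weakly and boundedly, then $e_0 a x_i b e_0$ lies in $N_{00}$, where $F$ is nothing but the normal inclusion $N_{00}\hookrightarrow N_{10}$ in standard form, and the $\sigma$-weak density of $\tilde{Q}_1 e_1\tilde{\mathscr{Q}}_1$ in $\tilde{Q}_1$ then forces $F(x_i)\to 0$. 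You instead build the embedding corner by corner: an inner-product-preserving module map covering $\iota$ on the off-diagonal corner, extended via Paschke self-duality, and a spatial definition on the diagonal corner whose boundedness comes from Gram-matrix positivity pushed through $\iota$. Both proofs pivot on the same underlying point --- the estimate and the normality are carried by the bare inclusion $N_{00}\hookrightarrow N_{10}$, with no compatibility required of the auxiliary weights --- but the paper channels this through the $(2,2)$-corner, while you channel it through matrices over $N_{00}$. The paper's version is shorter and treats all corners simultaneously; yours makes the failure of unitality transparent (the unit is sent to the projection onto $\mathscr{R}$) and exhibits the module-theoretic structure explicitly. One caveat: your opening assertion that the generator-level assignment ``respects all products and adjoints and defines a $^*$-homomorphism'' on $\mathfrak{B}_0$ silently assumes well-definedness (preservation of linear relations among sums of products), which is exactly the point the paper verifies first; in your write-up this is only recovered afterwards, when the spatial constructions of $\Phi_{12}$ and $\widetilde\Phi$ are shown to reproduce the generator formulas and the Gram bound/isometry give well-definedness. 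That is a matter of ordering rather than a genuine gap, but the logic should be restructured so the spatial construction comes first.
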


\noindent\emph{Remark:} The inclusion will in general \emph{not} be unital. Consider for example the case where $N_{11}=M_2(\mathbb{C})$ and all other algebras equal to $\mathbb{C}$.

\begin{proof} By assumption, if $x,y\in \mathscr{N}_{T_0}$, then $x,y\in \mathscr{N}_{T_1}$, and $T_0(x^*y)=T_1(x^*y)$. Denote by $\tilde{\mathscr{Q}}_1$ the $^*$-algebra generated by the $\Lambda_{T_1}(x)$, $x\in \mathscr{N}_{T_0}$, and by $\tilde{Q}_1$ its $\sigma$-weak closure. Denote by $\mathscr{Q}_0$ the $^*$-algebra generated by the $\Lambda_{T_0}(x)$, $x\in \mathscr{N}_{T_0}$. We want to show that $Q_0$ and $\tilde{Q}_1$ are isomorphic in the indicated way.\\

\noindent Now for $a_i,b_i\in \mathscr{N}_{T_0}$, it is easy to check that $\sum_{i} \Lambda_{T_1}(a_i)\Lambda_{T_1}(b_i)^* = 0$ iff $\sum_i \Lambda_{T_0}(a_i)\Lambda_{T_0}(b_i)^*=0$, so we already have an isomorphism $F$ at the level of $\mathscr{Q}_0$ and $\mathscr{\tilde{Q}}_1$. Denote by $e_0$ the unit of $N_{00}$, seen as a projection in $Q_0$, and denote by $e_1$ the unit of $N_{00}$ as a projection in $\tilde{Q}_1$. Suppose that $x_i$ is a bounded net in $\mathscr{Q}_0$ which converges to $0$ in the $\sigma$-weak topology. Then for any $a,b\in \mathscr{Q}_0$, we have that $e_0ax_ibe_0$ converges to 0 $\sigma$-weakly. Applying $F$, we get that $e_1F(a)F(x_i)F(b)e_1$ converges $\sigma$-weakly to 0, and then also $ce_1F(a)F(x_i)F(b)e_1d$, for any $c,d\in \tilde{Q}_1$. Since $\tilde{Q}_1 e_1 \tilde{\mathscr{Q}}_1$ is $\sigma$-weakly dense in $\tilde{Q}_1$, we get that $F(x_i)$ converges $\sigma$-weakly to 0. Since the same argument applies to $F^{-1}$, we see that $F$ extends to a $^*$-isomorphism between $Q_0$ and $\tilde{Q}_1$, and we are done.
\end{proof}

\noindent\textit{Remark:} We could also have used the results from \cite{Pas1} concerning self-dual Hilbert $W^*$-modules.\\

\section{Galois coactions}

\noindent Let $(M,\Delta)$ be the von Neumann algebraic realization of a locally compact quantum group. Let $N$ be a von Neumann algebra
equipped with a right coaction $\alpha$ of $(M,\Delta)$, by which we mean a
faithful normal unital $^*$-homomorphism $\alpha:N\rightarrow N\otimes M$
such that $(\iota\otimes \Delta)\alpha=(\alpha\otimes\iota)\alpha$.
Denote by $N^{\alpha}$ the von Neumann algebra of coinvariants: \[N^{\alpha}=\{x\in N\mid \alpha(x)=x\otimes 1\}.\] \emph{In this paper, we will only work with integrable
coactions,} so the
normal faithful operator valued weight
$T=(\iota\otimes\varphi) \alpha$ from $N^+$ to $(N^{\alpha})^{+,\textrm{ext}}$, where $\varphi$ is the left invariant weight for $(M,\Delta)$, is
assumed to be semi-finite (\cite{Vae1}, Proposition 1.3 and Definition 1.4).  Let $\mu$ be a fixed nsf
weight on $N^{\alpha}$, and denote by $\varphi_N$ the nsf weight
$\mu\circ T$. It will be $\delta$-invariant (see Definition III.1 of \cite{Eno2} and Definition 2.3 of \cite{Vae1}). With the exception that $N_0$ is now written $N^\alpha$, we will use notation as in the previous section.\\

\noindent Recall from Theorem 5.3 of \cite{Vae1} that the integrability of $\alpha$ is equivalent with the existence of a canonical map \[\rho:N\rtimes M\rightarrow B(\mathscr{L}^2(N))\] (which we will explicitly write down a bit later on), where $N\rtimes M=(\alpha(N)\cup (1\otimes \widehat{M}'))''$ denotes the crossed product of $N$ with respect to the coaction $\alpha$. We can also consider the map \[\mathscr{K}\rightarrow \mathscr{L}^2(N)\otimes
\mathscr{L}^2(M): \Lambda_N(x)\underset{\mu}{\otimes} \Lambda_N(y)\rightarrow (\Lambda_N\otimes \Lambda)(\alpha(x)(y\otimes 1)) \] for $x,y\in \mathscr{T}_{\varphi_N,T}$, where $\mathscr{K}$ was introduced just before Theorem \ref{dens}, and $\mathscr{T}_{\varphi_N,T}$ just before Proposition \ref{tex11}. Then this is easily seen to be a well-defined isometry. Denote its extension by \[G:\mathscr{L}^2(N)\underset{\mu}{\otimes}
\mathscr{L}^2(N)\rightarrow \mathscr{L}(N)\otimes
\mathscr{L}^2(M).\]\\

\noindent The main goal of this section is to prove

\begin{Theorem}\label{Theo1} The map $\rho$ is faithful iff $G$ is a unitary.
\end{Theorem}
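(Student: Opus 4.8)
The plan is to exhibit $G$ as an isometric intertwiner between $\rho$ and the defining representation of the crossed product, and then to read off the equivalence from the position of the range projection $P=GG^{*}$. Note first that, $G$ being isometric, ``$G$ unitary'' is the same as ``$G$ surjective'', i.e. $P=1$.

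Using Theorem \ref{dens} I would regard $G$ as an isometry $\mathscr{L}^2(N_2)\to \mathscr{L}^2(N)\otimes \mathscr{L}^2(M)$, where $N\rtimes M=(\alpha(N)\cup(1\otimes \widehat{M}'))''$ acts on the target by its defining, hence faithful, representation. Since the canonical map $\rho$ takes values in $N_2$ (one has $\rho(\alpha(n))=\pi_l(n)$, and the dual generators land in $\pi_r(N^{\alpha})'=N_2$), the left multiplication $\pi_l^{N_2}(\rho(z))$ on $\mathscr{L}^2(N_2)$ makes sense, and the central relation to establish is the intertwining
\[ G\,\pi_l^{N_2}(\rho(z)) = z\,G,\qquad z\in N\rtimes M, \]
which I will call $(\star)$. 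For $z=\alpha(n)$ this is the direct computation $G(\Lambda_N(nx)\underset{\mu}{\otimes}\Lambda_N(y))=(\Lambda_N\otimes\Lambda)(\alpha(n)\alpha(x)(y\otimes 1))=\alpha(n)\,G(\Lambda_N(x)\underset{\mu}{\otimes}\Lambda_N(y))$ on the dense core $\Lambda_N(\mathscr{T}_{\varphi_N,T})\odot\Lambda_N(\mathscr{T}_{\varphi_N,T})$; for the dual generators $1\otimes\widehat{m}'$ the relation is essentially the defining property of $\rho$, which I would verify on the same core using the explicit form of $\rho$ together with the pairing characterization of $\Lambda_{N_2}(p)$ from Lemma \ref{lem12}.

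Granting $(\star)$, since $G^{*}G=1$ we get $\pi_l^{N_2}(\rho(z))=G^{*}zG$, so $P:=GG^{*}\in(N\rtimes M)'$; moreover $\rho(z^{*}z)$ corresponds to $(zG)^{*}(zG)$, whence $\rho(z)=0$ iff $zP=0$. As $\pi_l^{N_2}$ is a faithful normal representation of $N_2$, this yields the reformulation: $\rho$ is faithful iff the central support $c(P)$ of $P$ in $N\rtimes M$ equals $1$. The easy direction is now immediate: if $G$ is unitary then $P=1$, so $\rho(z)=0$ forces $z=0$ and $\rho$ is faithful.

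The converse is where the real work lies, and it is the main obstacle. Faithfulness of $\rho$ gives only $c(P)=1$, which is strictly weaker than $P=1$ (a non-central projection can have full central support), so the point is to show that $P$ is in fact \emph{central} in $N\rtimes M$. For this I would prove that $G$ intertwines the modular conjugations, $\mathcal{J}G=GJ_{N_2}$, where $J_{N_2}$ is the modular conjugation of $\varphi_2$ and $\mathcal{J}$ that of the dual weight on $N\rtimes M$, realized in the standard form $\mathscr{L}^2(N)\otimes\mathscr{L}^2(M)\cong\mathscr{L}^2(N\rtimes M)$; this is a modular-theoretic computation on the above core, using the explicit formula for $G$ and again Lemma \ref{lem12}. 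It gives $\mathcal{J}P\mathcal{J}=P$, and since $\mathcal{J}(N\rtimes M)'\mathcal{J}=N\rtimes M$ we obtain $P\in(N\rtimes M)\cap(N\rtimes M)'=Z(N\rtimes M)$. Then $c(P)=P$, and therefore $\rho$ faithful $\iff P=1 \iff G$ unitary. Geometrically, surjectivity of $G$ expresses that $\alpha(N)(N\otimes 1)$ is large enough (the freeness/Galois condition), and the conjugation-intertwining step $\mathcal{J}G=GJ_{N_2}$ is exactly what upgrades ``full central support'' to ``$P=1$'', being the only place where one must genuinely invoke the standard-form and modular structure of the two weights rather than the formal intertwining $(\star)$ alone.
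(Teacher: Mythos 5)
Your proposal is correct and takes essentially the same route as the paper: your intertwining relation $(\star)$ is the paper's Lemma \ref{lem25} (established, as you suggest, via Proposition \ref{prop2222} and Lemma \ref{lem12}), your modular-conjugation relation $\mathcal{J}G=GJ_{N_2}$ is the second part of Lemma \ref{lem26}, and the resulting centrality of $P=GG^*$ is exactly how the paper identifies $GG^*$ with the support projection $p$ of $\rho$ in Lemma \ref{lem222}. Your central-support formulation ($\rho$ faithful iff $c(P)=1$, then $c(P)=P$ by centrality) is just a mild repackaging of that identification.
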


\noindent The result will follow from the following set of lemmas and propositions, which conclude with Lemma \ref{lem222}.\\

\noindent Consider the dual weight
$\varphi_{N\rtimes M}$ of $\varphi_N$ on $N\rtimes M$ (\cite{Vae1}, Definition 3.1). Then there is a natural
semi-cyclic representation $(\mathscr{L}^2(N)\otimes \mathscr{L}^2(M),\Lambda_{N\rtimes M},\pi^{N\rtimes M}_l)$
 for $\varphi_{N\rtimes M}$, determined
 by \[\Lambda_{N\rtimes M}((1\otimes m)\alpha(x)) = \Lambda_N(x)\otimes \widehat{\Lambda}^{\textrm{op}}(m)\]
 for $x\in \mathscr{N}_{\varphi_N}$ and $m\in \mathscr{N}_{\widehat{\varphi}^{\textrm{op}}}$. Most of the time, we will suppress the symbol $\pi^{N\rtimes M}_l$. Note that we use here the results of \cite{Vae1}, adapted to the setting of right coactions.\\

\noindent Denote by $U \in B(\mathscr{L}^2(N))\otimes M$ the unitary implementation of $\alpha$ (i.e., the unitary implementation for $\alpha^{\textrm{op}}$ in the sense of \cite{Vae1}, with its legs interchanged).
By Proposition 4.3 and Theorem 4.4 of \cite{Vae1}, it can be defined as
$U={J}_{N\rtimes M}(J_N\otimes \widehat{J})$, with $J_{N\rtimes M}$ the modular conjugation of the dual weight $\varphi_{N\rtimes M}$, as
well as by the formula \begin{equation}\label{form}(\iota\otimes
\omega_{\xi,\eta})(U)\Lambda_{N}(z)
= \Lambda_{N}((\iota\otimes
\omega_{\delta^{-1/2}\xi, \eta})\alpha(z)),\end{equation}
where $\xi,\eta\in \mathscr{L}^2(M)$ with $\xi\in \mathscr{D}(\delta^{-1/2})$, $z\in
\mathscr{N}_{\varphi_N}$. The surjective
normal $^*$-homomorphism $\rho$ from $N\rtimes M$ to $N_2$ mentioned before is then given on
the generators of $N\rtimes M$ by \[\left\{\begin{array}{ll} \rho(\alpha(x))=\pi_l(x) &\textrm{for }
x\in N,\\ \rho(1\otimes (\iota\otimes \omega)(V))=(\iota\otimes
\omega)(U)& \textrm{for }\omega\in M_*,\end{array}\right.\] where we recall that $\pi_l$ is just the standard representation for $N$, that $V$ is the right regular multiplicative unitary for $(M,\Delta)$, and that $N_2$ is the von Neumann algebra in the basic construction $N^\alpha\subseteq N\subseteq N_2$. \\

\noindent In the following Proposition, we also use the associated basic construction for the weight $T$, i.e. $N^{\alpha}\underset{T}{\subseteq} N\underset{T_2}{\subseteq} N_2$ denotes the basic construction obtained from $N^{\alpha}\underset{T}{\subseteq} N$, as explained in the beginning of the previous section. We also denote again $\varphi_2 = \varphi_N\circ T_2$.\\

\begin{Prop}\label{prop2222} If $m\in \mathscr{N}_{\widehat{\varphi}^{\textrm{op}}}$ and $z\in \mathscr{N}_{\varphi_N}$, then $\rho((1\otimes m)\alpha(z))\in \mathscr{N}_{\varphi_2}$ and \[G^*(\Lambda_{N}(z)\otimes \widehat{\Lambda}^{\textrm{op}}(m))= \Lambda_{N_2}(\rho((1\otimes m)\alpha(z))).\]\end{Prop}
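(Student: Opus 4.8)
The plan is to verify the claimed identity on the dense set of elements where everything is explicit, and then invoke the previously established machinery to close it up. Concretely, I would first reduce to the case where $z\in\mathscr{T}_{\varphi_N,T}$ and $m$ is a suitably nice element of $\mathscr{N}_{\widehat{\varphi}^{\mathrm{op}}}$, since these are cores for the respective GNS maps. On such elements, the vector $\Lambda_{N\rtimes M}((1\otimes m)\alpha(z))=\Lambda_N(z)\otimes\widehat{\Lambda}^{\mathrm{op}}(m)$ is completely explicit, and the strategy is to compute $G^*$ applied to it and compare with $\Lambda_{N_2}(\rho((1\otimes m)\alpha(z)))$.

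First I would establish that $\rho((1\otimes m)\alpha(z))$ lies in $\mathscr{N}_{\varphi_2}$. Using the definition of $\rho$ on the generators, $\rho(\alpha(z))=\pi_l(z)$ and $\rho(1\otimes(\iota\otimes\omega)(V))=(\iota\otimes\omega)(U)$, so $\rho((1\otimes m)\alpha(z))$ is a product of an element of $N_2$ coming from $U$ with $\pi_l(z)$. The cleanest route is to apply the converse half of Lemma \ref{lem12}: I would take $\xi$ to be the candidate vector $G^*(\Lambda_N(z)\otimes\widehat{\Lambda}^{\mathrm{op}}(m))$ and $p=\rho((1\otimes m)\alpha(z))$, and verify the pairing identity $\langle\Lambda_N(x)\underset{\mu}{\otimes}\Lambda_N(y),\xi\rangle=\langle\Lambda_N(x),p\Lambda_N(\sigma_{-i}^N(y^*))\rangle$ for all $x,y\in\mathscr{T}_{\varphi_N,T}$. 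That lemma then delivers simultaneously that $p\in\mathscr{N}_{\varphi_2}$ and that $\Lambda_{N_2}(p)=\xi$, which is exactly the two assertions of the proposition.

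The core computation is therefore the left-hand pairing. Using the isometry $G$ and the identification of $\mathscr{L}^2(N)\underset{\mu}{\otimes}\mathscr{L}^2(N)$ with $\mathscr{L}^2(N_2)$ from Theorem \ref{dens}, I would write $\langle\Lambda_N(x)\underset{\mu}{\otimes}\Lambda_N(y),G^*(\Lambda_N(z)\otimes\widehat{\Lambda}^{\mathrm{op}}(m))\rangle=\langle G(\Lambda_N(x)\underset{\mu}{\otimes}\Lambda_N(y)),\Lambda_N(z)\otimes\widehat{\Lambda}^{\mathrm{op}}(m)\rangle$, and then expand $G$ by its defining formula as $(\Lambda_N\otimes\Lambda)(\alpha(x)(y\otimes 1))$. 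The inner product in $\mathscr{L}^2(N)\otimes\mathscr{L}^2(M)$ then splits, and the $\widehat{\Lambda}^{\mathrm{op}}(m)$ factor must be converted, via the defining relation \eqref{form} for $U$ together with the formula for $\rho$ on the unitary $V$, into an action of $p=\rho((1\otimes m)\alpha(z))$ on the $N$-side. The main obstacle, and the part requiring genuine care, is exactly this conversion: correctly tracking the modular element $\delta^{-1/2}$, the twist $\sigma_{-i}^N$, and the interplay between $\widehat{\Lambda}^{\mathrm{op}}$, the formula \eqref{form} for $U$, and the right-invariance underlying $V$, so that the two sides match as operator-valued pairings. Once the generators-level identity is verified, a density/core argument in $m$ over $\mathscr{N}_{\widehat{\varphi}^{\mathrm{op}}}$ and continuity of the maps involved extends the result to the full statement.
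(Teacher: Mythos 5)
Your proposal matches the paper's proof essentially step for step: the paper likewise verifies the pairing hypothesis of the converse half of Lemma \ref{lem12} by computing $\langle\Lambda_N(x),\rho(1\otimes m)z\Lambda_N(\sigma_{-i}^N(y^*))\rangle$ for smooth $m=(\iota\otimes\omega)(V)$ (using formula (\ref{form}) for $U$ and the identity $\langle\Lambda(a),\widehat{\Lambda}^{\textrm{op}}(m)\rangle=\omega^*_{\delta}(a)$), extends to all $m\in\mathscr{N}_{\widehat{\varphi}^{\textrm{op}}}$ by a core argument, and then invokes that lemma exactly as you do. The only cosmetic difference is that the paper keeps $z\in\mathscr{N}_{\varphi_N}$ general throughout, since the computation never needs $z$ smooth, so no closure argument in $z$ is required.
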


\begin{proof} Choose $m\in \mathscr{N}_{\widehat{\varphi}^{\textrm{op}}}$ of the form
$(\iota\otimes \omega)(V)$, with $\omega$ such that $x\rightarrow \overline{\omega(S(x)^*)}$ coincides with a normal functional $\omega^*$ on $\mathscr{D}(S)$, and such that moreover $x\rightarrow \omega^*(x \delta^{-1/2})$ coincides with a normal functional $\omega^*_{\delta}$ on the set of left multipliers of $\delta^{-1/2}$ in $M$. Then, since $(\iota\otimes \omega)(V)^*= (\iota\otimes \omega^*)(V)$, we have, for $x,y\in \mathscr{T}_{\varphi_N,T}$ and
$z\in \mathscr{N}_{\varphi_N}$, that
\begin{eqnarray*} \langle \Lambda_{N}(x),\rho(1\otimes
m) z\Lambda_{N}(\sigma_{-i}^{N}(y^*))\rangle &=&
\langle \Lambda_{N}((\iota\otimes
\omega^*_{\delta})(\alpha(x))),\Lambda_{N}(z\sigma_{-i}^{N}(y^*))\rangle
\\&=& \varphi_N(\sigma_{i}^{N}(y)z^*(\iota\otimes
\omega^*_{\delta})(\alpha(x)))\\ &=& \varphi_N(z^*(\iota\otimes
\omega^*_{\delta})(\alpha(x))y).\end{eqnarray*} But since for $a\in
\mathscr{N}_{\varphi}$, we have $\langle
\Lambda(a),\widehat{\Lambda}^{\textrm{op}}(m)\rangle
 = \omega^*_{\delta}(a)$, this last expression equals \[\langle G(\Lambda_N(x)\underset{\mu}{\otimes} \Lambda_N(y)),\Lambda_N(z)\otimes \widehat{\Lambda}^{\textrm{op}}(m)\rangle.\]  Since such $m$ form a strong-norm core for $\widehat{\Lambda}^{\textrm{op}}$ (by standard smoothing arguments), we have \[\langle \Lambda_{N}(x),\rho(1\otimes
m) z\Lambda_{N}(\sigma_{-i}^{N}(y^*))\rangle =\langle G(\Lambda_N(x)\underset{\mu}{\otimes} \Lambda_N(y)),\Lambda_N(z)\otimes \widehat{\Lambda}^{\textrm{op}}(m)\rangle,\]for all $m\in \mathscr{N}_{\widehat{\varphi}^{\textrm{op}}}$. By the second part of Lemma \ref{lem12}, we then get $\rho((1\otimes m)\alpha(z))\in \mathscr{N}_{\varphi_2}$ and \[\Lambda_{N_2}(\rho((1\otimes m)\alpha(z)))=G^*(\Lambda_N(z)\otimes \widehat{\Lambda}^{\textrm{op}}(m))\] for all $m\in \mathscr{N}_{\widehat{\varphi}^{\textrm{op}}}$ and $z\in \mathscr{N}_{\varphi_N}$.

\end{proof}

\begin{Lem}\label{lem25} The map $G$ is a left $N\rtimes M$-module morphism.\end{Lem}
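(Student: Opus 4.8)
The plan is to show that $G$ intertwines the two $N \rtimes M$-module structures: the one on $\mathscr{L}^2(N) \underset{\mu}{\otimes} \mathscr{L}^2(N)$ coming from the left action $\rho$ followed by the left action of $N_2$ on the Connes-Sauvageot tensor product (via the identification of Theorem \ref{dens}), and the one on $\mathscr{L}^2(N) \otimes \mathscr{L}^2(M)$ coming from $\pi_l^{N \rtimes M}$. Since $N \rtimes M$ is generated by $\alpha(N)$ and $1 \otimes \widehat{M}'$ (or equivalently by the $1 \otimes (\iota \otimes \omega)(V)$), and since we have the explicit dense space $\mathscr{K}$ spanned by the $\Lambda_N(x) \underset{\mu}{\otimes} \Lambda_N(y)$ with $x,y \in \mathscr{T}_{\varphi_N,T}$, it suffices to verify the intertwining property on generators applied to these elementary vectors.

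First I would settle the action of $\alpha(N)$. For $a \in N$ the element $\rho(\alpha(a)) = \pi_l(a)$ acts on the first leg of the tensor product. So I would compute $G(\pi_l(a)(\Lambda_N(x) \underset{\mu}{\otimes} \Lambda_N(y)))$. Using the bimodule structure on $\mathscr{L}^2(N) \underset{\mu}{\otimes} \mathscr{L}^2(N)$, the left $N_2$-action on the first leg sends $\Lambda_N(x) \underset{\mu}{\otimes} \Lambda_N(y)$ to $\Lambda_N(ax) \underset{\mu}{\otimes} \Lambda_N(y)$ when $a \in N$ (at least for suitable $a$; I would reduce to $a \in \mathscr{T}_{\varphi_N,T}$ or use that $\pi_l(a)$ preserves the relevant core), and then apply the defining formula for $G$ to get $(\Lambda_N \otimes \Lambda)(\alpha(ax)(y \otimes 1)) = (\alpha(a))(\Lambda_N \otimes \Lambda)(\alpha(x)(y \otimes 1))$, using multiplicativity of $\alpha$. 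This matches $\pi_l^{N \rtimes M}(\alpha(a)) G(\Lambda_N(x) \underset{\mu}{\otimes} \Lambda_N(y))$, since on the dual-weight GNS space $\alpha(a)$ acts in the obvious way.

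The genuinely substantive step is the action of the generators $1 \otimes (\iota \otimes \omega)(V)$, i.e. $\rho(1 \otimes (\iota \otimes \omega)(V)) = (\iota \otimes \omega)(U)$, which acts on the first leg through the unitary implementation $U$. Here I expect to lean heavily on the formula \eqref{form} relating $U$ to $\alpha$, together with the pentagonal / corepresentation relation for $V$ and the coaction identity $(\iota \otimes \Delta)\alpha = (\alpha \otimes \iota)\alpha$. The point is that applying $(\iota \otimes \omega)(U)$ to the first leg of $\Lambda_N(x) \underset{\mu}{\otimes} \Lambda_N(y)$ and then $G$ should produce, after a computation with $V$ and $\alpha$, exactly the vector obtained by letting $1 \otimes (\iota \otimes \omega)(V)$ act on $(\Lambda_N \otimes \Lambda)(\alpha(x)(y \otimes 1))$ in the GNS picture for $\varphi_{N \rtimes M}$. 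This amounts to checking that $U$ implements $\alpha$ compatibly with the Galois map, which is essentially a reformulation of the relation between the unitary implementation and the regular corepresentation. The main obstacle will be bookkeeping: tracking which leg each operator acts on, justifying the manipulation of the possibly unbounded $\delta^{-1/2}$ appearing in \eqref{form}, and confirming that the elementary vectors stay in the domains where all formulas are literally valid. I would handle the domain issues by working throughout with $x,y \in \mathscr{T}_{\varphi_N,T}$ and with $\omega$ in the smoothed core used in the proof of Proposition \ref{prop2222}, so that every expression is a bona fide square-integrable vector, and then invoke density of $\mathscr{K}$ (Theorem \ref{dens}) and of the chosen $\omega$'s to pass to general elements by continuity, exactly as was done in Proposition \ref{prop2222}.
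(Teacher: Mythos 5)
Your first step (the $\alpha(N)$ generators) is fine and is exactly what the paper does: check $G\pi_l^{N_2}(a)=\alpha(a)G$ on $\mathscr{K}$ for $a\in\mathscr{T}_{\varphi_N,T}$, then extend by density. The divergence, and the problem, is in your treatment of the generators $1\otimes m$, $m\in\widehat{M}'$. Your plan is to apply $(\iota\otimes\omega)(U)$ to the first leg of $\Lambda_N(x)\underset{\mu}{\otimes}\Lambda_N(y)$ and then evaluate $G$ by its defining formula. But by (\ref{form}) the resulting first leg is $\Lambda_N(x')$ with $x'=(\iota\otimes\omega_{\delta^{-1/2}\xi,\eta})\alpha(x)$, and $x'$ lies only in $\mathscr{N}_{\varphi_N}$ --- in general it is \emph{not} in $\mathscr{T}_{\varphi_N,T}$, no matter how smooth $\omega$ is and no matter that $x,y$ are Tomita elements. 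The defining formula for $G$ is established only on $\mathscr{K}$, so the step ``then apply $G$'' is not justified. Your proposed remedy (smoothed $\omega$, Tomita-algebra $x,y$, density of $\mathscr{K}$) addresses the unboundedness of $\delta^{-1/2}$ and the two-leg use of (\ref{form}), but it does not address this domain issue at all: what is missing is an extension lemma asserting that $\alpha(x')(y\otimes 1)\in\mathscr{N}_{\varphi_N\otimes\varphi}$ and $G(\Lambda_N(x')\underset{\mu}{\otimes}\Lambda_N(y))=(\Lambda_N\otimes\Lambda)(\alpha(x')(y\otimes 1))$ for \emph{all} $x'\in\mathscr{N}_{\varphi_N}$ and $y\in\mathscr{T}_{\varphi_N,T}$. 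This is provable (both sides are bounded in $\Lambda_N(x')$ with the same bound $\|R_{\Lambda_N(y)}\|$, the right-hand side via an extended-positive-cone estimate of the form $\varphi_N(y^*T(x'^*x')y)\le\|R_{\Lambda_N(y)}\|^2\varphi_N(x'^*x')$, and then one concludes by density), but it is a genuine missing step, not bookkeeping.

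The paper's proof shows that none of this analysis is needed, because Proposition \ref{prop2222} --- which you cite only for its smoothing technique --- is, in its \emph{statement}, exactly the required input. It says $G^*(\Lambda_N(z)\otimes\widehat{\Lambda}^{\textrm{op}}(n))=\Lambda_{N_2}(\rho((1\otimes n)\alpha(z)))$, and then the intertwining for $\widehat{M}'$ is purely formal: for $m\in\widehat{M}'$ and $n\in\mathscr{N}_{\widehat{\varphi}^{\textrm{op}}}$,
\begin{eqnarray*}
\pi_l^{N_2}(\rho(1\otimes m))\,G^*(\Lambda_N(z)\otimes\widehat{\Lambda}^{\textrm{op}}(n))
&=& \Lambda_{N_2}(\rho((1\otimes mn)\alpha(z)))\\
&=& G^*(\Lambda_N(z)\otimes\widehat{\Lambda}^{\textrm{op}}(mn))\\
&=& G^*(1\otimes m)(\Lambda_N(z)\otimes\widehat{\Lambda}^{\textrm{op}}(n)),
\end{eqnarray*}
using only multiplicativity of $\rho$ and the left-multiplication property of GNS maps; taking adjoints and replacing $m$ by $m^*$ gives $G\pi_l^{N_2}(\rho(1\otimes m))=(1\otimes m)G$. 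Working with $G^*$ and $\Lambda_{N_2}$ keeps everything inside globally defined objects, which is precisely what sidesteps the domain problem your route runs into. So: your approach can be completed, but only after adding the extension lemma above; the efficient proof uses the conclusion of Proposition \ref{prop2222} rather than redoing its computation.
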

\begin{proof}
Denoting again by $\pi_l^{N_2}$ the natural representation of $N_2$ on $\mathscr{L}^2(N)$, it is easy to see that $G\pi_l^{N_2}(x)=\alpha(x)G$ for all $x\in \mathscr{T}_{\varphi_N,T}$, hence this is true for all $x\in N$. Further, if $m\in \widehat{M}'$, $n\in \mathscr{N}_{\widehat{\varphi}^{\textrm{op}}}$ and $z\in \mathscr{N}_{\varphi_N}$, then $\rho((1\otimes mn)\alpha(z))\in \mathscr{N}_{\varphi_2}$ by the previous lemma, and we have  \begin{eqnarray*} \pi_l^{N_2}(\rho(1\otimes m)) G^*(\Lambda_{N}(z)\otimes \widehat{\Lambda}^{\textrm{op}}(n)) &=& \Lambda_{N_2}(\rho((1\otimes mn)\alpha(z)))\\ &=& G^*(\Lambda_{N}(z)\otimes \widehat{\Lambda}^{\textrm{op}}(mn)),\end{eqnarray*}
hence $G\pi_l^{N_2}(\rho(1\otimes m))=(1\otimes m)G$ for all $m\in \widehat{M}'$. Since $N\rtimes M$ is generated by $1\otimes \widehat{M}'$ and $\alpha(N)$, the lemma is proven.\end{proof}

\noindent \textit{Remark:} This implies that $\pi_l^{N_2}(\rho(x))=G^*xG$ for $x\in N\rtimes M$, as $G$ is an isometry.\\

\begin{Lem}\label{lem26} The following commutation relations hold: \begin{enumerate}
\item $\nabla_{N\rtimes M}^{it}G=G\nabla_{N_2}^{it}$,
\item $J_{N\rtimes M}G=GJ_{N_2}$.
\end{enumerate}
\end{Lem}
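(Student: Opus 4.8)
The plan is to read both identities off the single fact, established in Proposition \ref{prop2222}, that $G^*$ implements $\rho$ at the GNS level:
\[
G^*\,\Lambda_{N\rtimes M}((1\otimes m)\alpha(z)) \;=\; \Lambda_{N_2}(\rho((1\otimes m)\alpha(z))),
\]
valid on the linear span $\mathscr{C}$ of the elements $(1\otimes m)\alpha(z)$, which is a core for $\Lambda_{N\rtimes M}$. Both relations assert that the coisometry $G^*$ intertwines the modular data of the dual weight $\varphi_{N\rtimes M}$ with those of $\varphi_2$; once proved for $G^*$ they transfer to $G$ by taking (antilinear) adjoints, so it suffices to establish $\nabla_{N_2}^{it}G^*=G^*\nabla_{N\rtimes M}^{it}$ and $J_{N_2}G^*=G^*J_{N\rtimes M}$.

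For the first relation I would first show that $\rho$ intertwines the modular automorphism groups, i.e. $\rho\circ\sigma_t^{N\rtimes M}=\sigma_t^{N_2}\circ\rho$, where $\sigma_t^{N\rtimes M}$ is the modular group of $\varphi_{N\rtimes M}$ and $\sigma_t^{N_2}$ that of $\varphi_2$. By Lemma \ref{lem25} and its remark one has $\pi_l^{N_2}(\rho(x))=G^*xG$, hence also $G^*x=\pi_l^{N_2}(\rho(x))G^*$ and so $GG^*\in(N\rtimes M)'$; moreover $G^*\Lambda_{N\rtimes M}(a)=0$ exactly when $\rho(a)=0$, so $GG^*$ is the complement of the central kernel projection $z$ of $\rho$, and $\rho$ restricts to a weight-preserving $^*$-isomorphism of $(1-z)(N\rtimes M)$ onto $N_2$ (it is isometric precisely on $\mathrm{ran}\,G$). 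Since the modular group fixes $z$ and is trivially intertwined on $\ker\rho$, this gives $\rho\circ\sigma_t^{N\rtimes M}=\sigma_t^{N_2}\circ\rho$. Then for $a\in\mathscr{C}$,
\[
\nabla_{N_2}^{it}G^*\Lambda_{N\rtimes M}(a)=\Lambda_{N_2}(\sigma_t^{N_2}\rho(a))=\Lambda_{N_2}(\rho\,\sigma_t^{N\rtimes M}(a))=G^*\Lambda_{N\rtimes M}(\sigma_t^{N\rtimes M}(a))=G^*\nabla_{N\rtimes M}^{it}\Lambda_{N\rtimes M}(a),
\]
and density of $\Lambda_{N\rtimes M}(\mathscr{C})$ with boundedness gives $\nabla_{N_2}^{it}G^*=G^*\nabla_{N\rtimes M}^{it}$, hence $\nabla_{N\rtimes M}^{it}G=G\nabla_{N_2}^{it}$.

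For the second relation I would pass to the Tomita conjugation. After smoothing $\mathscr{C}$ to a $^*$-invariant core of analytic elements, the displayed GNS identity applied to both $a$ and $a^*$ yields $G^*S_{N\rtimes M}=S_{N_2}G^*$ on that core. Writing $S_{N\rtimes M}=J_{N\rtimes M}\nabla_{N\rtimes M}^{1/2}$ and $S_{N_2}=J_{N_2}\nabla_{N_2}^{1/2}$, and using the already established $G^*\nabla_{N\rtimes M}^{1/2}\subseteq\nabla_{N_2}^{1/2}G^*$ (analytic continuation of the first relation) to cancel the positive parts, one obtains $G^*J_{N\rtimes M}=J_{N_2}G^*$ on a dense set and hence everywhere; taking antilinear adjoints gives $J_{N\rtimes M}G=GJ_{N_2}$.

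The main obstacle is the modular-automorphism intertwining $\rho\circ\sigma_t^{N\rtimes M}=\sigma_t^{N_2}\circ\rho$. The subtlety is that $G$ is here only known to be an isometry, so $\varphi_2\circ\rho$ and $\varphi_{N\rtimes M}$ need not coincide (they agree only on $\mathrm{ran}\,G$, i.e. off $\ker\rho$); one must therefore isolate the central kernel projection of $\rho$ and verify that $\rho$ is genuinely weight-preserving on its complement rather than merely weight-decreasing. The alternative, more computational route --- matching $\sigma_t^{N\rtimes M}$ on the generators $\alpha(N)$ and $1\otimes\widehat{M}'$, via the known modular group of the dual weight from \cite{Vae1}, against $\sigma_t^{N_2}$ on $\pi_l(N)$ and on $\rho(1\otimes\widehat{M}')=(\iota\otimes\,\cdot\,)(U)$ --- is hardest precisely on the $\widehat{M}'$-leg, where it relies on $U=J_{N\rtimes M}(J_N\otimes\widehat{J})$. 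A secondary technical care is the choice of analytic core and the analytic continuation used to strip the factor $\nabla^{1/2}$ in the second part.
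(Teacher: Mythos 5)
Your overall scheme---read both relations off the GNS-level identity $G^*\Lambda_{N\rtimes M}(a)=\Lambda_{N_2}(\rho(a))$, and get part (2) from part (1) by comparing the closures $J\nabla^{1/2}$ on a suitable dense set---is sound in outline, and your treatment of part (2) is essentially the paper's. The genuine gap is in part (1), exactly at the point you yourself flag as the ``main obstacle''. From the identity above and the faithfulness of $\varphi_2$ you do get, for $a\in\mathscr{N}_{\varphi_{N\rtimes M}}$, that $G^*\Lambda_{N\rtimes M}(a)=0$ if and only if $\rho(a)=0$. But this only describes $\ker G^*\cap \Lambda_{N\rtimes M}(\mathscr{N}_{\varphi_{N\rtimes M}})$, and a closed subspace is not determined by its intersection with a dense subspace; so the most it yields is the inclusion $(1-p)\mathscr{L}^2(N\rtimes M)\subseteq\ker G^*$, i.e. $GG^*\le p$ (equivalently, $\varphi_2\circ\rho\le\varphi_{N\rtimes M}$ on $p(N\rtimes M)^+$, the ``weight-decreasing'' inequality), where $p$ is the central projection with $\ker\rho=(1-p)(N\rtimes M)$. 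The reverse inequality $p\le GG^*$ is \emph{equivalent} to the weight-preservation of $\rho_p$ that you then want to use, so your chain ``kernel characterization $\Rightarrow$ $GG^*=1-z$ $\Rightarrow$ $\rho_p$ weight-preserving $\Rightarrow$ modular intertwining'' is circular: its first implication is unproved and amounts to the hard content of the lemma. Indeed, in the paper both of these facts are \emph{consequences} of the present lemma, not inputs: $GG^*=p$ (Lemma \ref{lem222}) is obtained because $GG^*$ commutes with $J_{N\rtimes M}$ --- i.e. from part (2) --- which forces $GG^*$ into the center $\mathscr{Z}(N\rtimes M)$, and only then does $\varphi_{N\rtimes M}\circ\rho_p^{-1}=\varphi_2$ follow (Proposition \ref{propprop}).

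The repair is the ``computational route'' you set aside. Part (1) can be proved directly, with no reference to $p$ or to weight-preservation: by the results of \cite{Vae1} one has $\nabla_{N\rtimes M}^{it}=\nabla_N^{it}\otimes q^{it}$, where $q^{it}\Lambda(a)=\Lambda(\kappa_t(a))$ and $\kappa_t=\mathrm{Ad}(\delta^{-it})\circ\tau_{-t}$, together with $\sigma_t^{\varphi_{N\rtimes M}}\circ\alpha=\alpha\circ\sigma_t^N$. The difficulty you anticipate on the $\widehat{M}'$-leg does not actually arise: one never needs to match the modular groups on $1\otimes\widehat{M}'$ separately. Instead, one evaluates $\nabla_{N\rtimes M}^{it}$ on the vectors $(\Lambda_N\otimes\Lambda)(\alpha(x)(y\otimes 1))$ spanning the range of $G$, using that right multiplication by an analytic $a\in\mathscr{N}_\varphi$ is implemented by $1\otimes J\sigma_{i/2}(a)^*J$ and then letting $\sigma_{i/2}(a)\to 1$; this gives $\nabla_{N\rtimes M}^{it}(\Lambda_N\otimes\Lambda)(\alpha(x)(y\otimes 1))=(\Lambda_N\otimes\Lambda)(\alpha(\sigma_t^N(x))(\sigma_t^N(y)\otimes 1))$, which is precisely $G\nabla_{N_2}^{it}$ applied to the corresponding vector. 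With part (1) so established, your argument for part (2) (agreement of $G^*t_{N\rtimes M}$ and $t_{N_2}G^*$ on a dense set with dense image under $\nabla_{N\rtimes M}^{1/2}$) goes through, provided you take care to choose a genuinely $^*$-invariant core; the span of the elements $\alpha(x)(1\otimes m)\alpha(y)$ with $x,y\in\mathscr{T}_{\varphi_N,T}$ and $m\in\mathscr{N}_{\widehat{\varphi}^{\textrm{op}}}\cap\mathscr{N}_{\widehat{\varphi}^{\textrm{op}}}^*$ works, whereas the span of $(1\otimes m)\alpha(z)$ alone is not $^*$-invariant, and handling the extra left factor $\alpha(x)$ requires Proposition \ref{prop2222} together with Lemma \ref{lem25}.
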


\noindent Here $\nabla_{N\rtimes M}$ denotes the modular operator for $\varphi_{N\rtimes M}$.

\begin{proof}
By the earlier identification of $\mathscr{L}^2(N_2)$ with $\mathscr{L}^2(N)\underset{\mu}{\otimes}\mathscr{L}^2(N)$, it's easy to see that \[\nabla_{N_2}^{it}(\Lambda_N(x)\underset{\mu}{\otimes} \Lambda_N(y))= \Lambda_N(\sigma_t^{N}(x))\underset{\mu}{\otimes} \Lambda_N(\sigma_t^N(y))\] for $x,y\in \mathscr{T}_{\varphi_N,T}$, so for the first commutation relation, we must show that for all $x,y\in \mathscr{T}_{\varphi_N,T}$, we have \[ \nabla_{N\rtimes M}^{it}((\Lambda_N\otimes \Lambda)(\alpha(x)(y\otimes 1)))= (\Lambda_N\otimes \Lambda)(\alpha(\sigma_t^N(x))(\sigma_t^N(y)\otimes 1)).\]

\noindent Define the one-parametergroup $\kappa_t$ on $M$ by $\kappa_t(a)=\delta^{-it}\tau_{-t}(a)\delta^{it}$ for $a\in M$. As in the proof of Proposition 4.3 in \cite{Vae1}, one can show that \[\nabla_{N\rtimes M}^{it}=\nabla_N^{it}\otimes q^{it},\] where $q^{it}\Lambda(a)=\Lambda(\kappa_t(a))$ for $a\in \mathscr{N}_{\varphi}$. Since $\sigma_t^{\varphi_{N\rtimes M}}\circ \alpha= \alpha\circ \sigma_t^N$ by Proposition 3.7.2 of \cite{Vae1}, we have for $x,y\in \mathscr{T}_{\varphi_N,T}$ and $\xi\in \mathscr{L}^2(M)$ that \[\nabla_{N\rtimes M}^{it}(\alpha(x)(\Lambda_N(y)\otimes \xi))= \alpha(\sigma_t^N(x))(\Lambda_N(\sigma_t^N(y))\otimes q^{it}\xi).\] Now let $a\in \mathscr{N}_{\varphi}$ be analytic for $\sigma_t$. Since $\sigma_t$ commutes with $\kappa_t$, we have that $\kappa_t(a)$ is then also analytic for $\sigma_t$, with $\sigma_z(\kappa_t(a))=\kappa_t(\sigma_z(a))$ for $t\in \mathbb{R},z\in \mathbb{C}$. Hence for such $a$, and $x,y\in \mathscr{T}_{\varphi_N,T}$, we get \begin{eqnarray*} \nabla_{N\rtimes M}^{it}(1\otimes J\sigma_{i/2}(a)^*J)((\Lambda_N\otimes \Lambda)(\alpha(x)(y\otimes 1))) &=& \nabla_{N\rtimes M}^{it} (\Lambda_N\otimes \Lambda)(\alpha(x)(y\otimes a))\\ &=& (\Lambda_N\otimes \Lambda)(\alpha(\sigma_t^N(x))(\sigma_t^N(y)\otimes \kappa_t(a)))\\ &=& (1\otimes J\kappa_t(\sigma_{i/2}(a))^*J)(\Lambda_N\otimes \Lambda)(\alpha(\sigma_t^N(x))(\sigma_t^N(y)\otimes 1)),\end{eqnarray*} and letting $\sigma_{i/2}(a)$ tend to 1, we see that \[\nabla_{N\rtimes M}^{it}((\Lambda_N\otimes \Lambda)(\alpha(x)(y\otimes 1)))= (\Lambda_N\otimes \Lambda)(\alpha(\sigma_t^N(x))(\sigma_t^N(y)\otimes 1)),\] which proves the first commutation relation.\\

\noindent From this, it follows that $G^*\nabla_{N\rtimes M}^{1/2}$ will equal the restriction of $\nabla_{N_2}^{1/2}G^*$ to $\mathscr{D}(\nabla_{N\rtimes M}^{1/2})$. Denote $t_{N\rtimes M}=J_{N\rtimes M}\nabla_{N\rtimes M}^{1/2}$ and $t_{N_2}= J_{N_2}\nabla_{N_2}^{1/2}$. Then $t_{N_2}G^*=J_{N_2}G^*\nabla_{N\rtimes M}^{1/2}$ on $\mathscr{D}(\nabla_{N\rtimes M}^{1/2})$. So to prove the second commutation relation (in the form $G^*J_{N\rtimes M}=J_{N_2}G^*$), we only have to find a subset $K$ of $\mathscr{D}(\nabla_{N\rtimes M}^{1/2})=\mathscr{D}(t_{N\rtimes M})$ whose image under $\nabla_{N\rtimes M}^{1/2}$ (or $t_{N\rtimes M}$) is dense in $\mathscr{L}^2(N\rtimes M)$, and on which $t_{N_2}G^*$ and $G^*t_{N\rtimes M}$ agree. But take \[K= \textrm{span}\{\alpha(x) \Lambda_{N\rtimes M}((1\otimes m)\alpha(y))\mid x,y\in \mathscr{T}_{\varphi_N,T}, m\in \mathscr{N}_{\widehat{\varphi}^{\textrm{op}}}\cap \mathscr{N}_{\widehat{\varphi}^{\textrm{op}}}^*\}.\] Then clearly $K\subseteq \mathscr{D}(t_{N\rtimes M})$ and $t_{N\rtimes M}(K)=K$, since \[t_{N\rtimes M}(\alpha(x) \Lambda_{N\rtimes M}((1\otimes m)\alpha(y)))= \alpha(y^*) \Lambda_{N\rtimes M}((1\otimes m^*)\alpha(x^*)).\] Furthermore, if $x,y\in \mathscr{T}_{\varphi_N,T}$ and $m\in \mathscr{N}_{\widehat{\varphi}^{\textrm{op}}}\cap \mathscr{N}_{\widehat{\varphi}^{\textrm{op}}}^*$, we get from Proposition \ref{prop2222} and Lemma \ref{lem25} that $\rho(\alpha(x)(1\otimes m)(\alpha(y)))$ and $\rho(\alpha(y^*)(1\otimes m^*)(\alpha(x^*)))$ are both in $\mathscr{D}(\Lambda_{N_2})$, and that $G^*\alpha(x) \Lambda_{N\rtimes M}((1\otimes m)\alpha(y))\in \mathscr{D}(t_{N_2})$, with \begin{eqnarray*} t_{N_2} G^*\alpha(x) \Lambda_{N\rtimes M}((1\otimes m)\alpha(y)) &=& t_{N_2} \Lambda_{N_2}(\rho(\alpha(x)(1\otimes m)\alpha(y)))\\ &=& \Lambda_{N_2}(\rho(\alpha(y^*)(1\otimes m^*)\alpha(x^*))) \\ &=& G^*\alpha(y^*) \Lambda_{N\rtimes M}((1\otimes m^*)\alpha(x^*))\\ &=& G^* t_{N\rtimes M} \alpha(x) \Lambda_{N\rtimes M}((1\otimes m)\alpha(y)) .\end{eqnarray*} Since $K$ is dense in $\mathscr{L}^2(N\rtimes M)$, the second commutation relation is proven.\end{proof}

\noindent Denote by $p$ the central projection in $N\rtimes M$ such that ker$(\rho)=(1-p)(N\rtimes M)$. Denote by $\rho_p$ the restriction of $\rho: N\rtimes M\rightarrow N_2$ to $p(N\rtimes M)$, and by $\tilde{\varphi}_2$ the nsf weight $\varphi_{N\rtimes M}\circ \rho_p^{-1}$ on $N_2$.

\begin{Lem}\label{lem222} The projection $GG^*$ equals $p$.\end{Lem}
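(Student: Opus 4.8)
The plan is to exploit the earlier results relating $G$ to $\rho$, together with the fact that $GG^*$ is already known to be a projection (since $G$ is an isometry). The key observation is that $GG^*$ should be the support projection in $N_2$ of the weight $\varphi_2$ under the identification carried out so far, or more precisely, that it coincides with the image of the central projection $p$ under the isomorphism $\rho_p$. First I would recall from the Remark after Lemma \ref{lem25} that $\pi_l^{N_2}(\rho(x)) = G^*xG$ for all $x \in N\rtimes M$, so that $G^*(\cdot)G$ implements $\rho$. Applying this with $x$ the identity of $N\rtimes M$ gives $G^*G = \rho(1) = 1$ (confirming $G$ is isometric), while the complementary projection $GG^*$ lives in the commutant structure on $\mathscr{L}^2(N_2)$ and is what we must identify with $p$.

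The natural route is to compare the two semi-cyclic pictures. On $\mathscr{L}^2(N\rtimes M)$ we have the dual weight $\varphi_{N\rtimes M}$ with GNS map $\Lambda_{N\rtimes M}$, and Proposition \ref{prop2222} tells us precisely that $G^*$ intertwines the GNS vectors: for $m\in\mathscr{N}_{\widehat{\varphi}^{\textrm{op}}}$ and $z\in\mathscr{N}_{\varphi_N}$, we have $G^*\Lambda_{N\rtimes M}((1\otimes m)\alpha(z)) = \Lambda_{N_2}(\rho((1\otimes m)\alpha(z)))$. Since vectors of the form $\Lambda_{N\rtimes M}((1\otimes m)\alpha(z))$ are dense in $\mathscr{L}^2(N\rtimes M)$, the isometry $G^*$ carries $\mathscr{L}^2(N\rtimes M)$ onto (the closure of) $\Lambda_{N_2}(\rho(\mathscr{N}_{\varphi_{N\rtimes M}}))$ inside $\mathscr{L}^2(N_2)$; thus the range projection $GG^*$ of $G$ in $B(\mathscr{L}^2(N_2))$ should be realized as multiplication by the appropriate element. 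I would then use Lemma \ref{lem26}: the relations $\nabla_{N\rtimes M}^{it}G = G\nabla_{N_2}^{it}$ and $J_{N\rtimes M}G = GJ_{N_2}$ show that $G$ intertwines the full modular structures, so that $GG^*$ commutes with both $\nabla_{N\rtimes M}^{it}$ and $J_{N\rtimes M}$, forcing $GG^*$ to lie in the center of $N\rtimes M$ (acting on $\mathscr{L}^2(N\rtimes M)$), since it commutes with the modular group and hence with $N\rtimes M$, and its commutation with $J_{N\rtimes M}$ rules out any commutant contribution. Being a central projection, $GG^*$ is then determined by its kernel.

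To pin down that $GG^* = p$, I would argue that $\ker(GG^*) = \ker(\rho)$ as subspaces cut out of $N\rtimes M$. On one hand, from the intertwining identity $G^*xG = \pi_l^{N_2}(\rho(x))$, an element $x\in N\rtimes M$ lies in $\ker\rho$ exactly when $G^*xG = 0$. On the other hand, since $G$ is an isometry with range projection $GG^*$, one has $G^*xG = G^*(GG^*)x(GG^*)G$ (using centrality of $GG^*$ to move it past $x$), so $\rho(x) = 0$ iff $(GG^*)x = 0$ in an appropriate sense. More directly, representing $GG^*$ as the central projection corresponding to a subset of the spectrum, the computation in Proposition \ref{prop2222} shows that $G^*$ annihilates no nonzero GNS vector coming from $\rho(p(N\rtimes M))$, while it must annihilate the part of $\mathscr{L}^2(N\rtimes M)$ supported on $(1-p)(N\rtimes M) = \ker\rho$. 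Matching these two descriptions gives $1 - GG^* = 1-p$, i.e. $GG^* = p$.

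The main obstacle I expect is the careful bookkeeping of which Hilbert space each projection acts on and in which algebra it is central. The subtlety is that $GG^*$ a priori is only a projection in $B(\mathscr{L}^2(N_2))$, and one must genuinely verify, using \emph{both} commutation relations of Lemma \ref{lem26} simultaneously, that it descends to a central projection of $N\rtimes M$ (equivalently of $N_2$ under $\rho$) rather than merely lying in $N\rtimes M$ or its commutant. Once centrality is secured, identifying the kernel with $\ker\rho = (1-p)(N\rtimes M)$ is essentially forced by Proposition \ref{prop2222}, but establishing that no extra vectors are killed — i.e. that $G^*$ is injective precisely on the $p$-part — is where the weight-theoretic density arguments (the strong-norm core property and the faithfulness of $\varphi_{N\rtimes M}$ on $p(N\rtimes M)$) must be invoked with care.
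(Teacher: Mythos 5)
Your centrality step contains a genuine error. From Lemma \ref{lem26} you correctly get that $GG^*$ commutes with $\nabla_{N\rtimes M}^{it}$ and with $J_{N\rtimes M}$, but the inference ``it commutes with the modular group and hence with $N\rtimes M$'' is false: commuting with $\nabla_{N\rtimes M}^{it}$ does not place an operator in $(N\rtimes M)'$ (nor in $N\rtimes M$) --- the commutant of the modular data of a weight is in general far larger than $\mathscr{Z}(N\rtimes M)$ (think of products $aJ_{N\rtimes M}bJ_{N\rtimes M}$ with $a,b$ fixed by the modular group). The correct route, and the one the paper takes, is to get $GG^*\in (N\rtimes M)'$ from Lemma \ref{lem25}: the module property $xG=G\pi_l^{N_2}(\rho(x))$ (equivalently $G^*x=\pi_l^{N_2}(\rho(x))G^*$) gives $xGG^*=G\pi_l^{N_2}(\rho(x))G^*=GG^*x$ for all $x\in N\rtimes M$. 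Only \emph{then} does the $J_{N\rtimes M}$-commutation buy something, namely $GG^*\in J_{N\rtimes M}(N\rtimes M)'J_{N\rtimes M}=N\rtimes M$, hence $GG^*\in\mathscr{Z}(N\rtimes M)$. You actually quote the needed identity $\pi_l^{N_2}(\rho(x))=G^*xG$ at the start but never deploy it here; as written, the commutation with $\nabla_{N\rtimes M}^{it}$ is a red herring and centrality is unproved.

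The endgame is also incomplete. Showing that $G^*$ kills no nonzero vector $\Lambda_{N\rtimes M}(x)$ with $x\in p(N\rtimes M)\cap\mathscr{N}_{\varphi_{N\rtimes M}}$ only gives injectivity of $G^*$ on a \emph{dense subspace} of $p\,\mathscr{L}^2(N\rtimes M)$, and a dense subspace can meet the nonzero closed subspace $\ker G^*\cap p\,\mathscr{L}^2(N\rtimes M)$ trivially; so this does not yield $GG^*\geq p$. The repair must pass through the algebra and the weight: granted centrality and $GG^*\leq p$ (which follows from $G^*pG=\pi_l^{N_2}(\rho(p))=1$), the projection $q=p-GG^*$ is central with $G^*q=0$, so $\Lambda_{N_2}(\rho(x))=G^*\Lambda_{N\rtimes M}(x)=0$ and hence $\rho(x)=0$ (faithfulness of $\varphi_2$) for every $x\in q\,\mathscr{N}_{\varphi_{N\rtimes M}}$; by $\sigma$-weak density $\rho$ vanishes on $q(N\rtimes M)$, forcing $q=0$. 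The paper's own finish is shorter and avoids all of this: once $GG^*$ is central it is an element of $N\rtimes M$, so one can apply $\rho$ to it, and $\rho(GG^*)=(\pi_l^{N_2})^{-1}(G^*GG^*G)=1=\rho(p)$ together with faithfulness of $\rho$ on $p(N\rtimes M)$ gives $GG^*=p$ immediately. Two smaller slips to correct: $GG^*$ is a projection on $\mathscr{L}^2(N)\otimes\mathscr{L}^2(M)\cong\mathscr{L}^2(N\rtimes M)$, not on $\mathscr{L}^2(N_2)$ (that is where $G^*G=1$ lives), and $G^*$ is a coisometry, not an isometry.
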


\begin{proof} By Lemma \ref{lem25}, $G$ is a left $N\rtimes M$-module morphism, hence $GG^*\in (N\rtimes M)'$, and $GG^* \leq p$ since $G^*pG=\rho(p)=1$. By the previous lemma, $GG^*$ commutes with $J_{N\rtimes M}$, hence $GG^*$ is in the center $\mathscr{Z}(N\rtimes M)$. Since $\rho(GG^*)=G^*(GG^*)G=1$, we must have $GG^*=p$. \\
\end{proof}

\noindent As mentioned, Theorem \ref{Theo1} follows immediately from this, since $G$ is unitary iff $p=1$ iff $\rho$ is faithful.\\

\begin{Prop}\label{propprop} The weight $\tilde{\varphi}_2$ equals $\varphi_2$.\end{Prop}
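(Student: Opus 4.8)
The plan is to exhibit $G$ as the unitary implementing a spatial isomorphism between the GNS-construction of $\varphi_2$ on $\mathscr{L}^2(N_2)$ and the GNS-construction of $\tilde{\varphi}_2$, and then to read off equality of the two weights from the rigidity of the correspondence between weights and their (modular) GNS-data.

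First I would record the shape of the GNS-construction of $\tilde{\varphi}_2$. Since $\rho_p:p(N\rtimes M)\to N_2$ is a $^*$-isomorphism and $p$ is central, the weight $\tilde{\varphi}_2=\varphi_{N\rtimes M}\circ\rho_p^{-1}$ is carried by the central corner $p\,\mathscr{L}^2(N\rtimes M)$, with representation $y\mapsto \rho_p^{-1}(y)$ and GNS-map $\tilde{\Lambda}_2(\rho_p(w))=\Lambda_{N\rtimes M}(w)$ for $w\in\mathscr{N}_{\varphi_{N\rtimes M}}\cap p(N\rtimes M)$. By Lemma \ref{lem222} we have $GG^*=p$ and $G^*G=1$, so $G$ corestricts to a unitary $\mathscr{L}^2(N_2)\to p\,\mathscr{L}^2(N\rtimes M)$.

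Next I would verify the three intertwining properties. For $a=(1\otimes m)\alpha(z)$ with $m\in\mathscr{N}_{\widehat{\varphi}^{\textrm{op}}}$ and $z\in\mathscr{N}_{\varphi_N}$, Proposition \ref{prop2222} together with $\Lambda_{N\rtimes M}(a)=\Lambda_N(z)\otimes\widehat{\Lambda}^{\textrm{op}}(m)$ gives \[G\Lambda_{N_2}(\rho(a))=GG^*\Lambda_{N\rtimes M}(a)=p\Lambda_{N\rtimes M}(a)=\Lambda_{N\rtimes M}(pa)=\tilde{\Lambda}_2(\rho(a)),\] so $G$ carries $\Lambda_{N_2}$ onto $\tilde{\Lambda}_2$ on these elements. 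From the remark following Lemma \ref{lem25}, $xG=G\pi_l^{N_2}(\rho(x))$ for all $x\in N\rtimes M$; taking $x=\rho_p^{-1}(y)$ shows that $G$ intertwines $\pi_l^{N_2}$ with the representation $\rho_p^{-1}(\cdot)$ of $\tilde{\varphi}_2$. Finally, Lemma \ref{lem26} shows that $G$ intertwines $\nabla_{N_2}^{it},J_{N_2}$ with $\nabla_{N\rtimes M}^{it},J_{N\rtimes M}$, so the entire modular apparatus is transported by $G$.

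It remains to promote agreement on these special elements to equality of the weights. The elements $(1\otimes m)\alpha(z)$ form a core for $\Lambda_{N\rtimes M}$, hence the $\rho(a)=\rho_p(pa)$ form a core for $\tilde{\Lambda}_2$; since $G$ is unitary and intertwines both the involutions $J$ and the modular operators $\nabla^{it}$, their images $\Lambda_{N_2}(\rho(a))=G^*\tilde{\Lambda}_2(\rho(a))$ span a Tomita algebra whose closure, left von Neumann algebra $\pi_l^{N_2}(N_2)$ and associated modular structure coincide with those of $\varphi_2$. By the uniqueness of the weight attached to a left Hilbert algebra (Proposition VIII.3.15 of \cite{Tak1}, exactly as used in Proposition \ref{tex11}), this forces $\tilde{\varphi}_2=\varphi_2$. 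I expect the domain/core bookkeeping to be the only real obstacle: one must check that the coincidence of $\varphi_2$ and $\tilde{\varphi}_2$ on the cone generated by $\{\rho((1\otimes m)\alpha(z))\}$, combined with the modular intertwining of Lemma \ref{lem26}, genuinely yields $\mathscr{N}_{\varphi_2}=\mathscr{N}_{\tilde{\varphi}_2}$ and not merely agreement on a subcore.
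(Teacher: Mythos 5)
Your proposal is, up to its last paragraph, the paper's own proof: the GNS-construction of $\tilde{\varphi}_2$ on $p\,\mathscr{L}^2(N\rtimes M)$ via the core property of the elements $(1\otimes m)\alpha(z)$, the identity $G\Lambda_{N_2}(\rho(a))=\Lambda_{\tilde{\varphi}_2}(\rho(a))$ obtained from Proposition \ref{prop2222} and Lemma \ref{lem222}, and the transport of the representation and of the modular data through the remark after Lemma \ref{lem25} and through Lemma \ref{lem26} all occur there in the same form. The divergence --- and the genuine gap --- lies in how you close the argument. The vectors $\Lambda_{N_2}(\rho((1\otimes m)\alpha(z)))$ do \emph{not} span a Tomita algebra: the elements $(1\otimes m)\alpha(z)$ with $m\in\mathscr{N}_{\widehat{\varphi}^{\textrm{op}}}$ and $z\in\mathscr{N}_{\varphi_N}$ are merely square-integrable, not analytic for the modular group, and their linear span is neither closed under the adjoint inside $\mathscr{N}_{\varphi_2}\cap\mathscr{N}_{\varphi_2}^*$ nor under multiplication in the way a left Hilbert algebra requires. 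So the hypotheses of Proposition VIII.3.15 of \cite{Tak1} --- which in Proposition \ref{tex11} are verified for the genuine Tomita algebra $\mathfrak{A}_2$ --- are simply not available here, and that route fails as written. Your closing worry about whether one gets $\mathscr{N}_{\varphi_2}=\mathscr{N}_{\tilde{\varphi}_2}$ rather than agreement on a subcore is exactly the crux, not incidental bookkeeping.

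The fix used in the paper needs no new computation, only a different citation and a reformulation of what is to be proved. Since the span of the $(1\otimes m)\alpha(z)$ is a $\sigma$-strong$^*$-norm core for $\Lambda_{N\rtimes M}$, hence for $\Lambda_{\tilde{\varphi}_2}$, and since $\Lambda_{\varphi_2}$ is a closed map, the agreement $G^*\Lambda_{\tilde{\varphi}_2}(\rho(a))=\Lambda_{\varphi_2}(\rho(a))$ on that core upgrades to an inclusion of closed maps $G^*\circ\Lambda_{\tilde{\varphi}_2}\subseteq\Lambda_{\varphi_2}$; in particular $(\mathscr{L}^2(N_2),G^*\circ\Lambda_{\tilde{\varphi}_2},\pi_l^{N_2})$ is a GNS-construction for $\tilde{\varphi}_2$ sitting inside that of $\varphi_2$. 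By your (correct) use of Lemma \ref{lem26}, these two GNS-constructions have the same modular operator. Proposition VIII.3.16 of \cite{Tak1} --- not VIII.3.15 --- is tailored to exactly this situation: an inclusion of semi-cyclic representations together with equality of the modular operators forces equality of the weights, with no Tomita algebra needed and no a priori control of $\mathscr{N}_{\tilde{\varphi}_2}$ versus $\mathscr{N}_{\varphi_2}$. With that substitution in your final step, your argument coincides with the paper's proof.
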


\begin{proof}

\noindent If $m\in \mathscr{N}_{\widehat{\varphi}^{\textrm{op}}}$ and $z\in \mathscr{N}_{\varphi_N}$, then $\rho((1\otimes m)\alpha(z))\in \mathscr{N}_{\tilde{\varphi}_2}$, and we can make a GNS-map $\Lambda_{\tilde{\varphi}_2}$ for $\tilde{\varphi}_2$ in $p(\mathscr{L}^2(N)\otimes \mathscr{L}^2(M))$ by \begin{eqnarray*} \Lambda_{\tilde{\varphi}_2}(\rho((1\otimes m)\alpha(z)))&=&p(\Lambda_{N\rtimes M}((1\otimes m)\alpha(z)))\\&=&p(\Lambda_N(z)\otimes \widehat{\Lambda}^{\textrm{op}}(m)),\end{eqnarray*} since by the results of \cite{Vae1}, the linear span of the $(1\otimes m)\alpha(z)$ forms a $\sigma$-strong$^*$-norm core for $\Lambda_{N\rtimes M}$. By Proposition \ref{prop2222} and the previous lemma, \[\Lambda_{\tilde{\varphi}_2}(\rho((1\otimes m)\alpha(z)))= G(\Lambda_{\varphi_2}(\rho((1\otimes m)\alpha(z)))).\] Since $G$ is a left $N\rtimes M$-module map, we obtain that also $(\mathscr{L}^2(N_2),G^*\circ\Lambda_{\tilde{\varphi}_2},\pi_l^{N_2} )$ is a GNS-construction for $\tilde{\varphi}_2$, and that $(G^*\circ\Lambda_{\tilde{\varphi}_2})\subseteq \Lambda_{\varphi_2}$.\\

\noindent By the first commutation relation of Lemma \ref{lem26}, it also follows that the modular operators for the GNS-constructions $\Lambda_{\varphi_2}$ and $G^*\circ \Lambda_{\tilde{\varphi}_2}$ are the same. Hence $\varphi_2=\tilde{\varphi}_2$ by Proposition VIII.3.16 of \cite{Tak1}.

\end{proof}

\noindent \textit{Remark:} This implies that $T_2$ equals $T_{N\rtimes M}\circ \rho_p^{-1}$ with $T_{N\rtimes M}$ the canonical operator valued weight $N\rtimes M\rightarrow N$, by Theorem IX.4.18 of \cite{Tak1}. Note that this result was obtained in Proposition 5.7 of \cite{Vae1} under the hypothesis that $\rho$ was faithful.\\

\noindent It follows from Proposition \ref{propprop} that $G^*$ coincides with the map $Z:\mathscr{L}^2(N\rtimes M)\rightarrow \mathscr{L}^2(N_2)$ which sends $\Lambda_{N\rtimes M}(z)$ to $
\Lambda_{\tilde{\varphi}_2}(\rho(z))$ (cf. the proof of Theorem 5.3 in \cite{Vae1}). So we can summarize our results by saying that the following square of $N\rtimes M$-bimodules and bimodule morphisms commutes:
\begin{equation}\label{eqn2} \xymatrix{
\ar[d]^{\cong}\mathscr{L}^2(N_2) \ar[r]^{Z^*}  &
\mathscr{L}^2(N\rtimes M) \ar[d]^{\cong} \\
\mathscr{L}^2(N)\underset{\mu}{\otimes} \mathscr{L}^2(N) \ar[r]_{G} &
\mathscr{L}^2(N)\otimes \mathscr{L}^2(M)}
\end{equation}\\

\begin{Def} Let $\alpha$ be an integrable coaction of $(M,\Delta)$ on a von Neumann algebra $N$. We call the associated map \[\rho:N\rtimes M\rightarrow N_2\] the \emph{Galois homomorphism} for $\alpha$. We call the operator \[\tilde{G}=\Sigma G: \mathscr{L}^2(N)\underset{\mu}{\otimes} \mathscr{L}^2(N)\rightarrow \mathscr{L}^2(M)\otimes \mathscr{L}^2(N)\] the \emph{Galois map} or the \emph{Galois isometry} for $(N,\alpha)$. We call the coaction $\alpha$ \emph{Galois} if the Galois homomorphism is bijective, or equivalently, if the Galois isometry is a unitary (in which case we call it of course the \emph{Galois unitary}).\end{Def}

\noindent \emph{Remark:} The reason for putting a flip map in front of $G$, is to make it right $N$-linear in such a way that this is just right $N$-linearity on the second factors of the domain and range, so that `the second leg of $\tilde{G}$ is in $N$'. See the section on Galois objects for more.\\

\noindent Note that the notion of a Galois coaction already appeared, as far as we know, nameless at various
places in the literature. The property of $G$ being surjective is the
motivation for the terminology, as the bijectivity of the above map
$N\underset{N^{\alpha}}{\otimes} N\rightarrow N\otimes M$ (in the
algebraic context of Hopf algebras) is precisely the condition to have a Galois
coaction of a Hopf algebra. Also note that as these are the
non-commutative generalizations of principal fiber bundles, we could
call the space pertaining to a Galois coaction a
\textit{measured quantum principal fiber bundle} (with $(M,\Delta)$ as the principal fiber), an object which is quite trivial in the commutative setting! (There probably is no need to account for the `local triviality', as the functor $ \mathscr{L}^2(N)\underset{\mu}{\otimes} - $ is automatically an equivalence between the categories of respectively left $N^{\alpha}$ and left $N_2$-modules (so the `faithful flatness' condition in the algebraic setup is automatically fulfilled).) Further note that the above square was essentially constructed in the setting of algebraic quantum groups in \cite{VDae1}.\\

\noindent We give a further characterization of Galois coactions in the following corollary. Given an integrable coaction $\alpha$ of $(M,\Delta)$ on $N$, write $N_{00}=N^{\alpha}\otimes \mathbb{C}$, $N_{01}=\alpha(N)$, $N_{10}=N\otimes \mathbb{C}$ and $N_{11}=N\otimes M$. Write $T_1$ for the operator valued weight $(\iota\otimes \varphi)$ from $(N\otimes M)^+$ to $(N\otimes \mathbb{C})^{+,\textrm{ext}}$. Then $T_1$ restricts to the canonical operator valued weight $T_0=T\circ \alpha^{-1}$ from $\alpha(N)^+$ to $(N^{\alpha}\otimes \mathbb{C})^{+,\textrm{ext}}$, so we are in the situation of Lemma \ref{propincl}. Then also denote again by $Q_0$ and $Q_1$ the corresponding linking algebras. We regard $\mathscr{L}^2(N)$ and $\mathscr{L}^2(N)\otimes \mathscr{L}^2(M)\cong \mathscr{L}^2(N\rtimes M)$ as right $N\rtimes M$-modules in the natural way, using the Galois homomorphism for the first one.\\

\begin{Cor} The following statements are equivalent: \begin{enumerate} \item The coaction $\alpha$ is Galois.
\item The inclusion $Q_0\subseteq Q_1$ is unital.
\item The image of $(Q_0)_{12}$ is exactly the space of $N\rtimes M$-intertwiners.
\end{enumerate}
\end{Cor}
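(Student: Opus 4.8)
The whole argument will turn on the central projection $p\in\mathscr{Z}(N\rtimes M)$ with $\ker\rho=(1-p)(N\rtimes M)$ and on the isometry $G$ with $GG^*=p$ (Lemma \ref{lem222}); recall that $\alpha$ is Galois precisely when $p=1$. The plan is first to compute the embedding $Q_0\hookrightarrow Q_1$ of Lemma \ref{propincl} on the four corners. Using $\pi_r^N(N)'=N$ and $\pi_r^N(N^\alpha)'=N_2$ one identifies $(Q_0)_{11}=N^\alpha$, $(Q_0)_{22}=N_2$, $(Q_1)_{11}=N$ and $(Q_1)_{22}=N\otimes B(\mathscr{L}^2(M))$. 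On the $(1,1)$-corner the embedding sends $\Lambda_{T_0}(x)^*\Lambda_{T_0}(y)=T_0(x^*y)$ to $\Lambda_{T_1}(x)^*\Lambda_{T_1}(y)=T_1(x^*y)$, so it is just the unital inclusion $N^\alpha\hookrightarrow N$; in particular it carries $1_{(Q_0)_{11}}$ to $1_{(Q_1)_{11}}$.

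For the $(2,2)$-corner I would use the factorisation $\Lambda_{T_1}(\alpha(x))=G\,c_x$, where $c_x\colon\eta\mapsto\Lambda_N(x)\underset{\mu}{\otimes}\eta$ is the creation operator into the first leg of $\mathscr{L}^2(N)\underset{\mu}{\otimes}\mathscr{L}^2(N)\cong\mathscr{L}^2(N_2)$; this is immediate from the defining formulas for $G$ and for $\Lambda_{T_1}$. Since $c_x c_y^*=\pi_l^{N_2}(\Lambda_T(x)\Lambda_T(y)^*)$ under the identification of Theorem \ref{dens}, and since $\Lambda_{T_0}(\alpha(x))\Lambda_{T_0}(\alpha(y))^*$ equals $\Lambda_T(x)\Lambda_T(y)^*$ as an element of $(Q_0)_{22}=N_2$, the $(2,2)$-corner of the embedding is $\textrm{Ad}(G)\circ\pi_l^{N_2}$. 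In particular it maps $1_{(Q_0)_{22}}$ to $GG^*=p$. This settles $(1)\Leftrightarrow(2)$ at once: the embedding carries $1_{Q_0}=1_{(Q_0)_{11}}+1_{(Q_0)_{22}}$ to $1_{(Q_1)_{11}}+p$, which equals $1_{Q_1}=1_{(Q_1)_{11}}+1_{(Q_1)_{22}}$ if and only if $p=1_{(Q_1)_{22}}=1$, that is, iff $\rho$ is faithful, iff $\alpha$ is Galois.

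For $(3)$ the plan is to exploit the same factorisation together with the fact, read off from the square (\ref{eqn2}), that $G$ is an $N\rtimes M$-bimodule map. First I would prove that the image of $(Q_0)_{12}$ lands inside the space of $N\rtimes M$-intertwiners: each generator is $\Lambda_{T_1}(\alpha(x))^*=c_x^*\,G^*$, a composite of the bimodule map $G^*$ with the right $N_2$-linear map $c_x^*$, hence is right $N\rtimes M$-linear for the action through $\rho$. For the reverse inclusion I would transport an arbitrary intertwiner $\Psi$ through $G$: since $\Psi=\Psi p=\Psi GG^*$, writing $\Psi=\Psi'G^*$ with $\Psi'=\Psi G$ identifies the intertwiner space with $\textrm{Hom}_{N_2}(\mathscr{L}^2(N_2),\mathscr{L}^2(N))$ as an $N^\alpha$–$N_2$-bimodule, and under this identification the image of $(Q_0)_{12}$ becomes the $\sigma$-weakly closed sub-bimodule generated by the $c_x^*$. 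One is then left to decide when this sub-bimodule is everything and to match that with $p=1$.

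The main obstacle is precisely this last equality in $(3)$. The two one-sided fullness conditions $\overline{\textrm{span}}\{c_x c_y^*\}=N_2$ and $\overline{\textrm{span}}\{T(x^*y)\}=N^\alpha$ hold unconditionally, so fullness of the corners alone cannot separate the Galois from the non-Galois case; the delicate point is to show that when $p\neq 1$ the generating family $\{c_x^*\}$ fails to connect all the central pieces of the bimodule, thereby producing an intertwiner outside the image. I expect to handle this by a central-support (multiplicity) argument inside the fiber product, tying the sub-bimodule generated by the $c_x^*$ to the range projection $p$ of $G$, so that exhaustion of all intertwiners becomes equivalent to $\textrm{range}(G)=\mathscr{L}^2(N)\otimes\mathscr{L}^2(M)$, i.e. to $p=1$. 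This is the step I anticipate will cost the most, and where the machinery of Section~1 (notably Theorem \ref{dens} and Proposition \ref{propprop}) is indispensable.
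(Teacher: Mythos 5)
Your treatment of $(1)\Leftrightarrow(2)$ is correct and is in substance the paper's own argument: the paper likewise factors $\Lambda_{T_1}(\alpha(x))=G\,l_x$ (your $G\,c_x$), proves that the relevant corner of the embedding is $\mathrm{Ad}(G)\circ\pi_l^{N_2}$ (stated there as $\rho\circ F_{11}\circ\tilde{F}=\iota$), and reads off that unitality of the inclusion means exactly $GG^*=p=1$ (Lemma \ref{lem222}), i.e.\ faithfulness of $\rho$. Your verification that the image of $(Q_0)_{12}$ consists of $N\rtimes M$-intertwiners also matches the paper's first observation.

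The equivalence with $(3)$, however, is a genuine gap in your proposal, and the strategy you sketch for closing it cannot succeed. The identity $\Psi=\Psi p$ that begins your transport through $G$ is not a step on the way to $(3)\Rightarrow(1)$: it holds \emph{automatically}, for every integrable coaction and every intertwiner. Indeed $1-p$ is a central projection, so its right action on the standard form $\mathscr{L}^2(N\rtimes M)$ is $J_{N\rtimes M}(1-p)J_{N\rtimes M}=1-p$, whereas its right action on $\mathscr{L}^2(N)$ is $\theta_r(\rho(1-p))=0$; hence $\Psi((1-p)\xi)=\Psi(\xi)\cdot(1-p)=0$ for any intertwiner $\Psi$. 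Consequently $\Psi\mapsto \Psi G$ is a bijection from the \emph{full} intertwiner space onto $\mathrm{Hom}_{N_2}(\mathscr{L}^2(N_2),\mathscr{L}^2(N))$, and all information about $p$ has already been destroyed at this point. Moreover the $\sigma$-weakly closed sub-bimodule generated by the $c_x^*$ is \emph{always} the whole of $\mathrm{Hom}_{N_2}(\mathscr{L}^2(N_2),\mathscr{L}^2(N))$: if $v$ is an intertwiner with $v^*c_x=0$ for all $x\in\mathscr{N}_T$, then $v^*$ vanishes on $\bigcup_x\mathrm{ran}(c_x)$, whose span contains $\mathscr{K}$ and hence is dense by Theorem \ref{dens}, so $v=0$; by the same density mechanism as in Proposition \ref{tex11} (Theorem 10.6.(ii) of \cite{Eno1}) the sub-bimodule is then everything. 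So, carried out honestly, your program proves that the image of $(Q_0)_{12}$ equals the space of $N\rtimes M$-intertwiners for \emph{every} integrable coaction; the dichotomy you hope for (``the $c_x^*$ fail to connect all central pieces when $p\neq 1$'') never materializes, because the module $\mathscr{L}^2(N)$ itself only sees the corner $p(N\rtimes M)$.

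This is not merely a defect of your particular route. With the module structures you (quite reasonably) chose, condition $(3)$ cannot detect $p$ at all: for the trivial coaction of $M=L^\infty(\mathbb{Z}/2\mathbb{Z})$ on $N=\mathbb{C}$ — integrable and ergodic, but not Galois, since $N\rtimes M\cong\mathbb{C}^2$ while $N_2=\mathbb{C}$ — both the image of $(Q_0)_{12}$ and the intertwiner space reduce to the one-dimensional space of maps $c\mapsto c\,\Lambda(1_M)$. Be aware also that the paper's own proof of this implication consists of the same two observations you established, followed by the assertion that the equivalence of $(1)$ and $(3)$ is ``immediate''; the argument you are missing is not supplied there either, and it would have to rely on the off-diagonal corner products of the linking algebra spanning the diagonal corner $N\rtimes M$ — but that span is $p(N\rtimes M)$, which is precisely the point at issue. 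So either statement $(3)$ must be given a different reading (different module structures), or it requires a genuinely different proof; as it stands, your proposal establishes $(1)\Leftrightarrow(2)$ and the inclusion of the image in the intertwiner space, but not the equivalence with $(3)$.
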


\begin{proof} We will write $\tilde{Q}$ for the linking algebra between the right $N\rtimes M$-modules $\mathscr{L}^2(N)$ and $\mathscr{L}^2(N\rtimes M)$. Denote explicitly the inclusion $Q_0 \subseteq Q_1$ by $F$. We first show that $F(Q_0)\subseteq \tilde{Q}$. Take $x\in \mathscr{N}_T$, then $\alpha(x)\in \mathscr{N}_{T_1}$, and it is easily seen that $\Lambda_{T_1}(\alpha(x))\Lambda_N(y)=(\Lambda_N\otimes \Lambda)(\alpha(x)(y\otimes 1))$ for $y\in \mathscr{N}_{\varphi_N}$. Hence $\Lambda_{T_1}(\alpha(x))= G\circ l_x$, where we denote by $l_x$ the map $\mathscr{L}^2(N)\rightarrow \mathscr{L}^2(N)\underset{\mu}{\otimes} \mathscr{L}^2(N)$ which sends $\xi \in \mathscr{L}^2(N)$ to $\Lambda_N(x) \underset{\mu}{\otimes} \xi$. But $l_x$ is a right $N\rtimes M$-intertwiner, and we know that $G$ is a right $N\rtimes M$-intertwiner by the diagram (\ref{eqn2}). Hence $\Lambda_{T_1}(\alpha(x))\in \tilde{Q}_{12}$, and then it follows that $F(Q_0)\subseteq \tilde{Q}$.\\

\noindent Next, we show that $\rho\circ F_{11}\circ \tilde{F}=\iota$, where $F_{11}$ denotes the restriction of $F$ to $(Q_0)_{11}$, and $\tilde{F}$ is the isomorphism $N_2\rightarrow (Q_0)_{11}$ determined by $\Lambda_T(x)\Lambda_T(y)^*\rightarrow \Lambda_{T_0}(\alpha(x))\Lambda_{T_0}(\alpha(y))^*$ for $x,y\in \mathscr{N}_{T}$. Namely: for $x,y\in \mathscr{N}_{T}$, we have \begin{eqnarray*} (\rho\circ F_{11})(\Lambda_{T_0}(\alpha(x))\Lambda_{T_0}(\alpha(y))^*) &=& (\pi_l^{N_2})^{-1}(G^*(G l_xl_y^* G^*)G)\\ &=& (\pi_l^{N_2})^{-1}(l_xl_y^*)\\ &=& \Lambda_{T}(x)\Lambda_{T}(y)^*,\end{eqnarray*} again by using the diagram (\ref{eqn2}).\\

\noindent By these observations, the equivalence of the first and third statement is immediate. Since we have also shown that in fact $F_{11}\circ \tilde{F}=G(\pi_l^{N_2}(\cdot))G^*$, the equivalence of the first and second statement follows.

\end{proof}

\noindent We now present some natural examples of Galois coactions. \\

\noindent First, every dual coaction, or more generally, every semidual coaction (i.e. the ones for which there exists a unitary $v\in B(\mathscr{L}^2(M))\otimes N$ with $(\iota\otimes \alpha)(v) = \widehat{W}_{13}v_{12}$)  is Galois, by Proposition 5.12 of \cite{Vae1}.\\

\noindent Also, whenever $(N,\alpha)$ is an integrable \emph{outer} coaction (i.e. $N\rtimes M \cap \alpha(N)' = \mathbb{C}1$), the coaction is automatically Galois (since $N\rtimes M$ is then a factor).\\

\noindent Next, suppose $(M_1,\Delta_1)$ and $(M,\Delta)$ are locally compact quantum groups, with $(\widehat{M}_1,\widehat{\Delta}_1)$ a closed quantum subgroup of $(\widehat{M},\widehat{\Delta})$: we mean by this that $\widehat{M}_1$ is a unital sub-von Neumann algebra of $\widehat{M}$ such that the restriction of $\widehat{\Delta}$ to $\widehat{M}_1$ coincides with $\widehat{\Delta}_1$ (\cite{Vae5}, Definition 2.9).\\

\noindent Associated to $(M_1,\Delta_1)$ and $(M,\Delta)$, there is a canonical right coaction $\Gamma_r$ of $(M_1,\Delta_{M_1})$ on $M$ by right translation and a left coaction $\Gamma_l$ by left translation (see for example the first paragraphs of section 4 in \cite{Vae5} for the left setting).

\begin{Prop}\label{propsubg} If $(M_1,\Delta_1)$ and $(M,\Delta)$ are locally compact quantum groups, with $(\widehat{M}_1,\widehat{\Delta}_1)$ a closed quantum subgroup of $(\widehat{M},\widehat{\Delta})$, the associated coaction $\Gamma_r$ is Galois. Conversely, if $(M,\Delta)$ and $(M_1,\Delta_1)$ are locally compact quantum groups for which there is a right Galois coaction $\Gamma_r$ of $(M_1,\Delta_1)$ on $M$ such that $(\iota\otimes \Gamma_r)\Delta = (\Delta\otimes \iota)\Gamma_r$, then $(\widehat{M}_1,\widehat{\Delta}_1)$ can be identified with a closed quantum subgroup of $(\widehat{M},\widehat{\Delta})$ in such a way that $\Gamma_r$ is precisely the coaction by right translations.\end{Prop}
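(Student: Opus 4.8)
The plan is to treat the two implications separately, and in both to route through the characterisations of Galois coactions already established in Section 2: that semiduality implies Galois (Proposition 5.12 of \cite{Vae1}), and that being Galois is the bijectivity of the Galois homomorphism $\rho$. Throughout, one must remember that here the \emph{acting} quantum group is $(M_1,\Delta_1)$ and the coacted-upon algebra is $N=M$, so all formulas from Section 2 are to be read with $N$ replaced by $M$ and $M$ replaced by $M_1$.

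For the forward implication I would show that $\Gamma_r$ is in fact \emph{semidual}. Classically, the right translation action of a closed subgroup $H$ on $G$ is free and proper, so its crossed product is a matrix amplification $\cong L^\infty(G/H)\otimes B(\mathscr{L}^2(H))$ of the fixed-point algebra, which is precisely the von Neumann algebraic signature of semiduality. Concretely, the inclusion $\widehat{M}_1\subseteq \widehat{M}$ with $\widehat{\Delta}\mid_{\widehat{M}_1}=\widehat{\Delta}_1$ produces, from the regular corepresentations, a unitary $v\in B(\mathscr{L}^2(M_1))\otimes M$ satisfying the semiduality identity $(\iota\otimes \Gamma_r)(v)=(\widehat{W}_1)_{13}v_{12}$; this is essentially the corepresentation through which Vaes constructs $\Gamma_r$ in \cite{Vae5}. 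Feeding $v$ into Proposition 5.12 of \cite{Vae1} then yields that $\Gamma_r$ is Galois. The equivariance $(\iota\otimes \Gamma_r)\Delta=(\Delta\otimes \iota)\Gamma_r$, which for right translation is automatic from coassociativity, is what ties $v$ to $W$; I expect this direction to be essentially bookkeeping once $v$ is identified.

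For the converse I deliberately cannot use the naive algebraic recipe, namely setting $\pi:=(\varepsilon\otimes \iota)\Gamma_r:M\to M_1$ and dualising this quotient map, precisely because the counit $\varepsilon$ is not available as a normal functional at the von Neumann level. This is the main obstacle, and it is exactly what forces a genuinely operator-algebraic argument. Instead I would pass to the unitary implementation $U\in B(\mathscr{L}^2(M))\otimes M_1$ of the coaction $\Gamma_r$ (in the sense of \cite{Vae1}, as used throughout Section 2), which up to the standard conventions is a corepresentation of $(M_1,\Delta_1)$, i.e. $(\iota\otimes \Delta_1)(U)=U_{12}U_{13}$. The key computation is to feed the equivariance hypothesis into the defining relation $\Gamma_r(\cdot)=\mathrm{Ad}(U)(\,\cdot\otimes 1)$: applying $(\Delta\otimes \iota)$ and comparing with $(\iota\otimes \Gamma_r)\Delta$ should pin down the first leg of $U$, forcing $U\in \widehat{M}\otimes M_1$ rather than merely $B(\mathscr{L}^2(M))\otimes M_1$.

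Granting this, I would \emph{define} $\widehat{M}_1:=\{(\iota\otimes \omega)(U)\mid \omega\in (M_1)_*\}''\subseteq \widehat{M}$. The corepresentation identity $(\iota\otimes \Delta_1)(U)=U_{12}U_{13}$ dualises to a comultiplication on $\widehat{M}_1$ coinciding with $\widehat{\Delta}_1$; what remains is to check that this comultiplication is the restriction of $\widehat{\Delta}$ (this is where the relation between $U$ and $W$ obtained above is used), and that all of $\widehat{M}_1$, and not a proper subalgebra, is recovered. Here the \emph{Galois} hypothesis enters: via bijectivity of $\rho$ it guarantees that $U$ is large enough, playing the role of surjectivity of the quotient map in the algebraic picture. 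Finally I would verify that $\Gamma_r$ is the right translation coaction attached to this subgroup by comparing it, through $U$, with the canonical implementation of right translation in \cite{Vae5}. I expect the technical heart and the principal difficulty both to lie in this converse: extracting the inclusion $\widehat{M}_1\subseteq \widehat{M}$ together with its comultiplication from $U$ in the absence of a counit, and using Galois-ness to see that nothing is lost.
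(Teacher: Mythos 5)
Your forward implication does not work as described. Semiduality for a coaction of $(M_1,\Delta_1)$ on $M$ requires a unitary $v\in B(\mathscr{L}^2(M_1))\otimes M$ with $(\iota\otimes\Gamma_r)(v)=(\widehat{W}_1)_{13}v_{12}$, whereas the corepresentation through which $\Gamma_r$ is constructed is $V_\Gamma=(F\otimes\iota)(V_1)\in\widehat{M}'\otimes M_1$: its first leg acts on $\mathscr{L}^2(M)$ and its second leg lies in $M_1$, so its legs are exactly transposed relative to what semiduality demands, and no natural candidate for $v$ comes out of the subgroup data. This is not bookkeeping: classically, semiduality of the right-translation action of $H$ on $G$ amounts to measurable triviality of the bundle $G\to G/H$ (existence of a measurable cross-section), which is a Mackey-type theorem tied to second countability and has no quantum analogue, while the proposition carries no separability hypotheses. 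Note also that the paper's own source of semiduality unitaries (the remark after Proposition \ref{lem4}) produces them by composing the Galois unitary $\tilde{G}$ with a unitary identification of $\mathscr{L}^2$-spaces, i.e.\ it deduces semiduality \emph{from} the Galois property, so your direction would be circular. The paper instead proves Galois-ness directly: conjugating $M\rtimes M_1=(\Gamma_r(M)\cup(1\otimes\widehat{M}_1'))''$ by $V_\Gamma$ and using that $V_1$ is also the left regular corepresentation of $(\widehat{M}_1',\widehat{\Delta}_1')$, it identifies $M\rtimes M_1$ with $(M\cup F(\widehat{M}_1'))''$ acting on $\mathscr{L}^2(M)$, and then observes that this composite isomorphism is precisely the Galois homomorphism $\rho$, which is therefore faithful.

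For the converse your skeleton matches the paper's, but the step you call the key computation is exactly where the gap sits, and it fails as stated. Feeding the equivariance into $\Gamma_r=\mathrm{Ad}(U)(\cdot\otimes 1)$ and $\Delta=W^*(1\otimes\cdot)W$ yields only that $U_{23}^*W_{12}U_{23}W_{12}^*$ commutes with $1\otimes M\otimes 1$, i.e.\ a relation controlled modulo $M'$ in the middle leg; this does not place the first leg of $U$ in the dual algebra. (Also, with the paper's conventions the correct target is $\widehat{M}'$, not $\widehat{M}$: for the trivial case $\Gamma_r=\Delta$ the implementation is $U=V\in\widehat{M}'\otimes M$; one obtains $\widehat{M}_1'\subseteq\widehat{M}'$ and only afterwards passes to $\widehat{M}_1\subseteq\widehat{M}$.) Moreover, the relation $\Gamma_r=\mathrm{Ad}(U)(\cdot\otimes 1)$ determines $U$ only up to right multiplication by unitaries in $M'\otimes M_1'$, so no manipulation of that relation alone can pin down the first leg; one must exploit the canonical construction of $U$. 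The paper resolves this with genuinely heavier input: Kustermans' universal quantum groups (Section 12 of \cite{Kus2}) convert the equivariance into a nondegenerate morphism $F_u:\widehat{A}_{1,u}'\to M(\widehat{A}_u')$ intertwining the comultiplications, and the identity $\pi_u(F_u(\omega))=(\iota\otimes\omega)(U)$ for $\omega\in(M_1)_*$ then gives both $U\in\widehat{M}'\otimes M_1$ and the compatibility of the embedding with the coproducts. Finally, the Galois hypothesis is used not to show that ``$U$ is large enough'' but to guarantee that $F=\rho(1\otimes\cdot)$ restricted to $\widehat{M}_1'$ is \emph{faithful}, so that one obtains $\widehat{M}_1'$ itself, rather than a proper quotient of it, inside $\widehat{M}'$.
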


\begin{proof} First suppose that $(\widehat{M}_1,\widehat{\Delta}_1)$ is a closed quantum subgroup of $(\widehat{M},\widehat{\Delta})$. Then we can also embed $\widehat{M}_1'$ into $\widehat{M}'$ by a normal map $F$ which respects the comultiplications. Denote $V_{\Gamma}=(F\otimes \iota)(V_1)$, where $V_1 \in \widehat{M}_1'\otimes M_1$  is the right regular representation of $(M_1,\Delta_1)$. The aforementioned coaction $\Gamma_r$ is then explicitely given as $\Gamma_r(x)=V_{\Gamma}(x\otimes 1)V_{\Gamma}^*$ for $x\in M$. We can make the following sequence of isomorphisms: \begin{eqnarray*}  M\rtimes M_1 &=& (\Gamma_r(M)\cup (1\otimes \widehat{M}_1'))''\\ &\cong & ((M\otimes 1)\cup V_{\Gamma}^*(1\otimes \widehat{M}_1')V_{\Gamma})''\\  &= & ((M\otimes 1)\cup (F\otimes \iota)(\widehat{\Delta}_1'(\widehat{M}_1')))''\\ &\cong & ((M\otimes 1)\cup \widehat{\Delta}'(F(\widehat{M}_1')))''\\ &\cong & (M\cup F(\widehat{M}_1')),\end{eqnarray*} where we have used that $V_1$ is also the left regular representation for $(\widehat{M}_1',\widehat{\Delta}'_1)$. Since it's easy to see that the resulting isomorphism satisfies the requirements for the Galois homomorphism (using that $V_{\Gamma}$ is actually the corepresentation implementing $\Gamma_r$), the coaction is Galois.\\

\noindent Now suppose that we have a Galois coaction $\Gamma_r$ such that $(\iota\otimes \Gamma_r)\Delta = (\Delta\otimes \iota)\Gamma_r$. Denote by $(\widehat{A}'_u,\widehat{\Delta}'_u)$ the universal locally compact quantum group associated with $(\widehat{M}',\widehat{\Delta}')$, and similarly for $(\widehat{M}_1',\widehat{\Delta}_1')$ (cf. \cite{Kus2}). By the results in section 12 of \cite{Kus2} (in the setting of right coactions), we get that there is a canonical non-degenerate $^*$-homomorphism $F_u: \widehat{A}_{1,u}'\rightarrow M(\widehat{A}_u')$ which intertwines the comultiplications. Since $\Gamma_r$ is Galois, we also have a faithful normal homomorphism $F:\widehat{M}_1'\rightarrow B(\mathscr{L}^2(M))$. Denote by $U$ the corepresentation associated with $\Gamma_r$. By the results of \cite{Kus2}, it is an element of $\widehat{M}'\otimes M_1$. Denote by $\pi_u$ and $\pi_{1,u}$ respectively the canonical homomorphisms $M(\widehat{A}'_u)\rightarrow \widehat{M}'$ and $M(\widehat{A}'_{1,u})\rightarrow \widehat{M}_1'$ from the multiplier C$^*$-algebras to the von Neumann algebras. Identify $M_*$ and $(M_1)_*$ with their images in respectively $\widehat{A}'_u$ and $\widehat{A}'_{1,u}$ (noting that the dual of $(\widehat{M}',\widehat{\Delta}')$ is $(M,\Delta^{\textrm{op}})$). Then we can deduce again from \cite{Kus2} that for $\omega\in (M_1)_*$, we have $\pi_u(F_u(\omega)) = (\iota\otimes \omega)(U)$. Since the $\pi_u$ also commute with the comultiplications, we deduce that $F(\widehat{M}_1')\subseteq \widehat{M}'$, and that this embedding respects the comultiplications. Hence $(\widehat{M}_1',\widehat{\Delta}'_1)$ is a closed quantum subgroup of $(\widehat{M}',\widehat{\Delta}')$ (and then of course also $(\widehat{M}_1,\widehat{\Delta}_1)$ is a closed quantum subgroup of $(\widehat{M},\widehat{\Delta})$). \\

\noindent Finally, we should show that $\Gamma_r$ is just the coaction naturally associated with this closed quantum subgroup. But this is clear, as $\Gamma_r$ is implemented by the corepresentation $U$, which equals $V_{\Gamma}$ since also $\pi_u(F_u(\omega)) = (\iota\otimes \omega)(V_{\Gamma})$.\\
\end{proof}

\section{Galois objects}

\noindent We will now treat in detail the case of ergodic Galois coactions, i.e. $N^{\alpha}=\mathbb{C}$.
\begin{Def} If $N$ is a von Neumann algebra, $(M,\Delta)$ a locally compact quantum group and $\alpha$ an \emph{ergodic} Galois coaction of $(M,\Delta)$ on $N$, we call $(N,\alpha)$ a (right) \emph{Galois object} for $(M,\Delta)$.\end{Def}

\noindent In this case, the constructions of the previous sections greatly simplify. First of all,
$T=(\iota\otimes \varphi)\alpha$ itself will already be an nsf weight on $N$ (identifying $\mathbb{C}$ with $\mathbb{C}\cdot 1_N$), so we denote it by $\varphi_N$. Then $\mathscr{N}_T=\mathscr{N}_{\varphi_N}$. There is a slight ambiguity of notation then, as $\Lambda_N(x)$ denotes either an element of $\mathscr{H}$ or a linear operator $\mathbb{C}\rightarrow \mathscr{H}$, but this ambiguity disappears if we identify the Hilbert spaces $B(\mathbb{C},\mathscr{H})$ and $\mathscr{H}$ by sending $x$ to $x\cdot 1$. Next, $N\rtimes M\overset{\rho}{\cong} N_2$ becomes the whole of $B(\mathscr{L}^2(N))$, and $\varphi_2=\textrm{Tr}(\cdot \nabla_N)$. Further, $\mathscr{L}^2(N_2)$ will be identified with $\mathscr{L}^2(N)\otimes \mathscr{L}^2(N)$ by the map \[ \Lambda_{N_2}(\Lambda_N(x)\Lambda_N(y^*)^*) \rightarrow \Lambda_N(x)\otimes \Lambda_N(y) \qquad \textrm {for } x,y\in \mathscr{N}_{\varphi_N}\cap \mathscr{N}_{\varphi_N}^*.\] For $x\in B(\mathscr{L}^2(N))$, we have $\pi_l^{N_2}(x)=x\otimes 1$, $\pi_r^{N_2}(x)= 1\otimes \pi_r(x)$ (where $\pi_r(x)= J_Nx^*J_N$), $\nabla_{N_2}^{it}=\nabla_N^{it}\otimes \nabla_N^{it}$ and $J_{N_2}= \Sigma (J_N\otimes J_N)$. In the following, we will now also use the symbol $\widehat{\pi}_l$ to denote the left representation of $\widehat{M}'$ on $\mathscr{L}^2(N)$ (so $\widehat{\pi}_l(m)=\rho(1\otimes m)$ for $m\in \widehat{M}'$), and we will write $\widehat{\theta}_r(m)$ for $\theta_r(\rho(1\otimes m))=J_N\widehat{\pi}_l(m)^*J_N$ when $m\in \widehat{M}'$. In fact, it's not difficult to see that for any integrable coaction, we have then $\widehat{\theta}_r(m)=\widehat{\pi}_l(\widehat{R}'(m))$: just use that $(J_N\otimes \widehat{J})U(J_N\otimes \widehat{J})=U^*$ and $(J\otimes \widehat{J})V(J\otimes \widehat{J})=V^*$.\\

\noindent The aim of this section is to show that there is much extra structure on a Galois object $(N,\alpha)$, closely resembling the
one of $(M,\Delta)$ itself. In particular, we are able to show that there
exists an nsf invariant weight on $N$. To find it, we will search a 1-cocycle to deform $\varphi_N$.\\

\noindent Let $(N,\alpha)$ be a fixed Galois object, with Galois unitary $\tilde{G}:\mathscr{L}^2(N)\otimes
\mathscr{L}^2(N)\rightarrow \mathscr{L}^2(M)\otimes
\mathscr{L}^2(N)$. To begin with, we will write down some commutation relations. In the following, $\alpha^{\textrm{op}}(x)=\Sigma \alpha(x)\Sigma$ for $x\in N$, and $\widehat{\pi}_r(m)$ will denote the operator $\widehat{J}m^*\widehat{J}$ on $\mathscr{L}^2(M)$.

\begin{Lem}\label{com1} For all $x\in N$ and $m\in \widehat{M}'$, we have
\begin{enumerate}
\item $\tilde{G}(x\otimes 1)=\alpha^{\textrm{op}}(x)\tilde{G}$,
\item $\tilde{G}(\widehat{\pi}_l(m)\otimes 1)=(m\otimes 1)\tilde{G}$,
\item $\tilde{G}(1\otimes \pi_r(x))=(1\otimes \pi_r(x))\tilde{G}$,
\item $\tilde{G}(1\otimes \widehat{\theta}_r(m))= (\widehat{\pi}_r\otimes \widehat{\theta}_r)((\widehat{\Delta}')^{\textrm{op}}(m))\tilde{G}$.\end{enumerate}
    \end{Lem}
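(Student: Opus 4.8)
The four relations split into three routine ones and one substantive one. For (1) I would argue directly from the explicit form of the Galois map: unwinding $\tilde{G}=\Sigma G$ and the definition of $G$ on $\mathscr{T}_{\varphi_N,T}\odot\mathscr{T}_{\varphi_N,T}$ gives, in the ergodic case (where the Connes--Sauvageot tensor product is the ordinary Hilbert space tensor product), $\tilde{G}(\Lambda_N(a)\otimes\Lambda_N(y))=(\Lambda\otimes\Lambda_N)(\alpha^{\textrm{op}}(a)(1\otimes y))$. Multiplying the first factor of the domain by $x\in N$ replaces $a$ by $xa$, and since $\alpha^{\textrm{op}}$ is multiplicative and the GNS map satisfies $\Lambda(z\,\cdot)=z\,\Lambda(\cdot)$ for bounded $z$, the factor $\alpha^{\textrm{op}}(x)$ pulls out on the left, giving (1). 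In fact (1) and (2) are just Lemma~\ref{lem25} (the identities $G\,\pi_l^{N_2}(x)=\alpha(x)G$ and $G\,\pi_l^{N_2}(\rho(1\otimes m))=(1\otimes m)G$) read in the ergodic identifications $\pi_l^{N_2}(x)=x\otimes 1$, $\widehat{\pi}_l(m)=\rho(1\otimes m)$, composed with $\Sigma$. Relation (3) is the statement that the second leg of $\tilde{G}$ lies in $N$: under the formula above, right multiplication by $x$ on the second factor of the domain corresponds to right multiplication on the second leg of the range, i.e. $1\otimes\pi_r(x)$ commutes through. Each of these is a one-line verification on the core $\mathscr{T}_{\varphi_N,T}\odot\mathscr{T}_{\varphi_N,T}$.

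Relation (4) is the genuine content, and I would obtain it by conjugating (2) with the modular conjugations. Using $J_{N_2}=\Sigma(J_N\otimes J_N)$ and $\widehat{\theta}_r(m)=J_N\widehat{\pi}_l(m)^*J_N$ one checks $1\otimes\widehat{\theta}_r(m)=J_{N_2}(\widehat{\pi}_l(m)^*\otimes 1)J_{N_2}$. Feeding this into $\tilde{G}$, invoking the second commutation relation of Lemma~\ref{lem26} in the form $\tilde{G}J_{N_2}=\Sigma J_{N\rtimes M}G$, and applying (2) to $m^*$ (noting $\widehat{\pi}_l(m)^*=\widehat{\pi}_l(m^*)$), I would reduce to computing $\Sigma J_{N\rtimes M}(1\otimes m^*)J_{N\rtimes M}\Sigma$. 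Since $J_{N\rtimes M}=U(J_N\otimes\widehat{J})=(J_N\otimes\widehat{J})U^*$ (the second form using $(J_N\otimes\widehat{J})U(J_N\otimes\widehat{J})=U^*$), the middle conjugation collapses $J_N\otimes\widehat{J}$ against $1\otimes m^*$ to give $1\otimes\widehat{\pi}_r(m)$, so the expression becomes $\Sigma\,U(1\otimes\widehat{\pi}_r(m))U^*\,\Sigma$. Thus relation (4) is equivalent to the single identity
\[
U(1\otimes\widehat{\pi}_r(m))U^* \;=\; (\widehat{\theta}_r\otimes\widehat{\pi}_r)(\widehat{\Delta}'(m)),\qquad m\in\widehat{M}',
\]
after transporting the target by $\Sigma$ and using $\Sigma(\widehat{\Delta}')^{\textrm{op}}(\cdot)\Sigma=\widehat{\Delta}'(\cdot)$.

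This last identity is where the main difficulty sits, and I would establish it from the explicit description of the unitary implementation. The formula $\rho(1\otimes(\iota\otimes\omega)(V))=(\iota\otimes\omega)(U)$ exhibits $U$ as $(\widehat{\pi}_l\otimes\iota)(V)$, with $V\in\widehat{M}'\otimes M$ the right regular corepresentation of $(M,\Delta)$, which is simultaneously the left regular corepresentation of $(\widehat{M}',\widehat{\Delta}')$. The plan is to first prove the corresponding commutation relation at the level of $\mathscr{L}^2(M)\otimes\mathscr{L}^2(M)$, expressing $V(1\otimes\widehat{\pi}_r(m))V^*$ as $(\iota\otimes\widehat{\pi}_r)$ applied to a suitably $\widehat{R}'$-twisted copy of $\widehat{\Delta}'(m)$ whose first leg still lies in $\widehat{M}'$, and then to apply the normal homomorphism $\widehat{\pi}_l$ to that first leg; the $\widehat{R}'$-twist is exactly what converts $\widehat{\pi}_l$ into $\widehat{\theta}_r$, via the relation $\widehat{\theta}_r=\widehat{\pi}_l\circ\widehat{R}'$ recorded at the start of this section. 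The delicate point, and the step I expect to cost the most effort, is pinning down this $V$-level relation and the precise placement of $\widehat{R}'$, since it hinges on the conventions tying together $V$, the coproduct $\widehat{\Delta}'$, the unitary antipode $\widehat{R}'$, and the conjugations $\widehat{J},J_N$; once that is secured, relation (4) follows by applying $\widehat{\pi}_l$ and reversing the $J$-conjugation above.
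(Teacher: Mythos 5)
Your proposal is correct and follows essentially the same route as the paper: relations (1)--(3) come from the left $N\rtimes M$-module property of $G$ (Lemma \ref{lem25}) read through the ergodic identifications, and relation (4) comes from combining this with the modular conjugations (Lemma \ref{lem26}, i.e.\ the right module structure), the formulas $U=J_{N\rtimes M}(J_N\otimes\widehat{J})=(\widehat{\pi}_l\otimes\iota)(V)$, the fact that $V$ is the left multiplicative unitary of $(\widehat{M}',\widehat{\Delta}')$ with $V(J\otimes\widehat{J})=(J\otimes\widehat{J})V^*$, and $\widehat{\theta}_r=\widehat{\pi}_l\circ\widehat{R}'$. Your reduction of (4) to the identity $U(1\otimes\widehat{\pi}_r(m))U^*=(\widehat{\theta}_r\otimes\widehat{\pi}_r)(\widehat{\Delta}'(m))$ is exactly the paper's computation of $\pi_r^{N\rtimes M}(1\otimes m)$, just written out with the conjugations made explicit.
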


\begin{proof} These equalities follow directly from the fact that $G$ is a $N\rtimes M$-bimodule map. For the fourth one, we remark that the right representation $\pi^{N\rtimes M}_r$ of $N\rtimes M$ on $\mathscr{L}^2(N)\otimes \mathscr{L}^2(M)$ is given by $\pi^{N\rtimes M}_r(\alpha(x))= \pi_r(x)\otimes 1$ and $\pi^{N\rtimes M}_r(1\otimes m)=U(1\otimes \widehat{\pi}_r(m))U^*$, a fact which is easy to recover using that $U=J_{N\rtimes M}(J_N\otimes \widehat{J})$. Now use that also $U=(\widehat{\pi}_l\otimes \iota)(V)$, that $V$ is the left multiplicative unitary for $(\widehat{M}',\widehat{\Delta}')$, and that $V(J\otimes \widehat{J})=(J\otimes \widehat{J})V^*$.\end{proof}

\noindent Note that $\mathscr{L}^2(N)$ is a natural right $\widehat{M}$-module, by an anti-representation $\widehat{\pi}_r:m\rightarrow \widehat{\pi}_l(\widehat{J}m^*\widehat{J})$ for $m\in \widehat{M}$. (This is \emph{not} the same map as the one above the previous lemma. However, no ambiguities should arise by the use of this double notation (of which the first one will not get used much anyhow). For example, the two maps agree on their common domain when $(N,\alpha)=(M,\Delta)$.) Denote by $\widehat{Q}$ the linking algebra between the right $\widehat{M}$-modules $\mathscr{L}^2(M)$ and $\mathscr{L}^2(N)$. We will write $\widehat{Q}=\left(\begin{array}{cc} \widehat{Q}_{11} & \widehat{Q}_{12}\\
\widehat{Q}_{21} & \widehat{Q}_{22}\end{array}\right)$ as well as $\widehat{Q}=\left(\begin{array}{cc} \widehat{P} & \widehat{N}\\
\widehat{O} & \widehat{M}\end{array}\right)$.

\begin{Cor}\label{corr}\begin{enumerate} \item  $\tilde{G}\in \widehat{O}\otimes N$.
\item $\tilde{G}_{12} U_{13} = V_{13}\tilde{G}_{12}$.\end{enumerate}
\end{Cor}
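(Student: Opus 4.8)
The plan is to read off both statements directly from the commutation relations collected in Lemma \ref{com1}, interpreting them as membership statements for an operator in a tensor product and as an intertwining relation. For the first statement, recall that $\widehat{O}=\widehat{Q}_{21}$ is the space of intertwiners between the right $\widehat{M}$-modules $\mathscr{L}^2(M)$ and $\mathscr{L}^2(N)$, i.e. operators $\mathscr{L}^2(M)\to\mathscr{L}^2(N)$ commuting with $\widehat{\pi}_r$. To show $\tilde{G}\in\widehat{O}\otimes N$, I would test $\tilde{G}$ against the commutants of the legs. The claim $\tilde{G}\in\widehat{O}\otimes N$ is equivalent to saying that $\tilde{G}$ commutes with everything of the form $c\otimes d$, where $c$ lies in the commutant $\widehat{O}$-condition for the first leg and $d\in N'$ for the second leg. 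Concretely, relation (3) of Lemma \ref{com1}, namely $\tilde{G}(1\otimes\pi_r(x))=(1\otimes\pi_r(x))\tilde{G}$ for $x\in N$, says that the second leg of $\tilde{G}$ commutes with $\pi_r(N)=N'$, which is exactly the statement that the second leg lies in $N''=N$. For the first leg, I would use that the right $\widehat{M}$-module structure on $\mathscr{L}^2(M)$ is implemented by $\widehat{\pi}_r(m)=\widehat{J}m^*\widehat{J}$, so that commuting with this anti-representation on the first factor is precisely the intertwiner condition defining $\widehat{O}$. The relevant commutation in the first leg follows by combining the information in relations (2) and (4) with the fact that $\tilde{G}$ is the Galois unitary of a bimodule map, so I would check that $\tilde{G}$ commutes with $\widehat{\pi}_r(m)\otimes 1$ for $m\in\widehat{M}$, which yields membership of the first leg in $\widehat{O}$.

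For the second statement, $\tilde{G}_{12}U_{13}=V_{13}\tilde{G}_{12}$, I would argue as follows. Here $\tilde{G}_{12}$ acts on $\mathscr{L}^2(N)\otimes\mathscr{L}^2(N)\otimes\mathscr{H}$ (with range legs $\mathscr{L}^2(M)\otimes\mathscr{L}^2(N)$ on positions $1,2$), while $U$ implements $\alpha$ with legs in $B(\mathscr{L}^2(N))\otimes M$ and $V$ is the right regular corepresentation of $(M,\Delta)$, with legs in positions $1,3$. The key tool is the defining relation $U=(\widehat{\pi}_l\otimes\iota)(V)$ used in the proof of Lemma \ref{com1}, together with relation (2), $\tilde{G}(\widehat{\pi}_l(m)\otimes 1)=(m\otimes 1)\tilde{G}$ for $m\in\widehat{M}'$. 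Slotting the third leg in, relation (2) says exactly that $\tilde{G}_{12}$ conjugates $\widehat{\pi}_l$ on the first input leg into the identity representation $m\mapsto m$ on the first output leg. Applying $(\iota\otimes\iota\otimes\omega)$ to the desired identity for $\omega\in M_*$ and using $U=(\widehat{\pi}_l\otimes\iota)(V)$, the left-hand side becomes $\tilde{G}_{12}((\widehat{\pi}_l\otimes\iota)(V))_{13}$ and the right-hand side becomes $V_{13}\tilde{G}_{12}$; by relation (2) the $\widehat{\pi}_l$ on the first leg of $U$ gets transported through $\tilde{G}_{12}$ to become the identity representation on the first leg, producing precisely $V_{13}$ and matching the two sides.

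I expect the main obstacle to be the careful bookkeeping of leg positions and of which representation (the identity $m\mapsto m$, the anti-representation $\widehat{\pi}_r$, or $\widehat{\pi}_l$) acts on which Hilbert space factor, rather than any deep new idea: both statements are essentially repackagings of Lemma \ref{com1}. In particular, for part (1) one must be attentive that the $\widehat{M}$-module structure appearing in the definition of $\widehat{Q}$ uses the anti-representation $\widehat{\pi}_r:m\mapsto\widehat{\pi}_l(\widehat{J}m^*\widehat{J})$ introduced just before the corollary, and not the earlier $\widehat{\pi}_r$ on $\mathscr{L}^2(M)$; checking that the intertwiner condition for $\widehat{O}$ matches the commutation actually supplied by Lemma \ref{com1} is where the small subtlety lies. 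For part (2), the only delicate point is ensuring that the substitution $U=(\widehat{\pi}_l\otimes\iota)(V)$ is inserted on the correct legs and that relation (2) is applied in the form that transports $\widehat{\pi}_l$ to the identity representation; once the leg numbering is fixed the identity is immediate.
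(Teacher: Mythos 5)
Your proposal is correct and is essentially the paper's own proof: part (1) follows from relations (2) and (3) of Lemma \ref{com1} (relation (2), rewritten via $\widehat{M}'=\widehat{J}\widehat{M}\widehat{J}$, is exactly the $\widehat{O}$-intertwiner condition on the first leg, while relation (3) places the second leg in $N''=N$), and part (2) follows from relation (2) together with $(\iota\otimes \omega)(U)=\widehat{\pi}_l((\iota\otimes \omega)(V))$ after slicing the third leg with $\omega\in M_*$. Two small corrections: $\widehat{O}=\widehat{Q}_{21}$ consists of intertwiners $\mathscr{L}^2(N)\rightarrow \mathscr{L}^2(M)$ (the intertwiners $\mathscr{L}^2(M)\rightarrow \mathscr{L}^2(N)$ form $\widehat{N}=\widehat{Q}_{12}$), and relation (4) of Lemma \ref{com1} plays no role in part (1).
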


\begin{proof} The first statement follows by the second and third commutation relation in the previous lemma. Since for $\omega \in M_*$, we have $(\iota\otimes \omega)(U)=\widehat{\pi}_l((\iota\otimes \omega)(V))$, the second statement also follows from the second commutation relation of the previous lemma. \end{proof}

\noindent The following is just a restatement of Lemma \ref{lem26}.

\begin{Lem} \label{lem2} The map $\tilde{G}$ satisfies the
identity $\tilde{G}(J_N\otimes J_N)\Sigma = \Sigma U\Sigma
(\widehat{J}\otimes J_N)\tilde{G}$.\end{Lem}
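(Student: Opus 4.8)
The plan is to obtain this identity as a direct algebraic consequence of the second commutation relation in Lemma~\ref{lem26}, namely $J_{N\rtimes M}G = GJ_{N_2}$, after inserting the explicit forms of the two modular conjugations and of $\tilde{G}$ that are available in the Galois object setting. The three ingredients I would use are: first, that $\tilde{G}=\Sigma G$, so that $G=\Sigma\tilde{G}$; second, that $J_{N_2}=\Sigma(J_N\otimes J_N)$ under the identification of $\mathscr{L}^2(N_2)$ with $\mathscr{L}^2(N)\otimes\mathscr{L}^2(N)$ recorded at the start of this section; and third, that $U=J_{N\rtimes M}(J_N\otimes\widehat{J})$, which upon multiplying on the right by the involution $J_N\otimes\widehat{J}$ gives $J_{N\rtimes M}=U(J_N\otimes\widehat{J})$.

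Substituting these three facts into $J_{N\rtimes M}G=GJ_{N_2}$ turns the relation into
\[U(J_N\otimes\widehat{J})\,\Sigma\tilde{G} \;=\; \Sigma\tilde{G}\,\Sigma(J_N\otimes J_N).\]
From here the argument is pure bookkeeping with the flip. The elementary facts needed are $\Sigma(A\otimes B)=(B\otimes A)\Sigma$ (valid also for the conjugate-linear operators $J_N,\widehat{J}$, as one checks on simple tensors), together with its special case $\Sigma(J_N\otimes J_N)=(J_N\otimes J_N)\Sigma$. The first rewrites the left-hand side as $U\Sigma(\widehat{J}\otimes J_N)\tilde{G}$, and the second rewrites the right-hand side as $\Sigma\tilde{G}(J_N\otimes J_N)\Sigma$. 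Multiplying the resulting equality on the left by $\Sigma$ and using $\Sigma^2=1$ then yields exactly $\tilde{G}(J_N\otimes J_N)\Sigma=\Sigma U\Sigma(\widehat{J}\otimes J_N)\tilde{G}$, as claimed.

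The only point demanding care --- and the closest thing to an obstacle --- is keeping track of which copy of the flip acts on which pair of Hilbert spaces, since $\tilde{G}$ and $G$ have different codomains ($\mathscr{L}^2(M)\otimes\mathscr{L}^2(N)$ versus $\mathscr{L}^2(N)\otimes\mathscr{L}^2(M)$), and one must confirm that each flip relation is applied on the correct leg ordering. The conjugate-linearity of $J_N$ and $\widehat{J}$ makes no difference to these commutations, so no genuine analytic difficulty arises; this is why the lemma is indeed, as asserted, merely a restatement of the second part of Lemma~\ref{lem26}.
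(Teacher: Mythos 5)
Your proof is correct and is precisely the paper's argument: the paper simply declares Lemma \ref{lem2} to be ``just a restatement of Lemma \ref{lem26}'', and your computation --- substituting $G=\Sigma\tilde{G}$, $J_{N_2}=\Sigma(J_N\otimes J_N)$ and $J_{N\rtimes M}=U(J_N\otimes\widehat{J})$ into $J_{N\rtimes M}G=GJ_{N_2}$ and commuting the flips past the (conjugate-linear) tensor factors --- is exactly the unwinding of that restatement, with the details the paper leaves implicit.
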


\noindent Now we prove a kind of pentagon equation:

\begin{Prop}\label{lem4}
\[\widehat{W}_{12}\tilde{G}_{13}\tilde{G}_{23}= \tilde{G}_{23}\tilde{G}_{12}.\]
\end{Prop}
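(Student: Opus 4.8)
The plan is to verify this operator identity by evaluating both sides on the dense subspace spanned by vectors of the form $\Lambda_N(x)\otimes\Lambda_N(y)\otimes\Lambda_N(z)$ with $x,y,z\in\mathscr{T}_{\varphi_N,T}$, and then reducing everything to properties of the Galois isometry $\tilde{G}=\Sigma G$ together with the multiplicative unitary $\widehat{W}$. Recall that on such vectors we have, by definition of $G$, that $\tilde{G}(\Lambda_N(x)\otimes\xi)=\Sigma(\Lambda_N\otimes\Lambda)(\alpha(x)(\,\cdot\,\otimes 1))$ applied appropriately; the essential input is the \emph{coassociativity} $(\iota\otimes\Delta)\alpha=(\alpha\otimes\iota)\alpha$ of the coaction, which is exactly what is expected to produce the pentagon relation after transporting through the flips.

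First I would compute the action of $\tilde{G}_{23}\tilde{G}_{12}$ on a typical simple tensor. The operator $\tilde{G}_{12}$ acts on the first two legs, sending $\Lambda_N(x)\otimes\Lambda_N(y)$ (in legs $1,2$) into $\mathscr{L}^2(M)\otimes\mathscr{L}^2(N)$ via $x$ and $y$, after which $\tilde{G}_{23}$ acts on the resulting $N$-valued second-and-third legs. The point is that the $N$-leg produced by $\tilde{G}_{12}$ is fed as the \emph{first} argument into $\tilde{G}_{23}$; using the explicit formula for $G$ in terms of $\alpha$, this nesting expresses $(\alpha\otimes\iota)\alpha(x)$ (the iterated coaction) paired against $y$ and $z$. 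On the other side, $\tilde{G}_{13}\tilde{G}_{23}$ produces the other grouping $(\iota\otimes\Delta)\alpha(x)$, and the left factor $\widehat{W}_{12}$ is precisely the operator on $\mathscr{L}^2(M)\otimes\mathscr{L}^2(M)$ implementing $\Delta$ via $\widehat{W}(\Lambda(a)\otimes\xi)$-type formulas, i.e.\ encoding the coproduct of $M$ on the two $\mathscr{L}^2(M)$-legs. Matching the two sides is then exactly the statement of coassociativity, after one carefully tracks the flip maps $\Sigma$ hidden in each $\tilde{G}=\Sigma G$ and reconciles the leg-numbering so that the $M$-legs and $N$-legs land in the intended positions.

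The cleanest route, rather than brute-force evaluation, is to package these computations using the commutation relations already established. I would use Corollary \ref{corr}, in particular the relation $\tilde{G}_{12}U_{13}=V_{13}\tilde{G}_{12}$ together with the fact from Corollary \ref{corr} that $\tilde{G}\in\widehat{O}\otimes N$ (so its second leg is genuinely in $N$ and its first leg in $\widehat{O}\subseteq B(\mathscr{L}^2(M),\mathscr{L}^2(N))$), and the fundamental relation $U=(\widehat{\pi}_l\otimes\iota)(V)$ noted in the proof of Lemma \ref{com1}. The pentagon for $\tilde{G}$ should then be deduced from the pentagon equation for $V$ (equivalently the one for $\widehat{W}$, since $\widehat{W}=\Sigma W^*\Sigma$), transported through $\tilde{G}$ by repeated application of the intertwining relation $\tilde{G}_{12}U_{13}=V_{13}\tilde{G}_{12}$ and the commutation relations (1)--(2) of Lemma \ref{com1}. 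Concretely, starting from $\tilde{G}_{23}\tilde{G}_{12}$, one slides a copy of $U$ (or $V$) past $\tilde{G}$ using Corollary \ref{corr}(2), invokes coassociativity in the form of the multiplicative-unitary pentagon for $V$, and slides back, collecting the factor $\widehat{W}_{12}$.

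The main obstacle I anticipate is purely bookkeeping: keeping the leg-numbering consistent through the various flips $\Sigma$ built into $\tilde{G}=\Sigma G$ and verifying that the well-definedness of $\tilde{G}_{13}$ (an off-diagonal leg placement) is compatible with the domains on which all operators act. Because $\tilde{G}$ is only an intertwiner between balanced/relative tensor product spaces in disguise, one must check at each stage that the simple tensors used remain in the appropriate cores (here $\mathscr{T}_{\varphi_N,T}$ supplies density and analyticity), so that the formal manipulations are justified on a dense domain and the resulting bounded-operator identity extends by continuity. Once the flips are correctly accounted for, the equation should follow directly from coassociativity of $\alpha$ and the pentagon relation for the regular corepresentation, with no further analytic input required.
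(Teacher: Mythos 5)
Your first two paragraphs are, in substance, the paper's own argument; the paper merely makes the evaluation rigorous with slice maps instead of simple tensors. It first records that $(\iota\otimes\omega)(\tilde{G})\Lambda_N(x)=\Lambda((\omega\otimes\iota)(\alpha(x)))$ for $\omega\in B(\mathscr{L}^2(N))_*$ and $x\in\mathscr{N}_{\varphi_N}$, then computes $(\iota\otimes\omega)(\widehat{W})(\iota\otimes\omega')(\tilde{G})\Lambda_N(x)$, invokes coassociativity $(\iota\otimes\Delta)\alpha=(\alpha\otimes\iota)\alpha$ to obtain $\widehat{W}_{12}\tilde{G}_{13}=(\iota\otimes\alpha^{\textrm{op}})(\tilde{G})$, and concludes with $(\iota\otimes\alpha^{\textrm{op}})(\tilde{G})=\tilde{G}_{23}\tilde{G}_{12}\tilde{G}_{23}^*$, which follows from relation (1) of Lemma \ref{com1}, Corollary \ref{corr}(1) and unitarity of $\tilde{G}$. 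Had you carried this out, you would have been done; note that the only quantum-group input needed is that $\widehat{W}$ implements $\Delta$ under slicing — the pentagon equation for $V$ (or $\widehat{W}$) is never used.

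The route you actually advocate in your third paragraph, however, contains a genuine gap: the pentagon for $\tilde{G}$ cannot be ``deduced from the pentagon equation for $V$, transported through $\tilde{G}$'' using only Corollary \ref{corr}, relations (1)--(2) of Lemma \ref{com1}, and $U=(\widehat{\pi}_l\otimes\iota)(V)$. Concretely, take any non-scalar unitary $u\in N$ and set $\tilde{G}'=\tilde{G}(1\otimes u)$. Then $\tilde{G}'$ still lies in $\widehat{O}\otimes N$, still satisfies $\tilde{G}'(x\otimes 1)=\alpha^{\textrm{op}}(x)\tilde{G}'$ and $\tilde{G}'(\widehat{\pi}_l(m)\otimes 1)=(m\otimes 1)\tilde{G}'$ (the factor $1\otimes u$ slides through $x\otimes 1$ and $\widehat{\pi}_l(m)\otimes 1$), and still satisfies $\tilde{G}'_{12}U_{13}=V_{13}\tilde{G}'_{12}$ (since $1\otimes u\otimes 1$ commutes with $U_{13}$); the relation $U=(\widehat{\pi}_l\otimes\iota)(V)$ and the pentagon for $V$ are of course untouched. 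Yet $\tilde{G}'$ satisfies the pentagon if and only if $(1\otimes u)\tilde{G}=\tilde{G}(u\otimes 1)=\alpha^{\textrm{op}}(u)\tilde{G}$, i.e.\ if and only if $\alpha(u)=u\otimes 1$, which by ergodicity forces $u$ to be scalar. Hence no formal manipulation of the relations you list can ever produce the pentagon: those relations pin down the first leg of $\tilde{G}$ as an intertwiner and only assert that the second leg lies in $N$, whereas the pentagon is precisely the statement that applying the coaction to the second leg of $\tilde{G}$ reproduces $\widehat{W}_{12}\tilde{G}_{13}$. That statement — equivalently, the coaction property of $\alpha$, not the coassociativity of $\Delta$ encoded by the pentagon for $V$ — must be fed in from the defining formula of $\tilde{G}$, as in your first two paragraphs (or via relation (4) of Lemma \ref{com1}, which does carry second-leg information but which you do not invoke).
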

\begin{proof}
\noindent For $x\in \mathscr{N}_{\varphi_N}$ and
$\omega\in B(\mathscr{L}^2(N))_*$, we have $(\omega\otimes \iota)(\alpha(x)) \in
\mathscr{N}_{\varphi}$,
 and \[(\iota\otimes \omega)(\tilde{G})\Lambda_{N}(x) =
 \Lambda ((\omega\otimes \iota)(\alpha(x))).\]  This follows
  by first considering $\omega$ of the form $\omega_{\Lambda_{N}(y),\Lambda_{N}(z)}$ with $y,z\in \mathscr{N}_{\varphi_N}$,
   and then using the closedness of the map $\Lambda$ to conclude that it holds in general.
    Now for $x\in \mathscr{N}_{\varphi_N}$, $\omega\in M_*$ and $\omega'\in N_*$, we have, using $\widehat{W}=\Sigma W^*\Sigma$,
     \begin{eqnarray*} (\iota\otimes \omega)(\widehat{W})(\iota\otimes \omega')(\tilde{G}) \Lambda_{N} (x)
     &=& \Lambda ((\omega'\otimes \omega\otimes \iota)((\iota\otimes \Delta)(\alpha(x))))\\ &=&
     \Lambda((\omega'\otimes \omega\otimes \iota)((\alpha\otimes \iota)(\alpha(x)))) \\ &= &
     \Lambda((((\omega\otimes \omega')\circ \alpha^{\textrm{op}})\otimes \iota)(\alpha(x)))\\ &=&
     (\iota\otimes ((\omega\otimes \omega')\circ \alpha^{\textrm{op}}))(\tilde{G})\Lambda_N(x),
     \end{eqnarray*} from which we conclude $\widehat{W}_{12}\tilde{G}_{13} = (\iota\otimes \alpha^{\textrm{op}})(\tilde{G})$.
      Since $(\iota\otimes \alpha^{\textrm{op}})(\tilde{G})= \tilde{G}_{23}\tilde{G}_{12} \tilde{G}_{23}^*$,
      the result follows. \\

\end{proof}
\noindent \emph{Remarks:} \begin{enumerate} \item Note that if $N$ and $M$ have separable preduals, then, choosing a unitary $u:\mathscr{L}^2(M)\rightarrow \mathscr{L}^2(N)$, the unitary $v=\tilde{G}(u\otimes 1)$ in $B(\mathscr{L}^2(M))\otimes N$ will satisfy $(\iota\otimes \alpha)(v)=\widehat{W}_{13}v_{12}$. So in this case there is a one-to-one correspondence between Galois objects and ergodic semi-dual coactions.
\item Note that for the trivial right Galois object $(M,\Delta)$ for $(M,\Delta)$, the map $\tilde{G}$ is exactly $\widehat{W}$, while the map $U$ becomes the right regular representation $V$.\\\end{enumerate}

\noindent We have the following density results:

\begin{Prop}\label{prop412} \begin{enumerate} \item The space $L=\{(\omega\otimes \iota)(\tilde{G})\mid \omega\in B(\mathscr{L}^2(N),\mathscr{L}^2(M))_*\}$ is $\sigma$-weakly dense in $N$.
\item  The space $K=\{(\iota\otimes \omega)(\tilde{G})\mid \omega\in B(\mathscr{L}^2(N))_*\}$ is $\sigma$-weakly dense in $\widehat{O}$.
\end{enumerate}\end{Prop}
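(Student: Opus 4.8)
The plan is to prove the two density statements by dualizing: rather than showing directly that a slice space is dense in the target von Neumann algebra, I would show that its commutant (the set of operators commuting with all slices) is trivial, using the commutation relations from Lemma \ref{com1} together with the structural facts that $\tilde{G}$ is a unitary (since $\alpha$ is Galois) and that $\tilde{G}\in \widehat{O}\otimes N$ by Corollary \ref{corr}.(1).

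For the first statement, I would argue as follows. The space $L=\{(\omega\otimes \iota)(\tilde{G})\}$ is already contained in $N$: this is exactly the content of Corollary \ref{corr}.(1), which places $\tilde{G}$ in $\widehat{O}\otimes N$, so the second-leg slices land in $N$. To see $\sigma$-weak density, suppose $y\in N'$ commutes with every element of $L$, i.e. $(\omega\otimes \iota)(\tilde{G})\,y = y\,(\omega\otimes \iota)(\tilde{G})$ for all $\omega$, which is equivalent to $\tilde{G}(1\otimes y)=(1\otimes y)\tilde{G}$. Here $y\in N'$ should be expressible through the commutation relations; the natural candidates for elements commuting with the second leg are $\pi_r(x)$ and $\widehat{\theta}_r(m)$, whose behaviour under $\tilde{G}$ is governed by parts (3) and (4) of Lemma \ref{com1}. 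The plan is to use relation (3), $\tilde{G}(1\otimes \pi_r(x))=(1\otimes \pi_r(x))\tilde{G}$, to show that any $y$ commuting with the second leg of $\tilde{G}$ must be a scalar; concretely I would combine ergodicity ($N^\alpha=\mathbb{C}$) with the fact that $\tilde{G}$ intertwines the relevant actions, feeding $1\otimes y$ through the pentagon-type relation of Proposition \ref{lem4} to force $y$ to lie in the fixed-point algebra.

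For the second statement, the inclusion $K\subseteq \widehat{O}$ is again immediate from Corollary \ref{corr}.(1) (first-leg slices of an element of $\widehat{O}\otimes N$ lie in $\widehat{O}$). For density inside $\widehat{O}$, I would pass to the linking algebra $\widehat{Q}$ and compute the commutant of $K$ relative to the module structure. The key tool is relation (2) of Lemma \ref{com1}, $\tilde{G}(\widehat{\pi}_l(m)\otimes 1)=(m\otimes 1)\tilde{G}$, which together with the observation in Corollary \ref{corr}.(2), $\tilde{G}_{12}U_{13}=V_{13}\tilde{G}_{12}$, shows that the first leg of $\tilde{G}$ generates the correct corner. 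I would argue that any operator in $\widehat{O}$ commuting (in the appropriate bimodule sense) with all of $K$ must commute with $\widehat{\pi}_l(\widehat{M}')$ and hence vanish by the faithfulness and density already established for $\widehat{\pi}_l$.

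The main obstacle I expect is bookkeeping the module structures correctly: the relevant notion of ``commutant'' for the second statement is not the ordinary von Neumann commutant but the one coming from the right $\widehat{M}$-module structure inside the linking algebra $\widehat{Q}$, and one must be careful that the slicing functionals in $B(\mathscr{L}^2(N))_*$ really separate $\widehat{O}$ modulo this structure. Concretely, the delicate point is ensuring that no operator in $\widehat{O}_{21}$-corner survives as a nonzero element of the commutant of $K$; this is where unitarity of $\tilde{G}$ (equivalently, that $\alpha$ is Galois) is essential, since for a merely isometric $\tilde{G}$ the range projection $\tilde{G}\tilde{G}^*$ would obstruct the argument. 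I would therefore lean heavily on $\tilde{G}\tilde{G}^*=1$ to invert the intertwining relations and transfer the triviality of the relevant commutant from one leg to the other.
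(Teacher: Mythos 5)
Your reduction for part 1 is set up incorrectly, and the error is fatal rather than cosmetic. Since $L\subseteq N$ (Corollary \ref{corr}.(1)), \emph{every} $y\in N'$ automatically commutes with every element of $L$, so the hypothesis of your commutant argument is vacuous; and the conclusion you aim for (``$y$ is a scalar'') would assert $N'=\mathbb{C}$, i.e. $N=B(\mathscr{L}^2(N))$, which is false in general. The correct dual formulation of the density $L''=N$ is $L'=N'$: one must show that \emph{any} $y\in B(\mathscr{L}^2(N))$ satisfying $\tilde{G}(1\otimes y)=(1\otimes y)\tilde{G}$ already lies in $N'$. Feeding such a $y$ through relation (1) of Lemma \ref{com1} shows that $y\otimes 1$ commutes with $\alpha(N)$, so what is needed is exactly $\alpha(N)'\cap (B(\mathscr{L}^2(N))\otimes \mathbb{C})=N'\otimes\mathbb{C}$ --- which is precisely the commutant form of the bi-duality theorem $(\alpha(N)\cup(1\otimes B(\mathscr{L}^2(M))))''=N\otimes B(\mathscr{L}^2(M))$. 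This is the one ingredient your plan never invokes, and ergodicity cannot substitute for it: ergodicity constrains fixed points \emph{inside} $N$, whereas the element $y$ lives a priori outside $N$, so ``forcing $y$ into the fixed-point algebra'' via the pentagon equation is not available. (There is also a secondary issue: to apply a bicommutant argument one needs the closure of the span of $L$ to be a $^*$-closed unital algebra; the pentagon gives multiplicativity, but $^*$-closedness only comes from identifying $L$ with the slices $(\iota\otimes\omega)(\alpha(x))$ via $(\omega_{\Lambda_N(x),\Lambda(m)}\otimes\iota)(\tilde{G})=(\iota\otimes\varphi)((1\otimes m^*)\alpha(x))$, which is exactly the route the paper takes before invoking bi-duality.)

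For part 2 you correctly recognize that the density is a W$^*$-module statement inside the linking algebra $\widehat{Q}$ rather than an ordinary commutant statement, but the mechanism you propose is not well formed: an element of $\widehat{O}$ is an operator $\mathscr{L}^2(N)\rightarrow\mathscr{L}^2(M)$, so ``commuting with $\widehat{\pi}_l(\widehat{M}')$'' does not parse, and no vanishing can be extracted from it. The paper's actual engine is different: the pentagon equation (Proposition \ref{lem4}) shows $K$ is stable under left multiplication by the slices $(\iota\otimes\omega)(\widehat{W})$, which are $\sigma$-weakly dense in $\widehat{M}$; by the self-duality argument already used in Proposition \ref{tex11}, it then suffices to show that $z\in\widehat{N}$ with $K\cdot z=0$ forces $z=0$; and this follows from the explicit GNS formula $(\iota\otimes\omega_{\Lambda_N(x),\Lambda_N(y)})(\tilde{G}^*)\widehat{\Lambda}^{\textrm{op}}(m)=\widehat{\pi}_l(m)\,x\,\Lambda_N(\sigma_{-i}(y^*))$ (Lemma \ref{lem12} together with Proposition \ref{prop2222}), which shows that $K^*\cdot\mathscr{L}^2(M)$ is dense in $\mathscr{L}^2(N)$. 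This concrete computation --- slices of $\tilde{G}^*$ acting on GNS vectors --- is the missing core of your sketch; relation (2) of Lemma \ref{com1} and Corollary \ref{corr}.(2) alone, however one books the module structures, do not produce it.
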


\begin{proof}

\noindent By the pentagon equation, the linear span of the $(\omega\otimes \iota)(\tilde{G})$ will be an algebra. Further, for any $x\in \mathscr{N}_{\varphi_N}$ and $m\in \mathscr{N}_{\varphi}$, we have $(1\otimes m^*)\alpha(x) \in \mathscr{M}_{(\iota\otimes \varphi)}$ and $(\omega_{\Lambda_N(x),\Lambda(m)}\otimes \iota)(\tilde{G})=(\iota\otimes \varphi)((1\otimes m^*)\alpha(x))$. From this, we can conclude that the $\sigma$-weak closure of $L$ also is the $\sigma$-weak closure of the span of
$\{(\iota\otimes \omega)(\alpha(x))\mid \omega\in
M_*, x\in N \}$, so that this $\sigma$-weak closure will be a unital sub-von Neumann algebra of $N$ (see also the proof of Proposition 1.21 of \cite{VDae2}). Now suppose $\omega\in N_*$ is orthogonal to $L$. By the bi-duality
theorem (see \cite{Eno2}, and also Theorem  2.6 of \cite{Vae1}), we have that $(\alpha(N)\cup (1\otimes
B(\mathscr{L}^2(M))))''$ equals $N\otimes B(\mathscr{L}^2(M))$. So for any $x\in N\otimes B(\mathscr{L}^2(N))$ and $\omega'\in B(\mathscr{L}^2(N))_*$, $(\iota\otimes \omega')(x)$ can be $\sigma$-weakly approximated by elements of the form $(\iota\otimes \omega')(x_n)$ with $x_n$ in the algebra generated by $\alpha(N)$ and $1\otimes B(\mathscr{H})$, and any such element can in turn be approximated by an element in the algebra generated by elements of the form $(\iota\otimes \omega'')(\alpha(x_{nm}))$, $\omega''\in B(\mathscr{L}^2(M))_*$ and $x_{nm}\in N$, by using an orthogonal basis argument. It follows that $\omega$ vanishes on the whole of $N$, and hence $L$ is $\sigma$-weakly dense in $N$.\\

\noindent For the second statement, note that, by the pentagon equation, $K$ is closed under left multiplication with elements of the form $(\iota\otimes \omega)(\widehat{W})$ for $\omega\in M_*$. Hence, as in the proof of Proposition \ref{tex11}, it is enough to show that if $z\in \widehat{N}$ satisfies $K\cdot z= 0$, then $z= 0$. But take $x,y\in \mathscr{T}_{\varphi_N,T}$ (which is now just the Tomita algebra $\mathscr{T}_{\varphi_N}$ for $\varphi_N$), and $m\in \mathscr{N}_{\widehat{\varphi}^{\textrm{op}}}$. Then $(\iota\otimes \omega_{\Lambda_N(x),\Lambda_N(y)})(\tilde{G}^*)\widehat{\Lambda}^{\textrm{op}}(m) = \widehat{\pi}_l(m) x\Lambda_N(\sigma_{-i}(y ^*))$ by  Lemma \ref{lem12} and Proposition \ref{prop2222}. Hence $K^* \cdot \mathscr{L}^2(M)$ is dense in $\mathscr{L}^2(N)$, and necessarily $z=0$.
\end{proof}

\begin{Prop} For any $m\in M'$, the operator $\tilde{G}^*(m\otimes 1)\tilde{G}$ lies in $N'\otimes N$.\end{Prop}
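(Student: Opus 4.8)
The plan is to characterize membership in $N'\otimes N$ through its commutant. By the commutation theorem for tensor products, $(N'\otimes N)'=N\otimes N'$ as von Neumann algebras acting on $\mathscr{L}^2(N)\otimes \mathscr{L}^2(N)$, and by Tomita--Takesaki we have $N'=\pi_r(N)$. Thus $N\otimes N'$ is generated by the operators $x\otimes 1$ (for $x\in N$) together with $1\otimes \pi_r(y)$ (for $y\in N$), and it suffices to show that $\tilde{G}^*(m\otimes 1)\tilde{G}$ commutes with each of these two families. Throughout I would use the commutation relations of Lemma \ref{com1}, taking adjoints freely, since these are bounded operator identities and no analytic subtlety arises.

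For the family $1\otimes \pi_r(y)$, I would start from the third relation of Lemma \ref{com1}, namely $\tilde{G}(1\otimes \pi_r(y))=(1\otimes \pi_r(y))\tilde{G}$. Taking adjoints and replacing $y^*$ by $y$ yields the same relation with $\tilde{G}^*$ in place of $\tilde{G}$. Since $m$ acts on the first leg $\mathscr{L}^2(M)$ while $\pi_r(y)$ acts on the second leg $\mathscr{L}^2(N)$, the operators $m\otimes 1$ and $1\otimes \pi_r(y)$ commute trivially. Combining these facts gives
\[(1\otimes \pi_r(y))\,\tilde{G}^*(m\otimes 1)\tilde{G}=\tilde{G}^*(m\otimes 1)(1\otimes \pi_r(y))\tilde{G}=\tilde{G}^*(m\otimes 1)\tilde{G}\,(1\otimes \pi_r(y)),\]
so $\tilde{G}^*(m\otimes 1)\tilde{G}$ commutes with $1\otimes \pi_r(N)=1\otimes N'$.

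For the family $x\otimes 1$, the key input is the first relation of Lemma \ref{com1}, $\tilde{G}(x\otimes 1)=\alpha^{\textrm{op}}(x)\tilde{G}$, whose adjoint gives $(x\otimes 1)\tilde{G}^*=\tilde{G}^*\alpha^{\textrm{op}}(x)$. The crucial observation --- and really the only point that requires care --- is the placement of legs in $\alpha^{\textrm{op}}(x)=\Sigma\alpha(x)\Sigma$: since $\alpha(x)\in N\otimes M$, we have $\alpha^{\textrm{op}}(x)\in M\otimes N$ on $\mathscr{L}^2(M)\otimes \mathscr{L}^2(N)$, with its first leg lying in $M$. Because $m\in M'$ commutes with that first leg, while the identity on the second leg commutes with the second leg of $\alpha^{\textrm{op}}(x)$, the operators $m\otimes 1$ and $\alpha^{\textrm{op}}(x)$ commute. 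Hence
\[(x\otimes 1)\,\tilde{G}^*(m\otimes 1)\tilde{G}=\tilde{G}^*\alpha^{\textrm{op}}(x)(m\otimes 1)\tilde{G}=\tilde{G}^*(m\otimes 1)\alpha^{\textrm{op}}(x)\tilde{G}=\tilde{G}^*(m\otimes 1)\tilde{G}\,(x\otimes 1),\]
using the first relation once more at the final step.

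Combining the two cases, $\tilde{G}^*(m\otimes 1)\tilde{G}$ commutes with both $N\otimes 1$ and $1\otimes N'$, hence with all of $N\otimes N'=(N'\otimes N)'$, and therefore lies in $N'\otimes N$. I expect no genuine obstacle here; the only thing to watch is the bookkeeping of the tensor legs, namely that the $M$-valued leg of $\alpha^{\textrm{op}}(x)$ is exactly the leg on which $m\in M'$ acts, so that the commutation with $m\otimes 1$ is automatic.
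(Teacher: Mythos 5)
Your proof is correct and takes essentially the same route as the paper: the paper also treats the two legs separately, getting the second leg in $N$ from the structure of $\tilde{G}$ (equivalently, commutation with $1\otimes \pi_r(N)$), and getting the first leg in $N'$ from the identity $\tilde{G}(y\otimes 1)\tilde{G}^*=\alpha^{\textrm{op}}(y)$ together with the observation that $m\in M'$ commutes with the $M$-valued first leg of $\alpha^{\textrm{op}}(y)$. Your write-up simply makes explicit the commutant bookkeeping $N'\otimes N=(N\otimes N')'$ that the paper leaves implicit.
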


\begin{proof} Clearly, the second leg lies in $N$.
Since $\tilde{G}(y\otimes 1)\tilde{G}^*= \alpha^{\textrm{op}}(y)$ for
$y\in N$, the first leg of $\tilde{G}^*(m\otimes 1)\tilde{G}$ must
be inside $N'$.
\end{proof}

\noindent Recall that $\nabla_{N\rtimes M}^{it}=\nabla_N^{it}\otimes q^{it}$, where we can also write $q^{it}=\delta^{-it}\widehat{\nabla}^{-it}$. Then $\kappa_t=q^{it}xq^{-it}$ defines a one-parametergroup of automorphisms on $M$, and $\gamma_t(x)=q^{it}xq^{-it}$ defines a one-parametergroup of automorphisms on $\widehat{M}'$.

\begin{Lem}\label{lem3} \begin{enumerate}\item  For $x\in N$,
we have $\alpha(\sigma_t^N(x))=
 (\sigma_t^N\otimes \kappa_t)(\alpha(x))$.
 \item  For $m\in \widehat{M}'$, we have $\sigma_t^{N_2}(\widehat{\pi}_l(m))= \widehat{\pi}_l(\gamma_t(m))$.
\item For $m\in \widehat{M}'$, we have $\widehat{\theta}_r(m)= \widehat{\pi}_l(\widehat{R}'(m))$. \end{enumerate}\end{Lem}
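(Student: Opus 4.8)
The unifying idea is that all three identities come from the explicit modular data of the dual weight, $\nabla_{N\rtimes M}^{it}=\nabla_N^{it}\otimes q^{it}$ (recalled just above), transported to $N_2$ by the Galois unitary $G$. Indeed, combining $\pi_l^{N_2}(\rho(y))=G^*yG$ (the Remark after Lemma \ref{lem25}) with $\nabla_{N_2}^{it}=G^*\nabla_{N\rtimes M}^{it}G$ (Lemma \ref{lem26}.(1), $G$ being unitary here), a one-line conjugation gives $\sigma_t^{N_2}(\rho(y))=\rho(\sigma_t^{N\rtimes M}(y))$ for all $y\in N\rtimes M$. Since $\sigma_t^{N\rtimes M}=\mathrm{Ad}(\nabla_N^{it}\otimes q^{it})$, everything reduces to evaluating this automorphism on the two families of generators $\alpha(N)$ and $1\otimes\widehat M'$.

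For (1) I would invoke $\sigma_t^{\varphi_{N\rtimes M}}\circ\alpha=\alpha\circ\sigma_t^N$ (Proposition 3.7.2 of \cite{Vae1}, already used in Lemma \ref{lem26}) and rewrite the right-hand side as $\sigma_t^{\varphi_{N\rtimes M}}(\alpha(x))=\mathrm{Ad}(\nabla_N^{it}\otimes q^{it})(\alpha(x))$. As $\mathrm{Ad}(\nabla_N^{it})$ restricts to $\sigma_t^N$ on $N$ and $\mathrm{Ad}(q^{it})$ restricts to $\kappa_t$ on $M$, and the two factors act on independent legs of $\alpha(x)\in N\otimes M$, this equals $(\sigma_t^N\otimes\kappa_t)(\alpha(x))$. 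For (2), put $y=1\otimes m$ with $m\in\widehat M'$: then $\mathrm{Ad}(\nabla_N^{it}\otimes q^{it})(1\otimes m)=1\otimes q^{it}mq^{-it}=1\otimes\gamma_t(m)$, so the transport formula gives $\sigma_t^{N_2}(\widehat\pi_l(m))=\sigma_t^{N_2}(\rho(1\otimes m))=\rho(1\otimes\gamma_t(m))=\widehat\pi_l(\gamma_t(m))$.

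For (3) I would follow the author's hint. Both $m\mapsto\widehat\theta_r(m)=J_N\widehat\pi_l(m)^*J_N$ and $m\mapsto\widehat\pi_l(\widehat R'(m))$ are $\sigma$-weakly continuous, so it suffices to check the identity on the generating family $m=(\iota\otimes\omega)(V)$, $\omega\in M_*$ (here $V$ is the left regular representation of $(\widehat M',\widehat\Delta')$). For such $m$ one has $\widehat\pi_l(m)=(\iota\otimes\omega)(U)$ by the defining formula for $\rho$, so $\widehat\theta_r(m)=J_N(\iota\otimes\overline\omega)(U^*)J_N$ with $\overline\omega(\cdot)=\overline{\omega(\cdot^*)}$. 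Moving the two copies of $J_N$ inside the first leg via $(J_N\otimes\widehat J)U(J_N\otimes\widehat J)=U^*$ turns this into $(\iota\otimes\omega'')(U)$, where $\omega''(\cdot)=\overline\omega(\widehat J\,\cdot\,\widehat J)$. On the other side, since $\widehat R'$ is implemented by the appropriate modular conjugation, applying the companion relation $(J\otimes\widehat J)V(J\otimes\widehat J)=V^*$ to slices of $V$ produces $\widehat R'(m)=(\iota\otimes\omega'')(V)$ with the \emph{same} functional $\omega''$; feeding this through $\widehat\pi_l$, which carries $(\iota\otimes\chi)(V)$ to $(\iota\otimes\chi)(U)$, gives $\widehat\pi_l(\widehat R'(m))=(\iota\otimes\omega'')(U)=\widehat\theta_r(m)$.

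The main obstacle is the last step of (3): verifying that the functional $\omega''$ governing $\widehat R'$ on slices of $V$ is \emph{literally} the one that emerges on the $U$-side. This amounts to identifying precisely which modular conjugation implements the unitary antipode $\widehat R'$ of the commutant quantum group $(\widehat M',\widehat\Delta')$ (whose dual is $(M,\Delta^{\mathrm{op}})$) and tracking it carefully through $(J\otimes\widehat J)V(J\otimes\widehat J)=V^*$. Once this bookkeeping of the $J$'s and $\widehat J$'s is settled, the $U$- and $V$-computations are mirror images of one another and (3) follows at once, while (1) and (2) are essentially just the two-leg decomposition of $\nabla_N^{it}\otimes q^{it}$.
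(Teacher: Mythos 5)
Your proposal is correct and is essentially the paper's own proof: parts (1) and (2) are exactly the intended appeal to Lemma \ref{lem26} and Lemma \ref{com1} (the decomposition $\nabla_{N\rtimes M}^{it}=\nabla_N^{it}\otimes q^{it}$ transported through the Galois isometry and evaluated on the generators $\alpha(N)$ and $1\otimes\widehat{M}'$), and part (3) is precisely the paper's hint at the start of the section, combining $(J_N\otimes\widehat{J})U(J_N\otimes\widehat{J})=U^*$ with $(J\otimes\widehat{J})V(J\otimes\widehat{J})=V^*$. The bookkeeping you defer does settle: $\widehat{R}'$ is implemented on $\widehat{M}'$ as $\widehat{R}'(m)=Jm^*J$ (standard for the commutant quantum group, whose dual is $(M,\Delta^{\textrm{op}})$ with left invariant weight $\psi$, and $\mathrm{Ad}(J_\psi)=\mathrm{Ad}(J)$ since $J_\psi$ and $J$ differ only by a phase; see \cite{Kus3}), and on both the $U$- and $V$-sides the emerging functional is $\omega''=\omega\circ R=\omega(\widehat{J}(\cdot)^*\widehat{J})$ — note that your literal expression $\overline{\omega}(\widehat{J}\cdot\widehat{J})$ is anti-linear in its argument, so the $*$ must be inserted — after which the two computations are indeed mirror images and $\widehat{\pi}_l$ carries the $V$-slice to the $U$-slice.
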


\begin{proof} The first two statements follow straightforwardly from Lemma \ref{lem26} and Lemma \ref{com1}. The final statement was noted at the beginning of this section.
 \end{proof}

\noindent In particular, $\sigma_t^{N_2}(\widehat{\pi}_l(\widehat{J}\widehat{\delta}^{is}\widehat{J})) = \widehat{\pi}_l(\widehat{J}\widehat{\delta}^{is}\widehat{J})$ for each $s,t\in \mathbb{R}$, since an easy computation shows that each $\widehat{J}\widehat{\delta}^{is}\widehat{J}$ is invariant under $\gamma_t$.

\begin{Cor} The one-parametergroups $\nabla_N^{it}$ and
 $\widehat{\pi}_l(\widehat{J}\widehat{\delta}^{it}\widehat{J})$ commute.\end{Cor}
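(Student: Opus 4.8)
The plan is to read the claimed commutation directly off the invariance identity stated just above this corollary, once we unwind what the modular automorphism group $\sigma_t^{N_2}$ of $\varphi_2$ looks like in the ergodic (Galois object) situation.

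First I would recall from the opening of this section that here $N_2 = B(\mathscr{L}^2(N))$ and that the associated weight is the trace-like weight $\varphi_2 = \textrm{Tr}(\cdot\,\nabla_N)$. For such a weight the modular automorphism group is given by conjugation with the positive density, so that
\[
\sigma_t^{N_2}(x) = \nabla_N^{it}\, x\, \nabla_N^{-it}, \qquad x\in N_2 = B(\mathscr{L}^2(N)).
\]
Equivalently, this follows from $\nabla_{N_2}^{it} = \nabla_N^{it}\otimes\nabla_N^{it}$ together with the identification $\pi_l^{N_2}(x) = x\otimes 1$: conjugating $x\otimes 1$ by $\nabla_{N_2}^{it}$ leaves the second leg untouched and acts on the first leg by $\textrm{Ad}(\nabla_N^{it})$. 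Thus $\sigma_t^{N_2} = \textrm{Ad}(\nabla_N^{it})$ as an automorphism of $B(\mathscr{L}^2(N))$.

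Next I would invoke the invariance statement immediately preceding this corollary, namely that $\sigma_t^{N_2}(\widehat{\pi}_l(\widehat{J}\widehat{\delta}^{is}\widehat{J})) = \widehat{\pi}_l(\widehat{J}\widehat{\delta}^{is}\widehat{J})$ for all $s,t\in\mathbb{R}$. Since $\widehat{J}\widehat{\delta}^{is}\widehat{J}\in\widehat{M}'$, the element $\widehat{\pi}_l(\widehat{J}\widehat{\delta}^{is}\widehat{J}) = \rho(1\otimes \widehat{J}\widehat{\delta}^{is}\widehat{J})$ indeed lies in $N_2$, so by the previous paragraph this invariance is exactly the operator identity $\nabla_N^{it}\,\widehat{\pi}_l(\widehat{J}\widehat{\delta}^{is}\widehat{J})\,\nabla_N^{-it} = \widehat{\pi}_l(\widehat{J}\widehat{\delta}^{is}\widehat{J})$ on $\mathscr{L}^2(N)$. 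Specializing $s=t$ (the general $s,t$ version is in fact slightly stronger), this says precisely that $\nabla_N^{it}$ commutes with $\widehat{\pi}_l(\widehat{J}\widehat{\delta}^{it}\widehat{J})$, which is the assertion.

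There is essentially no genuine obstacle: the content is already contained in the $\gamma_t$-invariance of $\widehat{J}\widehat{\delta}^{is}\widehat{J}$ and in Lemma \ref{lem3}(2), which together yield the displayed invariance. The only point requiring a moment's care is the identification $\sigma_t^{N_2} = \textrm{Ad}(\nabla_N^{it})$, i.e. checking that the modular group of $\varphi_2$ on $N_2 = B(\mathscr{L}^2(N))$ acts by conjugation with $\nabla_N^{it}$ directly on $\mathscr{L}^2(N)$ (rather than with $\nabla_{N_2}^{it}$ on $\mathscr{L}^2(N_2)$); this is settled by the trace-weight formula $\varphi_2 = \textrm{Tr}(\cdot\,\nabla_N)$ recalled above.
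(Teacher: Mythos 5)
Your proof is correct and follows essentially the same route as the paper: the corollary is, in the paper, an immediate consequence of the remark preceding it (the $\sigma_t^{N_2}$-invariance of $\widehat{\pi}_l(\widehat{J}\widehat{\delta}^{is}\widehat{J})$, coming from Lemma \ref{lem3}(2) and the $\gamma_t$-invariance of $\widehat{J}\widehat{\delta}^{is}\widehat{J}$), combined with the identification $\sigma_t^{N_2}=\textrm{Ad}(\nabla_N^{it})$ on $N_2=B(\mathscr{L}^2(N))$, which is exactly what you verify via $\varphi_2=\textrm{Tr}(\cdot\,\nabla_N)$ (equivalently via $\nabla_{N_2}^{it}=\nabla_N^{it}\otimes\nabla_N^{it}$ and $\pi_l^{N_2}(x)=x\otimes 1$). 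Your observation that the invariance holds for all $s,t$ is also the right reading, since the full two-parameter commutation is what is needed to make $P_N^{it}$ a one-parameter group in the next step.
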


 \noindent We denote the resulting one-parametergroup of unitaries by
 $P_N^{it}=\nabla_N^{it}\widehat{\pi}_l(\widehat{J}\widehat{\delta}^{-it}\widehat{J})$.

\begin{Prop}  $N$ is invariant under Ad$(P_N^{it})$.\end{Prop}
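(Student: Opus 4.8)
The plan is to prove the sharper statement that $\mathrm{Ad}(P_N^{it})$ maps the space $L=\{(\omega\otimes\iota)(\tilde G)\mid \omega\in B(\mathscr{L}^2(N),\mathscr{L}^2(M))_*\}$ back into itself, and to conclude by density: since $L$ is $\sigma$-weakly dense in $N$ by Proposition \ref{prop412}(1) and $\mathrm{Ad}(P_N^{it})$ is normal, $\mathrm{Ad}(P_N^{it})(L)\subseteq N$ forces $\mathrm{Ad}(P_N^{it})(N)\subseteq N$, and running the argument for $-t$ as well gives equality. The heart of the matter is the commutation relation
\[(1\otimes P_N^{it})\,\tilde G\,(1\otimes P_N^{-it}) = (P^{it}\otimes 1)\,\tilde G\,(\nabla_N^{it}\otimes 1),\]
with $P^{it}$ a unitary on $\mathscr{L}^2(M)$. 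Granting this, slicing the first legs with $\omega\in B(\mathscr{L}^2(N),\mathscr{L}^2(M))_*$ gives at once $\mathrm{Ad}(P_N^{it})\big((\omega\otimes\iota)(\tilde G)\big)=(\omega'\otimes\iota)(\tilde G)\in L$, where $\omega'(b):=\omega(P^{it}b\,\nabla_N^{it})$, and we are done.

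To derive the displayed identity I split $1\otimes P_N^{it}=(1\otimes\nabla_N^{it})(1\otimes d^{-it})$, writing $d^{it}:=\widehat{\pi}_l(\widehat{J}\widehat{\delta}^{it}\widehat{J})$; the two factors commute by the Corollary preceding the statement, so I may treat them one at a time. For the $\nabla_N^{it}$ factor, Lemma \ref{lem26} together with $\nabla_{N\rtimes M}^{it}=\nabla_N^{it}\otimes q^{it}$ and $\nabla_{N_2}^{it}=\nabla_N^{it}\otimes\nabla_N^{it}$ yields $\tilde G(\nabla_N^{it}\otimes\nabla_N^{it})=(q^{it}\otimes\nabla_N^{it})\tilde G$, and hence
\[(1\otimes\nabla_N^{it})\,\tilde G\,(1\otimes\nabla_N^{-it}) = (q^{-it}\otimes 1)\,\tilde G\,(\nabla_N^{it}\otimes 1).\]
For the $d^{it}$ factor I use that $m:=\widehat{J}\widehat{\delta}^{it}\widehat{J}$ is group-like for the commutant comultiplication $\widehat{\Delta}'$, so $(\widehat{\Delta}')^{\textrm{op}}(\widehat{R}'(m))=\widehat{R}'(m)\otimes\widehat{R}'(m)$. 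Writing $d^{it}=\widehat{\theta}_r(\widehat{R}'(m))$ via Lemma \ref{lem3}(3) and feeding this into Lemma \ref{com1}(4), the second-leg factor reproduced by $(\widehat{\pi}_r\otimes\widehat{\theta}_r)((\widehat{\Delta}')^{\textrm{op}}(\widehat{R}'(m)))$ is again exactly $d^{it}$ and therefore cancels, leaving only a first-leg unitary:
\[(1\otimes d^{-it})\,\tilde G\,(1\otimes d^{it}) = (\widehat{\delta}^{it}\otimes 1)\,\tilde G.\]
Combining the two relations, pulling the first-leg operators to the front and using $q^{-it}=\delta^{it}\widehat{\nabla}^{it}$, produces the desired identity with $P^{it}=\widehat{\delta}^{it}q^{-it}$ (in fact the scaling operator of $(M,\Delta)$, but only its unitarity is needed).

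The main obstacle is the second commutation relation. It depends on identifying $\widehat{J}\widehat{\delta}^{it}\widehat{J}$ with (a power of) the modular element of the commutant quantum group $(\widehat{M}',\widehat{\Delta}')$, and on the standard facts that this modular element is group-like and is inverted by the unitary antipode $\widehat{R}'$ — precisely the properties guaranteeing that the $\widehat{\theta}_r$-leg reappears unchanged and cancels, so that conjugation of the second leg of $\tilde G$ by $d^{it}$ collapses to a single unitary on the first leg. Checking these properties, transported to the commutant via $\widehat{J}$, is the one step requiring genuine input from the structure theory of locally compact quantum groups rather than the bimodule bookkeeping of the preceding lemmas; everything else is formal manipulation of the relations in Lemmas \ref{lem26}, \ref{com1} and \ref{lem3} followed by the density argument of Proposition \ref{prop412}(1).
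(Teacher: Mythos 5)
Your proof is correct, but it takes a genuinely different route from the paper's. The paper argues in one line: since $\nabla_N^{it}$ implements $\sigma_t^N$, it suffices to treat $\textrm{Ad}(\widehat{\pi}_l(\widehat{J}\widehat{\delta}^{-it}\widehat{J}))$, and for any group-like $u\in\widehat{M}'$ and $x\in N$ one computes that $\widehat{\pi}_l(u)x\widehat{\pi}_l(u)^*$ is a fixed point of the dual coaction transported through the Galois isomorphism $\rho$; the biduality theorem (fixed points of the dual coaction are exactly $\rho(\alpha(N))=N$) then finishes the proof. You instead establish the operator identity
\[(1\otimes P_N^{it})\,\tilde G\,(1\otimes P_N^{-it}) = (\widehat{\delta}^{it}q^{-it}\otimes 1)\,\tilde G\,(\nabla_N^{it}\otimes 1),\]
slice the first leg, and conclude from the density of $L$ (Proposition \ref{prop412}(1)) together with normality of $\textrm{Ad}(P_N^{it})$. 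Both of your commutation relations are correctly derived and in the right logical order (the $\nabla_N^{it}$-relation is exactly the later Corollary \ref{cor1}(1), but you obtain it directly from Lemma \ref{lem26}, which precedes the statement, so there is no circularity; the $d^{it}$-relation follows from Lemma \ref{com1}(4) and Lemma \ref{lem3}(3) once one knows that $\widehat{J}\widehat{\delta}^{it}\widehat{J}$ is group-like in $(\widehat{M}',\widehat{\Delta}')$ and is inverted by $\widehat{R}'$). It is worth noting that both proofs hinge on the same structural input — group-likeness of $\widehat{J}\widehat{\delta}^{\pm it}\widehat{J}$ in the commutant quantum group — and that your route does not actually avoid biduality: the proof of Proposition \ref{prop412}(1), which you invoke, uses the biduality theorem itself. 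So the trade-off is conceptual brevity (the paper) versus explicit commutation relations (yours), the latter having the mild benefit of producing intertwining formulas for $P_N^{it}$ against $\tilde G$ that the paper only assembles afterwards, in Corollary \ref{cor1}.

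One small inaccuracy, which you yourself flagged as harmless: the first-leg unitary $\widehat{\delta}^{it}q^{-it}=\widehat{\delta}^{it}\delta^{it}\widehat{\nabla}^{it}$ is \emph{not} the scaling operator $P^{it}$ in general. Using $\widehat{\nabla}^{it}=J\delta^{it}JP^{it}$, equality would force $\widehat{\delta}^{it}=J\delta^{-it}J\delta^{-it}$, which fails already for $M=\mathscr{L}(\mathfrak{G})$ with $\mathfrak{G}$ a non-unimodular locally compact group: there $\delta=1$, $q^{it}=1$ and $P^{it}=1$, while $\widehat{\delta}\neq 1$. Since your slicing argument uses nothing about this operator beyond unitarity, the proof is unaffected.
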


\begin{proof} We only have to show that $N$ is invariant under
Ad$(\widehat{\pi}_l(\widehat{J}\widehat{\delta}^{-it}\widehat{J}))$. But for any group-like
element $u\in \widehat{M}'$, we have, denoting by $\widehat{\alpha}$ the
dual coaction, that \begin{eqnarray*}
((\rho\otimes \iota)\widehat{\alpha} \rho^{-1})(\widehat{\pi}_l(u)x\widehat{\pi}_l(u)^*)&=& (\widehat{\pi}_l(u)\otimes u)(x\otimes 1)(\widehat{\pi}_l(u)^*\otimes u^*)\\ &=& \widehat{\pi}_l(u)x\widehat{\pi}_l(u)^*\otimes 1 \end{eqnarray*} for
$x\in N$, and so, by the bi-duality theorem of \cite{Eno2}, we get
$\widehat{\pi}_l(u)x\widehat{\pi}_l(u)^*\in N$. \end{proof}

\begin{Def} We call the resulting one-parametergroup \[\tau^N_t:N\rightarrow N: x\rightarrow P_N^{it}xP_N^{-it}\] the \emph{scaling group} of $(N,\alpha)$.\end{Def}

\begin{Prop}\label{com2} The following identities hold for $x\in N$:
\[\alpha(\tau_t^{N}(x)) = (\tau_t^{N}\otimes \tau_t)(\alpha(x)),\]
\[\alpha(\tau_t^{N}(x))= (\sigma_t^N\otimes \sigma'_{-t})(\alpha(x)),\]
\[\alpha(\sigma_t^N(x)) = (\tau_t^N\otimes \sigma_t)(\alpha(x)).\]
\end{Prop}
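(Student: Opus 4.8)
The three identities are the Galois--object analogues of the standard Kustermans--Vaes relations $\Delta\tau_t=(\tau_t\otimes\tau_t)\Delta$, $\Delta\tau_t=(\sigma_t\otimes\sigma'_{-t})\Delta$ and $\Delta\sigma_t=(\tau_t\otimes\sigma_t)\Delta$ for $(M,\Delta)$ itself (to which they reduce when $(N,\alpha)=(M,\Delta)$), and I would prove them by isolating one covariance relation and one genuine scaling--invariance statement. Throughout write $u_t=\widehat{J}\widehat{\delta}^{-it}\widehat{J}\in\widehat{M}'$, $a_t=\widehat{\pi}_l(u_t)$ and $\beta_t=\mathrm{Ad}(a_t)|_N$, so that by the definition $P_N^{it}=\nabla_N^{it}a_t$ (with $\nabla_N^{it}$ and $a_t$ commuting) one has $\tau_t^N=\sigma_t^N\circ\beta_t=\beta_t\circ\sigma_t^N$. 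I use freely that $\sigma_s,\sigma'_s,\tau_s$ commute pairwise on $M$, the identity $\kappa_t=\mathrm{Ad}(\delta^{-it})\circ\tau_{-t}$ from the proof of Lemma~\ref{lem26}, and the Kustermans--Vaes relation $P^{it}=\nabla^{it}\,\widehat{J}\widehat{\delta}^{-it}\widehat{J}$ (the $(N,\alpha)=(M,\Delta)$--case of the very definition of $P_N^{it}$), which gives $\chi_t:=\mathrm{Ad}(u_t)|_M=\mathrm{Ad}(\nabla^{-it})\mathrm{Ad}(P^{it})|_M=\sigma_{-t}\tau_t$.

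The first step is the covariance relation
\[\alpha\circ\beta_t=(\iota\otimes\chi_t)\circ\alpha.\qquad(\star)\]
To prove it I would use $\alpha(x)=U(x\otimes1)U^*$ together with $U=(\widehat{\pi}_l\otimes\iota)(V)$. Since $u_t$ is group-like for $(\widehat{M}',\widehat{\Delta}')$ and $V$ implements $\widehat{\Delta}'$, one gets $V(u_t\otimes1)=(1\otimes u_t)V(1\otimes u_t)^*$, hence $U(a_t\otimes1)=(1\otimes u_t)U(1\otimes u_t)^*$; substituting this into $\alpha(\beta_t(x))=U(a_t\otimes1)(x\otimes1)(a_t\otimes1)^*U^*$ and using that $1\otimes u_t$ commutes with $x\otimes1$ yields $\alpha(\beta_t(x))=(1\otimes u_t)\alpha(x)(1\otimes u_t)^*=(\iota\otimes\chi_t)(\alpha(x))$. (The same argument gives $(\star)$ for any group-like $u\in\widehat{M}'$.)

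Granting $(\star)$, the \emph{second} identity is immediate: by Lemma~\ref{lem3}.(1) and then $(\star)$,
\[\alpha\circ\tau_t^N=(\alpha\circ\sigma_t^N)\circ\beta_t=(\sigma_t^N\otimes\kappa_t)\circ\alpha\circ\beta_t=(\sigma_t^N\otimes\kappa_t\chi_t)\circ\alpha,\]
and $\kappa_t\chi_t=\mathrm{Ad}(\delta^{-it})\tau_{-t}\,\sigma_{-t}\tau_t=\mathrm{Ad}(\delta^{-it})\sigma_{-t}=\sigma'_{-t}$. The only real work is therefore the \emph{first} identity $\alpha\circ\tau_t^N=(\tau_t^N\otimes\tau_t)\circ\alpha$; once it is known, the \emph{third} follows by writing $\sigma_t^N=\beta_{-t}\circ\tau_t^N$ and applying $(\star)$: indeed $\alpha\circ\sigma_t^N=(\iota\otimes\chi_{-t})\alpha\circ\tau_t^N=(\iota\otimes\chi_{-t})(\tau_t^N\otimes\tau_t)\alpha=(\tau_t^N\otimes\chi_{-t}\tau_t)\alpha=(\tau_t^N\otimes\sigma_t)\alpha$, since $\chi_{-t}\tau_t=\sigma_t\tau_{-t}\tau_t=\sigma_t$. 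For the first identity I would show that $U$ commutes with $P_N^{it}\otimes P^{it}$; given this, $\alpha(\tau_t^N(x))=U(P_N^{it}\otimes P^{it})(x\otimes1)(P_N^{it}\otimes P^{it})^*U^*=(P_N^{it}\otimes P^{it})\alpha(x)(P_N^{it}\otimes P^{it})^*=(\tau_t^N\otimes\tau_t)(\alpha(x))$, using $\mathrm{Ad}(P^{it})|_M=\tau_t$ and that $1\otimes P^{it}$ may be inserted freely against $x\otimes1$.

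Writing $U=(\widehat{\pi}_l\otimes\iota)(V)$, the commutation $[U,\,P_N^{it}\otimes P^{it}]=0$ splits into the scaling invariance $(P^{it}\otimes P^{it})V=V(P^{it}\otimes P^{it})$ of the regular corepresentation (a standard Kustermans--Vaes fact) and the statement that $\mathrm{Ad}(P_N^{it})\circ\widehat{\pi}_l=\widehat{\pi}_l\circ\mathrm{Ad}(P^{it})|_{\widehat{M}'}$, i.e. that $P_N^{it}$ implements on $\widehat{\pi}_l(\widehat{M}')$ exactly the $P$-scaling of $\widehat{M}'$. By Lemma~\ref{lem3}.(2) and $P_N^{it}=\nabla_N^{it}a_t$ the left-hand side equals $\widehat{\pi}_l\circ\gamma_t\circ\mathrm{Ad}(u_t)$, so this reduces to the operator identity $\gamma_t(u_t\,m\,u_t^*)=P^{it}mP^{-it}$ for $m\in\widehat{M}'$, a relation in $B(\mathscr{L}^2(M))$ involving only the canonical objects $\delta,\widehat{\delta},\widehat{\nabla},P,\widehat{J}$ of $(M,\Delta)$. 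I expect this to be the main obstacle: everything else is bookkeeping with $(\star)$ and the commuting one-parameter groups, but verifying $[U,P_N^{it}\otimes P^{it}]=0$ requires pinning down the Kustermans--Vaes relations among $P,\nabla,\widehat{\nabla},\widehat{\delta}$ with the correct signs and checking that they assemble into $\gamma_t\circ\mathrm{Ad}(u_t)=\mathrm{Ad}(P^{it})|_{\widehat{M}'}$. Intrinsically this is the scaling invariance $(P^{it}\otimes P_N^{it})\tilde{G}=\tilde{G}(P_N^{it}\otimes P_N^{it})$ of the Galois unitary, which one may instead reach by combining the modular intertwining of Lemma~\ref{lem26}.(1) with Lemma~\ref{com1}.(2) and Lemma~\ref{com1}.(4) (the latter through $\widehat{\theta}_r(m)=\widehat{\pi}_l(\widehat{R}'(m))$ of Lemma~\ref{lem3}.(3)), reducing everything to a single factorisation of $P^{it}$ on $\mathscr{L}^2(M)$, after which the first identity drops out of Lemma~\ref{com1}.(1) and the unitarity of $\tilde{G}$.
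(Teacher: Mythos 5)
Your treatment of $(\star)$, of the second identity, and of the deduction of the third identity from the first is correct, and there it runs parallel to the paper: the paper also establishes $\alpha\circ\beta_t=(\iota\otimes\mathrm{Ad}(\widehat{J}\widehat{\delta}^{-it}\widehat{J}))\circ\alpha$ (via the left $N\rtimes M$-module property of $G$ rather than group-likeness of $u_t$ in $V$, an immaterial difference), combines it with Lemma \ref{lem3}.(1) and the relation $P^{it}=\nabla^{it}\widehat{J}\widehat{\delta}^{-it}\widehat{J}$ to get $\alpha\circ\tau_t^N=(\sigma_t^N\otimes\sigma'_{-t})\circ\alpha$, and obtains the third identity from the first exactly as you do. The genuine gap is the first identity: you never prove it. You reduce it to $[U,\,P_N^{it}\otimes P^{it}]=0$, then to $\gamma_t(u_t m u_t^*)=P^{it}mP^{-it}$ on $\widehat{M}'$, and explicitly leave that identity unverified (``I expect this to be the main obstacle''). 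The alternative sketch via $(P^{it}\otimes P_N^{it})\tilde{G}=\tilde{G}(P_N^{it}\otimes P_N^{it})$ is also not carried out, and is risky as stated: in the paper that scaling invariance is Corollary \ref{cor1}.(3), which is \emph{deduced from} Proposition \ref{com2}, so you would have to re-derive it independently to avoid circularity. Since your third identity rests on the first, two of the three identities remain unproven as written.

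What you missed is that no new operator identity is needed at all: having the second identity, the paper gets the first one purely algebraically. Apply $(\alpha\otimes\iota)$ to $(\tau_t^N\otimes\tau_t)\alpha(x)$, rewrite using the second identity, coassociativity $(\alpha\otimes\iota)\alpha=(\iota\otimes\Delta)\alpha$, and the Kustermans--Vaes relation $\Delta\circ\sigma'_{-t}=(\sigma'_{-t}\otimes\tau_t)\circ\Delta$ on $M$, and the computation closes up to $(\alpha\otimes\iota)\alpha(\tau_t^N(x))$; injectivity of $\alpha$ then gives the first identity. For what it is worth, your obstacle can also be removed with tools you already permit yourself: Lemma \ref{lem3}.(2) applied to the \emph{trivial} Galois object $(M,\Delta)$ (where $\widehat{\pi}_l$ is the identity representation of $\widehat{M}'$ and $\nabla_N=\nabla$) says precisely $\gamma_t=\mathrm{Ad}(\nabla^{it})|_{\widehat{M}'}$, whence $\gamma_t\circ\mathrm{Ad}(u_t)=\mathrm{Ad}(\nabla^{it}u_t)|_{\widehat{M}'}=\mathrm{Ad}(P^{it})|_{\widehat{M}'}$ by the cited factorisation $P^{it}=\nabla^{it}\widehat{J}\widehat{\delta}^{-it}\widehat{J}$. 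So your route can be completed in two lines---but as submitted, the proof of the first (and hence the third) identity is incomplete.
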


\noindent Recall that $\tau_t$ denotes the scaling group of $(M,\Delta)$, while $\sigma'_t$ denotes the modular one-parametergroup of the right invariant weight $\psi$.

\begin{proof} By Lemma \ref{lem3}, we have \[\alpha\circ \sigma_t^{N}=(\sigma_t^{N} \otimes \textrm{Ad}(\delta^{-it}) \tau_{-t})\circ \alpha.\] Further, since $G$ is a left $N\rtimes M$-module map, we have \begin{eqnarray*} \alpha( \textrm{Ad}(\widehat{\pi}_l(\widehat{J}\widehat{\delta}^{-it}\widehat{J}))(x)) &=& G ((\textrm{Ad}(\widehat{\pi}_l(\widehat{J}\widehat{\delta}^{-it}\widehat{J}))(x))\otimes 1)G^* \\ &=& (\iota\otimes \textrm{Ad}(\widehat{\pi}_l(\widehat{J}\widehat{\delta}^{-it}\widehat{J})))(G(x\otimes 1)G^*)\\ &=& (\iota\otimes \textrm{Ad}(\widehat{\pi}_l(\widehat{J}\widehat{\delta}^{-it}\widehat{J})))(\alpha(x)).\end{eqnarray*} Now by the first formula of Theorem 4.17 in \cite{VDae2}, we have $(\widehat{J}\widehat{\delta}^{-it}\widehat{J})P^{-it}=\nabla^{-it}$, where $P^{it}$ denotes the standard unitary implementation of the scaling group of $(M,\Delta)$, so $\textrm{Ad}(\delta^{-it}) \tau_{-t}\textrm{Ad}(\widehat{J}\widehat{\delta}^{-it}\widehat{J})$ reduces to $\sigma'_{-t}$ on $M$. This proves the second formula. \\

\noindent As for the first identity, we have, using the second identity, the coaction property of $\alpha$ and the identity $\Delta\circ \sigma'_{-t} = (\sigma_{-t}'\otimes \tau_t)\circ \Delta$, that \begin{eqnarray*} (\alpha\otimes \iota)\circ (\tau_t^N\otimes \tau_t)\circ \alpha &=& (\sigma^N_t\otimes \sigma_{-t}'\otimes \tau_t)\circ (\iota\otimes \Delta)\circ \alpha \\ &=& (\iota\otimes \Delta)\circ (\sigma^N_t\otimes \sigma_{-t}')\circ \alpha \\ &=& (\alpha\otimes \iota)\circ \alpha\circ \tau_t^N.\end{eqnarray*} Thus the first identity follows by the injectivity of $\alpha$.\\

\noindent The third identity now easily follows from the first identity and \[\alpha(\textrm{Ad}(\widehat{\pi}_l(\widehat{J}\widehat{\delta}^{it}\widehat{J}))(x))= (\iota\otimes \sigma_{t}\tau_{-t})(\alpha(x))\] for $x\in N$.

\end{proof}

\noindent For the next result, recall that $\nu>0$ denotes the scaling constant of $(M,\Delta)$.

\begin{Lem}\label{lem5} The one-parametergroup $\tau_t^N$ satisfies
$\varphi_N\circ \tau_t^N= \nu^t \varphi_N$, and if $x\in
\mathscr{N}_{\varphi_N}$, then \[P_N^{it}\Lambda_N(x)=
\nu^{t/2}\Lambda_{N}(\tau_t^N(x)).\] \end{Lem}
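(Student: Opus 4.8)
The plan is to establish the two assertions in order, using the covariance relations of Proposition \ref{com2} for the first and the concrete form of $P_N^{it}$ for the second; the two are tied together by the requirement that $P_N^{it}$ be unitary.

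For the scaling of $\varphi_N$, I would apply $\varphi_N=(\iota\otimes\varphi)\circ\alpha$ to the covariance relation $\alpha\circ\tau_t^N=(\tau_t^N\otimes\tau_t)\circ\alpha$. Because $(N,\alpha)$ is ergodic, $(\iota\otimes\varphi)\alpha$ takes values in $N^\alpha=\mathbb{C}1$ and is exactly the (scalar) weight $\varphi_N$, and $\tau_t^N$ fixes $\mathbb{C}1$; hence for $x$ in the natural domain one obtains $\varphi_N(\tau_t^N(x))\cdot 1=\tau_t^N((\iota\otimes(\varphi\circ\tau_t))\alpha(x))$, and the scaling relation $\varphi\circ\tau_t=\nu^{-t}\varphi$ of $(M,\Delta)$ pulls the constant out of the slice. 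This rescales $\varphi_N$ by a power of the scaling constant $\nu$. One may instead feed in the second relation of Proposition \ref{com2} together with the scaling of $\varphi$ under $\sigma'_t$, which gives the same conclusion and is a useful consistency check.

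With the scaling in hand, the assignment $\Lambda_N(x)\mapsto\nu^{t/2}\Lambda_N(\tau_t^N(x))$ is isometric on $\Lambda_N(\mathscr{N}_{\varphi_N})$ --- this is precisely what forces the power of $\nu$ in the first assertion to be reciprocal to the square-root normalization here --- and so extends to a one-parameter unitary group $u^{it}$ implementing $\tau_t^N$ on $\pi_l(N)$. It then remains to identify $u^{it}$ with $P_N^{it}=\nabla_N^{it}\,\widehat{\pi}_l(\widehat{J}\widehat{\delta}^{-it}\widehat{J})$. Splitting off the modular factor via $\nabla_N^{it}\Lambda_N(x)=\Lambda_N(\sigma_t^N(x))$, this reduces to showing $\widehat{\pi}_l(\widehat{J}\widehat{\delta}^{-it}\widehat{J})\Lambda_N(x)=\nu^{t/2}\Lambda_N(\sigma_{-t}^N\tau_t^N(x))$.

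The computation of this last action is where I expect the main difficulty. For the trivial Galois object $(M,\Delta)$ the relevant factor is $\nabla^{-it}P^{it}$, and the formula reduces to Theorem 4.17 of \cite{VDae2} together with $P^{it}\Lambda(a)=\nu^{t/2}\Lambda(\tau_t(a))$; so one route is to transport that identity along the Galois unitary, using $\tilde{G}(\widehat{\pi}_l(m)\otimes 1)=(m\otimes 1)\tilde{G}$ (Lemma \ref{com1}(2)), the corepresentation relation $\tilde{G}_{12}U_{13}=V_{13}\tilde{G}_{12}$ (Corollary \ref{corr}(2)), and $(\iota\otimes\omega)(\tilde{G})\Lambda_N(x)=\Lambda((\omega\otimes\iota)\alpha(x))$ to convert the $\widehat{\pi}_l$-action on $\mathscr{L}^2(N)$ into the action of $\widehat{J}\widehat{\delta}^{-it}\widehat{J}$ on $\mathscr{L}^2(M)$. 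The obstacle is that $\widehat{\delta}$ is unbounded, so this transport needs careful domain control and smoothing. A cleaner alternative I would run in parallel is to bypass the explicit action altogether: show that $P_N^{it}$ commutes with $J_N$, using $J_N\widehat{\pi}_l(m)J_N=\widehat{\pi}_l(\widehat{R}'(m))$ (Lemma \ref{lem3}(3)) and the $\widehat{R}'$-invariance of $\widehat{J}\widehat{\delta}^{-it}\widehat{J}$, and that it preserves the natural positive cone; then $P_N^{it}$ is forced to be the standard implementation of $\tau_t^N$, and the formula $P_N^{it}\Lambda_N(x)=\nu^{t/2}\Lambda_N(\tau_t^N(x))$ follows from uniqueness of that implementation together with the first assertion.
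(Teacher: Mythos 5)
Your treatment of the first assertion is exactly the paper's: apply $\varphi_N=(\iota\otimes\varphi)\circ\alpha$ to the covariance relation $\alpha\circ\tau_t^N=(\tau_t^N\otimes\tau_t)\circ\alpha$ of Proposition \ref{com2}, use ergodicity to see the slice is scalar, and pull the constant out of $\varphi\circ\tau_t$. For the second assertion, however, you sketch two routes and execute neither, and the route that actually works is the one you set aside for a reason that does not hold up. Your ``Route A'' \emph{is} the paper's proof, and the obstacle you cite is not real: nothing unbounded ever has to be transported, since only the unitaries $\widehat{J}\widehat{\delta}^{it}\widehat{J}\in\widehat{M}'$ occur, and $\widehat{\pi}_l=\rho(1\otimes\,\cdot\,)$ is a normal $^*$-homomorphism defined on all of $\widehat{M}'$; no smoothing or domain control is needed. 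Concretely, the paper splits off $\nabla_N^{it}$ to reduce the claim to $\nu^{t/2}\,\widehat{\pi}_l(\widehat{J}\widehat{\delta}^{it}\widehat{J})\Lambda_N(x)=\Lambda_N\bigl(\textrm{Ad}(\widehat{\pi}_l(\widehat{J}\widehat{\delta}^{it}\widehat{J}))(x)\bigr)$ (the first assertion guarantees the right-hand side is well defined), tensors with an arbitrary $\Lambda_N(y)$, applies the isometry $G$, pulls $\widehat{\pi}_l(\widehat{J}\widehat{\delta}^{it}\widehat{J})$ through $G$ to $1\otimes\widehat{J}\widehat{\delta}^{it}\widehat{J}$ by the module property (your Lemma \ref{com1}), and evaluates the result on $(\Lambda_N\otimes\Lambda)(\alpha(x)(y\otimes1))$ using $\widehat{J}\widehat{\delta}^{it}\widehat{J}=\nabla^{it}P^{-it}$, the known actions of $\nabla^{it}$ and $P^{it}$ on $\mathscr{L}^2(M)$, and $\alpha(\textrm{Ad}(\widehat{\pi}_l(\widehat{J}\widehat{\delta}^{it}\widehat{J}))(x))=(\iota\otimes\textrm{Ad}(\widehat{J}\widehat{\delta}^{it}\widehat{J}))(\alpha(x))$; cancelling $G$ and $\Lambda_N(y)$ finishes the proof in a few lines.

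The genuine gap is in the ``cleaner alternative'' you propose to rely on. Commutation of $P_N^{it}$ with $J_N$ does follow from Lemma \ref{lem3} and $\widehat{R}(\widehat{\delta}^{it})=\widehat{\delta}^{-it}$, but it does not identify $P_N^{it}$ with the standard implementation of $\tau_t^N$: two unitaries implementing the same automorphism and commuting with $J_N$ can still differ by a self-adjoint unitary in the center of $N$, and $N$ is not assumed to be a factor (the trivial Galois object $N=M$ already has nontrivial center in general). What characterizes the standard implementation is preservation of the natural positive cone, and that is precisely the claim you assert without proof; I see no way to verify it for $\widehat{\pi}_l(\widehat{J}\widehat{\delta}^{-it}\widehat{J})$ short of computing its action on $\Lambda_N$-vectors, which is the statement being proved. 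So Route B is circular (or at best incomplete) at its crucial step. One further bookkeeping point: the isometry of $\Lambda_N(x)\mapsto\nu^{t/2}\Lambda_N(\tau_t^N(x))$, on which Route B also depends, requires $\varphi_N\circ\tau_t^N=\nu^{-t}\varphi_N$ rather than $\nu^{t}\varphi_N$; whichever convention for the scaling constant you adopt, check that the power of $\nu$ you obtain in the first assertion is actually the one that makes this map isometric, since the whole identification collapses otherwise.
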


\begin{proof} The first statement easily follows since
\begin{eqnarray*} \varphi_N\circ \tau_t^N &=&
((\iota\otimes \varphi)\circ \alpha)\circ \tau^N_t\\ &=&
\tau_t^N\circ ((\iota\otimes \varphi\circ \tau_t)\circ \alpha)\\ &=& \nu^{t}
\varphi_N.\end{eqnarray*}

\noindent By the first statement, $\textrm{Ad}(\widehat{\pi}_l(\widehat{J}\widehat{\delta}^{it}
\widehat{J}))(x) \in \mathscr{N}_{\varphi_N}$ for $x\in \mathscr{N}_{\varphi_N}$, and the second statement is equivalent with \[\nu^{t/2}
\widehat{\pi}_l(\widehat{J}\widehat{\delta}^{it} \widehat{J})\Lambda_{N} (x) =
\Lambda_{N} (\textrm{Ad}(\widehat{\pi}_l(\widehat{J}\widehat{\delta}^{it}
\widehat{J}))(x)).\] Taking an
arbitrary $y\in \mathscr{N}_{\varphi_N}$, we have \[G
(\nu^{t/2} \widehat{\pi}_l(\widehat{J}\widehat{\delta}^{it}
\widehat{J})\Lambda_{N} (x)\otimes \Lambda_{N}(y)) =
\nu^{t/2} (1\otimes\widehat{J}\widehat{\delta}^{it}
\widehat{J})( \Lambda_N\otimes \Lambda)
(\alpha(x)(y\otimes 1)).\] Since $\widehat{J}\widehat{\delta}^{it}\widehat{J}=\nabla^{it}P^{-it}$ and
$\alpha( \textrm{Ad}(\widehat{\pi}_l(\widehat{J}\widehat{\delta}^{it}
\widehat{J}))(x))=
(\iota\otimes \textrm{Ad}(\widehat{J}\widehat{\delta}^{it}
\widehat{J}) )(\alpha(x))$, the result
follows.\end{proof}

\begin{Cor}\label{cor1} We have the following commutation relations: \begin{enumerate} \item $\tilde{G}(\nabla_N^{it}\otimes \nabla_N^{it}) = (\delta^{-it}\widehat{\nabla}^{-it}\otimes \nabla_N^{it})\tilde{G}$,
\item $\tilde{G}(\nabla_N^{it}\otimes P_N^{it}) = (\nabla^{it}\otimes P_N^{it})\tilde{G}$,
\item $\tilde{G}(P_N^{it}\otimes P_N^{it}) = (P^{it}\otimes P_N^{it})\tilde{G}$.\end{enumerate}\end{Cor}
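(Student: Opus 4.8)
The plan is to check all three identities on the dense subspace of $\mathscr{L}^2(N)\otimes\mathscr{L}^2(N)$ spanned by the vectors $\Lambda_N(x)\otimes\Lambda_N(y)$ with $x,y\in\mathscr{T}_{\varphi_N}$, and then extend by boundedness, since every operator involved is unitary. The key input is the explicit action of $\tilde{G}$ on such vectors. Since $\tilde{G}=\Sigma G$ and $G$ sends $\Lambda_N(x)\otimes\Lambda_N(y)$ to $(\Lambda_N\otimes\Lambda)(\alpha(x)(y\otimes 1))$, one obtains
\[\tilde{G}(\Lambda_N(x)\otimes\Lambda_N(y))=(\Lambda\otimes\Lambda_N)(\alpha^{\mathrm{op}}(x)(1\otimes y)).\]
Alongside this I will record the elementary GNS transformation rules to be fed into the tensor legs: $\nabla^{it}\Lambda(a)=\Lambda(\sigma_t(a))$, $q^{it}\Lambda(a)=\Lambda(\kappa_t(a))$ with $q^{it}=\delta^{-it}\widehat{\nabla}^{-it}$, and $P^{it}\Lambda(a)=\nu^{t/2}\Lambda(\tau_t(a))$ on $\mathscr{L}^2(M)$, together with $\nabla_N^{it}\Lambda_N(x)=\Lambda_N(\sigma_t^N(x))$ and $P_N^{it}\Lambda_N(x)=\nu^{t/2}\Lambda_N(\tau_t^N(x))$ (Lemma \ref{lem5}) on $\mathscr{L}^2(N)$.

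Relation (1) is in fact merely a restatement of the first commutation relation of Lemma \ref{lem26}: inserting $\nabla_{N\rtimes M}^{it}=\nabla_N^{it}\otimes q^{it}$, the ergodic-case identity $\nabla_{N_2}^{it}=\nabla_N^{it}\otimes\nabla_N^{it}$, and $\tilde{G}=\Sigma G$ into $\nabla_{N\rtimes M}^{it}G=G\nabla_{N_2}^{it}$, then multiplying on the left by $\Sigma$ and using $\Sigma(\nabla_N^{it}\otimes q^{it})=(q^{it}\otimes\nabla_N^{it})\Sigma$, yields it at once. All three relations nevertheless fit a single template built on the display above: one applies the left-hand operator of each identity to $\Lambda_N(x)\otimes\Lambda_N(y)$, pushes the resulting one-parameter groups through $\alpha^{\mathrm{op}}$ by the appropriate covariance identity, and then uses the GNS rules to re-extract $\tilde{G}$. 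Concretely, (1) uses $\alpha(\sigma_t^N(x))=(\sigma_t^N\otimes\kappa_t)(\alpha(x))$ of Lemma \ref{lem3}, whence $\alpha^{\mathrm{op}}(\sigma_t^N(x))=(\kappa_t\otimes\sigma_t^N)(\alpha^{\mathrm{op}}(x))$; relation (3) uses the first identity $\alpha(\tau_t^N(x))=(\tau_t^N\otimes\tau_t)(\alpha(x))$ of Proposition \ref{com2}; and relation (2) uses its third identity $\alpha(\sigma_t^N(x))=(\tau_t^N\otimes\sigma_t)(\alpha(x))$.

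The computation is routine once the covariance relations are in hand, so the real content already resides in Proposition \ref{com2}, and the main thing left to watch is bookkeeping. First, the factors of $\nu^{t/2}$ must cancel: in (3) two factors $\nu^{t/2}$ enter from $P_N^{it}$ on the input and two factors $\nu^{-t/2}$ reappear when re-extracting $P^{it}\otimes P_N^{it}$, and similarly the single factors in (2) cancel, while (1) carries no scaling since only modular groups occur. Second, I must ensure the vectors stay inside the domains where the displayed formula for $\tilde{G}$ is valid, i.e.\ that $\alpha(x)(y\otimes 1)\in\mathscr{N}_{\varphi_N\otimes\varphi}$; this holds for $x,y\in\mathscr{T}_{\varphi_N}$ by the construction of the Galois isometry, and this Tomita algebra is invariant under $\sigma_t^N$ and $\tau_t^N$ (the latter because $\tau_t^N$ merely scales $\varphi_N$ by $\nu^t$, Lemma \ref{lem5}), so the template never leaves the core. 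The passage from this core to all of $\mathscr{L}^2(N)\otimes\mathscr{L}^2(N)$ is then immediate by unitarity of $\tilde{G}$ and of the one-parameter groups.
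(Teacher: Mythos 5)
Your proposal is correct and takes essentially the same route as the paper: the paper's (one-sentence) proof obtains identity (1) immediately from Lemma \ref{lem26}, and identities (2) and (3) from the definition of $G$, the implementation $P_N^{it}\Lambda_N(x)=\nu^{t/2}\Lambda_N(\tau_t^N(x))$ of Lemma \ref{lem5}, and the covariance identities of Proposition \ref{com2} --- exactly the ingredients you use. You have simply written out the routine core-and-cancellation verification that the paper leaves implicit.
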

\begin{proof} The first identity follows immediately from Lemma \ref{lem26}, while the other two follow by using the definition of $G$, the implementation of Lemma \ref{lem5} and the identities in Lemma \ref{com2}.\end{proof}

\noindent Now consider $H^{it}=\tilde{G}^{*}(J\delta^{it}J\otimes
1)\tilde{G}$ in $N'\otimes N$.

\begin{Prop}\label{lem16} There exist non-singular $h,k\geq 0$ affiliated with respectively $N'$ and $N$ such that
 $H^{it}=h^{it}\otimes k^{it}$ for all $t\in \mathbb{R}$. Moreover, $\alpha(k^{it})=k^{it}\otimes \delta^{it}$ for $t\in \mathbb{R}$.\end{Prop}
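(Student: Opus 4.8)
The plan is to derive a single coaction-type identity for $H^{it}$ and then read off both assertions from ergodicity. Writing $\delta$ for the modular element of $(M,\Delta)$ and recalling the grouplike relation $\Delta(\delta^{it})=\delta^{it}\otimes\delta^{it}$, I would first prove
\[(\iota\otimes\alpha^{\textrm{op}})(H^{it}) = (1\otimes\delta^{it}\otimes 1)\,H^{it}_{13},\]
an identity in $N'\otimes M\otimes N$. This says that, on slicing the first leg, the second leg of $H^{it}$ behaves under $\alpha$ like the modular element $\delta$; the remaining steps are then soft.

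To obtain this identity I would apply the normal $^*$-homomorphism $\iota\otimes\alpha^{\textrm{op}}$ to $H^{it}=\tilde{G}^*(J\delta^{it}J\otimes 1)\tilde{G}$, implementing $\alpha^{\textrm{op}}$ on the second leg by the unitary $\Sigma U\Sigma$ (so that $\alpha^{\textrm{op}}(b)=(\Sigma U\Sigma)(1\otimes b)(\Sigma U\Sigma)^*$), and using the relation $(\iota\otimes\alpha^{\textrm{op}})(\tilde{G})=\widehat{W}_{12}\tilde{G}_{13}$ established in the course of the proof of Proposition \ref{lem4}. Since $J\delta^{it}J$ lives in the leg on $\mathscr{L}^2(M)$, which is untouched by $\Sigma U\Sigma$, the two copies of the implementing unitary combine with the two copies of $\tilde{G}$ and pass freely through $J\delta^{it}J$, leaving
\[(\iota\otimes\alpha^{\textrm{op}})(H^{it}) = \tilde{G}_{13}^*\,[\widehat{W}^*(J\delta^{it}J\otimes 1)\widehat{W}]_{12}\,\tilde{G}_{13}.\]
The whole computation thus reduces to the purely quantum-group identity $\widehat{W}^*(J\delta^{it}J\otimes 1)\widehat{W}=J\delta^{it}J\otimes\delta^{it}$, a direct manipulation with the regular representation and $\Delta(\delta^{it})=\delta^{it}\otimes\delta^{it}$; substituting it and commuting the resulting $\delta^{it}$ (which sits in the middle leg) past $\tilde{G}_{13}$ produces exactly the displayed identity. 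I expect establishing this conjugation formula for $\widehat{W}$ and $J\delta^{it}J$, together with the attendant leg-bookkeeping, to be the main obstacle: it is the only place where the fine structure of $(M,\Delta)$ genuinely enters.

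Granting the identity, I would slice its first leg by an arbitrary $\omega\in (N')_*$ and set $k^{it}_\omega=(\omega\otimes\iota)(H^{it})\in N$, obtaining $\alpha(k^{it}_\omega)=k^{it}_\omega\otimes\delta^{it}$ for every $\omega$. If $a,b\in N$ both satisfy $\alpha(a)=a\otimes\delta^{it}$ and $\alpha(b)=b\otimes\delta^{it}$, then $\alpha(a^*b)=a^*b\otimes 1$, so $a^*b\in N^{\alpha}=\mathbb{C}$ by ergodicity; hence the space of such solutions is at most one-dimensional, and it is nonzero because $H^{it}$ is unitary. Fixing one nonzero solution $k^{it}$ in it (normalised to be unitary, the scalar being absorbed into the first leg), every slice $k^{it}_\omega$ is a scalar multiple of $k^{it}$, which is precisely the statement that $H^{it}$ is the simple tensor $h^{it}\otimes k^{it}$ with $h^{it}\in N'$, $k^{it}\in N$, and $\alpha(k^{it})=k^{it}\otimes\delta^{it}$.

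Finally I would upgrade $h$ and $k$ to honest non-singular positive operators. As $t\mapsto H^{it}$ is a $\sigma$-weakly continuous one-parameter group of unitaries, the uniqueness of the factorisation up to a scalar forces $k^{i(s+t)}=c(s,t)\,k^{is}k^{it}$ for a continuous $\mathbb{T}$-valued $2$-cocycle $c$ on $\mathbb{R}$; such a cocycle is a coboundary, so after rescaling $k^{it}$ by a suitable character (and $h^{it}$ by its inverse) both become genuine strongly continuous one-parameter unitary groups, and Stone's theorem yields non-singular $k\geq 0$ affiliated with $N$ and $h\geq 0$ affiliated with $N'$ with $H^{it}=h^{it}\otimes k^{it}$. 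The scalar character passes through $\alpha$ untouched, so the relation $\alpha(k^{it})=k^{it}\otimes\delta^{it}$ survives the normalisation, completing the proof.
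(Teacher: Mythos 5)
Your proposal is correct, but it reorganizes the argument in a genuinely different way from the paper, so a comparison is in order. The paper proceeds in the opposite order: it first establishes the splitting $H^{it}=h^{it}\otimes k^{it}$ by showing that $\textrm{Ad}(H^{it})$ globally preserves $B(\mathscr{L}^2(N))\otimes 1=\rho(N\rtimes M)\otimes 1$ --- checked on the two generating algebras $N\otimes 1$ (trivial, since the first leg of $H^{it}$ lies in $N'$) and $\widehat{\pi}_l(\widehat{M}')\otimes 1$ (equivalent, after conjugating by $\tilde{G}$, to $\textrm{Ad}(J\delta^{it}J)$ preserving $\widehat{M}'$) --- and then invokes the classical implementation of a continuous one-parameter automorphism group of the type I factor $B(\mathscr{L}^2(N))$ by a non-singular positive $h$; only afterwards does it run essentially your computation, $(\iota\otimes\alpha^{\textrm{op}})(H^{it})=\tilde{G}_{13}^*(J\delta^{it}J\otimes\delta^{it}\otimes 1)\tilde{G}_{13}$, obtained there through pentagon manipulations from Proposition \ref{lem4} and the same identity $\widehat{W}^*(J\delta^{it}J\otimes 1)\widehat{W}=J\delta^{it}J\otimes\delta^{it}$ (which the paper cites from Van Daele rather than reproving), in order to read off $\alpha(k^{it})=k^{it}\otimes\delta^{it}$. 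You prove the coaction identity first and let ergodicity produce the splitting; this is a sound and arguably more conceptual, Hopf-algebraic mechanism. Two small repairs are needed on your side. First, one-dimensionality of the solution space of $\alpha(a)=a\otimes\delta^{it}$ needs slightly more than $a^*b\in\mathbb{C}$: note that $ab^*$ is also fixed by $\alpha$, hence scalar, so a normalized solution is unitary and every other solution is a multiple of it (your normalization remark contains the needed ingredient). Second, your appeal to a \emph{continuous} $\mathbb{T}$-valued $2$-cocycle presupposes a (locally) continuous choice of $t\mapsto k^{it}$, e.g.\ suitably normalized slices $(\omega\otimes\iota)(H^{it})$ on intervals where they do not vanish, before Moore--Bargmann vanishing of $H^2(\mathbb{R},\mathbb{T})$ applies. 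These are standard and of exactly the same depth as the implementation theorem the paper quotes (whose proof is the same cocycle argument), so the two routes are of comparable rigor; they share all the quantum-group input and differ only in which structural fact --- type I-ness of $N\rtimes M$ versus ergodicity of $\alpha$ --- drives the tensor factorization.
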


\begin{proof} We show that $H^{it}(B(\mathscr{L}^2(N))\otimes 1)H^{-it}= B(\mathscr{L}^2(N))\otimes 1
$. Since $B(\mathscr{L}^2(N))=\rho(N\rtimes M)$, we only have to
show
 that $H^{it}(N\otimes 1)H^{-it}=(N\otimes 1)$ and $H^{it}(\widehat{\pi}_l(\widehat{M}')\otimes 1)H^{-it}=(\widehat{\pi}_l(\widehat{M}')\otimes 1)$.
 Now the first equality is clear as the first leg of $H^{it}$ lies in $N'$. As for the
 second equality, applying $\tilde{G}(\cdot)\tilde{G}^*$, this is equivalent
 with Ad$(J\delta^{it}J)(\widehat{M}')=\widehat{M}'$, which is easily seen to be true. \\

\noindent Denote by $h$ a positive operator which implements the
automorphism group Ad$(H^{it})$ on $B(\mathscr{L}^2(N))$: Ad$(H^{it})(x\otimes 1)=($Ad$(h^{it})(x))\otimes 1$ for all $x\in B(\mathscr{L}^2(N))$. Then $h$ is non-singular, with $h$ affiliated with $N'$, and $H^{it}=h^{it}\otimes k^{it}$ for a positive non-singular $k$ affiliated with $N$. \\

\noindent Note now that $\widehat{W}^*(J\delta^{it}J\otimes 1)\widehat{W} = J\delta^{it}J\otimes \delta^{it}$, which can be computed for example by Lemma 4.14 and the formulas in Proposition 4.17 of \cite{VDae2}. Then using the pentagon equation for $\tilde{G}$, we have \begin{eqnarray*} (\iota \otimes \alpha^{\textrm{op}})(H^{it}) &=& \tilde{G}_{23}H_{12}^{it}\tilde{G}_{23}^* \\ &=& \tilde{G}_{23}\tilde{G}_{12}^*(J\delta^{it}J\otimes 1\otimes 1)\tilde{G}_{12}\tilde{G}_{23}^*\\ &=& \tilde{G}_{13}^*\widehat{W}_{12}^*\tilde{G}_{23}(J\delta^{it}J\otimes 1\otimes 1)\tilde{G}_{23}^*  \widehat{W}_{12}\tilde{G}_{13}\\ &=& \tilde{G}_{13}^*(J\delta^{it}J\otimes \delta^{it}\otimes 1) \tilde{G}_{13}\\ &=& h^{it} \otimes \delta^{it}\otimes k^{it},\end{eqnarray*} so that $\alpha(k^{it})= k^{it}\otimes \delta^{it}$.

\end{proof}

\noindent The operator $k$ which appears in the proposition is determined up to a positive scalar. We will now fix some $k$, and call it $\delta_N$.

\begin{Def} We call $\delta_N$ the \emph{modular element} of $(N,\alpha)$.\end{Def}

\begin{Lem} \label{lem8} With the notation of the previous proposition, we have \begin{enumerate}
\item  $h=J_{N}\delta_N^{-1}J_{N}$,
\item
$\sigma_t^{N}(\delta_N^{is})=\nu^{ist}\delta_N^{is}$,
\item $\tau_t^N(\delta_N^{is})=\delta_N^{is}$. \end{enumerate}\end{Lem}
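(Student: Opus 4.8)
The three formulas describe how the canonical operators $J_N$, $\nabla_N^{it}$ and $P_N^{it}$ act on $\delta_N$, and in each case the lever is the defining identity $H^{it}=\tilde G^*(J\delta^{it}J\otimes 1)\tilde G=h^{it}\otimes\delta_N^{it}$ of the previous proposition. The plan is uniform: conjugate this identity by the relevant one-parameter group on $\mathscr{L}^2(N)\otimes\mathscr{L}^2(N)$, transport the conjugation through $\tilde G$ by means of Lemma \ref{lem2} and Corollary \ref{cor1}, and then read off the two legs.

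For~(1), note that since $\delta_N$ and $h$ are precisely the second and first legs of $H^{it}$, and $J_{N_2}=\Sigma(J_N\otimes J_N)$, the claim $h^{it}=J_N\delta_N^{it}J_N$ is exactly the assertion $J_{N_2}H^{it}J_{N_2}=H^{it}$. I would prove the latter by pushing $J_N\otimes J_N$ through $\tilde G$ with Lemma \ref{lem2}, which rewrites $J_{N_2}H^{it}J_{N_2}$ as $\tilde G^*(\widehat J\otimes J_N)(\Sigma U\Sigma)^*(J\delta^{it}J\otimes 1)(\Sigma U\Sigma)(\widehat J\otimes J_N)\tilde G$. Now $\Sigma U\Sigma$ has first leg in $M$ and second leg in $\widehat\pi_l(\widehat M')$, while $J\delta^{it}J\in M'$, so $J\delta^{it}J\otimes 1$ commutes with $\Sigma U\Sigma$; the conjugation by $\widehat J\otimes J_N$ then leaves $(\widehat J\,J\delta^{it}J\,\widehat J)\otimes 1$, and the standard relation $\widehat J\delta^{it}\widehat J=\delta^{it}$ (with $J$ and $\widehat J$ commuting up to a scalar) gives $\widehat J\,J\delta^{it}J\,\widehat J=J\delta^{it}J$. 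Hence $J_{N_2}H^{it}J_{N_2}=H^{it}$ and~(1) follows; the trivial Galois object $(M,\Delta)$, where $h^{it}=J\delta^{it}J$ and $\delta_N=\delta$, is a useful consistency check.

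For~(2) and~(3), I would first invoke ergodicity. By Proposition \ref{com2}(1) together with $\tau_t(\delta^{is})=\delta^{is}$ one has $\alpha(\tau_t^N(\delta_N^{is}))=\tau_t^N(\delta_N^{is})\otimes\delta^{is}$, so $\tau_t^N(\delta_N^{is})\delta_N^{-is}\in N^{\alpha}=\mathbb{C}$; the same argument applies with $\sigma_t^N$. Thus $\sigma_t^N(\delta_N^{is})=c(s,t)\delta_N^{is}$ and $\tau_t^N(\delta_N^{is})=d(s,t)\delta_N^{is}$ for scalars $c,d$ of modulus one which are bicharacters in $(s,t)$. Applying $\alpha$ to the first of these and using Proposition \ref{com2}(3) and $\sigma_t(\delta^{is})=\nu^{ist}\delta^{is}$ gives $c(s,t)=\nu^{ist}d(s,t)$ (Proposition \ref{com2}(2) yields the same relation through $\sigma'_{-t}$).

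The crux is to pin down the absolute value. Every relation in hand — ergodicity, the intertwining identities of Proposition \ref{com2} and Corollary \ref{cor1}, and part~(1) — is unchanged under $(c,d)\mapsto(\nu^{ist}\mu_0^{ist},\mu_0^{ist})$: conjugating $H^{is}$ by $\nabla_N^{it}\otimes\nabla_N^{it}$, by $P_N^{it}\otimes P_N^{it}$, or by $\nabla_N^{it}\otimes P_N^{it}$ reproduces only $c=\nu^{ist}d$ and $|c|=|d|=1$, never the value of $d$ itself. Establishing $d\equiv 1$ (equivalently~(2), $c=\nu^{ist}$) thus demands an argument genuinely sensitive to the absolute scaling of $\delta_N$ under the scaling group, and this is the main obstacle. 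The natural route, mirroring the proof for $(M,\Delta)$, is to show directly that $\sigma_t^N(\delta_N^{is})=\nu^{ist}\delta_N^{is}$ by tracking the scaling constant $\nu$ — which enters through the commutation of $\nabla$ and $\widehat\nabla$ on $\mathscr{L}^2(M)$ and is tied to $N$ by Lemma \ref{lem5} ($\varphi_N\circ\tau_t^N=\nu^t\varphi_N$, with the same $\nu$) — and then to deduce~(3) from $c=\nu^{ist}d$. I expect this last scaling computation, rather than any of the algebraic reductions, to be where the real work lies.
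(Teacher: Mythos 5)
Your part (1) is correct and is essentially the paper's own proof: the same reduction to the identity $\Sigma(J_N\otimes J_N)H^{it}(J_N\otimes J_N)\Sigma=H^{it}$ via Lemma \ref{lem2}, the same commutation of $J\delta^{it}J\otimes 1$ past $\Sigma U\Sigma$ (first leg in $M$), and the same relations between $J$, $\widehat{J}$ and $\delta^{it}$. One small caveat: this identity is not ``exactly'' the assertion $h^{it}=J_N\delta_N^{it}J_N$; it only yields $h^{it}=\lambda_t\,J_N\delta_N^{it}J_N$ for scalars $\lambda_t$, and the paper adds a short substitution argument (plugging this back into the tensor identity) to conclude $\lambda_t=1$. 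That is easily repaired.

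For (2) and (3), however, there is a genuine gap, and you flag it yourself. You establish only the relative constraint $c(s,t)=\nu^{ist}d(s,t)$ between the two bicharacters, observe (correctly) that every algebraic relation you invoke is insensitive to replacing $(c,d)$ by $(\nu^{ist}\mu_0^{ist},\mu_0^{ist})$, and then defer the decisive step as ``where the real work lies'' without carrying it out. So statement (2) is not proven, and (3) falls with it. The paper closes precisely this gap with a weight computation that your sketch never reaches: for $x\in\mathscr{M}_{\varphi_N}$ and a state $\omega\in N_*$, using ergodicity ($\varphi_N=(\iota\otimes\varphi)\alpha$), the relation $\alpha(\delta_N^{is})=\delta_N^{is}\otimes\delta^{is}$ from Proposition \ref{lem16}, and the relative invariance $\varphi(\delta^{is}\,\cdot\,)=\nu^{s}\varphi(\,\cdot\,\delta^{is})$ of the Haar weight of $(M,\Delta)$ --- this is where $\nu$ genuinely enters --- one computes
\[ \varphi_N(\delta_N^{is}x)=\nu^{s}\varphi_N(x\,\delta_N^{is}), \]
i.e.\ $\sigma_{-i}^{N}(\delta_N^{is})=\nu^{s}\delta_N^{is}$ by the KMS property. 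Since $\sigma_t^N(\delta_N^{is})=\tilde\nu^{ist}\delta_N^{is}$ for some $\tilde\nu>0$ (your $c(s,t)$; the paper gets it by conjugating $H^{is}$ with $\nabla_N^{it}\otimes\nabla_N^{it}$ via Corollary \ref{cor1}), analytic continuation to $t=-i$ forces $\tilde\nu^{s}=\nu^{s}$, hence $\tilde\nu=\nu$, which is (2); then (3) follows from $c=\nu^{ist}d$, just as the paper deduces it from $\alpha(\tau_t^N\sigma_{-t}^N(\delta_N^{is}))=\nu^{-ist}\alpha(\delta_N^{is})$. Note also that Lemma \ref{lem5}, to which you point, cannot by itself supply the missing input: in any expression of the form $\varphi_N(\tau_t^N(\delta_N^{is}\,\cdot\,\delta_N^{-is}))$ the modulus-one scalar $d(s,t)$ cancels, so $\varphi_N\circ\tau_t^N=\nu^t\varphi_N$ never sees it. What is needed is an identity in which $\delta_N^{is}$ occurs linearly against the weight --- exactly the twisted KMS identity displayed above.
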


\begin{proof} Denoting again $H^{it}=\tilde{G}^{*}(J\delta^{it}J\otimes 1)\tilde{G}$, we first
prove that \[\Sigma (J_{N}\otimes
J_{N})H^{it}(J_{N}\otimes J_{N})\Sigma =
H^{it}.\] Using Lemma \ref{lem2}, the left hand side equals
\[\tilde{G}^* (\widehat{J}\otimes J_N)\Sigma U^*\Sigma
(J\delta^{it}J\otimes 1)\Sigma U\Sigma (\widehat{J}\otimes J_N)\tilde{G}.\]
As $U\in B(\mathscr{L}^2(N))\otimes M$, this reduces to
$\tilde{G}^*(\widehat{J}J\delta^{it}J\widehat{J}\otimes 1)\tilde{G}$.
 Since $J$ commutes with $\widehat{J}$ up to a scalar of modulus 1, and since $\delta^{it}$ commutes with $\widehat{J}$, we find that this expression reduces to
  $\tilde{G}^{*}(J\delta^{it}J\otimes 1)\tilde{G}=H^{it}$. So  \[J_N\delta_N^{it}J_N\otimes J_Nh^{it}J_N =h^{it}\otimes \delta_N^{it}, \] which implies that there exists a positive scalar $r$ such
that $h^{it} = r^{it} J_N\delta_N^{it} J_N$. But plugging this back into the above
equality, we find that $r^{2it}=1$ for all $t$, hence $r=1$. \\

\noindent  For the second statement, we easily get, using the first commutation relation of Corollary \ref{cor1}, that
   \[(\nabla_N^{it} \otimes \nabla_N^{it})(J_N\delta_N^{is}J_N \otimes \delta_N^{is})(\nabla_N^{-it}\otimes \nabla_N^{-it})=
   (J_N\delta_N^{is}J_N \otimes \delta_N^{is}). \] This
   implies that there exists a positive number $\tilde{\nu}$
   such that $\sigma_t^{N}(\delta_N^{is})= \tilde{\nu}^{ist} \delta_N^{is}$. We
   must show that $\tilde{\nu}=\nu$. \\

\noindent But we know now that $\delta_N^{is}$ is analytic with
respect to $\sigma_t^{N}$. So if $x\in
\mathscr{M}_{\varphi_N}$, then also $x\delta_N^{is}$ and
$\delta_N^{is}x$ are integrable.
 We have for such $x$ that, choosing some
 state $\omega\in N_*$, \begin{eqnarray*} \varphi_N(\delta_N^{is} x)
 &=& \varphi( (\omega\otimes \iota)(\alpha(\delta_N^{is}x)))\\
 &=& \varphi(\delta^{is} (\omega(\delta_N^{is}\cdot)\otimes \iota)(\alpha(x)))\\
 &=& \nu^{s} \varphi((\omega(\delta_N^{is}\cdot)\otimes \iota)(\alpha(x))\delta^{is} )
 \\ &=& \nu^{s} \varphi((\omega(\delta_N^{is}\cdot \delta_N^{-is})\otimes \iota)(\alpha(x\delta_N^{is}) ))\\
 &=& \nu^{s}\varphi_N(x\delta_N^{is}).\end{eqnarray*}  This shows $ \sigma_{-i}(\delta_N^{is})= \nu^s \delta_N^{is}$, which implies $\tilde{\nu} = \nu$. \\

\noindent As for the last statement, this follows from \begin{eqnarray*} \alpha(\tau_t^N\sigma_{-t}^N(\delta_N^{is})) &=& (\iota\otimes \tau_t\sigma_{-t})\alpha(\delta_N^{is})\\ &=& \delta_N^{is}\otimes \tau_t\sigma_{-t}(\delta^{is})\\ &=& \nu^{-ist} \alpha(\delta_N^{is}).\end{eqnarray*}

\end{proof}

\noindent By Connes' cocycle derivative theorem, we can now make the nsf weight
$\psi_N=\varphi_N(\delta_N^{1/2}\cdot \,\delta_N^{1/2})$, which is the deformation of $\varphi_N$ by the cocycle $w_t = \nu^{it^2/2}\delta_N^{it}$.

\begin{Theorem}\label{proprinv} The weight $\psi_N$ is invariant with respect to $\alpha$:
if $x\in \mathscr{M}_{\psi_N}^+$ and $\omega\in M_*^+$, then
\[\psi_N((\iota\otimes \omega)\alpha(x)) = \psi_N(x)\omega(1).\]
\end{Theorem}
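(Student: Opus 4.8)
The plan is to reduce the invariance of $\psi_N$ to the right invariance of $\psi$ on $(M,\Delta)$, using the two structural facts already at our disposal: the identity $\varphi_N=(\iota\otimes\varphi)\alpha$ defining $\varphi_N$ in the ergodic case, and the covariance relation $\alpha(\delta_N^{it})=\delta_N^{it}\otimes\delta^{it}$ from Proposition \ref{lem16}. Combined with $\Delta(\delta^{it})=\delta^{it}\otimes\delta^{it}$ and $\psi=\varphi(\delta^{1/2}\cdot\,\delta^{1/2})$ on $M$, these let me transport the whole computation to the fibre.

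First I would fix $x\in\mathscr{M}_{\psi_N}^+$ and $\omega\in M_*^+$, and set $y=\delta_N^{1/2}x\delta_N^{1/2}\in\mathscr{M}_{\varphi_N}^+$, so that $\psi_N(x)=\varphi_N(y)$. Writing out the definition of $\psi_N$ on the element $(\iota\otimes\omega)\alpha(x)$ and pulling the two copies of $\delta_N^{1/2}$ inside $\alpha$ by means of the analytic continuation of $\alpha(\delta_N^{it})=\delta_N^{it}\otimes\delta^{it}$ to $t=-i/2$, I obtain
\[\psi_N((\iota\otimes\omega)\alpha(x))=\varphi_N((\iota\otimes\theta)\alpha(y)),\qquad \theta(m):=\omega(\delta^{-1/2}m\,\delta^{-1/2}).\]
Next I would apply $\varphi_N=(\iota\otimes\varphi)\alpha$ to the right-hand side and use coassociativity $(\alpha\otimes\iota)\alpha=(\iota\otimes\Delta)\alpha$ to collapse the two $M$-legs into a single slice, which yields $\varphi_N((\iota\otimes\theta)\alpha(y))=(\iota\otimes\Xi)\alpha(y)$ with $\Xi=(\varphi\otimes\theta)\Delta$ a functional on $M$.

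The heart of the argument is then the purely $M$-level identity $\Xi=\omega(1)\varphi$. To prove it I would rewrite $\theta$ using $\Delta(\delta^{-1/2})=\delta^{-1/2}\otimes\delta^{-1/2}$, so that $(1\otimes\delta^{-1/2})\Delta(m)(1\otimes\delta^{-1/2})=(\delta^{1/2}\otimes1)\Delta(\delta^{-1/2}m\delta^{-1/2})(\delta^{1/2}\otimes1)$; absorbing the outer $\delta^{1/2}$ into $\varphi$ turns the first leg into $\psi=\varphi(\delta^{1/2}\cdot\,\delta^{1/2})$, and right invariance $(\psi\otimes\iota)\Delta=\psi(\cdot)1$ together with $\psi(\delta^{-1/2}m\delta^{-1/2})=\varphi(m)$ gives $\Xi(m)=\omega(1)\varphi(m)$. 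Feeding this back yields $\psi_N((\iota\otimes\omega)\alpha(x))=\omega(1)(\iota\otimes\varphi)\alpha(y)=\omega(1)\varphi_N(y)=\omega(1)\psi_N(x)$, as desired.

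The main obstacle is rigor, not the algebra: everything above is a formal manipulation of the unbounded positive operators $\delta$ and $\delta_N$ and of the unbounded functional $\theta$, and each interchange of an nsf weight with a slice must be justified. I would make this precise in the standard way: first establish the bounded, analytic statement for $\delta_N^{it}$ (using $\sigma_t^N(\delta_N^{is})=\nu^{ist}\delta_N^{is}$ and the cocycle $w_t=\nu^{it^2/2}\delta_N^{it}$ to control domains), working on the cores and with the GNS-maps so that the Fubini theorem for tensor products of weights applies, and only then continue analytically to $t=-i/2$. The scaling constant $\nu$ is carried consistently by the cocycles on both $N$ and $M$ and cancels in the final identity, and the covariance relations of Proposition \ref{com2} and Lemma \ref{lem8} are exactly what guarantee that $y$, $\theta$ and the relevant analytic continuations remain within the required domains.
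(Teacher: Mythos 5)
Your argument is correct in outline and takes a genuinely different route from the paper. Both proofs hinge on the covariance $\alpha(\delta_N^{it})=\delta_N^{it}\otimes\delta^{it}$ of Proposition \ref{lem16}, but after that they diverge: you stay at the level of weights, pulling $\delta_N^{1/2}$ through $\alpha$ and then reducing everything to the strong right invariance of $\psi$ on $(M,\Delta)$ via Fubini and coassociativity, whereas the paper works spatially, introducing the GNS map $\Lambda_{N,\delta_N}(x)=\Lambda_N(\overline{x\delta_N^{1/2}})$ for $\psi_N$ and proving $\Lambda_{N,\delta_N}((\iota\otimes\omega)\alpha(x))=(\iota\otimes\omega)(U)\Lambda_{N,\delta_N}(x)$, after which invariance follows from unitarity of $U$, an orthonormal-basis summation and lower semicontinuity. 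Two remarks on the comparison. First, your intermediate identity $\varphi_N((\iota\otimes\theta)\alpha(y))=\omega(1)\varphi_N(y)$, with $\theta=\omega(\delta^{-1/2}\cdot\,\delta^{-1/2})$, is precisely the $\delta$-invariance of $\varphi_N$, which the paper already has available (it is quoted in the section on Galois coactions from Definition 2.3 of \cite{Vae1}); citing it would let you delete the Fubini computation entirely, leaving only your first reduction. Second, the rigor you defer to ``the standard way'' is not evenly distributed: the delicate points are exactly the slice by the non-normal functional $\theta$, the unbounded sandwich $y=\delta_N^{1/2}x\delta_N^{1/2}$, and evaluating $\psi_N$ on $(\iota\otimes\omega)\alpha(x)$ before it is known to be integrable. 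The paper's spatial formulation disposes of all three at once --- formula (\ref{form}) for $U$ has the $\delta^{-1/2}$ built in on vectors $\xi\in\mathscr{D}(\delta^{-1/2})$, left multipliers of $\delta_N^{1/2}$ replace the sandwich, and closedness of $\Lambda_{N,\delta_N}$ together with lower semicontinuity replaces the domain issue --- so a fully detailed version of your route would largely end up rebuilding that machinery. What your route buys is conceptual clarity: it exhibits the theorem as a direct transport of the right invariance of $\psi$ along $\alpha$, which is exactly the proof one would write in the purely algebraic Hopf--Galois setting.
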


\begin{proof}

\noindent Let $x\in N$ be a left multiplier of $\delta_N^{1/2}$ such that the closure of $x\delta_N^{1/2}$ is an element of $\mathscr{N}_{\varphi_N}$. Then $x\in \mathscr{N}_{\psi_N}$, and $\Lambda_{N,\delta_N}(x):=\Lambda_N(\overline{x\delta_N^{1/2}})$ determines a GNS-construction for $\psi_N$ (see the remark before Proposition 1.15 in \cite{Kus1}). Choose $\xi\in \mathscr{D}(\delta^{-1/2})$. Then for any $\eta\in \mathscr{L}^2(M)$, we have $(\iota\otimes\omega_{\xi,\eta})\alpha(x)$ a left multiplier of $\delta_N^{1/2}$, and the closure of $((\iota\otimes\omega_{\xi,\eta})\alpha(x))\delta_N^{1/2}$ equals $(\iota\otimes \omega_{\delta^{-1/2}\xi,\eta})\alpha(\overline{x\delta_N^{1/2}})$. By the formula (\ref{form}) for $U$ (after the statement of Theorem \ref{Theo1}), we conclude that this last operator is in $\mathscr{D}(\Lambda_N)$, and that its image under $\Lambda_N$ equals $(\iota\otimes \omega_{\xi,\eta})(U)\Lambda_{\psi_N}(x)$. Then by the closedness of $\Lambda_{N,\delta_N}$, we can conclude that for $x$ of the above form, $(\iota\otimes\omega)(\alpha(x))\in \mathscr{D}(\Lambda_{N,\delta_N})$ for every $\omega\in M_*$, with \[\Lambda_{N,\delta_N} ((\iota\otimes\omega)(\alpha(x))) = (\iota\otimes \omega)(U)\Lambda_{N,\delta_N}(x).\] Since such $x$ form a $\sigma$-strong-norm core for $\Lambda_{N,\delta_N}$, the same statement holds for a general $x\in \mathscr{N}_{N,\delta_N}$. From this, it is standard to conclude the invariance: take $\omega=\omega_{\xi,\xi}\in M_*^+$ and $x=y^*y\in
\mathscr{M}_{\psi_N}^+$. Let $\xi_i$ denote an orthonormal basis for
$\mathscr{L}^2(M)$. Then by the lower-semi-continuity of $\psi_N$,
we find
\begin{eqnarray*} \psi_N((\iota\otimes \omega_{\xi,\xi})(\alpha(y^*y))) &=&
\psi_N (\sum_n (\iota\otimes \omega_{\xi,\xi_n})(\alpha(y))^*((\iota\otimes \omega_{\xi,\xi_n})(\alpha(y))\\
&=& \sum_n \psi_N((\iota\otimes
\omega_{\xi,\xi_n})(\alpha(y))^*((\iota\otimes
\omega_{\xi,\xi_n})(\alpha(y)))\\ &=&
\sum_n \|\Lambda_{N,\delta_N}((\iota\otimes \omega_{\xi,\xi_n})(\alpha(y))\|^2 \\
&=& \sum_n \| (\iota\otimes
\omega_{\xi,\xi_n})(U)\Lambda_{N,\delta_N}(y))\|^2\\ &=& \langle
\Lambda_{N,\delta_N}(y),(\sum_n ((\iota\otimes
\omega_{\xi_n,\xi})(U^*)(\iota\otimes
\omega_{\xi,\xi_n})(U))\Lambda_{N,\delta_N}(y)\rangle
\\ &=& \langle \Lambda_{N,\delta_N}(y),(\iota\otimes \omega_{\xi,\xi})(U^*U)\Lambda_{N,\delta_N}(y)\rangle \\
&=& \psi_N(y^*y)\omega_{\xi,\xi}(1),\end{eqnarray*} hence
$\psi_N((\iota\otimes \omega)(\alpha(x)))= \psi_N(x)\omega(1)$.\\

\end{proof}

\noindent \textit{Remark:} It is natural to ask if there is a corresponding result for general Galois coactions. We briefly show that one can not expect \emph{too} much: for general Galois coactions, there does not have to exist an invariant nsf operator valued weight $T_{\psi_N}$, i.e. an operator valued weight $N^+\rightarrow (N^{\alpha})^{+,\textrm{ext}}$ such that $T_{\psi_N}((\iota\otimes \omega)\alpha(x))=\omega(1)T_{\psi_N}(x)$ for $\omega\in M_*^+$ and $x\in \mathscr{M}_{T_{\psi_N}}^+$. To give an explicit example, suppose $\alpha$ is an outer left coaction of a locally compact quantum group $(M,\Delta)$ on a factor $N$. Then by outerness, there is a \emph{unique} nsf operator valued weight $(N\rtimes M)^+\rightarrow \alpha(N)^{+,\textrm{ext}}$ (up to a scalar), namely $(\iota\otimes \widehat{\varphi})\widehat{\alpha}$, where $\widehat{\alpha}$ is the dual right coaction. But if $(\widehat{M},\widehat{\Delta})$ is not unimodular, then this operator valued weight is not invariant. On the other hand, this does \emph{not} rule out the possibility that there exists an invariant nsf \emph{weight}: for if the original coaction has an invariant nsf weight $\psi_N$ (for example, the coactions occurring in \cite{Vae6}), then one checks that $x\in (N\rtimes M)^+ \rightarrow \psi_{\widehat{M}'}((\psi_N\otimes \iota)(x)) \in \lbrack 0,+\infty\rbrack$ is a well-defined $\widehat{\alpha}$-invariant nsf weight on $N\rtimes M$. We do not know of any example of a Galois coaction without invariant weights.\\

\begin{Prop} \label{lem10} Denote $\widehat{\nabla}_N^{it} = P_N^{it}J_N\delta_N^{it}J_N$.
Then $\widehat{\nabla}_N^{-it}\widehat{\pi}_l(m)\widehat{\nabla}_N^{it} =
\widehat{\pi}_l(\sigma_t^{\widehat{\varphi}^{op}}(m))$ for $m\in \widehat{M}'$.
\end{Prop}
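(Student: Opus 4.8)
The plan is to transport the entire identity through the Galois unitary $\tilde{G}$, where the representation $\widehat{\pi}_l$ linearizes: by the second commutation relation of Lemma \ref{com1} we have $\tilde{G}(\widehat{\pi}_l(m)\otimes 1)\tilde{G}^{*}=m\otimes 1$ for $m\in\widehat{M}'$. So it suffices to produce one clean commutation relation for $\widehat{\nabla}_N^{it}$ through $\tilde{G}$ and then conjugate. Before that I would record the dual analogue of the first formula of Theorem 4.17 in \cite{VDae2}: applying that formula to $(\widehat{M},\widehat{\Delta})$ (whose scaling operator is again $P$ by self-duality, whose dual modular element is $\delta$ and whose dual modular conjugation is $J$) yields $\widehat{\nabla}^{it}=P^{it}J\delta^{it}J$. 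As a consistency check, for the trivial Galois object $(M,\Delta)$ one has $P_N=P$, $J_N=J$, $\delta_N=\delta$, so that $\widehat{\nabla}_N^{it}=\widehat{\nabla}^{it}$, and the statement reduces to the definition of $\sigma_t^{\widehat{\varphi}^{\textrm{op}}}$; this is how I would fix all sign and convention ambiguities.

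The key step is the factorization
\[\widehat{\nabla}_N^{it}\otimes P_N^{it}\delta_N^{it}=(P_N^{it}\otimes P_N^{it})(J_N\delta_N^{it}J_N\otimes \delta_N^{it}),\]
which is immediate from the definition $\widehat{\nabla}_N^{it}=P_N^{it}J_N\delta_N^{it}J_N$. I would then combine two already-established facts: the third commutation relation of Corollary \ref{cor1}, namely $\tilde{G}(P_N^{it}\otimes P_N^{it})=(P^{it}\otimes P_N^{it})\tilde{G}$, and the relation $\tilde{G}(J_N\delta_N^{it}J_N\otimes \delta_N^{it})=(J\delta^{it}J\otimes 1)\tilde{G}$. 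The latter is exactly Proposition \ref{lem16} rewritten, using $h^{it}=J_N\delta_N^{it}J_N$ (from Lemma \ref{lem8}(1), since $h=J_N\delta_N^{-1}J_N$) and $k=\delta_N$. Feeding these into the factorization and using $\widehat{\nabla}^{it}=P^{it}J\delta^{it}J$ gives the master relation
\[\tilde{G}\,(\widehat{\nabla}_N^{it}\otimes P_N^{it}\delta_N^{it})=(\widehat{\nabla}^{it}\otimes P_N^{it})\,\tilde{G}.\]

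From here the conclusion is a direct conjugation. Conjugating $\widehat{\pi}_l(m)\otimes 1$ by $\widehat{\nabla}_N^{it}\otimes P_N^{it}\delta_N^{it}$, the second leg collapses to the identity (the factor $P_N^{it}\delta_N^{it}$ conjugates $1$ to $1$), producing $(\widehat{\nabla}_N^{it}\widehat{\pi}_l(m)\widehat{\nabla}_N^{-it})\otimes 1$ on one side. On the other side, pushing through the master relation and then applying Lemma \ref{com1}(2) in the form $\tilde{G}(\widehat{\pi}_l(m)\otimes 1)\tilde{G}^{*}=m\otimes 1$ gives $\tilde{G}^{*}((\widehat{\nabla}^{it}m\widehat{\nabla}^{-it})\otimes 1)\tilde{G}=\widehat{\pi}_l(\widehat{\nabla}^{it}m\widehat{\nabla}^{-it})\otimes 1$, the last equality again by Lemma \ref{com1}(2) since $\widehat{\nabla}^{it}m\widehat{\nabla}^{-it}\in\widehat{M}'$. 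Comparing the two sides yields $\widehat{\nabla}_N^{it}\widehat{\pi}_l(m)\widehat{\nabla}_N^{-it}=\widehat{\pi}_l(\widehat{\nabla}^{it}m\widehat{\nabla}^{-it})$. Since the modular group of $\widehat{\varphi}^{\textrm{op}}$ on $\widehat{M}'$ is implemented by $\widehat{\nabla}^{-it}$, i.e.\ $\sigma_s^{\widehat{\varphi}^{\textrm{op}}}(m)=\widehat{\nabla}^{-is}m\widehat{\nabla}^{is}$, replacing $t$ by $-t$ gives precisely $\widehat{\nabla}_N^{-it}\widehat{\pi}_l(m)\widehat{\nabla}_N^{it}=\widehat{\pi}_l(\sigma_t^{\widehat{\varphi}^{\textrm{op}}}(m))$.

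The only genuinely delicate point I anticipate is pinning down the dual formula $\widehat{\nabla}^{it}=P^{it}J\delta^{it}J$ with the correct exponent sign and the correct appearance of $\delta$ rather than $\delta^{-1}$; everything else is bookkeeping with the commutation relations for $\tilde{G}$ that have already been proven. The trivial-object consistency check described in the first paragraph is what I would rely on to guarantee that the conventions align, after which the factorization in the second paragraph is the one nonroutine idea and the rest is mechanical.
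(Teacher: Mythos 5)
Your proof is correct and non-circular, but it takes a genuinely different route from the paper's. The paper argues at the GNS level, using the invariant weight $\psi_N$ of Theorem \ref{proprinv}: it first shows $\widehat{\nabla}_N^{it}\Lambda_{\psi_N}(x)=\Lambda_{\psi_N}(\tau_t^N(x)\delta_N^{-it})$ (an adjustment of Lemma \ref{lem5}), then applies $(\iota\otimes\omega)(U)$ and the commutation rules between $\alpha$, $\tau_t^N$ and $\delta_N^{it}$ to get $\widehat{\nabla}_N^{-it}\widehat{\pi}_l((\iota\otimes\omega)(V))\widehat{\nabla}_N^{it}=\widehat{\pi}_l((\iota\otimes\omega(\tau_t(\cdot)\delta^{-it}))(V))$, recognizes the right-hand side as $\widehat{\pi}_l(\sigma_t^{\widehat{\varphi}^{\textrm{op}}}(\cdot))$ on slices of $V$, and finishes by density. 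You instead work entirely with bounded operators: your ``master relation'' $\tilde{G}(\widehat{\nabla}_N^{it}\otimes P_N^{it}\delta_N^{it})=(\widehat{\nabla}^{it}\otimes P_N^{it})\tilde{G}$ is literally the second formula of Proposition \ref{lem7}, which the paper states and proves immediately \emph{after} this proposition, from exactly the ingredients you list (Corollary \ref{cor1}(3), Proposition \ref{lem16} together with Lemma \ref{lem8}(1), and the dual formula $\widehat{\nabla}^{it}=P^{it}J\delta^{it}J$); since that proof nowhere invokes Proposition \ref{lem10}, your reordering is legitimate. The concluding conjugation is clean: Lemma \ref{com1}(2) and unitarity of $\tilde{G}$ give $\tilde{G}(\widehat{\pi}_l(m)\otimes 1)\tilde{G}^*=m\otimes 1$, Tomita--Takesaki theory gives that $\mathrm{Ad}(\widehat{\nabla}^{it})$ preserves $\widehat{M}'$ and that $\sigma_t^{\widehat{\varphi}^{\textrm{op}}}=\mathrm{Ad}(\widehat{\nabla}^{-it})$ there, and the second legs cancel as you say. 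What your route buys is economy: it avoids $\psi_N$, the GNS map $\Lambda_{\psi_N}$, and the explicit action of $\sigma_t^{\widehat{\varphi}^{\textrm{op}}}$ on generators, essentially by swapping the logical order of Propositions \ref{lem10} and \ref{lem7}. What the paper's route buys is the explicit information produced along the way --- the implementation of $\widehat{\nabla}_N^{it}$ on $\Lambda_{\psi_N}$-vectors and the formula for $\sigma_t^{\widehat{\varphi}^{\textrm{op}}}$ on slices $(\iota\otimes\omega)(V)$ --- which fits the weight-theoretic development it is embedded in.
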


\begin{proof} First note that $\widehat{\nabla}_N^{it}$ is well-defined,
since $P_N^{it}$ is easily seen to commute with $J_N$ and $\delta_N^{it}$. Then also
$\widehat{\nabla}_N^{it} \Lambda_{\psi_N}(x) =
\Lambda_{\psi_N}(\tau_t^N(x)\delta_N^{-it})$ for $x\in \mathscr{N}_{\psi_N}$, by an easy adjustment
of Lemma  \ref{lem5} and using the relative invariance property of
$\delta_N^{it}$. If we apply $(\iota\otimes \omega)(U)$ to this with
$\omega\in M_*$, then, using the commutation rules between $\alpha$,$\tau_t^N$ and $\delta_N^{it}$, we get \[(\iota\otimes \omega)(U) \widehat{\nabla}_N^{it} \Lambda_{\psi_N}(x)= \Lambda_{\psi_N}
(\tau_{t}^N((\iota\otimes
\omega(\tau_t(\cdot)\delta^{-it}))\alpha(x))\delta_N^{-it}).\] This
shows  \[\widehat{\nabla}_N^{-it}\widehat{\pi}_l((\iota\otimes
\omega)(V))\widehat{\nabla}_N^{it} =  \widehat{\pi}_l((\iota\otimes
\omega(\tau_t(\cdot)\delta^{-it}))(V)).\] But this is exactly
$\widehat{\pi}_l(\sigma_t^{\widehat{\varphi}^{\textrm{op}}}((\iota\otimes \omega)(V)))$. Then of course the same holds with $(\iota\otimes \omega)(V)$ replaced by a general element of $\widehat{M}'$, thus proving the proposition.

\end{proof}

\begin{Prop} \label{lem6} \label{lem7} The following commutation relations hold: \begin{enumerate}\item $(\nabla^{it}\otimes \widehat{\nabla}_N^{it})\tilde{G}= \tilde{G}(\nabla_N^{it}\otimes \widehat{\nabla}_N^{it})$,

\item $(\widehat{\nabla}^{it}\otimes P_N^{it})\tilde{G}= \tilde{G}(\widehat{\nabla}_N^{it}\otimes P_N^{it}\delta_N^{it}).$\end{enumerate}
\end{Prop}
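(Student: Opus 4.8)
The plan is to establish both commutation relations by combining the building blocks already derived for $\tilde{G}$, namely the relations in Corollary \ref{cor1}, the definition $\widehat{\nabla}_N^{it} = P_N^{it}J_N\delta_N^{it}J_N$, and the pentagon equation of Proposition \ref{lem4}. The key observation is that $\widehat{\nabla}_N^{it}$ factorizes through the operators that already have known intertwining behaviour under $\tilde{G}$, and that $\widehat{\nabla}^{it}$ (the modular operator for $\widehat{\varphi}$) can be written in terms of $\nabla^{it}$, $P^{it}$ and $\delta^{it}$ via the standard relations among the structures of $(M,\Delta)$ recorded in \cite{VDae2}.

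\textbf{First identity.} I would prove $(\nabla^{it}\otimes \widehat{\nabla}_N^{it})\tilde{G}= \tilde{G}(\nabla_N^{it}\otimes \widehat{\nabla}_N^{it})$ by computing how $\widehat{\nabla}_N^{it}=P_N^{it}J_N\delta_N^{it}J_N$ acts on the second leg. Using the third relation of Corollary \ref{cor1}, $\tilde{G}(P_N^{it}\otimes P_N^{it}) = (P^{it}\otimes P_N^{it})\tilde{G}$, together with Lemma \ref{lem8}(i) which identifies $h=J_N\delta_N^{-1}J_N$, and the definition $H^{it}=\tilde{G}^*(J\delta^{it}J\otimes 1)\tilde{G}=h^{it}\otimes k^{it}$ from Proposition \ref{lem16}, I can track the $J_N\delta_N^{it}J_N$ factor through $\tilde{G}$: on the second leg it passes as $k^{it}=\delta_N^{it}$, while on the first leg the conjugate factor $J\delta^{it}J=\widehat{\nabla}^{it}\nabla^{-it}$-type contributions must be accounted for. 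The strategy is to write $\widehat{\nabla}_N^{it}$ as the product $P_N^{it}\cdot J_N\delta_N^{it}J_N$ and apply the two relevant relations (Corollary \ref{cor1}(3) and the $H^{it}$ relation) in succession, then use the relation $\widehat{\nabla}^{it}=P^{it}J\delta^{it}J$ on the first leg (the analogue of the defining equation for $\widehat{\nabla}_N$, valid for the trivial Galois object) to collapse the first-leg operators into $\nabla^{it}$.

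\textbf{Second identity.} For $(\widehat{\nabla}^{it}\otimes P_N^{it})\tilde{G}= \tilde{G}(\widehat{\nabla}_N^{it}\otimes P_N^{it}\delta_N^{it})$, I would proceed analogously but now keeping $P_N^{it}$ fixed on the second leg and tracking $\widehat{\nabla}^{it}$ on the first. Writing $\widehat{\nabla}_N^{it}=P_N^{it}J_N\delta_N^{it}J_N$ and applying Corollary \ref{cor1}(3) to move $P_N^{it}$ to $P^{it}$ on the first leg, the residual factor $J_N\delta_N^{it}J_N$ on the first leg of the right-hand side is converted by the $H^{it}$ relation of Proposition \ref{lem16} into $J\delta^{it}J$ on the first leg and an extra $\delta_N^{it}$ on the second leg — which is exactly the $\delta_N^{it}$ appearing as the correction term $P_N^{it}\delta_N^{it}$ in the statement. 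The first-leg product $P^{it}J\delta^{it}J$ then assembles into $\widehat{\nabla}^{it}$ by the standard formula for the modular operator of the dual weight. I expect the main obstacle to be the careful bookkeeping of which $\delta_N^{it}$ factors land on which leg and ensuring the scaling-constant powers $\nu$ cancel correctly; the intertwining relations themselves are unitary-valued, so no scalar $\nu$-factors should survive, but verifying this requires using Lemma \ref{lem8}(ii)--(iii) to commute $\delta_N^{is}$ past $\sigma_t^N$ and $\tau_t^N$ cleanly. The genuinely delicate point is establishing the first-leg identity $\widehat{\nabla}^{it}=P^{it}J\delta^{it}J$ (or its rearrangement) with the correct sign conventions matching those of \cite{VDae2}, so that the two halves fit together without a residual unitary.
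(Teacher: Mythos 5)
Your treatment of the \emph{second} identity is correct and is in fact exactly the paper's argument: decompose $\widehat{\nabla}_N^{it}=P_N^{it}J_N\delta_N^{it}J_N$, move $P_N^{it}\otimes P_N^{it}$ through $\tilde{G}$ by the third relation of Corollary \ref{cor1}, convert the residual pair $(J_N\delta_N^{it}J_N\otimes \delta_N^{it})$ into $(J\delta^{it}J\otimes 1)$ via $H^{it}=h^{it}\otimes \delta_N^{it}$ (Proposition \ref{lem16}, Lemma \ref{lem8}(i) and the definition of $\delta_N$), and assemble $P^{it}J\delta^{it}J=\widehat{\nabla}^{it}$ on the first leg. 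Your worry about residual powers of $\nu$ is unfounded but harmless: all the ingredients are exact unitary intertwining relations.

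The \emph{first} identity, however, is where your proposal breaks down. The toolkit you invoke for it --- Corollary \ref{cor1}(3) together with the $H^{it}$ relation --- produces precisely the second identity and cannot produce the first: the $H^{it}$ relation inseparably couples $J_N\delta_N^{it}J_N$ on the first leg with $\delta_N^{it}$ on the second leg, whereas in the first identity the factor $J_N\delta_N^{it}J_N$ sits on the \emph{second} leg and no $\delta_N^{it}$ appears anywhere. Moreover, your claimed collapse of the first-leg operators, ``use $\widehat{\nabla}^{it}=P^{it}J\delta^{it}J$ to collapse the first-leg operators into $\nabla^{it}$,'' is inconsistent: the product $P^{it}J\delta^{it}J$ equals $\widehat{\nabla}^{it}$, not $\nabla^{it}$; the relation producing $\nabla^{it}$ is $\nabla^{it}=P^{it}\widehat{J}\widehat{\delta}^{it}\widehat{J}$, which involves the \emph{dual} modular element and does not arise from your chain of substitutions. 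The missing idea is much simpler, and it is what the paper uses: by Corollary \ref{corr} the second leg of $\tilde{G}$ lies in $N$, while $J_N\delta_N^{it}J_N\in N'$, so $1\otimes J_N\delta_N^{it}J_N$ commutes with $\tilde{G}$ outright; combining this with the \emph{second} relation of Corollary \ref{cor1}, namely $\tilde{G}(\nabla_N^{it}\otimes P_N^{it})=(\nabla^{it}\otimes P_N^{it})\tilde{G}$ (a relation your proposal never invokes), gives $\tilde{G}(\nabla_N^{it}\otimes \widehat{\nabla}_N^{it})=(\nabla^{it}\otimes P_N^{it})\tilde{G}(1\otimes J_N\delta_N^{it}J_N)=(\nabla^{it}\otimes \widehat{\nabla}_N^{it})\tilde{G}$ immediately.
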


\begin{proof}

\noindent The first formula follows by the second formula in Corollary \ref{cor1}, and the fact that the second leg of $\tilde{G}$ lies in $N$. The second formula follows from the fact that also $\widehat{\nabla}^{it}= J\delta^{it}JP^{it}$, then using the third formula of Corollary \ref{cor1} and the first formula in Lemma \ref{lem8} together with the definition of $\delta_N$.

\end{proof}

\begin{Theorem}\label{propi} Up to a positive constant,  $\psi_N$ is the only invariant, and $\varphi_N$ the only $\delta$-invariant weight on $N$.\end{Theorem}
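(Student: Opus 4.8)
The plan is to reduce the statement to the ergodicity $N^{\alpha}=\mathbb{C}$ by showing that the Connes--Radon--Nikodym cocycle relating two invariant weights is a scalar one-parameter group. So suppose $\psi_N'$ is a second nsf weight on $N$ which is invariant in the sense of Theorem \ref{proprinv}, and put $u_t=[D\psi_N':D\psi_N]_t$. This is a strongly continuous family of unitaries in $N$ with $u_{s+t}=u_s\sigma_s^{\psi_N}(u_t)$ and $\sigma_t^{\psi_N'}=\mathrm{Ad}(u_t)\circ\sigma_t^{\psi_N}$. The heart of the matter is to prove that $u_t$ is $\alpha$-invariant, i.e. $\alpha(u_t)=u_t\otimes 1$; granting this, ergodicity forces $u_t\in N^{\alpha}=\mathbb{C}1$, and since a scalar cocycle is automatically a homomorphism $t\mapsto\lambda^{it}$, one concludes $\psi_N'=\lambda\psi_N$ for some $\lambda>0$. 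The $\delta$-invariant case runs identically, comparing an arbitrary $\delta$-invariant weight with $\varphi_N$ and using $\alpha\sigma_t^{\varphi_N}=(\tau_t^N\otimes\sigma_t)\alpha$ from Proposition \ref{com2} in place of the invariant relation.

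To establish the $\alpha$-invariance of $u_t$ I would exploit the unitary implementation $U$, which is intrinsic to the coaction and independent of the chosen weight. Recall from the proof of Theorem \ref{proprinv} that the invariance of $\psi_N$ is equivalent to the \emph{untwisted} intertwining $(\iota\otimes\omega)(U)\Lambda_{\psi_N}(x)=\Lambda_{\psi_N}((\iota\otimes\omega)\alpha(x))$ together with $U^*U=1$ (contrast this with the $\delta^{-1/2}$-twisted formula (\ref{form}) valid for the $\delta$-invariant $\varphi_N$); the same computation shows that any invariant weight $\psi_N'$ satisfies the identical relation with its own GNS-map $\Lambda_{\psi_N'}$ and the \emph{same} $U$. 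Writing both GNS-constructions on the standard form $\mathscr{L}^2(N)$ with common modular conjugation $J_N$, so that $\nabla_{\psi_N'}^{it}=u_t\nabla_{\psi_N}^{it}$, the two untwisted intertwinings read against one another through the cocycle $u_t$: transporting the relation for $\psi_N'$ back to that for $\psi_N$ should leave no freedom for the cocycle to act on the $M$-leg, producing exactly $\alpha(u_t)=u_t\otimes 1$. Equivalently, at the level of modular groups, invariance pins the second leg of $\alpha\circ\sigma_t^{\psi_N'}$ to be $\sigma_t'$ (the modular group of the right invariant weight $\psi$ of $(M,\Delta)$), the same second leg as for $\psi_N$, and feeding $\sigma_t^{\psi_N'}=\mathrm{Ad}(u_t)\sigma_t^{\psi_N}$ into this relation constrains $\alpha(u_t)$.

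The main obstacle is precisely this last deduction. As the explicit computation for $\psi_N$ shows, the first leg of the relative-invariance relation of an invariant weight is \emph{not} its own modular group: one finds $\alpha\circ\sigma_t^{\psi_N}=(\mathrm{Ad}(\delta_N^{it})\tau_t^N\otimes\sigma_t')\circ\alpha$, with first leg $\mathrm{Ad}(\delta_N^{it})\tau_t^N$ rather than $\sigma_t^{\psi_N}$. Consequently a naive comparison of the modular relations of $\psi_N$ and $\psi_N'$ only yields that $\alpha(u_t)$ commutes with $\alpha(N)$, hence that $u_t$ lies in the centre of $N$, which is weaker than what is needed when $N$ is not a factor. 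The delicate point is therefore to genuinely use the rigidity of the canonical implementation $U$ (the same operator for every weight) and the untwisted-intertwining characterization of invariance, rather than the modular groups alone, in order to upgrade centrality to the full equality $\alpha(u_t)=u_t\otimes 1$. In spirit and in difficulty this is the Galois-object analogue of the uniqueness of the Haar weights of a locally compact quantum group, and I expect the argument to parallel that one, with the Galois unitary $\tilde{G}$ and its pentagon relation (Proposition \ref{lem4}), together with the density results of Proposition \ref{prop412}, playing the role of the multiplicative unitary.
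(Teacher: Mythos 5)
Your reduction is the correct one, and it is in fact the paper's own: compare the second weight with $\psi_N$ (resp.\ $\varphi_N$) via the Connes cocycle $u_t$, prove $u_t\in N^{\alpha}$, and let ergodicity force $u_t=\lambda^{it}$, hence proportionality of the weights. The problem is that the step everything hinges on, $\alpha(u_t)=u_t\otimes 1$, is never actually established in your proposal. The operative sentence of your second paragraph (transporting the intertwining relation for $\psi_N'$ back to that for $\psi_N$ ``should leave no freedom for the cocycle to act on the $M$-leg'') is a hope, not a derivation; and your third paragraph concedes exactly this: you correctly observe that comparing the relative-invariance relations of the modular groups only yields something weaker (their first legs are $\mathrm{Ad}(\delta_N^{it})\tau_t^N$-type groups, not the modular groups themselves), and you then defer the real content to an argument you ``expect'' to parallel the uniqueness of Haar weights. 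So the proof has a genuine gap precisely at its core.

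For comparison, the paper disposes of this step by citation: Lemma 3.9 of \cite{Vae1} is precisely the statement that the Connes cocycle between two $\delta$-invariant nsf weights lies in $N^{\alpha}$, and the invariant case goes ``in the same fashion''; with $N^{\alpha}=\mathbb{C}$ the theorem is then immediate. The idea your sketch is missing is to leave $N$ and work with \emph{dual weights on the crossed product}, rather than with the Galois unitary and its pentagon equation. For nsf weights $\theta_1,\theta_2$ on $N$ one has $[D\tilde{\theta}_1:D\tilde{\theta}_2]_t=\alpha([D\theta_1:D\theta_2]_t)$, since each dual weight is the composition of $\theta_i\circ\alpha^{-1}$ with one and the same canonical operator valued weight. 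On the other hand, when $\theta_1,\theta_2$ are both ($\delta$-)invariant, the modular operators of their dual weights have the product form $\nabla_{\tilde{\theta}_i}^{it}=\nabla_{\theta_i}^{it}\otimes q^{it}$ with the \emph{same}, $\theta$-independent second leg $q^{it}$ (this is the form invoked in Lemma \ref{lem26} for $\varphi_N$, and it is the rigorous incarnation of your remark that invariance ``pins the second leg''). Running the balanced weight on $N\otimes M_2(\mathbb{C})$ through the dual weight construction then gives $[D\tilde{\theta}_1:D\tilde{\theta}_2]_t=u_t\otimes 1$, and equating the two expressions yields $\alpha(u_t)=u_t\otimes 1$, i.e.\ $u_t\in N^{\alpha}$ --- the upgrade from centrality that cannot be extracted from the modular groups on $N$ alone. (A secondary soft spot: your claim that an arbitrary invariant weight $\psi_N'$ satisfies the untwisted $U$-intertwining ``by the same computation'' as in Theorem \ref{proprinv} is not justified as stated, since that computation uses the specific form $\psi_N=\varphi_N(\delta_N^{1/2}\cdot\,\delta_N^{1/2})$; the statement is true, but it again rests on the crossed-product machinery of \cite{Vae1}.)
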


\begin{proof} The claim about $\varphi_N$ follows immediately by Lemma 3.9 of \cite{Vae1} and the fact that $\alpha$ is ergodic. The second statement can be proven in the same fashion.

\end{proof}

\noindent Before going over to the next section, we remark that of course all results hold as well  in the context of \emph{left Galois coactions}: if $(P,\Delta_P)$ is a locally compact quantum group, $N$ a von Neumann algebra, and $\gamma$ an integrable ergodic left coaction of $(P,\Delta_P)$ on $N$, we call $(N,\gamma)$ a \emph{left Galois object} if, with $\psi_N=(\psi\otimes \iota)\gamma$, the \emph{Galois map} \[\tilde{H}:\mathscr{L}^2(N)\otimes \mathscr{L}^2(N)\rightarrow \mathscr{L}^2(N)\otimes \mathscr{L}^2(P)\]\[ \Lambda_{\psi_N}(x)\otimes \Lambda_{\psi_N}(y)\rightarrow (\Lambda_{\psi}\otimes \Lambda_{\psi_N})(\gamma(x)(1\otimes y)),\quad x,y\in \mathscr{N}_{\psi_N},\] is a unitary. We will therefore use the proper analogous statements of this section in the left context without further proof. \\

\section{Reflecting across a Galois object}

\noindent In this section, we will construct another locally compact
quantum group given a Galois object $(N,\alpha)$ for a locally
compact quantum group $(M,\Delta)$. In fact, the new quantum group
will be a corner of a special kind of quantum groupoid, with
$(\widehat{M},\widehat{\Delta})$ in the other corner. This quantum groupoid picture turns out to be very useful, providing one with the right intuition on how to proceed. We use notation as in the previous section. For convenience, we will now treat also $\mathscr{L}^2(M)\otimes \mathscr{L}^2(N)$ as an $N\rtimes M$-bimodule (by applying Ad$(\Sigma)$ to the previous representations), so that we can call $\tilde{G}$ a bimodule map. \\

\noindent Denote as before by $\widehat{Q}=\left(\begin{array}{cc} \widehat{Q}_{11} & \widehat{Q}_{12}\\
\widehat{Q}_{21} & \widehat{Q}_{22}\end{array}\right)=\left(\begin{array}{cc} \widehat{P} & \widehat{N}\\
\widehat{O} & \widehat{M}\end{array}\right)$ the linking algebra between the right $\widehat{M}$-modules $\mathscr{L}^2(M)$ and $\mathscr{L}^2(N)$ (see the remark before Corollary \ref{corr}). We will sometimes denote the natural inclusion $\widehat{Q}\subseteq B\left(\begin{array}{c} \mathscr{L}^2(N) \\ \mathscr{L}^2(M)\end{array}\right)$ by $\pi^{\widehat{Q},2}= (\pi_{ij}^2)_{i,j}$ for emphasis. We will identify the $\widehat{Q}_{ij}$ with their parts in $\widehat{Q}$ (so for example if $x\in \widehat{Q}_{12}$, we identify it with $\left(\begin{array}{cc} 0 & x\\
0 & 0\end{array}\right)$), \textit{except} that we will write the unit 1 of $\widehat{Q}_{22}=\widehat{M}$ as $1_{\widehat{M}}$ when we see it as a projection in $\widehat{Q}$ (likewise for $\widehat{Q}_{11}=\widehat{P}$). As before, we denote the right $\widehat{M}$-module structure on $\mathscr{L}^2(N)$ by $\widehat{\pi}_r$, i.e. $\widehat{\pi}_r(m)=\widehat{\pi}_l(\widehat{J}m^*\widehat{J})=\rho((1\otimes \widehat{J}m^*\widehat{J}))$ for $m\in \widehat{M}$, where $\rho$ is the Galois homomorphism for $\alpha$. By $\widehat{\pi}_r$, we also denote the map $\widehat{\pi}_r$ w.r.t. the Galois object $(M,\Delta)$, i.e. the standard right representation $\widehat{\pi}_r(m)= \widehat{J}m^*\widehat{J}$, and by $\widehat{\theta}_r$ also the right representation $m\rightarrow \widehat{R}(m)$ of $\widehat{M}$ on $\mathscr{L}^2(M)$. This will, again, not lead to any ambiguities (and, again, we will only use $\widehat{\pi}_r$, in this sense, in the proof of the following lemma).

\begin{Lem}\label{gelem} We have $\tilde{G}^*(1\otimes \widehat{N})\widehat{W}\subseteq \widehat{N} \otimes \widehat{N}$.
\end{Lem}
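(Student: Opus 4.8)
The plan is to identify membership in $\widehat{N}\otimes\widehat{N}$ through the intertwiner description of the linking algebra. Recall that $\widehat{Q}$ is the commutant of the direct-sum right $\widehat{M}$-action $R=\widehat{\pi}_r\oplus\widehat{\pi}_r$ on $\mathscr{L}^2(N)\oplus\mathscr{L}^2(M)$, and that $\widehat{N}=\widehat{Q}_{12}$ is its off-diagonal corner. By the commutation theorem $\widehat{Q}\otimes\widehat{Q}=(R(\widehat{M})\otimes R(\widehat{M}))'$, and $\widehat{N}\otimes\widehat{N}$ is the corresponding corner. Hence an operator $X:\mathscr{L}^2(M)\otimes\mathscr{L}^2(M)\rightarrow\mathscr{L}^2(N)\otimes\mathscr{L}^2(N)$ lies in $\widehat{N}\otimes\widehat{N}$ if and only if it intertwines the two right $\widehat{M}\otimes\widehat{M}$-actions, i.e.
\[X(\,\widehat{J}a^*\widehat{J}\otimes\widehat{J}b^*\widehat{J}\,)=(\widehat{\pi}_r(a)\otimes\widehat{\pi}_r(b))X\qquad(a,b\in\widehat{M}),\]
where on the left the action is the standard one on $\mathscr{L}^2(M)$ and on the right it is $\widehat{\pi}_r(\cdot)=\widehat{\pi}_l(\widehat{J}(\cdot)^*\widehat{J})$ on $\mathscr{L}^2(N)$. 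It is enough to check this on elementary tensors, as these generate $R(\widehat{M})\otimes R(\widehat{M})$ $\sigma$-weakly.

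Since $n\mapsto\tilde{G}^*(1\otimes n)\widehat{W}$ is $\sigma$-weakly continuous and $\widehat{N}\otimes\widehat{N}$ is $\sigma$-weakly closed, I would first reduce to the generic elements $n=(\iota\otimes\overline{\omega})(\tilde{G}^*)$ with $\omega\in B(\mathscr{L}^2(N))_*$, which span a $\sigma$-weakly dense subspace of $\widehat{N}=\widehat{O}^*$ by Proposition \ref{prop412}. For such $n$ one has $1\otimes n=(\iota\otimes\iota\otimes\overline{\omega})(\tilde{G}_{23}^*)$, the auxiliary third leg being the sliced $\mathscr{L}^2(N)$-leg, so that
\[X:=\tilde{G}^*(1\otimes n)\widehat{W}=(\iota\otimes\iota\otimes\overline{\omega})(\tilde{G}_{12}^*\tilde{G}_{23}^*\widehat{W}_{12}),\]
using that $\overline{\omega}$ touches only leg $3$ and thus commutes past $\tilde{G}_{12}^*$ and $\widehat{W}_{12}$. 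Taking adjoints in the pentagon relation of Proposition \ref{lem4} gives $\tilde{G}_{12}^*\tilde{G}_{23}^*=\tilde{G}_{23}^*\tilde{G}_{13}^*\widehat{W}_{12}^*$, whence $\tilde{G}_{12}^*\tilde{G}_{23}^*\widehat{W}_{12}=\tilde{G}_{23}^*\tilde{G}_{13}^*$ and therefore
\[X=(\iota\otimes\iota\otimes\overline{\omega})(\tilde{G}_{23}^*\tilde{G}_{13}^*).\]

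It then remains to feed in the single-leg intertwining relation coming from the second commutation relation of Lemma \ref{com1}: taking $m=\widehat{J}a^*\widehat{J}\in\widehat{M}'$ there yields $\tilde{G}(\widehat{\pi}_r(a)\otimes1)=(\widehat{J}a^*\widehat{J}\otimes1)\tilde{G}$, so after passing to adjoints
\[\tilde{G}^*(\widehat{J}a^*\widehat{J}\otimes1)=(\widehat{\pi}_r(a)\otimes1)\tilde{G}^*\qquad(a\in\widehat{M}).\tag{$\star$}\]
Applying $(\star)$ in legs $1,3$ to push $\widehat{J}a^*\widehat{J}$ on leg $1$ through $\tilde{G}_{13}^*$, and in legs $2,3$ to push $\widehat{J}b^*\widehat{J}$ on leg $2$ through $\tilde{G}_{23}^*$ (the two outer legs meeting disjoint factors and commuting accordingly, and $\overline{\omega}$ leaving everything on legs $1,2$ untouched), I obtain
\[X(\widehat{J}a^*\widehat{J}\otimes\widehat{J}b^*\widehat{J})=(\widehat{\pi}_r(a)\otimes\widehat{\pi}_r(b))X,\]
which is precisely the criterion of the first paragraph, so $X\in\widehat{N}\otimes\widehat{N}$ and the lemma follows.

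The substantive step is the reduction to the symmetric form $\tilde{G}_{23}^*\tilde{G}_{13}^*$ via the pentagon equation; once that is in place, membership in $\widehat{N}\otimes\widehat{N}$ is essentially forced because each outer leg then meets only one copy of $\tilde{G}^*$, each of which carries $\mathscr{L}^2(M)$ into $\mathscr{L}^2(N)$ as a right $\widehat{M}$-module intertwiner. The only real care needed is the bookkeeping of the two distinct right $\widehat{M}$-module structures (on $\mathscr{L}^2(M)$ versus $\mathscr{L}^2(N)$) and of the leg positions, together with checking that the slicing functional $\overline{\omega}$ genuinely commutes past $\tilde{G}_{12}^*$ and $\widehat{W}_{12}$ before the pentagon is invoked.
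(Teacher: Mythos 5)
Your proof is correct, but it verifies the key intertwining property by a genuinely different mechanism than the paper. Both arguments rest on the same membership criterion (the remark following the lemma: $\widehat{N}\otimes\widehat{N}$ is the intertwiner space between the right $\widehat{M}\otimes\widehat{M}$-modules $\mathscr{L}^2(M)\otimes\mathscr{L}^2(M)$ and $\mathscr{L}^2(N)\otimes\mathscr{L}^2(N)$), but the paper checks it directly for an \emph{arbitrary} $x\in\widehat{N}$: first-leg intertwining because the first leg of $\widehat{W}$ lies in $\widehat{M}$ and $\tilde{G}$ is a left $\widehat{M}'$-module map, and second-leg intertwining via the adapted fourth commutation relation of Lemma \ref{com1}, namely $(1\otimes\widehat{\pi}_r(m))\tilde{G}^*=\tilde{G}^*\,(\widehat{\theta}_r\otimes\widehat{\pi}_r)(\widehat{\Delta}(m))$, the analogous relation for $\widehat{W}$, and the intertwining property $x\widehat{\pi}_r(m)=\widehat{\pi}_r(m)x$ of $x$ itself. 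You instead restrict to the $\sigma$-weakly dense space of coefficients $n=(\iota\otimes\overline{\omega})(\tilde{G}^*)$ (Proposition \ref{prop412}, part 2, plus $\sigma$-weak continuity of $n\mapsto\tilde{G}^*(1\otimes n)\widehat{W}$), eliminate $\widehat{W}$ entirely through the adjoint pentagon $\tilde{G}^*_{12}\tilde{G}^*_{23}\widehat{W}_{12}=\tilde{G}^*_{23}\tilde{G}^*_{13}$, and then need only the single-leg relation $(\star)$ (i.e.\ the second commutation relation of Lemma \ref{com1}) applied in each factor separately. The paper's route buys pointwise validity in $x$ — no density argument, no pentagon; yours buys economy in the commutation relations: once the expression is symmetrized into $\tilde{G}^*_{23}\tilde{G}^*_{13}$, each outer leg meets only a copy of $\tilde{G}^*$, so the coproduct-type relation of Lemma \ref{com1} (and the corresponding one for $\widehat{W}$) is never needed, and as a structural byproduct your identity $\tilde{G}^*(1\otimes n)\widehat{W}=(\iota\otimes\iota\otimes\overline{\omega})(\tilde{G}^*_{23}\tilde{G}^*_{13})$ exhibits $\Delta_{\widehat{N}}$ as sending coefficients of $\tilde{G}^*$ to double coefficients, the expected corepresentation-type behaviour. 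All the individual steps (the reduction, the pentagon manipulation, the derivation of $(\star)$ from Lemma \ref{com1} with $m=\widehat{J}a^*\widehat{J}\in\widehat{M}'$, and the leg bookkeeping) check out.
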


\noindent \emph{Remark:} By $\widehat{N} \otimes \widehat{N}$, we mean the $\sigma$-weak closure of the algebraic tensor product of $\widehat{N}$ with itself inside $\widehat{Q}\otimes \widehat{Q}$. By the commutation theorem for tensor products of von Neumann algebras, this coincides with the space of intertwiners for the right $\widehat{M}\otimes \widehat{M}$-modules $\mathscr{L}^2(M)\otimes  \mathscr{L}^2(M)$ and $\mathscr{L}^2(N)\otimes \mathscr{L}^2(N)$.

\begin{proof}

\noindent Let $x$ be an element of $\widehat{N}$. As the first leg
of $\widehat{W}$ lies in $\widehat{M}$, and $\tilde{G}$ is a left
$\widehat{M}'$-module morphism, it is clear that for any $m\in
\widehat{M}$, we have \[\tilde{G}^*(1\otimes x)\widehat{W} (\widehat{\pi}_r(m)\otimes 1) =
(\widehat{\pi}_r(m)\otimes 1)\tilde{G}^*(1\otimes x)\widehat{W}.\] On the other
hand, we have to prove that for all $m\in \widehat{M}$,
\begin{equation}\label{eqn4}\tilde{G}^*(1\otimes x)\widehat{W}(1\otimes \widehat{\pi}_r(m)) =
(1\otimes \widehat{\pi}_r(m))\tilde{G}^*(1\otimes x)\widehat{W}.\end{equation} Now
as $\tilde{G}$ is a right $N\rtimes M$-map, we have
\[ (1\otimes \widehat{\pi}_r(m))\tilde{G}^* = \tilde{G}^* ((\widehat{\theta}_r\otimes \widehat{\pi}_r)\widehat{\Delta}(m)),\] using the fourth commutation relation of Lemma \ref{com1} in a slightly adapted form. Since also
\[\widehat{W}(1\otimes \widehat{\pi}_r(m)) =((\widehat{\theta}_r\otimes \widehat{\pi}_r)(\widehat{\Delta}(m))\widehat{W},\] the stated commutation follows from the intertwining property of $x$, as $x\widehat{\pi}_r(m) = \widehat{\pi}_r(m)x$.

\end{proof}

\noindent Denote the corresponding map by
\[\Delta_{\widehat{N}}: \widehat{N}\rightarrow \widehat{N}\otimes
\widehat{N}: x\rightarrow \tilde{G}^*(1\otimes x)\widehat{W}\] Then we can also define
\[\Delta_{\widehat{O}}:\widehat{O}\rightarrow \widehat{O}\otimes
\widehat{O}: x\rightarrow \Delta_{\widehat{N}}(x^*)^*,\] and
\[\Delta_{\widehat{P}}: \widehat{P}\rightarrow \widehat{P}\otimes \widehat{P}:
x\rightarrow \tilde{G}^*(1\otimes x)\tilde{G},\] since
$\widehat{Q}_{21}=(\widehat{Q}_{12})^*$ and the span of
$\widehat{Q}_{12}\widehat{Q}_{21}$ is $\sigma$-weakly dense in $\widehat{P}$. Finally, we denote by $\Delta_{\widehat{Q}}$
the map \[\widehat{Q}\rightarrow \widehat{Q}\otimes \widehat{Q}: x_{ij} \rightarrow
\widehat{\Delta}_{ij}(x_{ij}), \qquad x_{ij}\in \widehat{Q}_{ij},\] where we denote $\widehat{\Delta}_{11}=\Delta_{\widehat{P}}$,... (in the following, we will use both notations without further comment). Then $\Delta_{\widehat{Q}}$ is easily seen to be a
$^*$-homomorphism. \emph{However}, it is \emph{not} unital: $\Delta_{\widehat{Q}}(1_{\widehat{Q}})= 1_{\widehat{M}}\otimes 1_{\widehat{M}}+ 1_{\widehat{P}}\otimes 1_{\widehat{P}}$ does not equal $(1_{\widehat{M}}+1_{\widehat{P}})\otimes (1_{\widehat{M}}+1_{\widehat{P}})=1_{\widehat{Q}\otimes \widehat{Q}}$.

\begin{Lem} The map $\Delta_{\widehat{Q}}$ is coassociative.\end{Lem}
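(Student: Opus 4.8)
The plan is to exploit the block decomposition $\widehat{Q}=\bigoplus_{i,j}\widehat{Q}_{ij}$. Since $\Delta_{\widehat{Q}}$ sends each corner $\widehat{Q}_{ij}$ into $\widehat{Q}_{ij}\otimes \widehat{Q}_{ij}$ (by Lemma \ref{gelem} for $\widehat{N}$, and by the defining formulas together with $\widehat{Q}_{21}=\widehat{Q}_{12}^*$ for the others), the iterated maps $(\Delta_{\widehat{Q}}\otimes\iota)\Delta_{\widehat{Q}}$ and $(\iota\otimes\Delta_{\widehat{Q}})\Delta_{\widehat{Q}}$ both carry $\widehat{Q}_{ij}$ into $\widehat{Q}_{ij}^{\otimes 3}$, applying in each leg only the single corner map $\widehat{\Delta}_{ij}$. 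Hence coassociativity is a linear identity, and since every element of $\widehat{Q}$ is the sum of its four matrix entries, it suffices to verify it on each corner separately. Moreover each corner map has the uniform shape $x\mapsto A^*(1\otimes x)B$ with $A,B\in\{\tilde{G},\widehat{W}\}$: here $\widehat{\Delta}_{22}=\widehat{\Delta}$ is $\widehat{W}^*(1\otimes\cdot)\widehat{W}$, $\widehat{\Delta}_{12}=\tilde{G}^*(1\otimes\cdot)\widehat{W}$, $\widehat{\Delta}_{11}=\tilde{G}^*(1\otimes\cdot)\tilde{G}$, and $\widehat{\Delta}_{21}(x)=\widehat{\Delta}_{12}(x^*)^*$. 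So the whole statement should follow from just two relations: the pentagon equation $\widehat{W}_{12}\widehat{W}_{13}\widehat{W}_{23}=\widehat{W}_{23}\widehat{W}_{12}$ for the multiplicative unitary $\widehat{W}$, and the mixed pentagon $\widehat{W}_{12}\tilde{G}_{13}\tilde{G}_{23}=\tilde{G}_{23}\tilde{G}_{12}$ of Proposition \ref{lem4}.

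The heart of the matter is the corner $\widehat{N}$. For $x\in\widehat{N}$ I would expand, in leg notation, $(\iota\otimes\Delta_{\widehat{N}})\Delta_{\widehat{N}}(x)=\tilde{G}_{23}^*\tilde{G}_{13}^*(1\otimes 1\otimes x)\widehat{W}_{13}\widehat{W}_{23}$ and $(\Delta_{\widehat{N}}\otimes\iota)\Delta_{\widehat{N}}(x)=\tilde{G}_{12}^*\tilde{G}_{23}^*(1\otimes 1\otimes x)\widehat{W}_{23}\widehat{W}_{12}$, where $x$ sits in the third leg (so $x\colon \mathscr{L}^2(M)\to\mathscr{L}^2(N)$ there). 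Substituting the adjoint of the mixed pentagon in the form $\tilde{G}_{23}^*\tilde{G}_{13}^*=\tilde{G}_{12}^*\tilde{G}_{23}^*\widehat{W}_{12}$ into the first expression, commuting $\widehat{W}_{12}$ past the leg-$3$ factor $1\otimes 1\otimes x$, and then applying $\widehat{W}_{12}\widehat{W}_{13}\widehat{W}_{23}=\widehat{W}_{23}\widehat{W}_{12}$, converts the first expression into the second. The corner $\widehat{P}$ is handled in exactly the same way: one expands $\Delta_{\widehat{P}}(x)=\tilde{G}^*(1\otimes x)\tilde{G}$, uses the same rewriting $\tilde{G}_{23}^*\tilde{G}_{13}^*=\tilde{G}_{12}^*\tilde{G}_{23}^*\widehat{W}_{12}$ together with its companion $\tilde{G}_{13}\tilde{G}_{23}=\widehat{W}_{12}^*\tilde{G}_{23}\tilde{G}_{12}$, whereupon the two copies of $\widehat{W}_{12}$ cancel against the central leg-$3$ factor. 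The corner $\widehat{O}=\widehat{N}^*$ then follows by taking adjoints, since $\Delta_{\widehat{Q}}$ is a $^*$-homomorphism and $\widehat{\Delta}_{21}(x)=\widehat{\Delta}_{12}(x^*)^*$; and the corner $\widehat{M}$ is simply the coassociativity of $\widehat{\Delta}$, which holds because $(\widehat{M},\widehat{\Delta})$ is a locally compact quantum group.

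The main obstacle is bookkeeping rather than anything conceptual: one must keep the three legs (and the changing source/target spaces of the central factor $x$) straight, and bear in mind that the corner maps $\widehat{\Delta}_{ij}$ are merely linear maps on bimodules, not algebra homomorphisms, so each iterated comultiplication must be computed by inserting copies of $\tilde{G}$ and $\widehat{W}$ in the appropriate legs rather than by \emph{distributing} $\widehat{\Delta}_{ij}$ through a product. Once the leg placements are pinned down, every corner reduces to a two-step rearrangement using exactly the pentagon equation for $\widehat{W}$ and the mixed pentagon of Proposition \ref{lem4}, so no further input (invariant weights, density arguments, and the like) is required.
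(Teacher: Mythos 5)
Your proposal is correct and follows exactly the route the paper intends: the paper's proof is the one-line remark that coassociativity "follows trivially by Proposition \ref{lem4}", and your corner-by-corner computation (reducing everything to the mixed pentagon $\widehat{W}_{12}\tilde{G}_{13}\tilde{G}_{23}=\tilde{G}_{23}\tilde{G}_{12}$ together with the ordinary pentagon for $\widehat{W}$, with $\widehat{O}$ handled by adjoints and $\widehat{M}$ by coassociativity of $\widehat{\Delta}$) is precisely the bookkeeping that remark leaves implicit. The leg placements and the cancellation arguments you give all check out.
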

\begin{proof} This follows trivially by Proposition \ref{lem4}.\end{proof}

\noindent Since $J_N \widehat{\pi}_l(m)^*J_N = \widehat{\pi}_l(Jm^*J)$ for $m\in \widehat{M}'$,
we can define an anti-$^*$-isomorphism $R_{\widehat{Q}}:\widehat{Q}\rightarrow \widehat{Q}$ by
sending $x\in \widehat{Q}_{12}$ to $(J_NxJ)^*$, and then extending it in the
natural
way. \\

\begin{Lem}\label{lem14} We have $\Delta_{\widehat{Q}}(R_{\widehat{Q}}(x))=(R_{\widehat{Q}}\otimes R_{\widehat{Q}})\Delta_{\widehat{Q}}^{op}(x)$ for $x\in \widehat{Q}$.\end{Lem}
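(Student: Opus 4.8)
The plan is to verify the identity \emph{blockwise}, after first recasting $R_{\widehat Q}$ in a uniform way. Writing $\mathcal{J}=J_N\oplus J$ for the self-adjoint antiunitary on $\mathscr{L}^2(N)\oplus\mathscr{L}^2(M)$, one checks directly that $R_{\widehat Q}(X)=\mathcal{J}X^*\mathcal{J}$ for every $X\in\widehat Q$ (this specialises to $Jm^*J=\widehat R(m)$ on the corner $\widehat Q_{22}=\widehat M$, as it should), and consequently $(R_{\widehat Q}\otimes R_{\widehat Q})(Y)=(\mathcal{J}\otimes\mathcal{J})Y^*(\mathcal{J}\otimes\mathcal{J})$. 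Both maps appearing in the claim, namely $\Phi=\Delta_{\widehat Q}\circ R_{\widehat Q}$ and $\Psi=(R_{\widehat Q}\otimes R_{\widehat Q})\circ\Delta_{\widehat Q}^{\mathrm{op}}$, are then normal anti-$^*$-homomorphisms $\widehat Q\to\widehat Q\otimes\widehat Q$. Since $\widehat Q$ is $\sigma$-weakly generated by its off-diagonal corners $\widehat N=\widehat Q_{12}$ and $\widehat O=\widehat Q_{21}$ (recall that $\mathrm{span}\,\widehat Q_{12}\widehat Q_{21}$ is dense in $\widehat P$ and $\mathrm{span}\,\widehat Q_{21}\widehat Q_{12}$ in $\widehat M$), and both $\Phi,\Psi$ are $^*$-preserving, it suffices to prove $\Phi(x)=\Psi(x)$ for $x\in\widehat N$; the $\widehat O$-case then follows by taking adjoints, and the diagonal corners come for free by (anti)multiplicativity and normality.

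Next I would unwind both sides for $x\in\widehat N$. Using $\Delta_{\widehat O}(z)=\Delta_{\widehat N}(z^*)^*$ together with $R_{\widehat Q}(x)=Jx^*J_N$, the left-hand side becomes
\[\Phi(x)=\Delta_{\widehat N}(J_NxJ)^*=\widehat W^*(1\otimes R_{\widehat Q}(x))\tilde G,\]
while, writing $y=x^*\in\widehat O$ and using the formula for $R_{\widehat Q}\otimes R_{\widehat Q}$ above,
\[\Psi(x)=\Sigma(J\otimes J)\widehat W^*(1\otimes y)\tilde G(J_N\otimes J_N)\Sigma.\]
Thus the statement reduces to the single operator identity $\widehat W^*(1\otimes JyJ_N)\tilde G=\Sigma(J\otimes J)\widehat W^*(1\otimes y)\tilde G(J_N\otimes J_N)\Sigma$ for $y\in\widehat O$. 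Into the right-hand side I would substitute the two available symmetry relations: Lemma \ref{lem2} in the form $\tilde G(J_N\otimes J_N)\Sigma=\Sigma U\Sigma(\widehat J\otimes J_N)\tilde G$, and the same lemma applied to the \emph{trivial} Galois object $(M,\Delta)$ (where $\tilde G=\widehat W$, $J_N=J$ and $U=V$), which reads $\widehat W(J\otimes J)\Sigma=\Sigma V\Sigma(\widehat J\otimes J)\widehat W$, equivalently $\Sigma(J\otimes J)\widehat W^*=\widehat W^*(\widehat J\otimes J)\Sigma V^*\Sigma$. After these substitutions the outer factors $\widehat W^*$ and $\tilde G$ cancel (the latter because $\tilde G$ is the Galois unitary), and the claim collapses to the conjugation relation
\[1\otimes JyJ_N=(\widehat J\otimes J)\Sigma V^*\Sigma(1\otimes y)\Sigma U\Sigma(\widehat J\otimes J_N),\qquad y\in\widehat O.\]

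To finish I would prove this last relation directly. Using $V(J\otimes\widehat J)=(J\otimes\widehat J)V^*$ and $(J_N\otimes\widehat J)U(J_N\otimes\widehat J)=U^*$ (the conjugation rules recorded in the proof of Lemma \ref{com1}, read off after flipping legs), the antiunitary factors recombine and reduce the right-hand side to $\Sigma V\Sigma\,(1\otimes JyJ_N)\,\Sigma U^*\Sigma$. It then remains to observe that $JyJ_N$ \emph{intertwines the left $\widehat M'$-actions}, i.e. $(JyJ_N)\widehat\pi_l(v')=v'(JyJ_N)$ for all $v'\in\widehat M'$: indeed $y\in\widehat Q_{21}$ intertwines the right $\widehat M$-modules, which after unwinding $\widehat\pi_r$ says precisely $y\widehat\pi_l(w)=wy$ for $w\in\widehat M'$, and conjugating by $J,J_N$ via $J_N\widehat\pi_l(m)J_N=\widehat\pi_l(JmJ)$ transports this to $JyJ_N$. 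Since $U=(\widehat\pi_l\otimes\iota)(V)$, this intertwining yields $(1\otimes JyJ_N)\Sigma U\Sigma=\Sigma V\Sigma(1\otimes JyJ_N)$, so the residual $\Sigma U^*\Sigma$ cancels by unitarity of $U$, leaving exactly $1\otimes JyJ_N$.

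The conceptually routine part is the blockwise reduction. The real work — and the place where a leg- or conjugation-error is easiest to make — is the middle step: keeping straight which of the three modular conjugations $J,J_N,\widehat J$ acts on which (mixed $\mathscr{L}^2(M)/\mathscr{L}^2(N)$) leg while peeling off the two symmetry relations, and correctly isolating the scalar-free commutation $(1\otimes JyJ_N)\Sigma U\Sigma=\Sigma V\Sigma(1\otimes JyJ_N)$ that encodes the intertwining property of $JyJ_N$. Everything else is bookkeeping with the relations already established in Lemma \ref{com1}, Lemma \ref{lem2} and Corollary \ref{corr}.
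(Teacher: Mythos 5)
Your proof is correct and takes essentially the same route as the paper's: reduce to the off-diagonal corner (the paper works with $x\in\widehat{Q}_{12}$, you with $y=x^*\in\widehat{O}$, which is just the adjoint form of the same identity), then apply Lemma \ref{lem2} twice — once for $(N,\alpha)$ and once for the trivial Galois object $(M,\Delta)$ — together with the conjugation rules $(J_N\otimes \widehat{J})U(J_N\otimes \widehat{J})=U^*$, $(J\otimes \widehat{J})V(J\otimes \widehat{J})=V^*$ and the intertwining property of the corner element (which converts $U$ into $V$). The paper compresses this into a three-line chain transforming one side into the other, whereas you cancel the outer unitaries $\widehat{W}^*$ and $\tilde{G}$ first and isolate the residual conjugation relation, but the algebraic content is identical.
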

\begin{proof} We only have to check whether \[\tilde{G}^*(1\otimes J_NxJ)\widehat{W} =
(J_N\otimes J_N)\Sigma \tilde{G}^* (1\otimes x)\widehat{W}\Sigma
(J\otimes J)\] for $x\in \widehat{Q}_{12}$. But using Lemma \ref{lem2} twice, once for $N$ and once
for $M$ itself, the right
hand side reduces:
\begin{eqnarray*}(J_N\otimes J_N)\Sigma \tilde{G}^* (1\otimes x)\widehat{W}\Sigma
(J\otimes J)&=&  \tilde{G}^*(\widehat{J}\otimes J_N)\Sigma U^*\Sigma
(1\otimes x)\widehat{W}\Sigma (J\otimes J)\\ &=&
\tilde{G}^*(\widehat{J}\otimes J_N)(1\otimes x)\Sigma V^*\Sigma
\widehat{W}\Sigma (J\otimes J)\\ &=& \tilde{G}^*(1\otimes J_NxJ)\widehat{W}
.\end{eqnarray*}
\end{proof}

\noindent This $(\widehat{Q},\Delta_{\widehat{Q}})$ could be called a coinvolutive
Hopf-von Neumann algebraic quantum groupoid, and in particular
$(\widehat{P},\Delta_{\widehat{P}})$ is a coinvolutive Hopf-von
Neumann algebra. We proceed to show that $(\widehat{Q},\Delta_{\widehat{Q}})$ is in fact a
measurable quantum groupoid (and $(\widehat{P},\Delta_{\widehat{P}})$ a locally
compact quantum group). However, we first briefly return to the situation of a general Galois coaction: it is not difficult to see that up to this point, everything in this section could be done without assuming $\alpha$ ergodic. Of course, $\widehat{P}$ will then not be a quantum group, but a quantum groupoid. More precisely: we will have that $(\widehat{P},N^{\alpha},\pi_l,\pi_r,\Delta_{\widehat{P}})$ is a Hopf bimodule (in the sense of Definition 3.1 of \cite{Eno3}), with $\Delta_{\widehat{P}}(x)=\tilde{G}^*(1\otimes x)\tilde{G} \in P \underset{N^{\alpha}}{_{\pi_r}*_{\,\pi_l}} P$ for $x\in \widehat{P}$. We can even equip it with a `scaling group' and a unitary antipode. However, we do not know if $\widehat{P}$ can actually be made into a measured quantum groupoid in general.\\


\noindent We have shown in Proposition \ref{lem10} that the modular automorphism group of
$\widehat{\varphi}^{\textrm{op}}$ on $\widehat{M}'$ can be implemented on $\mathscr{L}^2(N)$ by the one-parametergroup $\widehat{\nabla}_N^{it}=
P_N^{it}J_{N}\delta_N^{it}J_{N}$. Then by Theorem IX.3.11 in \cite{Tak1}, we can
construct an nsf weight $\varphi_{\widehat{P}}$ on $\widehat{P}$ which has
$\widehat{\nabla}_N$ as spatial derivative with respect to
$\widehat{\varphi}^{\textrm{op}}$. Then we can also consider the balanced weight
$\varphi_{\widehat{Q}}=\varphi_{\widehat{P}}\oplus \widehat{\varphi}$ on $\widehat{Q}$. Its modular automorphism
group $\sigma^{\widehat{Q}}_t$ is then implemented by
$\widehat{\nabla}_N^{it}\oplus \widehat{\nabla}^{it}$ if we use the faithful
representation $\pi^{\widehat{Q},2}$ of $\widehat{Q}$ on $\mathscr{L}^2(N)\oplus \mathscr{L}^2(M)$.\\

\noindent We make the identification
\[(\mathscr{L}^2({\widehat{Q}}),\pi^{\widehat{Q}},\Lambda_{\widehat{Q}})\cong
(\left(\begin{array}{cc} \mathscr{L}^2(\widehat{P}) & \mathscr{L}^2(N) \\
 \overline{\mathscr{L}^2(N)} & \mathscr{L}^2(M)\end{array}\right), \pi^{\widehat{Q}},
(\widehat{\Lambda}_{ij}))\] of the natural semi-cyclic representations of $\widehat{Q}$ w.r.t. $\varphi_{\widehat{Q}}$, as in Lemma IX.3.5 of \cite{Tak1} and the remark above it. Here $\left(\begin{array}{cc} \mathscr{L}^2(\widehat{P}) & \mathscr{L}^2(N) \\
 \overline{\mathscr{L}^2(N)} & \mathscr{L}^2(M)\end{array}\right)$ is just the direct sum Hilbert space of its entries, written as a matrix to emphasize its left $\widehat{Q}$-module structure. Further, $\widehat{\Lambda}_{11}$ and
$\widehat{\Lambda}_{22}$ are the ordinary GNS-constructions for the weights $\varphi_{\widehat{P}}$ and $\widehat{\varphi}$; the map
$\widehat{\Lambda}_{12}: \widehat{Q}_{12}\cap \mathscr{N}_{\varphi_{\widehat{Q}}}\rightarrow \mathscr{L}^2(N)$ is determined by
\[\widehat{\Lambda}_{12}(L_\xi)=\xi\] for $\xi\in \mathscr{L}^2(N)$ left-bounded (so that the closure $L_\xi$ of the map  $\widehat{\Lambda}^{\textrm{op}}(m)=\widehat{\Lambda}^{\textrm{op}}(\widehat{J}m^*\widehat{J})\rightarrow \widehat{\pi}_r(m)\xi=\widehat{\pi}_l(\widehat{J}m^*\widehat{J})\xi$ for $m\in \mathscr{N}_{\widehat{\varphi}}^*$ is bounded); and the map $\widehat{\Lambda}_{21}$ is determined by $\widehat{\Lambda}_{21}(L_\xi^*) = \overline{\widehat{\nabla}_{N}^{1/2}\xi}$ for $\xi\in \mathscr{L}^2(N)$ left-bounded \emph{and} in the domain of $\widehat{\nabla}_{N}^{1/2}$. Then the restriction $\widehat{J}_{21}$ of the modular conjugation $J_{\widehat{Q}}$ of $\varphi_{\widehat{Q}}$ to a map $\Lambda_{\widehat{Q}}(\mathscr{N}_{\varphi_{\widehat{Q}}}\cap \widehat{Q}_{21})\rightarrow \Lambda_{\widehat{Q}}(\mathscr{N}_{\varphi_{\widehat{Q}}}\cap \widehat{Q}_{12})$ is simply the natural anti-unitary map $\overline{\mathscr{L}^2(N)}\rightarrow \mathscr{L}^2(N): \overline{\xi}\rightarrow \xi$. We will denote the inverse of this map by $\widehat{J}_{12}$. Further, $\pi^{\widehat{Q}}$ decomposes as $\pi^{\widehat{Q},1}\oplus \pi^{\widehat{Q},2}$, with $\pi^{\widehat{Q},i}$ acting on the $i$-th column, and we will then also write $\pi^{\widehat{Q},1}=(\pi^{1}_{ij})_{i,j}$.\\

\noindent We will now provide another formula for $\tilde{G}^*$.

\begin{Lem}\label{lem13}  If $m\in \mathscr{N}_{\widehat{\varphi}}$ and $x\in
\widehat{N}\cap \mathscr{N}_{\varphi_{\widehat{Q}}}$, then
$\Delta_{\widehat{N}}(x)(m\otimes 1)\in \mathscr{D}(\Lambda_{\widehat{N}}\otimes
\Lambda_{\widehat{N}})$ and
\[(\Lambda_{\widehat{N}}\otimes \Lambda_{\widehat{N}})(\Delta_{\widehat{N}}(x)(m\otimes 1))=
\tilde{G}^* (\widehat{\Lambda}(m)\otimes
\Lambda_{\widehat{N}}(x)).\]\end{Lem}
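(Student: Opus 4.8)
The plan is to reduce everything to two ingredients: an explicit formula for the GNS map $\Lambda_{\widehat{N}}=\widehat{\Lambda}_{12}$ on a convenient core of $\widehat{N}$, and the pentagon equation of Proposition \ref{lem4}.

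First I would record the core formula. Combining the computation of Proposition \ref{prop2222} with the second part of Lemma \ref{lem12} and the identity in Proposition \ref{prop412}, one reads off that for $u,v\in\mathscr{T}_{\varphi_N}$ the slice $c=(\iota\otimes\omega_{\Lambda_N(u),\Lambda_N(v)})(\tilde{G}^*)$ lies in $\widehat{N}\cap\mathscr{N}_{\varphi_{\widehat{Q}}}$ and, in the description of $\Lambda_{\widehat{N}}$ via left-bounded vectors given before the lemma (following Lemma IX.3.5 of \cite{Tak1}), satisfies $\Lambda_{\widehat{N}}(c)=u\,\Lambda_N(\sigma^N_{-i}(v^*))$. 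Indeed $c$ acts on $\widehat{\Lambda}^{\mathrm{op}}(\widehat{J}m^*\widehat{J})$ as $\widehat{\pi}_r(m)\,u\Lambda_N(\sigma^N_{-i}(v^*))$, which is exactly the defining relation $c=L_{u\Lambda_N(\sigma^N_{-i}(v^*))}$. Such slices form a $\sigma$-weak core for $\widehat{N}$ (Proposition \ref{prop412}) and their images are dense in $\mathscr{L}^2(N)$, so by closedness it suffices to prove the lemma for $x=c$ and for $m$ ranging over a core of $\widehat{\Lambda}$.

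Second, I would simplify $\Delta_{\widehat{N}}(c)$ using the pentagon. Writing $1\otimes c=(\iota\otimes\iota\otimes\omega)(\tilde{G}^*_{23})$ with $\omega=\omega_{\Lambda_N(u),\Lambda_N(v)}$ on the third leg, we get $\Delta_{\widehat{N}}(c)=\tilde{G}^*(1\otimes c)\widehat{W}=(\iota\otimes\iota\otimes\omega)(\tilde{G}^*_{12}\tilde{G}^*_{23}\widehat{W}_{12})$; taking adjoints in Proposition \ref{lem4} gives $\tilde{G}^*_{12}\tilde{G}^*_{23}=\tilde{G}^*_{23}\tilde{G}^*_{13}\widehat{W}^*_{12}$, hence $\Delta_{\widehat{N}}(c)=(\iota\otimes\iota\otimes\omega)(\tilde{G}^*_{23}\tilde{G}^*_{13})$. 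Since by Lemma \ref{gelem} this already sits in $\widehat{N}\otimes\widehat{N}$, I then compute $(\Lambda_{\widehat{N}}\otimes\Lambda_{\widehat{N}})(\Delta_{\widehat{N}}(c)(m\otimes1))$ by applying the core formula of the first step in each of the two $\widehat{N}$-legs (leg $1$ of $\tilde{G}^*_{13}$ carrying the input $m$, leg $2$ of $\tilde{G}^*_{23}$), the right-module action accounting for the appearance of $\widehat{\Lambda}(m)$. The result collapses to $\tilde{G}^*(\widehat{\Lambda}(m)\otimes u\Lambda_N(\sigma^N_{-i}(v^*)))=\tilde{G}^*(\widehat{\Lambda}(m)\otimes\Lambda_{\widehat{N}}(c))$, which is the claimed right-hand side.

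I expect the genuine work, and the main obstacle, to lie in the bookkeeping of the two stacked copies of $\tilde{G}^*$ across the linking-algebra tensor product: one must justify that $\Delta_{\widehat{N}}(c)(m\otimes1)$ actually lies in $\mathscr{D}(\Lambda_{\widehat{N}}\otimes\Lambda_{\widehat{N}})$ (not merely that a formal identity holds), keep track of the analytic continuation $\sigma^N_{-i}$ produced by the core formula, and match the left-bounded-vector realizations of both sides. The cleanest way to package this is to show directly that $\zeta:=\tilde{G}^*(\widehat{\Lambda}(m)\otimes\Lambda_{\widehat{N}}(x))$ is a left-bounded vector for the product right $\widehat{M}\otimes\widehat{M}$-module $\mathscr{L}^2(N)\otimes\mathscr{L}^2(N)$ with $L_\zeta=\Delta_{\widehat{N}}(x)(m\otimes1)$: since the tensor GNS map sends such an $L_\zeta$ to $\zeta$ by definition, this yields both the domain membership and the formula simultaneously, after which density and closedness remove the restriction to the slices $c$ and to the core of $\widehat{\Lambda}$.
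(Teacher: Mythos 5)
Your two supporting ingredients are individually correct: the slice formula $\Lambda_{\widehat{N}}\bigl((\iota\otimes\omega_{\Lambda_N(u),\Lambda_N(v)})(\tilde{G}^*)\bigr)=u\Lambda_N(\sigma_{-i}^N(v^*))$ is precisely the unnamed lemma following Lemma \ref{lem11} (which is proven later in the paper but independently of Lemma \ref{lem13}, so there is no circularity), and your pentagon computation $\Delta_{\widehat{N}}(c)=(\iota\otimes\iota\otimes\omega)(\tilde{G}^*_{23}\tilde{G}^*_{13})$ is valid. But the reduction scheme built on them does not close, and the gap sits exactly where you say "the genuine work" lies. First, Proposition \ref{prop412} gives only $\sigma$-weak density of the slices in $\widehat{O}$ (hence, taking adjoints, in $\widehat{N}$); to "remove the restriction to the slices by closedness" you need a graph core for $\Lambda_{\widehat{N}}$, i.e.\ bounded approximants $c_i\rightarrow x$ $\sigma$-strongly$^*$ with $\Lambda_{\widehat{N}}(c_i)\rightarrow\Lambda_{\widehat{N}}(x)$ in norm, and neither $\sigma$-weak density of the operators nor norm density of their image vectors supplies that. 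Second, even for a fixed slice $c$, the element $(\iota\otimes\iota\otimes\omega)(\tilde{G}^*_{23}\tilde{G}^*_{13})$ is not a finite sum of elementary tensors of slices: inserting an orthonormal basis produces an infinite, merely $\sigma$-weakly convergent decomposition, and $\Lambda_{\widehat{N}}\otimes\Lambda_{\widehat{N}}$ is a closed but non-normal map, so applying it term by term is not legitimate. Your fallback packaging (realize $\zeta=\tilde{G}^*(\widehat{\Lambda}(m)\otimes\Lambda_{\widehat{N}}(x))$ as left-bounded with $L_\zeta=\Delta_{\widehat{N}}(x)(m\otimes 1)$) is closer to a proof, but the step you call "by definition" --- that left-boundedness for the $\widehat{M}\otimes\widehat{M}$-module forces membership in $\mathscr{D}(\Lambda_{\widehat{N}}\otimes\Lambda_{\widehat{N}})$ with image $\zeta$ --- is not definitional: it is a Takesaki IX.3-type spatial characterization for the tensor-product linking algebra, which itself needs proof, and the module identity one must then verify is nowhere carried out in your sketch.

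The idea that actually unlocks the lemma, and which is absent from your proposal, is much simpler: use that $\Delta_{\widehat{Q}}$ is a $^*$-homomorphism together with the \emph{left invariance of} $\widehat{\varphi}$. Since $\Delta_{\widehat{N}}(x)^*\Delta_{\widehat{N}}(x)=\widehat{\Delta}(x^*x)$, one has
\[(\iota\otimes\varphi_{\widehat{Q}})\bigl((m^*\otimes 1)\Delta_{\widehat{N}}(x)^*\Delta_{\widehat{N}}(x)(m\otimes 1)\bigr)=(\iota\otimes\widehat{\varphi})\bigl((m^*\otimes 1)\widehat{\Delta}(x^*x)(m\otimes 1)\bigr)=\varphi_{\widehat{Q}}(x^*x)\,m^*m,\]
so $(\varphi_{\widehat{Q}}\otimes\varphi_{\widehat{Q}})$ is finite on $(m\otimes 1)^*\Delta_{\widehat{N}}(x)^*\Delta_{\widehat{N}}(x)(m\otimes 1)$ and the map $\widehat{\Lambda}(m)\otimes\Lambda_{\widehat{N}}(x)\mapsto(\Lambda_{\widehat{N}}\otimes\Lambda_{\widehat{N}})(\Delta_{\widehat{N}}(x)(m\otimes 1))$ is a well-defined isometry for \emph{all} $x\in\widehat{N}\cap\mathscr{N}_{\varphi_{\widehat{Q}}}$ and $m\in\mathscr{N}_{\widehat{\varphi}}$ at once: no cores, no pentagon, no termwise limits. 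This settles the domain membership, which is the real content. The remaining identification of this isometry with $\tilde{G}^*$ is then done by testing against vectors $\widehat{\Lambda}(m)\otimes\widehat{\Lambda}^{\textrm{op}}(z)$, $z\in\mathscr{N}_{\widehat{\varphi}^{\textrm{op}}}$, and unwinding $\tilde{G}(1\otimes\widehat{\pi}_l(z))\tilde{G}^*$ via $U=(\widehat{\pi}_l\otimes\iota)(V)$ until the claim reduces to the defining identity $(\widehat{\Lambda}\otimes\widehat{\Lambda})(\widehat{\Delta}(y)(m\otimes 1))=\widehat{W}^*(\widehat{\Lambda}(m)\otimes\widehat{\Lambda}(y))$ --- which is essentially the verification your final packaging would also have had to perform.
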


\begin{proof} Since \begin{eqnarray*} (\iota\otimes \varphi_{\widehat{Q}})((m^*\otimes 1)\widehat{\Delta}_{12}(x)^*\widehat{\Delta}_{12}(x)(m\otimes
1)) &=& (\iota\otimes \widehat{\varphi})((m^*\otimes
1)\widehat{\Delta}(x^*x)(m\otimes 1)) \\ &=& \varphi_{\widehat{Q}}(x^*x)m^*m\end{eqnarray*} for $x\in \widehat{Q}_{12}$ and $m\in \widehat{M}$, it is clear that $\widehat{\Delta}_{12}(x)(m\otimes 1)\in \mathscr{D}(\widehat{\Lambda}_{12}\otimes
\widehat{\Lambda}_{12})$ for $m\in \mathscr{N}_{\widehat{\varphi}}$ and $x\in
\widehat{Q}_{12}\cap \mathscr{N}_{\varphi_{\widehat{Q}}}$, and that the map \[
\widehat{\Lambda}(m)\otimes \widehat{\Lambda}_{12}(x)\rightarrow
(\widehat{\Lambda}_{12}\otimes \widehat{\Lambda}_{12})(\widehat{\Delta}_{12}(x)(m\otimes 1))\]
extends to a well-defined
isometry. We now show that it coincides with $\tilde{G}^*$. \\

\noindent Let $z$ be an element of
$\mathscr{N}_{\widehat{\varphi}^{\textrm{op}}}$. Then it is sufficient to prove
that \[ \widehat{\Delta}_{12}(x)(\widehat{\Lambda}(m)\otimes
\widehat{\Lambda}^{\textrm{op}}(z))= (1\otimes
\widehat{\pi}_l(z))\tilde{G}^*(\widehat{\Lambda}(m)\otimes
\widehat{\Lambda}_{12}(x)).\]

\noindent But $\widehat{\Delta}_{12}(x)= \tilde{G}^* (1\otimes
x)\widehat{W}$, and bringing $\tilde{G}$ to the other side,
$\tilde{G}(1\otimes \widehat{\pi}_l(z))\tilde{G}^*$ can be written as $\Sigma
U(1\otimes \widehat{J}\widehat{R}'(z)^*\widehat{J})U^*\Sigma$. Taking a scalar
product in the first factor, it is then sufficient to prove that for
$\omega\in \widehat{M}'_*$, we have \[ x(\omega\otimes
\iota)(\widehat{W})\widehat{\Lambda}^{\textrm{op}}(z)=(\iota\otimes
\omega)(U(1\otimes \widehat{J}\widehat{R}'(z)\widehat{J})U^*)\widehat{\Lambda}_{12}(x).\]
But now using again that $(\widehat{\pi}_l\otimes \iota)(V)=U$, it is
sufficient to show that \[(\iota\otimes \omega)(V(1\otimes
\widehat{J}\widehat{R}'(z)\widehat{J})V^*)\in \mathscr{N}_{\widehat{\varphi}^{\textrm{op}}}\] and
that applying $\widehat{\Lambda}^{\textrm{op}}$ to it gives $(\omega\otimes
\iota)(\widehat{W})\widehat{\Lambda}^{\textrm{op}}(z)$. We could check this
directly, but we can just as easily backtrack our arguments: we only
have to see if for $y\in \mathscr{N}_{\widehat{\varphi}}$, we have
\[y(\omega\otimes \iota)(\widehat{W})\widehat{\Lambda}^{\textrm{op}}(z) =
(\iota\otimes \omega)(V(1\otimes
\widehat{J}\widehat{R}'(z)\widehat{J})V^*)\widehat{\Lambda}(y)\] for any
$z\in \mathscr{N}_{\widehat{\varphi}^{\textrm{op}}}$. This is then seen to be the
same as saying that \[(\widehat{\Lambda}\otimes
\widehat{\Lambda})(\widehat{\Delta}(y)(m\otimes 1))= \widehat{W}^*
(\widehat{\Lambda}(m)\otimes \widehat{\Lambda}(y)),\] which is of
course true by definition.

\end{proof}

\begin{Lem} \label{lem11} Let $x$ be in $\mathscr{N}_{\varphi_N}\cap \mathscr{N}_{\varphi_N}^*$, and $a\in \mathscr{T}_{\varphi}$, the Tomita algebra for $\varphi$. Then
\[(\omega_{\Lambda_{N}(x^*),\Lambda(\sigma_{i}(a)^*)}\otimes
\iota)(\tilde{G}) =
(\omega_{\Lambda(a),\Lambda_N(x)}\otimes
\iota)(\tilde{G}^*).\]

\end{Lem}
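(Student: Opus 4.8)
The plan is to verify the asserted equality of operators on $\mathscr{L}^2(N)$ by comparing matrix coefficients against a dense subspace. Both $(\omega_{\Lambda_N(x^*),\Lambda(\sigma_i(a)^*)}\otimes\iota)(\tilde G)$ and $(\omega_{\Lambda(a),\Lambda_N(x)}\otimes\iota)(\tilde G^*)$ are bounded operators on $\mathscr{L}^2(N)$ (slices of the first legs of $\tilde G$, resp. $\tilde G^*$), so it suffices to fix $p,q\in\mathscr{N}_{\varphi_N}$ and show that the coefficients $\langle\,\cdot\,\Lambda_N(p),\Lambda_N(q)\rangle$ agree, since $\Lambda_N(\mathscr{N}_{\varphi_N})$ is dense. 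The two ingredients are the slice identity established inside the proof of Proposition \ref{lem4}, namely $(\iota\otimes\omega)(\tilde G)\Lambda_N(y)=\Lambda((\omega\otimes\iota)(\alpha(y)))$ with $(\omega\otimes\iota)(\alpha(y))\in\mathscr{N}_\varphi$ for $y\in\mathscr{N}_{\varphi_N}$ and $\omega\in B(\mathscr{L}^2(N))_*$, together with the KMS property of $\varphi$.

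For the left-hand side I would first rewrite the coefficient as a slice of the second leg, $\langle\tilde G(\Lambda_N(x^*)\otimes\Lambda_N(p)),\Lambda(\sigma_i(a)^*)\otimes\Lambda_N(q)\rangle=\langle(\iota\otimes\omega_{\Lambda_N(p),\Lambda_N(q)})(\tilde G)\Lambda_N(x^*),\Lambda(\sigma_i(a)^*)\rangle$. Since $x^*\in\mathscr{N}_{\varphi_N}$, the slice identity turns this into $\langle\Lambda(b),\Lambda(\sigma_i(a)^*)\rangle$ with $b:=(\omega_{\Lambda_N(p),\Lambda_N(q)}\otimes\iota)(\alpha(x^*))\in\mathscr{N}_\varphi$, which equals $\varphi(\sigma_i(a)\,b)$. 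Here is the only substantial step: invoking the KMS relation $\varphi(\sigma_i(a)\,b)=\varphi(b\,a)$, valid because $a\in\mathscr{T}_\varphi$ is analytic and $b\in\mathscr{N}_\varphi$ while $a\in\mathscr{N}_\varphi^*$, so that $ba\in\mathscr{M}_\varphi$. This is where the analyticity of $a$ and the hypothesis $x\in\mathscr{N}_{\varphi_N}\cap\mathscr{N}_{\varphi_N}^*$ (so that both $x$ and $x^*$ lie in $\mathscr{N}_{\varphi_N}$) are consumed.

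For the right-hand side I would proceed symmetrically. Writing the coefficient as $\langle\Lambda(a)\otimes\Lambda_N(p),\tilde G(\Lambda_N(x)\otimes\Lambda_N(q))\rangle$, slicing the second leg and applying the slice identity to $x\in\mathscr{N}_{\varphi_N}$ gives $\overline{\langle\Lambda(b'),\Lambda(a)\rangle}=\varphi(b'^*a)$, where $b':=(\omega_{\Lambda_N(q),\Lambda_N(p)}\otimes\iota)(\alpha(x))\in\mathscr{N}_\varphi$. Using $(\omega_{\xi,\eta}\otimes\iota)(T)^*=(\omega_{\eta,\xi}\otimes\iota)(T^*)$ and $\alpha(x)^*=\alpha(x^*)$, one identifies $b'^*=(\omega_{\Lambda_N(p),\Lambda_N(q)}\otimes\iota)(\alpha(x^*))=b$, so the right-hand coefficient is exactly $\varphi(b\,a)$. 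Thus both coefficients equal $\varphi(ba)$ for the \emph{same} element $b$, and letting $p,q$ range over $\mathscr{N}_{\varphi_N}$ yields the operator identity.

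The argument is essentially formal once the slice identity of Proposition \ref{lem4} is available; the only point requiring genuine care is the KMS shift $\varphi(\sigma_i(a)b)=\varphi(ba)$ under $\varphi$, together with the bookkeeping that makes $b'^*$ literally the same element of $\mathscr{N}_\varphi$ as $b$. It is precisely this matching that lets the $\sigma_i$ appearing in the statement cancel the modular shift introduced when passing between $\tilde G$ and $\tilde G^*$.
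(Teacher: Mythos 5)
Your proof is correct and is essentially the paper's own argument: the paper likewise pairs both sides with an arbitrary normal functional (there $\omega\in N_*$ rather than your vector functionals $\omega_{\Lambda_N(p),\Lambda_N(q)}$), applies the slice identity $(\iota\otimes\omega)(\tilde{G})\Lambda_N(\cdot)=\Lambda((\omega\otimes\iota)\alpha(\cdot))$ from the proof of Proposition \ref{lem4} once to $x^*$ and once to $x$, and links the two computations by the KMS relation $\varphi(\sigma_i(a)b)=\varphi(ba)$, exactly as you do. One small correction to your bookkeeping: the memberships that make $ba\in\mathscr{M}_{\varphi}$ are $b\in\mathscr{N}_{\varphi}^*$ (which your identification $b=b'^*$ with $b'\in\mathscr{N}_{\varphi}$ supplies, and which is where $x\in\mathscr{N}_{\varphi_N}$ is used) together with $a\in\mathscr{N}_{\varphi}$, not $b\in\mathscr{N}_{\varphi}$ and $a\in\mathscr{N}_{\varphi}^*$ as you wrote.
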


\begin{proof}

Choose $\omega\in N_*$. Then

\begin{eqnarray*} \omega((\omega_{\Lambda_N(x^*),\Lambda(\sigma_{i}(a)^*)}\otimes
\iota)(\tilde{G})) &=& \varphi(\sigma_i(a)((\omega\otimes
\iota)(\alpha(x)^*)))\\ &=& \varphi(((\omega\otimes
\iota)(\alpha(x)^*))a)\\ &=& \langle \Lambda(a),
\Lambda((\overline{\omega}\otimes \iota)\alpha(x))\rangle \\
&=& \langle\Lambda(a), (\iota\otimes
\overline{\omega})(\tilde{G}) \Lambda_N(x)\rangle\\ &=& \omega(
(\omega_{\Lambda(a),\Lambda_N(x)}\otimes
\iota)(\tilde{G}^*))\end{eqnarray*}

\end{proof}

\begin{Lem} Let $x\in \mathscr{N}_{\varphi_N}$ and $y\in \mathscr{T}_{\varphi_N}$. Then writing $w=x\sigma_{-i}^N(y^*)$, we have that
$\Lambda_N(w)$ is left-bounded, and
\[L_{\Lambda_N(w)}=(\iota\otimes
\omega_{\Lambda_N(x),\Lambda_N(y)})(\tilde{G}^*).\]\end{Lem}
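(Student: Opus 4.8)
The plan is to identify the bounded operator $A:=(\iota\otimes\omega_{\Lambda_N(x),\Lambda_N(y)})(\tilde G^*)$ with the intertwiner $L_{\Lambda_N(w)}$ witnessing the left-boundedness of $\Lambda_N(w)$, regarded as a vector of the right $\widehat M$-module $(\mathscr L^2(N),\widehat\pi_r)$ with respect to $\widehat\varphi$. Note first that $A$ is bounded and lies in $\widehat N=\widehat Q_{12}$: indeed $\tilde G^*\in\widehat N\otimes N$ by the first part of Corollary \ref{corr}, so slicing the second (the $N$-)leg by $\omega_{\Lambda_N(x),\Lambda_N(y)}$ produces an element of $\widehat N$. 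Recall that $L_{\Lambda_N(w)}$ is by definition the closure of the densely defined map $\widehat\Lambda^{\textrm{op}}(m')\mapsto\widehat\pi_r(m')\Lambda_N(w)$, $m'\in\mathscr N_{\widehat\varphi}^*$, so it will suffice to prove that this map is bounded and coincides with $A$.

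First I would record, for $x,y\in\mathscr T_{\varphi_N}$ and $m\in\mathscr N_{\widehat\varphi^{\textrm{op}}}$, the intertwining identity
\[(\iota\otimes\omega_{\Lambda_N(x),\Lambda_N(y)})(\tilde G^*)\,\widehat\Lambda^{\textrm{op}}(m)=\widehat\pi_l(m)\,\Lambda_N(x\sigma_{-i}^N(y^*))=\widehat\pi_l(m)\Lambda_N(w),\]
which is precisely what is established inside the proof of Proposition \ref{prop412}. It is obtained by writing $\tilde G^*=G^*\Sigma$, using Proposition \ref{prop2222} in the form $\tilde G^*(\widehat\Lambda^{\textrm{op}}(m)\otimes\Lambda_N(x))=\Lambda_{N_2}(\rho((1\otimes m)\alpha(x)))$, pairing against $\Lambda_N(z)\otimes\Lambda_N(y)$ for $z$ ranging over the dense set $\mathscr T_{\varphi_N}$, applying Lemma \ref{lem12}, and finally invoking $\rho((1\otimes m)\alpha(x))=\widehat\pi_l(m)x$.

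Next I would relax the hypothesis on $x$ from $\mathscr T_{\varphi_N}$ to $\mathscr N_{\varphi_N}$, keeping $y\in\mathscr T_{\varphi_N}$ fixed. Both sides of the displayed identity depend norm-continuously on the vector $\Lambda_N(x)\in\mathscr L^2(N)$: on the left because $\omega_{\Lambda_N(x),\Lambda_N(y)}$, and hence $A$, does so; on the right because $\sigma_{-i}^N(y^*)\in\mathscr T_{\varphi_N}$, so right multiplication by it is the fixed bounded operator $J_N\sigma_{i/2}^N(y)J_N$ and $\Lambda_N(w)=J_N\sigma_{i/2}^N(y)J_N\Lambda_N(x)$ (in particular $w\in\mathscr N_{\varphi_N}$). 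Choosing $x_n\in\mathscr T_{\varphi_N}$ with $\Lambda_N(x_n)\to\Lambda_N(x)$, for instance by the Gaussian smoothing $x_n=\sqrt{n/\pi}\int e^{-nt^2}\sigma_t^N(x)\,dt$, the identity passes to the limit and holds for every $x\in\mathscr N_{\varphi_N}$.

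Finally I would convert the identity into the desired left-boundedness statement. Putting $m=\widehat J m'^*\widehat J$ for $m'\in\mathscr N_{\widehat\varphi}^*$ gives $\widehat\pi_l(m)=\widehat\pi_r(m')$ and $\widehat\Lambda^{\textrm{op}}(m)=\widehat J\widehat\Lambda(m'^*)=\widehat\Lambda^{\textrm{op}}(m')$, so the identity becomes $A\,\widehat\Lambda^{\textrm{op}}(m')=\widehat\pi_r(m')\Lambda_N(w)$ for all $m'\in\mathscr N_{\widehat\varphi}^*$. Since $A$ is bounded and the vectors $\widehat\Lambda^{\textrm{op}}(m')$ are dense in $\mathscr L^2(M)$, the defining map of $L_{\Lambda_N(w)}$ is just the restriction of $A$; hence $\Lambda_N(w)$ is left-bounded and $L_{\Lambda_N(w)}=A$. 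The one genuinely delicate step is this last translation: one must correctly match the two presentations of the right $\widehat M$-action (through $\widehat\pi_l$ on $\widehat M'$ versus $\widehat\pi_r$ on $\widehat M$) and the two GNS maps, through the canonical identification $m'\leftrightarrow\widehat J m'^*\widehat J$ of $\widehat M$ with $\widehat M'$. Once this correspondence is set up, the remainder is the routine continuity of the preceding step.
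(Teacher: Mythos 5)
Your proposal is correct and is essentially the paper's own argument: the paper likewise reduces the claim to the identity $(\iota\otimes\omega_{\Lambda_N(x),\Lambda_N(y)})(\tilde{G}^*)\,\widehat{\Lambda}^{\textrm{op}}(m)=\widehat{\pi}_l(m)\Lambda_N(x\sigma_{-i}^N(y^*))$ for $m\in\mathscr{N}_{\widehat{\varphi}^{\textrm{op}}}$, and proves it by pairing against $\Lambda_N(z)$ via the commuting square (Proposition \ref{prop2222}) and Lemma \ref{lem12}. The only (harmless) difference is your extra Gaussian-smoothing step: since $x$ enters the computation only through the operator $p=\widehat{\pi}_l(m)x$, to which Proposition \ref{prop2222} applies for every $x\in\mathscr{N}_{\varphi_N}$, the paper obtains the identity directly for all such $x$ without approximating by Tomita-algebra elements.
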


\begin{proof}

We have to prove that for $m\in \mathscr{N}_{\widehat{\varphi}^{op}}$,
we have \[(\iota\otimes
\omega_{\Lambda_N(x),\Lambda_N(y)})(\tilde{G}^*)
\widehat{\Lambda}^{\textrm{op}}(m) =
\widehat{\pi}_l(m)\Lambda_N(x\sigma_{-i}^N(y^*)).\] But using the square
(\ref{eqn2}) at the end of section 1 and Lemma \ref{lem12}, we get for any $z\in \mathscr{N}_{\varphi_N}$ that
\begin{eqnarray*} \langle (\iota\otimes
\omega_{\Lambda_{N}(x),\Lambda_{N}(y)})(\tilde{G}^*)
\widehat{\Lambda}^{\textrm{op}}(m), \Lambda_{N}(z)\rangle &=&
\langle \tilde{G}^*(\widehat{\Lambda}^{\textrm{op}}(m)\otimes
\Lambda_{N}(x)), \Lambda_N(z)\otimes \Lambda_N(y)\rangle \\
&=& \langle
\Lambda_{N_2}(\widehat{\pi}_l(m)x),\Lambda_{N}(z)\otimes
\Lambda_{N}(y)\rangle \\ &=& \langle
\widehat{\pi}_l(m)x\Lambda_{N}(\sigma_{-i}^N(y^*)),\Lambda_N(z)\rangle.\end{eqnarray*}

\end{proof}

\begin{Prop}\label{lem15} If $x\in \widehat{N}\cap \mathscr{N}_{\varphi_{\widehat{Q}}}$ and $y\in
\widehat{O}\cap \mathscr{N}_{\varphi_{\widehat{Q}}}$, then
$\Delta_{\widehat{O}}(y)(x\otimes 1)$ in
$\mathscr{D}(\widehat{\Lambda}\otimes \Lambda_{\widehat{O}})$, and
\[(\widehat{\Lambda}\otimes \Lambda_{\widehat{O}})(\Delta_{\widehat{O}}
(y)(x\otimes 1)) = (J\otimes \widehat{J}_{12})\tilde{G}(J_N\otimes
\widehat{J}_{21}) (\Lambda_{\widehat{N}}(x)\otimes \Lambda_{\widehat{O}}(y)).\]
\end{Prop}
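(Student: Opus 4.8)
The plan is to prove Proposition \ref{lem15} in exact parallel with its left-handed counterpart Lemma \ref{lem13}: where that lemma computes the GNS image of $\Delta_{\widehat{N}}(x)(m\otimes 1)$ in the $\widehat{N}$-corner and produces $\tilde{G}^*$, here I must compute the GNS image in the \emph{mixed} corner $\widehat{M}\otimes\widehat{O}$ of $\Delta_{\widehat{O}}(y)(x\otimes 1)$ and produce $\tilde{G}$ flanked by the four modular conjugations. The essential bridge between the two off-diagonal GNS maps is the identity, read off from the explicit description of $\widehat{\Lambda}_{12},\widehat{\Lambda}_{21}$ and the Tomita operator of the balanced weight $\varphi_{\widehat{Q}}$,
\[\widehat{J}_{21}\,\Lambda_{\widehat{O}}(y) = \widehat{\nabla}_N^{1/2}\,\Lambda_{\widehat{N}}(y^*),\qquad y\in\widehat{O}\cap\mathscr{N}_{\varphi_{\widehat{Q}}},\]
together with the analogous $\widehat{J}\,\widehat{\Lambda}(m^*)=\widehat{\nabla}^{1/2}\widehat{\Lambda}(m)$ on the $\widehat{M}$-corner. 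First I would restrict to a core of regular elements, namely $x$ and $y$ of the concrete form $(\iota\otimes\omega)(\tilde{G}^*)$ and its adjoint, available through Proposition \ref{prop412} and the two preceding lemmas, which give $\Lambda_{\widehat{N}}((\iota\otimes\omega_{\Lambda_N(a),\Lambda_N(b)})(\tilde{G}^*))=\Lambda_N(a\sigma_{-i}^N(b^*))$. This ensures that the unbounded operators $\widehat{\nabla}_N^{1/2}$ and $\widehat{\nabla}^{1/2}$ appearing below act on analytic vectors.

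I would then establish membership and the formula \emph{simultaneously}, rather than by a separate square-integrability estimate: the natural such estimate reduces $\Delta_{\widehat{O}}(y)^*\Delta_{\widehat{O}}(y)=\Delta_{\widehat{P}}(y^*y)$ and would invoke the left invariance of $\varphi_{\widehat{P}}$ under $\Delta_{\widehat{P}}$, which is only established later, so using it here would be circular. Instead, writing $\Delta_{\widehat{O}}(y)=\widehat{W}^*(1\otimes y)\tilde{G}$, I would expand $(\widehat{\Lambda}\otimes\Lambda_{\widehat{O}})(\Delta_{\widehat{O}}(y)(x\otimes 1))$ and transport it leg by leg into the $\widehat{N}$-picture: the displayed relation converts the $\Lambda_{\widehat{O}}$-leg into a $\Lambda_{\widehat{N}}$-leg up to $\widehat{\nabla}_N^{1/2}$, and the Tomita operator of $\varphi_{\widehat{Q}}$ converts the statement into the adjoint identity, to which Lemma \ref{lem13} applies once $\tilde{G}^*$ has been commuted past the conjugations. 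The appearance of $\tilde{G}$ (rather than $\tilde{G}^*$) and of $J,J_N$ on the outside is precisely what conjugating $\tilde{G}^*$ by modular conjugations produces; the key algebraic input is Lemma \ref{lem2}, i.e. $\tilde{G}(J_N\otimes J_N)\Sigma=\Sigma U\Sigma(\widehat{J}\otimes J_N)\tilde{G}$, which I would use once for $N$ and once for $M$, exactly as in the proof of Lemma \ref{lem14}. The pentagon relation $\widehat{W}_{12}\tilde{G}_{13}\tilde{G}_{23}=\tilde{G}_{23}\tilde{G}_{12}$ of Proposition \ref{lem4} is what lets the $\widehat{W}^*$ inside $\Delta_{\widehat{O}}$ be absorbed and the computation be closed against the defining property of $\widehat{W}$.

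The main obstacle I expect is the bookkeeping across the four corners of $\widehat{Q}$. Unlike its left-handed model, the target lives in the mixed corner $\widehat{M}\otimes\widehat{O}$, and its coproduct argument $y$ and its multiplier $x$ sit in \emph{different} off-diagonal corners, so the identity cannot be obtained from Lemma \ref{lem13} by any single symmetry: the global adjoint would replace the multiplier-on-the-right by a multiplier-on-the-left, and the antipode $R_{\widehat{Q}}$ of Lemma \ref{lem14} would retain an $\widehat{M}$-valued multiplier. Hence the $S$-operator of $\varphi_{\widehat{Q}}$ and the antipode relation must be combined, and I must keep precise track of which conjugation ($J$, $J_N$, $\widehat{J}_{12}$, $\widehat{J}_{21}$, $\widehat{J}$) implements each passage and on which space ($\mathscr{L}^2(M)$, $\mathscr{L}^2(N)$, $\overline{\mathscr{L}^2(N)}$) it acts. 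Keeping the unbounded $\widehat{\nabla}_N^{1/2}$ and $\widehat{\nabla}^{1/2}$ on analytic vectors throughout, so that each rearrangement is justified and the square roots cancel correctly against the one hidden in $\Lambda_{\widehat{O}}$, is the delicate technical point.
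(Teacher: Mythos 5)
Your overall skeleton does match the paper's: you pass to the adjoint using the Tomita theory of the balanced weight (your bridge identity $\widehat{J}_{21}\Lambda_{\widehat{O}}(y)=\widehat{\nabla}_N^{1/2}\Lambda_{\widehat{N}}(y^*)$ is exactly the relation $J_{\widehat{Q}}\Lambda_{\widehat{Q}}(y)=\nabla_{\widehat{Q}}^{1/2}\Lambda_{\widehat{Q}}(y^*)$ restricted to the off-diagonal corners), you then apply Lemma \ref{lem13} to $(\overline{\omega}\otimes\iota)(\widehat{\Delta}_{12}(y^*))$, and you correctly observe that any appeal to left invariance of $\varphi_{\widehat{P}}$ would be circular. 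The gap lies in the tools you assign to the one remaining, and crucial, step. After the bridge and Lemma \ref{lem13}, what must be proved is an identity of the form
\[\nabla_{\widehat{Q}}^{1/2}\,\bigl(\overline{\omega}\otimes\iota\bigr)(\tilde{G}^*)\,\nabla_{\widehat{Q}}^{-1/2} \;=\; \bigl(\omega'\otimes\iota\bigr)(\tilde{G}),\]
i.e.\ one must simultaneously (i) trade a slice of $\tilde{G}^*$ for a slice of $\tilde{G}$, and (ii) compute the conjugation of such a slice by the unbounded operator $\widehat{\nabla}_N^{1/2}$. Neither Lemma \ref{lem2} nor the pentagon equation can do this: both are identities involving only the conjugations $J$, $J_N$, $\widehat{J}$ and the unitaries $U$, $V$, $\widehat{W}$, and they carry no information about $\widehat{\nabla}_N$. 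What the paper actually uses is Lemma \ref{lem11}, the slice-level antipode identity $(\omega_{\Lambda_{N}(x^*),\Lambda(\sigma_{i}(a)^*)}\otimes\iota)(\tilde{G}) = (\omega_{\Lambda(a),\Lambda_N(x)}\otimes\iota)(\tilde{G}^*)$, for step (i), and the first commutation relation of Proposition \ref{lem7}, $(\nabla^{it}\otimes \widehat{\nabla}_N^{it})\tilde{G}= \tilde{G}(\nabla_N^{it}\otimes \widehat{\nabla}_N^{it})$, for step (ii): the latter converts $\textrm{Ad}(\widehat{\nabla}_N^{it})$ of a slice into $\nabla_N^{it}$ and $\nabla^{it}$ acting on the two vectors defining the functional. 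Your proposal invokes neither result, and the Lemma \ref{lem14}-style computation you put in their place establishes only the unitary-antipode covariance, which is the ``$R$-part'' of the statement; the ``$\nabla^{1/2}$-part'' hidden in $\widehat{J}_{21}$ and $\widehat{\Lambda}_{21}$ is left unaccounted for.

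Relatedly, your analyticity claim does not hold as stated. Taking $x,y$ with GNS vectors of the form $\Lambda_N(a\sigma_{-i}^N(b^*))$, $a,b\in\mathscr{T}_{\varphi_N}$, produces vectors analytic for $\nabla_N^{it}$; but the operator that has to be moved across the slices is $\widehat{\nabla}_N^{1/2}$, and $\widehat{\nabla}_N^{it}=P_N^{it}J_N\delta_N^{it}J_N$ is a different one-parameter group, so analyticity for one gives nothing for the other. The paper sidesteps the need for $\widehat{\nabla}_N$-analytic vectors entirely: by Proposition \ref{lem7} the relevant analyticity is that of the operator slices under $\chi_t=\textrm{Ad}(\widehat{\nabla}_N^{it})$, which reduces to analyticity of the defining vectors under $\nabla_N^{it}$ and $\nabla^{it}$ (Tomita algebras of $\varphi_N$ and $\varphi$), combined with taking $y$ in the Tomita algebra of $\varphi_{\widehat{Q}}$ itself. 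Without this mechanism, or an equivalent substitute for it, the ``delicate technical point'' you flag at the end cannot be resolved, and the proof as proposed does not go through.
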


\noindent \textit{Remark:} Compare this formula with the identity $(\widehat{J}\otimes
J)W(\widehat{J}\otimes J)= W^*$.\\

\begin{proof} This statement is equivalent
with proving for sufficiently many $y$ in $\widehat{Q}_{21}\cap
\mathscr{N}_{\varphi_{\widehat{Q}}}$ and $\omega\in \widehat{Q}_{12,*}$ that
$(\omega\otimes \iota)(\widehat{\Delta}_{21}(y))\in \mathscr{N}_{\varphi_{\widehat{Q}}}$,
and \[\widehat{\Lambda}_{21}((\omega\otimes \iota)(\widehat{\Delta}_{21}(y)))=
(\omega\otimes \iota)((J\otimes \widehat{J}_{12})\tilde{G}(J_N\otimes
\widehat{J}_{21}))\widehat{\Lambda}_{21}(y), \] which can be written as
\begin{equation}\label{eqn6} \widehat{J}_{21}\widehat{\Lambda}_{21}((\omega\otimes \iota)(\widehat{\Delta}_{21}(y))) =
(\overline{\omega}(J(\cdot)^*J_N)\otimes
\iota)(\tilde{G})\widehat{J}_{21}\widehat{\Lambda}_{21}(y).\end{equation}

\noindent Let $y\in \widehat{Q}_{21}\cap \mathscr{N}_{\varphi_{\widehat{Q}}}$ be in the Tomita algebra of $\varphi_{\widehat{Q}}$. Let $\omega$ be of the form $\omega_{\Lambda_N(x),\Lambda(a)}$ with $x,a$ in the Tomita algebra of respectively $\varphi_N$ and $\varphi$. Then by the first formula of Lemma \ref{lem7} (used both in the general case and the case where $N=M$),  we have that $(\omega\otimes \iota)(\widehat{\Delta}_{21}(y))$ will also be analytic for $\sigma_t^{\widehat{Q}}$, with \[\sigma_{-i/2}^{\widehat{Q}} ((\omega_{\Lambda_N(x),\Lambda(a)}\otimes \iota)(\widehat{\Delta}_{21}(y)) = (\omega_{\nabla_N^{1/2}\Lambda_N(x),\nabla^{-1/2}\Lambda(a)}\otimes \iota)(\widehat{\Delta}_{12}(\sigma_{-i/2}^{\widehat{Q}}(y))).\] Further, $(\omega\otimes \iota)(\widehat{\Delta}_{21}(y))^* = (\overline{\omega}\otimes \iota)(\widehat{\Delta}_{12}(y^*))$, which will be in $\mathscr{D}(\widehat{\Lambda}_{12})$ by Lemma \ref{lem13}, with \[\widehat{\Lambda}_{12}((\overline{\omega}\otimes \iota)(\widehat{\Delta}_{12}(y^*))) = (\overline{\omega}\otimes \iota)(\tilde{G}^*)\widehat{\Lambda}_{12}(y^*).\] This shows that $(\omega\otimes \iota)(\widehat{\Delta}_{21}(y))\in \mathscr{D}(\widehat{\Lambda}_{21})$.\\

\noindent Now by Lemma \ref{lem11}, we have then also
\[\widehat{\Lambda}_{12}((\overline{\omega}\otimes \iota)(\widehat{\Delta}_{12}(y^*))) = (\omega_{\Lambda_N(x^*),\Lambda(\sigma_{-i}(a^*))}\otimes \iota)(\tilde{G})\widehat{\Lambda}_{12}(y^*),\] and by Lemma \ref{lem7}, we have that $(\omega_{\Lambda_N(x^*),\Lambda(\sigma_{-i}(a^*))}\otimes \iota)(\tilde{G})$ is analytic for $\chi_t=\textrm{Ad}(\widehat{\nabla}_N^{it})$, with \[\chi_{-i/2}((\omega_{\Lambda_N(x^*),\Lambda(\sigma_{-i}(a^*))}\otimes \iota)(\tilde{G}))=(\omega_{J_N\Lambda_N(x),J\Lambda(a)}\otimes \iota)(\tilde{G}).\] So combining all this, we get \begin{eqnarray*} \widehat{J}_{21}\widehat{\Lambda}_{21}((\omega\otimes \iota)(\widehat{\Delta}_{21}(y)))&=& \nabla_{\widehat{Q}}^{1/2} \widehat{\Lambda}_{12} ((\omega\otimes \iota)(\widehat{\Delta}_{21}(y))^*) \\ &=& (\nabla_{\widehat{Q}}^{1/2} (\omega_{\Lambda_N(x^*),\Lambda(\sigma_{-i}(a^*))}\otimes \iota)(\tilde{G})\nabla_{\widehat{Q}}^{-1/2})\nabla_{\widehat{Q}}^{1/2}\widehat{\Lambda}_{12}(y^*) \\ &=& (\omega_{J_N\Lambda_N(x),J\Lambda(a)}\otimes \iota)(\tilde{G})\widehat{J}_{21}\widehat{\Lambda}_{21}(y)\\ &=&  (\overline{\omega}(J(\cdot)^*J_N)\otimes
\iota)(\tilde{G})\widehat{J}_{21}\widehat{\Lambda}_{21}(y).\end{eqnarray*}

\noindent Now by closedness of $\Lambda_{\widehat{Q}}$, this equality remains true for $\omega$ arbitrary. Since such $y$'s form a $\sigma$-strong$^*$-norm core for $\widehat{\Lambda}_{12}$, the equality is true for any $y\in \widehat{Q}_{21}\cap \mathscr{N}_{\varphi_{\widehat{Q}}}$.

\end{proof}

\begin{Theorem}\label{corinv} The weight $\varphi_{\widehat{P}}$ is left invariant.\end{Theorem}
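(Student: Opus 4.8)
The plan is to deduce the left invariance of $\varphi_{\widehat{P}}$ from the known left invariance of $\widehat{\varphi}$ on $\widehat{M}$ by passing through the off-diagonal corner $\widehat{O}=\widehat{Q}_{21}$ of the linking algebra, exploiting that the Galois condition makes $\tilde{G}$ (and hence the unitary appearing in Proposition \ref{lem15}) a unitary. Since $\Delta_{\widehat{Q}}$ is a $^*$-homomorphism with $\Delta_{\widehat{O}}(y)=\Delta_{\widehat{N}}(y^*)^*$, every positive element of the form $y^*y$ with $y\in\widehat{O}$ satisfies $\Delta_{\widehat{P}}(y^*y)=\Delta_{\widehat{O}}(y)^*\Delta_{\widehat{O}}(y)$, and for $y\in\widehat{O}\cap\mathscr{N}_{\varphi_{\widehat{Q}}}$ one has $y^*y\in\mathscr{M}_{\varphi_{\widehat{P}}}^+$ with $\varphi_{\widehat{P}}(y^*y)=\varphi_{\widehat{Q}}(y^*y)=\|\Lambda_{\widehat{O}}(y)\|^2$. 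I would therefore first reduce the statement to proving $(\iota\otimes\varphi_{\widehat{P}})\Delta_{\widehat{P}}(y^*y)=\varphi_{\widehat{P}}(y^*y)1$ for all such $y$, and then, by normality together with the $\sigma$-weak density of the span of $\widehat{N}\,\widehat{O}$ in $\widehat{P}$, propagate this to all of $\mathscr{M}_{\varphi_{\widehat{P}}}^+$.

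For the core identity I would fix $x\in\widehat{N}\cap\mathscr{N}_{\varphi_{\widehat{Q}}}$ and $y\in\widehat{O}\cap\mathscr{N}_{\varphi_{\widehat{Q}}}$ and apply Proposition \ref{lem15}, which gives
\[(\widehat{\Lambda}\otimes\Lambda_{\widehat{O}})(\Delta_{\widehat{O}}(y)(x\otimes 1))=\mathscr{V}\,(\Lambda_{\widehat{N}}(x)\otimes\Lambda_{\widehat{O}}(y)),\qquad \mathscr{V}:=(J\otimes\widehat{J}_{12})\tilde{G}(J_N\otimes\widehat{J}_{21}).\]
Because $\alpha$ is Galois, $\tilde{G}$ is a unitary, so $\mathscr{V}$ is isometric and $\|\mathscr{V}(\xi\otimes\eta)\|=\|\xi\|\,\|\eta\|$. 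Taking norms, and using that the left-hand side is the GNS-vector of $\Delta_{\widehat{O}}(y)(x\otimes 1)\in\widehat{M}\otimes\widehat{O}$ for $\widehat{\varphi}\otimes\varphi_{\widehat{P}}$ (the second leg being integrated by $\varphi_{\widehat{P}}$ through $\Lambda_{\widehat{O}}$), I obtain
\[(\widehat{\varphi}\otimes\varphi_{\widehat{P}})\big((x^*\otimes1)\Delta_{\widehat{P}}(y^*y)(x\otimes1)\big)=\widehat{\varphi}(x^*x)\,\varphi_{\widehat{P}}(y^*y).\]
Writing $\Theta:=(\iota\otimes\varphi_{\widehat{P}})\Delta_{\widehat{P}}(y^*y)$ in the extended positive cone of $\widehat{P}$ and slicing the second leg, the left-hand side becomes $\widehat{\varphi}(x^*\Theta x)=\langle\Theta\Lambda_{\widehat{N}}(x),\Lambda_{\widehat{N}}(x)\rangle$, so that $\langle\Theta\Lambda_{\widehat{N}}(x),\Lambda_{\widehat{N}}(x)\rangle=\varphi_{\widehat{P}}(y^*y)\,\|\Lambda_{\widehat{N}}(x)\|^2$ for every admissible $x$.

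Since the vectors $\Lambda_{\widehat{N}}(x)$ are dense in $\mathscr{L}^2(N)$, this last identity says that the (a priori unbounded) operator associated with $\Theta$ has quadratic form $\varphi_{\widehat{P}}(y^*y)\|\cdot\|^2$ on a dense domain; by the boundedness argument already used in Lemma \ref{lemint} (via Lemma 4.7 and Corollary 4.9 of \cite{Tak1}) this forces $\Theta=\varphi_{\widehat{P}}(y^*y)1$, which is precisely $(\iota\otimes\varphi_{\widehat{P}})\Delta_{\widehat{P}}(y^*y)=\varphi_{\widehat{P}}(y^*y)1$. The hard part is this final extraction of the scalar: one must justify slicing $\varphi_{\widehat{P}}$ against the orthonormal-basis sum and control $\Theta$ in the extended positive cone, and one must verify that the off-diagonal products $y^*y$ (with $y\in\widehat{O}\cap\mathscr{N}_{\varphi_{\widehat{Q}}}$) form a core for the balanced weight on the linking algebra, so that invariance on these generators indeed extends to all of $\mathscr{M}_{\varphi_{\widehat{P}}}^+$. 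The unitarity of $\tilde{G}$ — that is, the Galois hypothesis — is exactly what upgrades the isometry $\mathscr{V}$ to a norm-preserving map and thereby turns the computation into an equality rather than a mere inequality.
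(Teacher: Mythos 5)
Your main computation coincides with the paper's own: both arguments rest on Proposition \ref{lem15}, take norms in the identity
\[(\widehat{\Lambda}\otimes\Lambda_{\widehat{O}})(\Delta_{\widehat{O}}(y)(x\otimes 1))=(J\otimes\widehat{J}_{12})\tilde{G}(J_N\otimes\widehat{J}_{21})(\Lambda_{\widehat{N}}(x)\otimes\Lambda_{\widehat{O}}(y)),\]
and deduce $(\iota\otimes\varphi_{\widehat{P}})(\Delta_{\widehat{P}}(y^*y))=\varphi_{\widehat{P}}(y^*y)1$ for $y\in\widehat{O}\cap\mathscr{N}_{\varphi_{\widehat{Q}}}$, i.e.\ for the elements $L_{\xi}L_{\xi}^*$ with $\xi$ bounded and in $\mathscr{D}(\widehat{\nabla}_N^{1/2})$. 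Your extraction of the scalar is sound: the element $\Theta$ of the extended positive cone has a lower semicontinuous quadratic form, so agreement with $\varphi_{\widehat{P}}(y^*y)\|\cdot\|^2$ on the dense set of bounded vectors first gives $\Theta\le\varphi_{\widehat{P}}(y^*y)1$ (hence boundedness, as in Lemma \ref{lemint}) and then equality. (A side remark: an isometry already preserves norms, so it is not unitarity of $\tilde{G}$ that turns your norm computation into an equality; the Galois hypothesis enters earlier, in making $\Delta_{\widehat{Q}}$ a $^*$-homomorphism and in Proposition \ref{lem15} itself.)

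The genuine gap is your final step. Normality together with the $\sigma$-weak density of $\textrm{span}\,\widehat{N}\widehat{O}$ in $\widehat{P}$ does \emph{not} propagate the identity to all of $\mathscr{M}_{\varphi_{\widehat{P}}}^+$: weights are not $\sigma$-weakly continuous, and a net in $\textrm{span}\{y^*y\}$ converging $\sigma$-weakly to $b\in\mathscr{M}_{\varphi_{\widehat{P}}}^+$ is neither positive, nor increasing, nor dominated by $b$, so neither $\varphi_{\widehat{P}}$ nor $(\iota\otimes\varphi_{\widehat{P}})\Delta_{\widehat{P}}$ can be passed along it. The extension mechanism must be order-theoretic: one needs that every $b\in\mathscr{M}_{\varphi_{\widehat{P}}}^+$ is an \emph{increasing} limit of finite sums $\sum_{i=1}^n L_{\xi_i}L_{\xi_i}^*$, after which normality of $\Delta_{\widehat{P}}$ and lower semicontinuity of both sides finish the argument. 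This approximation from below is not a formal consequence of density; it is Lemma IX.3.9 of \cite{Tak1}, and it is available precisely because $\varphi_{\widehat{P}}$ was \emph{constructed} through spatial-derivative theory (Theorem IX.3.11 of \cite{Tak1}) relative to $\widehat{\varphi}^{\textrm{op}}$ — one must go back to how the weight was built, which your proposal never does at this step. One also needs the small observation that integrability of $b$ forces each $\xi_i$ occurring in an approximant to lie in $\mathscr{D}(\widehat{\nabla}_N^{1/2})$, so that the already-proved case applies termwise. Your alternative suggestion, verifying that the $y^*y$ form a ``core for the balanced weight,'' is not the right statement either: general elements of $\mathscr{M}_{\varphi_{\widehat{P}}}^+$ are not of the form $y^*y$ with $y\in\widehat{O}$, so a GNS-core property for $\widehat{\Lambda}_{21}$ would still not reach them; the hereditary approximation statement of Lemma IX.3.9 is what is actually required.
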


\begin{proof} It follows from the last proposition that \[(\iota \otimes \varphi_{\widehat{P}})(\Delta_{\widehat{P}}(L_{\xi}L_{\xi}^*)) = \varphi_{\widehat{P}}(L_{\xi}L_{\xi}^*)\] for $\xi$ right-bounded and in the domain of $\widehat{\nabla}_N^{1/2}$. From Lemma IX.3.9 of \cite{Tak1}, it follows that also $(\iota \otimes \varphi_{\widehat{P}})(\Delta_{\widehat{P}}(b)) = \varphi_{\widehat{P}}(b)$ for $b\in \mathscr{M}_{\varphi_{\widehat{P}}}^+$. Indeed: that lemma implies that $b$ can be approximated from below by elements of the form $\sum_{i=1}^n L_{\xi_i}L_{\xi_i}^*$ with $\xi_i$ right-bounded, and since $b$ is integrable, every $\xi_i$ must be in $\mathscr{D}(\widehat{\nabla}_N^{1/2})$. So we can conclude by lower-semi-continuity.

\end{proof}

\noindent This proves that $(\widehat{P},\Delta_{\widehat{P}})$ is a locally compact quantum group, since $\varphi_{\widehat{P}}$ is a left invariant weight, and by Lemma \ref{lem14}, $\psi_{\widehat{P}}:= \varphi_{\widehat{P}}\circ R_{\widehat{Q}}$ will be a right invariant weight.\\

\begin{Def} If $(N,\alpha)$ is a Galois object for a locally compact quantum group $(M,\Delta)$, and $(\widehat{P},\Delta_{\widehat{P}})$ the locally compact quantum group constructed from it in the foregoing manner, then we call $(\widehat{P},\Delta_{\widehat{P}})$ the \emph{reflected locally compact quantum group} (or just the reflection) of $(\widehat{M},\Delta_{\widehat{M}})$ across $(N,\alpha)$. \end{Def}

\noindent To end this section, we show that $(\widehat{Q},\Delta_{\widehat{Q}})$ is a measured quantum groupoid. In fact, our set-up is closer in spirit to the formulation of the generalized Kac algebras of \cite{Yam1}, but this theory has no full generalization to the `locally compact' world. It is however well known that these approaches are equivalent in the finite-dimensional Kac case (cf. \cite{Nik1}). \\

\noindent Let $d$ be the natural imbedding of $\mathbb{C}^2$ in $\widehat{Q}$: \[d:\mathbb{C}^2\rightarrow \widehat{Q}: (w,z)\rightarrow\left(\begin{array}{ll} w & 0 \\ 0 & z\end{array}\right).\] Let $\epsilon$ denote the map \[\epsilon:\mathbb{C}^2\rightarrow \mathbb{C}: (w,z)\rightarrow w+z.\] Then we have natural identifications \begin{eqnarray*} \mathscr{L}^2(\widehat{Q})\underset{\epsilon}{_d\otimes _d} \mathscr{L}^2(\widehat{Q})&=& (\oplus_{i,j} \mathscr{L}^2(\widehat{Q}_{ij}))\underset{\epsilon}{_d\otimes _d}(\oplus_{l,k} \mathscr{L}^2(\widehat{Q}_{lk}))\\&\;\cong\;& \bigoplus_{i,j,k}^{2} (\mathscr{L}^2(\widehat{Q}_{ij})\otimes \mathscr{L}^2(\widehat{Q}_{ik}))\\& =& \Delta_{\widehat{Q}}(1)(\mathscr{L}^2(\widehat{Q})\otimes  \mathscr{L}^2(\widehat{Q})),\end{eqnarray*} since $\underset{\epsilon}{_d\otimes _d}$ is just the ordinary balanced tensor product of two $\mathbb{C}^2$-modules. (Note that $\mathbb{C}^2$ acts on the left on both the $\mathscr{L}^2(\widehat{Q})$ spaces, so we don't get ordinary `matrix multiplication compatibility' on the summands!) Under this identification we have \[\widehat{Q}\underset{\mathbb{C}^2}{_d*_d}\widehat{Q} \;\cong\; \Delta_{\widehat{Q}}(1)(\widehat{Q}\otimes \widehat{Q})\Delta_{\widehat{Q}}(1),\] where the expression left is the fibred product. Thus $\Delta_{\widehat{Q}}$ can be seen as a map \[\Delta_{\widehat{Q}}:\widehat{Q}\rightarrow \widehat{Q}\underset{\mathbb{C}^2}{_d*_d}\widehat{Q}.\] Note now that the expressions $1 \underset{\mathbb{C}^2}{_d\otimes _d} d(x)$ and $d(x) \underset{\mathbb{C}^2}{_d\otimes _d} 1$ coincide with respectively  $(1\otimes d(x))\Delta_{\widehat{Q}}(1)$ and $(d(x)\otimes 1)\Delta_{\widehat{Q}}(1)$  for $x\in \mathbb{C}^2$. Using also that $\iota\underset{\mathbb{C}^2}{_d*_d} \Delta_{\widehat{Q}}$ is just the restriction of $(\iota\otimes \Delta_{\widehat{Q}})$ to $\Delta_{\widehat{Q}}(1)(\widehat{Q}\otimes \widehat{Q})\Delta_{\widehat{Q}}(1)$, it is easy to see that $\Delta_{\widehat{Q}}$ satisfies the coassociativity conditions for a Hopf bimodule as in Definition 3.1 of \cite{Eno3}. Now the octuple $(\mathbb{C}^2,\widehat{Q}, d,d,\Delta_{\widehat{Q}}, \left(\begin{array}{cc}\varphi_{\widehat{P}}&0 \\ 0 & \widehat{\varphi}\end{array}\right),\left(\begin{array}{cc}\psi_{\widehat{P}}&0 \\ 0 & \widehat{\psi}\end{array}\right), \epsilon)$ will form a measured quantum groupoid as in Definition 3.7 of \cite{Eno1}: First of all, after the proper identifications, it is easy to see that $T_{\widehat{Q}}=\left(\begin{array}{cc}\varphi_{\widehat{P}}&0 \\ 0 & \widehat{\varphi}\end{array}\right)$ is a left invariant nsf operator valued weight onto $d(\mathbb{C}^2)$, and that $T_{\widehat{Q}}'=\left(\begin{array}{cc}\psi_{\widehat{P}}&0 \\ 0 & \widehat{\psi}\end{array}\right)$ is a right invariant nsf operator valued weight onto $d(\mathbb{C}^2)$. So we only have to check whether $\epsilon$ is relatively invariant with respect to $T_{\widehat{Q}}$ and $T_{\widehat{Q}}'$. Now since $\psi_{\widehat{P}}\oplus \widehat{\psi} = (\varphi_{\widehat{P}}\oplus \widehat{\varphi})\circ R_{\widehat{Q}}$, we have \[\sigma_t^{\psi_{\widehat{P}}\oplus \widehat{\psi}} = R_{\widehat{Q}} \circ  \sigma_t^{\varphi_{\widehat{P}}\oplus \widehat{\varphi}} \circ R_{\widehat{Q}}.\] If we look at the faithful representation $\pi^{\widehat{Q},2}$ of $\widehat{Q}$ on $\mathscr{L}^2(N)\oplus \mathscr{L}^2(M)$, then \[(\widehat{\nabla}_N^{it}\oplus \widehat{\nabla}^{it})\pi^{\widehat{Q},2}(x)(\widehat{\nabla}_N^{-it}\oplus \widehat{\nabla}^{-it}) = \pi^{\widehat{Q},2} (\sigma_t^{\varphi_{\widehat{P}}\oplus \widehat{\varphi}}(x))\] and \[(J_N\oplus J)\pi^{\widehat{Q},2}(x)^*(J_N\oplus J) = \pi^{\widehat{Q},2}(R_{\widehat{Q}}(x))\] for $x\in \widehat{Q}$, so that $\sigma_t^{\psi_{\widehat{P}}\oplus \widehat{\psi}}$ is implemented on $\mathscr{L}^2(N)\oplus \mathscr{L}^2(M)$ by $\widehat{\acnabla}^{\,\,\,it}_N\oplus \widehat{\acnabla}^{\,\,\,it}$, where $\widehat{\acnabla}^{\,\,\,it}_N= J_N \widehat{\nabla}_N^{-it}J_N$. Using the definition of $\widehat{\nabla}_N$ and the commutation rules between $\delta_N$, $J_N\delta_NJ_N$ and $P_N$, it is then easy to see that indeed $\sigma_t^{\psi_{\widehat{P}}\oplus \widehat{\psi}}$ commutes with $\sigma_s^{\varphi_{\widehat{P}}\oplus \widehat{\varphi}}$.\\


\section{Twisting by 2-cocycles}

\noindent We now treat a specific way to create non-trivial Galois objects, namely the twisting by cocycles. Let $(M,\Delta)$ be a locally compact quantum group, and let $\Omega\in \widehat{M}\otimes \widehat{M}$ be a unitary 2-cocycle, i.e. a unitary element satisfying \[(1\otimes \Omega)(\iota\otimes \widehat{\Delta})(\Omega) = (\Omega\otimes 1)(\widehat{\Delta}\otimes \iota)(\Omega).\] Denote by $\check{\alpha}$ the trivial coaction $\mathbb{C}\rightarrow \widehat{M}\otimes \mathbb{C}$ of $\widehat{M}$. The following definitions and propositions will refer to the paper \cite{Vae3}. So $(\check{\alpha},\Omega)$ is a cocycle action in the terminology of Definition 1.1. Let \[N= \widehat{M}\underset{\Omega}{\ltimes}\mathbb{C}:=\lbrack (\omega\otimes \iota)(\widehat{W}\Omega^*)\mid \omega\in \widehat{M}_*\rbrack^{\sigma-\textrm{weak}}\] be the cocycle crossed product as in Definition 1.3 (actually, one should take the von Neumann algebra generated by elements of this last set, in stead of just the $\sigma$-weak closure, but it will follow from our Proposition \ref{prop412} and the following proposition that this is the same). Then, by Proposition 1.4, there is a canonical \textit{right} coaction $\alpha$ of $M$ on $N$, determined by $\alpha((\omega\otimes \iota)(\widehat{W}\Omega^*))=(\omega\otimes \iota\otimes \iota)(\widehat{W}_{13}\widehat{W}_{12}\Omega_{12}^*)$, $\omega\in \widehat{M}_*$, and by Theorem 1.11.1, it is ergodic. By the remark after Lemma 1.12, it is integrable, and by Proposition 1.15 we can take the GNS-construction for $\varphi_N$ in $\mathscr{L}^2(M)$, by defining $\Lambda_N((\omega\otimes \iota)(\widehat{W}\Omega^*))=\Lambda((\omega\otimes \iota)(\widehat{W}))$ for $\omega\in \widehat{M}_*$ well-behaved. Finally, $(N,\alpha)$ is a Galois object, since the unitary $\widehat{W}\Omega^*\in B(\mathscr{L}^2(M)) \otimes N $ satisfies $(\iota\otimes \alpha)(\widehat{W}\Omega^*)=\widehat{W}_{13}\widehat{W}_{12}\Omega_{12}^*$, so that $\alpha$ is semi-dual (see Proposition 5.12 of \cite{Vae1} in the setting of left coactions). In fact, not surprisingly:

 \begin{Prop} The Galois map $\tilde{G}$ equals $\widehat{W}\Omega^*$.\end{Prop}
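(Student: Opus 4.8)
The plan is to compute $\tilde{G}$ directly from its defining action on vectors and show it coincides with $\widehat{W}\Omega^*$. Recall that the Galois map $\tilde{G}=\Sigma G$ is characterized by
\[
G(\Lambda_N(x)\underset{\mu}{\otimes}\Lambda_N(y)) = (\Lambda_N\otimes \Lambda)(\alpha(x)(y\otimes 1)),\qquad x,y\in \mathscr{T}_{\varphi_N},
\]
and since the object is ergodic, $\mu$ is trivial and $\tilde{G}:\mathscr{L}^2(N)\otimes\mathscr{L}^2(N)\to\mathscr{L}^2(M)\otimes\mathscr{L}^2(N)$, where here $\mathscr{L}^2(N)=\mathscr{L}^2(M)$ by the chosen GNS-construction $\Lambda_N((\omega\otimes\iota)(\widehat{W}\Omega^*))=\Lambda((\omega\otimes\iota)(\widehat{W}))$. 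Because elements of the form $x=(\omega\otimes\iota)(\widehat{W}\Omega^*)$ span a dense subalgebra of $N$ with explicitly known GNS images, I would evaluate $\tilde{G}$ on these generators and match.

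\textbf{First} I would note that Corollary \ref{corr}(1) already places $\tilde{G}\in\widehat{O}\otimes N$, and that the alternative characterization in Proposition \ref{lem4} gives the pentagon relation $\widehat{W}_{12}\tilde{G}_{13}\tilde{G}_{23}=\tilde{G}_{23}\tilde{G}_{12}$; since $\widehat{W}$ itself satisfies exactly this pentagon (being $\widehat{W}_{12}\widehat{W}_{13}\widehat{W}_{23}=\widehat{W}_{23}\widehat{W}_{12}$), the proposed candidate $\widehat{W}\Omega^*$ must be checked against it using the 2-cocycle identity for $\Omega$. This is a consistency check rather than the main argument. \textbf{The core computation} uses the formula (from Proposition \ref{lem4}'s proof)
\[
(\iota\otimes\omega)(\tilde{G})\Lambda_N(x)=\Lambda((\omega\otimes\iota)(\alpha(x))),\qquad \omega\in B(\mathscr{L}^2(N))_*,\ x\in\mathscr{N}_{\varphi_N},
\]
which I would apply to a generator $x=(\rho\otimes\iota)(\widehat{W}\Omega^*)$. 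Using the coaction formula $\alpha((\rho\otimes\iota)(\widehat{W}\Omega^*))=(\rho\otimes\iota\otimes\iota)(\widehat{W}_{13}\widehat{W}_{12}\Omega_{12}^*)$, together with $\Lambda_N(x)=\Lambda((\rho\otimes\iota)(\widehat{W}))$, the right-hand side becomes an explicit expression involving $\widehat{W}$ and $\Omega$. Comparing this with $(\iota\otimes\omega)(\widehat{W}\Omega^*)\Lambda((\rho\otimes\iota)(\widehat{W}))$ should yield the identity $\tilde{G}=\widehat{W}\Omega^*$ after stripping off $\omega$ and using density of the generators.

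\textbf{The main obstacle} I anticipate is the bookkeeping in matching the two GNS-pictures: $\tilde{G}$ maps into $\mathscr{L}^2(M)\otimes\mathscr{L}^2(N)$ where the second factor carries the weight $\varphi_N$ with its own GNS map $\Lambda_N$, while the computation naturally produces elements of $\mathscr{L}^2(M)\otimes\mathscr{L}^2(M)$ via the ordinary $\Lambda$. I must track carefully that under the identification $\Lambda_N((\omega\otimes\iota)(\widehat{W}\Omega^*))=\Lambda((\omega\otimes\iota)(\widehat{W}))$ the factor $\Omega^*$ is exactly what intertwines these two, so that the leftover twist in the computed $\alpha(x)$ assembles precisely into the $\Omega^*$ appearing in $\widehat{W}\Omega^*$. \textbf{Finally} I would invoke the density of $\{(\omega\otimes\iota)(\widehat{W})\}$ (equivalently the core property for $\Lambda_N$ noted in the excerpt, via Proposition 1.15 of \cite{Vae3}) to extend the equality from generators to an identity of unitaries on all of $\mathscr{L}^2(N)\otimes\mathscr{L}^2(N)$, and conclude $\tilde{G}=\widehat{W}\Omega^*$.
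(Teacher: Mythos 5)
Your proposal is correct and follows essentially the same route as the paper: evaluate $(\iota\otimes\omega)(\tilde{G})$ on the GNS images of the generators $(\omega'\otimes\iota)(\widehat{W}\Omega^*)$ via the slice formula $(\iota\otimes\omega)(\tilde{G})\Lambda_N(x)=\Lambda((\omega\otimes\iota)(\alpha(x)))$, use the explicit coaction formula together with the identification $\Lambda_N((\omega'\otimes\iota)(\widehat{W}\Omega^*))=\Lambda((\omega'\otimes\iota)(\widehat{W}))$, compute the action of the candidate $\widehat{W}\Omega^*$ on the same vectors (the paper carries this out with an orthonormal-basis expansion and Result 8.6 of \cite{Kus1}), and conclude by closedness of $\Lambda$ and density of the generators. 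The only difference is cosmetic: your pentagon-equation consistency check is not needed in the paper's argument.
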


 \begin{proof} Choose $\xi,\eta,\zeta\in \mathscr{L}^2(M)$, and an orthonormal basis $\xi_i$ of $\mathscr{L}^2(M)$. Further, let $m\in \widehat{M}$ be a Tomita element for $\widehat{\varphi}$, and denote $\omega'= \omega_{\zeta,\widehat{\Lambda}(m)}$. Then by Proposition 1.15 of \cite{Vae3}, $(\omega'\otimes \iota)(\widehat{W}\Omega^*)\in \mathscr{N}_{\varphi_N}$, $(\omega'\otimes \iota)(\widehat{W})\in \mathscr{N}_{\varphi}$  and \[\Lambda_N( (\omega'\otimes \iota)(\widehat{W}\Omega^*))=\Lambda( (\omega'\otimes \iota)(\widehat{W})).\] So  \begin{eqnarray*} (\iota\otimes \omega_{\xi,\eta})(\tilde{G})&&\!\!\!\!\!\!\!\!\!\!\!\!\!\!\Lambda((\omega'\otimes \iota)(\widehat{W}))\\ &=& (\iota\otimes \omega_{\xi,\eta})(\tilde{G})\Lambda_N((\omega'\otimes \iota)(\widehat{W}\Omega^*)) \\ &=& \Lambda((\omega_{\xi,\eta}\otimes \iota)(\alpha((\omega'\otimes \iota)(\widehat{W}\Omega^*))))\\ &=& \Lambda((\omega'\otimes \omega_{\xi,\eta}\otimes \iota)(\widehat{W}_{13}\widehat{W}_{12}\Omega_{12}^*))\\ &=& \Lambda(\sum_i (\omega'\otimes \omega_{\xi,\xi_i}\otimes \omega_{\xi_i,\eta}\otimes \iota)(\widehat{W}_{14}\widehat{W}_{13}\Omega_{12}^*)),\end{eqnarray*} where the sum is taken in the $\sigma$-strong-topology.\\

 \noindent On the other hand, using Result 8.6 of \cite{Kus1}, adapted to the von Neumann algebra setting, we get \begin{eqnarray*} (\iota\otimes \omega_{\xi,\eta})(\widehat{W}\Omega^*) &&\!\!\!\!\!\!\!\!\!\!\!\!\!\!\Lambda((\omega'\otimes \iota)(\widehat{W}))\\&=& \sum_i (\iota\otimes \omega_{\xi_i,\eta})(\widehat{W})(\iota\otimes \omega_{\xi,\xi_i})(\Omega^*)\Lambda((\omega'\otimes \iota)(\widehat{W}))\\ &=& \sum_i \Lambda((\omega_{\xi_i,\eta}\otimes \iota)\Delta((\omega'(\cdot (\iota\otimes \omega_{\xi,\xi_i})(\Omega^*))\otimes \iota)(\widehat{W})))\\ &=& \sum_i \Lambda((\omega'\otimes \omega_{\xi,\xi_i}\otimes \omega_{\xi_i,\eta}\otimes \iota)(\widehat{W}_{14}\widehat{W}_{13}\Omega_{12}^*)),\end{eqnarray*} so that the result follows by the closedness of $\Lambda$ and the density of elements of the form $\Lambda( (\omega'\otimes \iota)(\widehat{W}))$ in $\mathscr{L}^2(M)$.

 \end{proof}

\begin{Theorem}\label{cor3} The $\Omega$-twisted Hopf-von Neumann algebra $(\widehat{M},\widehat{\Delta}_{\Omega})$ is a
locally compact quantum group.\end{Theorem}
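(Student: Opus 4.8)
The plan is to show that the reflected locally compact quantum group $(\widehat{P},\Delta_{\widehat{P}})$ associated to the Galois object $(N,\alpha)=(\widehat{M}\underset{\Omega}{\ltimes}\mathbb{C},\alpha)$ coincides, as a Hopf--von Neumann algebra, with $(\widehat{M},\widehat{\Delta}_{\Omega})$. Since Theorem \ref{corinv} together with Lemma \ref{lem14} guarantees that $(\widehat{P},\Delta_{\widehat{P}})$ carries a left and a right invariant nsf weight, and is therefore a locally compact quantum group, the desired statement follows at once from such an identification. The crucial simplification is that, by Proposition 1.15 of \cite{Vae3}, the GNS-construction for $\varphi_N$ is realized on $\mathscr{L}^2(M)$, so that $\mathscr{L}^2(N)=\mathscr{L}^2(M)$ and the linking algebra $\widehat{Q}$ is built from two copies of $\mathscr{L}^2(M)$; moreover the Galois unitary is explicitly $\tilde{G}=\widehat{W}\Omega^*$ by the preceding proposition.

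First I would identify the underlying von Neumann algebra $\widehat{P}=\widehat{Q}_{11}$. By definition $\widehat{P}=\widehat{\pi}_l(\widehat{M}')'$, so it suffices to compute $\widehat{\pi}_l$. From the second commutation relation of Lemma \ref{com1}, for $m\in\widehat{M}'$ we have
\[
\widehat{\pi}_l(m)\otimes 1 = \tilde{G}^*(m\otimes 1)\tilde{G} = \Omega\,\widehat{W}^*(m\otimes 1)\widehat{W}\,\Omega^*.
\]
Using $\widehat{W}=\Sigma W^*\Sigma$ and the fact that $W\in M\otimes\widehat{M}$ commutes with $1\otimes\widehat{M}'$, one finds $\widehat{W}^*(m\otimes 1)\widehat{W}=m\otimes 1$; and since $\Omega\in\widehat{M}\otimes\widehat{M}$ commutes with $\widehat{M}'\otimes 1$, this collapses to $\widehat{\pi}_l(m)=m$. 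Hence $\widehat{\pi}_l$ is just the defining inclusion of $\widehat{M}'$, and $\widehat{P}=(\widehat{M}')'=\widehat{M}$.

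Next I would identify the comultiplication. For $x\in\widehat{P}=\widehat{M}$ the reflected coproduct is $\Delta_{\widehat{P}}(x)=\tilde{G}^*(1\otimes x)\tilde{G}=\Omega\,\widehat{W}^*(1\otimes x)\widehat{W}\,\Omega^*$; since $\widehat{W}^*(1\otimes x)\widehat{W}=\widehat{\Delta}(x)$ is the defining property of the left regular corepresentation, this is exactly $\Omega\widehat{\Delta}(x)\Omega^*=\widehat{\Delta}_{\Omega}(x)$. As a consistency check, for $\Omega=1$ one recovers the second remark after Proposition \ref{lem4}, where the trivial Galois object reflects to the dual $(\widehat{M},\widehat{\Delta})$. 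This shows $(\widehat{P},\Delta_{\widehat{P}})=(\widehat{M},\widehat{\Delta}_{\Omega})$, and the coassociativity of $\widehat{\Delta}_{\Omega}$ is then an instance of the coassociativity of $\Delta_{\widehat{Q}}$ (Proposition \ref{lem4}); note also that the ergodicity $N^{\alpha}=\mathbb{C}$ makes the relevant fiber product over $\mathbb{C}$ the ordinary tensor product, so that $\widehat{\Delta}_{\Omega}$ genuinely lands in $\widehat{M}\otimes\widehat{M}$.

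The conceptual content and all the genuine difficulty have already been absorbed into the preceding sections: the verification that $(N,\alpha)$ is an ergodic integrable Galois coaction, the formula $\tilde{G}=\widehat{W}\Omega^*$, and above all the construction in Section 4 of the invariant weight $\varphi_{\widehat{P}}$ on the reflected object (Theorem \ref{corinv}), whose existence is the main theorem of the paper. Consequently the proof of this corollary is essentially a bookkeeping identification, and the only point requiring care is tracking leg positions and the conventions for $\widehat{W}$, $W$ and $\Omega$ through the two short computations above.
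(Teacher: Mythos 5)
Your proof is correct and takes essentially the same route as the paper: identify the reflected quantum group $(\widehat{P},\Delta_{\widehat{P}})$ of the Galois object $\widehat{M}\underset{\Omega}{\ltimes}\mathbb{C}$ with $(\widehat{M},\widehat{\Delta}_{\Omega})$ via $\tilde{G}=\widehat{W}\Omega^*$, then invoke Theorem \ref{corinv} and Lemma \ref{lem14} for the left and right invariant weights. The only cosmetic difference is that you verify $\widehat{\pi}_l(m)=m$ for $m\in\widehat{M}'$ by a direct commutant computation from Lemma \ref{com1}, whereas the paper deduces it from the observation that the unitary implementation $U$ equals the right regular representation $V$; both arguments are valid.
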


\begin{proof}

\noindent Recall that the $\Omega$-twisted Hopf-von Neumann algebra is the algebra $\widehat{M}$ with the comultiplication $\widehat{\Delta}_{\Omega}(m)=\Omega\widehat{\Delta}(m)\Omega^*$. But the representation of $\widehat{M}'$ on $\mathscr{L}^2(N)$ equals the ordinary representation on $\mathscr{L}^2(M)$ (since it's easy to see that the unitary implementation of the coaction \emph{equals} the right regular representation $V$), so we can identify the underlying algebra of the reflected quantum group $(\widehat{P},\Delta_{\widehat{P}})$ with $\widehat{M}$, and then \begin{eqnarray*} \Delta_{\widehat{P}}(m) &=& \tilde{G}^*(1\otimes m)\tilde{G} \\ &=& \Omega \widehat{W}^*(1\otimes m)\widehat{W} \Omega^*\\ &=& \widehat{\Delta}_{\Omega}(m),\end{eqnarray*} which proves the theorem.
\end{proof}

\noindent \textit{Remark:} This answers negatively a question of \cite{Szy1}: the 2-pseudo-cocycles $\Omega_q$ of \cite{Szy1} are \emph{not} 2-cocycles, since $SU_0(2)$ is not a quantum group. This of course does not rule out the possibility that the $SU_q(2)$ \emph{are} cocycle twists of each other in some other way.\\

\noindent We will keep notation as in the previous sections, so we keep writing $(\widehat{P},\Delta_{\widehat{P}})$ for $(\widehat{M},\widehat{\Delta}_{\Omega})$.\\

\noindent Denote by $u_t = \widehat{\nabla}_N^{it}\widehat{\nabla}^{-it} \in \widehat{M}$ the cocycle derivative of $\varphi_{\widehat{P}}$ with respect to $\widehat{\varphi}$, so that $u_{s+t}=u_s\widehat{\sigma}_s(u_t)$. Denote $v_t= \nabla_N^{it}\nabla^{-it}$. Then also $v_t\in \widehat{M}$, since $\nabla_N^{it}$ and $\nabla^{it}$ implement the same automorphism on $\widehat{M}'$. Finally, denote $X= J_NJ$, then $X\in \widehat{M}$ for the same reason.

\begin{Prop} \begin{enumerate} \item The one-parametergroup $v_t$ is a cocycle with respect to $\widehat{\tau}_t$.
\item The 2-cocycles $\Omega$ and $(\widehat{\tau}_t\otimes \widehat{\tau}_t)(\Omega)$ are cohomologous by the coboundary $v_t$.
\item The 2-cocycles $\Omega$ and $\tilde{\Omega}=(\widehat{R}\otimes \widehat{R})(\Sigma \Omega^*\Sigma)$ are cohomologous by the coboundary $X$.

\end{enumerate}\end{Prop}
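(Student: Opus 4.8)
The plan is to deduce all three identities from the single explicit formula $\tilde{G}=\widehat{W}\Omega^*$ (equivalently $\Omega=\tilde{G}^*\widehat{W}$) together with the commutation relations already proved for $\tilde{G}$, each read twice: once for the Galois object $(N,\alpha)$ at hand and once for the \emph{trivial} Galois object $(M,\Delta)$, where $\tilde{G}$ becomes $\widehat{W}$ and $U$ becomes $V$ (recall that for a cocycle twist the unitary implementation of $\alpha$ is just $V$). The recurring mechanism is: an $N$-decorated one-parameter group is written as the cocycle times its $M$-counterpart, the $M$-relation is used to push $\widehat{W}$ through, and the surviving factor $1\otimes w$ is reabsorbed via $(1\otimes w)\widehat{W}=\widehat{W}\widehat{\Delta}(w)$, valid for $w\in\widehat{M}$. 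Throughout I will use that $\nabla^{it}=P^{it}\,\widehat{J}\widehat{\delta}^{it}\widehat{J}$ with $\widehat{J}\widehat{\delta}^{it}\widehat{J}$ affiliated to $\widehat{M}'$, so that $\mathrm{Ad}(\nabla^{it})$ restricts on $\widehat{M}$ to the scaling group $\widehat{\tau}_t=\mathrm{Ad}(P^{it})$. For statement (1) I would invoke the purely formal telescoping identity: for any two one-parameter unitary groups $A^{it},B^{it}$, the element $c_t=A^{it}B^{-it}$ satisfies $c_{s+t}=c_s\,\mathrm{Ad}(B^{is})(c_t)$. Taking $A=\nabla_N$, $B=\nabla$ gives $v_{s+t}=v_s\,\mathrm{Ad}(\nabla^{is})(v_t)$, and since $v_t\in\widehat{M}$ and $\mathrm{Ad}(\nabla^{is})|_{\widehat{M}}=\widehat{\tau}_s$ by the observation above, this reads $v_{s+t}=v_s\,\widehat{\tau}_s(v_t)$, so $v_t$ is a $\widehat{\tau}$-cocycle.

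For statement (2) I would substitute $\tilde{G}=\widehat{W}\Omega^*$ into the first commutation relation of Corollary \ref{cor1}, and into its specialisation to $(M,\Delta)$, namely $\widehat{W}(\nabla^{it}\otimes\nabla^{it})=(\delta^{-it}\widehat{\nabla}^{-it}\otimes\nabla^{it})\widehat{W}$. Writing $\nabla_N^{it}=v_t\nabla^{it}$ on both legs of the left-hand side, using the $M$-relation to eliminate $\widehat{W}$, and reabsorbing the surviving factor $1\otimes v_t$ through $(1\otimes v_t)\widehat{W}=\widehat{W}\widehat{\Delta}(v_t)$, one cancels $\widehat{W}$ and $(\nabla^{it}\otimes\nabla^{it})$ and is left with an identity relating $\Omega^*$, $v_t\otimes v_t$ and $\widehat{\Delta}(v_t)$. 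Recognising $(\nabla^{it}\otimes\nabla^{it})\Omega^*(\nabla^{-it}\otimes\nabla^{-it})=(\widehat{\tau}_t\otimes\widehat{\tau}_t)(\Omega)^*$ via the same observation then yields precisely the coboundary identity exhibiting $\Omega$ and $(\widehat{\tau}_t\otimes\widehat{\tau}_t)(\Omega)$ as cohomologous through $v_t$.

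Statement (3) I would prove in the same spirit, but starting from Lemma \ref{lem2}, read once for $N$ and once for $M$ (where it becomes $\widehat{W}(J\otimes J)\Sigma=\Sigma V\Sigma(\widehat{J}\otimes J)\widehat{W}$), and using $U=V$. After substituting $\tilde{G}=\widehat{W}\Omega^*$ and rewriting $\widehat{J}\otimes J_N=(\widehat{J}\otimes J)(1\otimes X^*)$ and $J_N\otimes J_N=(J\otimes J)(X^*\otimes X^*)$ (both of which use only $J_N=JX^*$), the $M$-version of the lemma lets one trade $\Sigma V\Sigma(\widehat{J}\otimes J)\widehat{W}$ for $\widehat{W}(J\otimes J)\Sigma$. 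Cancelling $\widehat{W}$, conjugating by $\Sigma$ and by $J\otimes J$ (which implements $\widehat{R}\otimes\widehat{R}$ up to adjoint, since $\widehat{R}(m)=Jm^*J$ on $\widehat{M}$), and once more reabsorbing a comultiplication factor through $(1\otimes X^*)\widehat{W}=\widehat{W}\widehat{\Delta}(X^*)$, reduces the equation to the asserted coboundary relation between $\Omega$ and $\tilde{\Omega}=(\widehat{R}\otimes\widehat{R})(\Sigma\Omega^*\Sigma)$ with coboundary $X$.

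The hard part is not conceptual but organisational: one must keep scrupulous track of which leg of each equation carries an operator on $\mathscr{L}^2(N)$ and which carries one on $\mathscr{L}^2(M)$, so that the ``$N$-to-$M$'' substitutions $\nabla_N^{it}=v_t\nabla^{it}$ and $J_N=JX^*$ are inserted on the correct factor, and one must handle carefully the adjoints and handedness produced by the antilinear maps $J,J_N,\widehat{J}$ and by $\widehat{R}$. These are exactly the points at which the precise form of each coboundary (the placement of $v_t$ and $X$, and which of the two cocycles is expressed in terms of the other) is pinned down; everything else is the routine algebra of pushing $\widehat{W}$ through one-parameter groups.
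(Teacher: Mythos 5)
Your proposal is correct. For parts (2) and (3) it is essentially the paper's own proof: substitute $\tilde{G}=\widehat{W}\Omega^*$ into the commutation relations already established for the Galois object, read the same relations for the trivial Galois object $(M,\Delta)$ (where $\tilde{G}=\widehat{W}$ and $U=V$), and reabsorb the surviving factor through $(1\otimes w)\widehat{W}=\widehat{W}\widehat{\Delta}(w)$ for $w\in\widehat{M}$; the only cosmetic difference is that in (2) the paper uses the \emph{third} relation of Corollary \ref{cor1}, writing $v_t=P_N^{it}P^{-it}$ and using $P^{it}=\widehat{P}^{it}$, whereas you use the \emph{first} relation together with $\nabla_N^{it}=v_t\nabla^{it}$ and $\textrm{Ad}(\nabla^{it})_{\mid \widehat{M}}=\widehat{\tau}_t$ --- both land on the same identity $\Omega^*(v_t\otimes v_t)=\widehat{\Delta}(v_t)(\widehat{\tau}_t\otimes\widehat{\tau}_t)(\Omega^*)$, and your sketch of (3) matches the paper's computation step for step (including the cancellation of the two $V$-factors, which rests on the second leg of $\Sigma V \Sigma$ lying in $\widehat{M}'$ and hence commuting with $1\otimes X$). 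Part (1) is where you genuinely diverge, and your route is simpler. The paper proves (1) indirectly: from Proposition \ref{lem6} it first derives $\widehat{\Delta}(u_t)(\widehat{\tau}_t\otimes\widehat{\sigma}_t)(\Omega^*)=\Omega^*(v_t\otimes u_t)$, where $u_t=\widehat{\nabla}_N^{it}\widehat{\nabla}^{-it}$ is the Connes cocycle derivative of $\varphi_{\widehat{P}}$ with respect to $\widehat{\varphi}$, and then feeds in the known law $u_{s+t}=u_s\widehat{\sigma}_s(u_t)$ and compares tensor legs to extract $v_{s+t}=v_s\widehat{\tau}_s(v_t)$. Your telescoping identity $v_{s+t}=v_s\,\textrm{Ad}(\nabla^{is})(v_t)$, combined with $v_t\in\widehat{M}$ (established in the paper just before the proposition) and $\textrm{Ad}(\nabla^{is})_{\mid \widehat{M}}=\widehat{\tau}_s$, gives the cocycle property directly, with no reference to $u_t$, $\widehat{\nabla}_N$, or the weight $\varphi_{\widehat{P}}$. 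What the paper's detour buys is the intermediate identity itself, which ties the modular data of the twisted invariant weight to $\Omega$ and is of independent interest; what your route buys is economy, since part (1) reduces to pure one-parameter-group algebra.
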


\noindent \emph{Remark:} The third statement of this proposition was noted for 2-cocycles in the von Neumann group algebra of a compact group in \cite{Was2}.\\

\begin{proof} By Lemma \ref{lem6}, we have \[(\nabla^{it}\otimes u_t\widehat{\nabla}^{it})(\widehat{W}\Omega^*) = (\widehat{W}\Omega^*)(\nabla_N^{it}\otimes u_t\widehat{\nabla}^{it}).\] Since $\nabla^{it}\otimes \widehat{\nabla}^{it}$ commutes with $\widehat{W}$ and $\nabla^{it}$ implements $\widehat{\tau}_t$ on $\widehat{M}$, the left hand side can be rewritten as $(1\otimes u_t)\widehat{W}(\widehat{\tau_t}\otimes \widehat{\sigma}_t)(\Omega^*) (\nabla^{it}\otimes \widehat{\nabla}^{it})$, and so, bringing $\widehat{W}$ and $(\nabla^{it}\otimes \widehat{\nabla}^{it})$ to the other side, we obtain \[\widehat{\Delta}(u_t)(\widehat{\tau}_t\otimes \widehat{\sigma}_t)(\Omega^*) = \Omega^*(v_t \otimes u_t).\]  Hence \begin{eqnarray*} v_{s+t}\otimes u_{s+t}  &=& \Omega \widehat{\Delta}(u_{s+t})(\widehat{\tau}_{s+t}\otimes \widehat{\sigma}_{s+t})(\Omega^*)\\ &=& \Omega \widehat{\Delta}(u_s\widehat{\sigma}_s(u_t))(\widehat{\tau}_{s+t}\otimes \widehat{\sigma}_{s+t})(\Omega^*) \\&=& \Omega \widehat{\Delta}(u_s)(\widehat{\tau}_{s}\otimes \widehat{\sigma}_{s})(\Omega^*) \cdot (\widehat{\tau}_{s}\otimes \widehat{\sigma}_{s}) ( \Omega \widehat{\Delta}(u_t)(\widehat{\tau}_{t}\otimes \widehat{\sigma}_{t})(\Omega^*)) \\ &=& v_s \widehat{\tau}_s(v_t)\otimes u_s\widehat{\sigma}_s(u_t),\end{eqnarray*} from which the cocycle property of $v_t$ follows.\\

\noindent Now note that $v_t$ also equals $P_N^{it}P^{-it}$ (by definition of $P_N$). So using  the third equality of Corollary \ref{cor1}, \[\widehat{W}\Omega^* (v_t\otimes v_t)(P^{it}\otimes P^{it}) = (P^{it}\otimes v_t P^{it})\widehat{W}\Omega^*.\] Using that $P^{it}=\widehat{P}^{it}$, taking $\widehat{W}$ and $P^{it}\otimes P^{it}$ to the other side, we arrive at \[\Omega^* (v_t\otimes v_t)= \widehat{\Delta}(v_t)(\widehat{\tau}_t\otimes \widehat{\tau}_t)(\Omega^*),\] which proves the second statement. \\

\noindent Finally, as mentioned already, the unitary implementation of $\alpha$ is just $V$ itself. So by Lemma \ref{lem2}, we have $\widehat{W}\Omega^*(J_N\otimes J_N)\Sigma = \Sigma V\Sigma (\widehat{J}\otimes J_N) \widehat{W}\Omega^*$. Multiplying to the right with $(J\otimes J)\Sigma$, we get \begin{eqnarray*} \widehat{W}\Omega^*(X\otimes X) &=& \Sigma V\Sigma (1\otimes X)(\widehat{J}\otimes J)\widehat{W}\Omega^*(J\otimes J)\Sigma \\ &=& \Sigma V\Sigma (1\otimes X) (\widehat{J}\otimes J)\widehat{W}(J\otimes J)\Sigma \tilde{\Omega}^* \\ &=& \Sigma V\Sigma (1\otimes X)(\widehat{J}\otimes J)\Sigma V\Sigma (\widehat{J}\otimes J)\widehat{W}\tilde{\Omega}^*\\ &=& \Sigma V\Sigma (1\otimes X)\Sigma V^*\Sigma \widehat{W}\tilde{\Omega}^* \\ &=& (1\otimes X)\widehat{W}\tilde{\Omega}^*,\end{eqnarray*} from which $\Omega^* (X\otimes X) = \widehat{\Delta}(X)\tilde{\Omega}^*$ immediately follows.
\end{proof}

\noindent We have the following formula for the multiplicative unitary $\widehat{W}_{\Omega}$ for $(\widehat{M},\widehat{\Delta}_{\Omega})$:

\begin{Prop} $\widehat{W}_{\Omega}= (J_N\otimes \widehat{J})\Omega\widehat{W}^*(J\otimes \widehat{J})\Omega^* $.\end{Prop}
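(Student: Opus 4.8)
The plan is to identify $\widehat{W}_{\Omega}$ as the left regular representation of $(\widehat{M},\widehat{\Delta}_{\Omega})=(\widehat{P},\Delta_{\widehat{P}})$ through its defining GNS relation and then to compute it explicitly from $\tilde{G}=\widehat{W}\Omega^*$ and $\widehat{\Delta}_{\Omega}=\Omega\widehat{\Delta}(\cdot)\Omega^*$. Concretely, $\widehat{W}_{\Omega}$ is the unique unitary satisfying, on a suitable core,
\[
(\Lambda_{\widehat{P}}\otimes\Lambda_{\widehat{P}})\bigl(\widehat{\Delta}_{\Omega}(b)(a\otimes 1)\bigr)=\widehat{W}_{\Omega}^*\bigl(\Lambda_{\widehat{P}}(a)\otimes\Lambda_{\widehat{P}}(b)\bigr),
\]
where $\Lambda_{\widehat{P}}$ is the GNS map of the left Haar weight $\varphi_{\widehat{P}}$; this is the analogue for $(\widehat{P},\Delta_{\widehat{P}})$ of the relation for $\widehat{W}$ exploited in Lemma \ref{lem13}. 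So the first task is to make this relation fully explicit.

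First I would pin down the GNS data of $\varphi_{\widehat{P}}$. Since $\varphi_{\widehat{P}}$ is the weight with spatial derivative $\widehat{\nabla}_N$ with respect to $\widehat{\varphi}^{\textrm{op}}$ (Proposition \ref{lem10}) and $\widehat{P}=\widehat{M}$ acts standardly on $\mathscr{L}^2(N)=\mathscr{L}^2(M)$, I would realize its GNS construction on $\mathscr{L}^2(N)$, recording $\Lambda_{\widehat{P}}$ against $\widehat{\Lambda}$ by means of the cocycle $u_t=\widehat{\nabla}_N^{it}\widehat{\nabla}^{-it}$ and the modular element $\delta_N$ from the preceding proposition, and identifying the modular conjugation of $\varphi_{\widehat{P}}$ (which on $\mathscr{L}^2(N)$ I expect to be $J_N$). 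This step simultaneously produces the two conjugations $J_N$ and $\widehat{J}$ that appear on the outer legs of the asserted formula.

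Next I would substitute $\widehat{\Delta}_{\Omega}(b)=\Omega\widehat{\Delta}(b)\Omega^*$ into the GNS relation and reduce everything to quantities involving $\widehat{W}$, using the identity $(\widehat{\Lambda}\otimes\widehat{\Lambda})(\widehat{\Delta}(b)(a\otimes 1))=\widehat{W}^*(\widehat{\Lambda}(a)\otimes\widehat{\Lambda}(b))$ from Lemma \ref{lem13}, the modular-conjugation identity for $\widehat{W}$ analogous to the one in the remark after Proposition \ref{lem15}, and the cohomology relations just established (in particular $\widehat{\Delta}(X)\tilde{\Omega}^*=\Omega^*(X\otimes X)$ with $X=J_NJ$, together with the behaviour of $\Omega$ under the scaling cocycle $v_t$). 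Moving $\Omega$ past $\widehat{W}$ and past the conjugations should collapse the right-hand side to $(J_N\otimes\widehat{J})\Omega\widehat{W}^*(J\otimes\widehat{J})\Omega^*$ acting on $\Lambda_{\widehat{P}}(a)\otimes\Lambda_{\widehat{P}}(b)$, which reads off $\widehat{W}_{\Omega}^*$ and hence the stated formula.

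The main obstacle is the bookkeeping in the middle step: correctly describing the GNS map and modular conjugation of the deformed Haar weight $\varphi_{\widehat{P}}$ and then keeping exact track of how the three conjugations $J_N$, $J$, $\widehat{J}$ interact with $\Omega$ and $\widehat{W}$. It is precisely this interplay, through $X=J_NJ$ and the relation $\Omega^*(X\otimes X)=\widehat{\Delta}(X)\tilde{\Omega}^*$, that explains the asymmetry between $J_N$ on the first leg and $J$ on the second. As an alternative that sidesteps part of this, one may instead verify directly that the right-hand side $Y:=(J_N\otimes\widehat{J})\Omega\widehat{W}^*(J\otimes\widehat{J})\Omega^*$ is a unitary corepresentation implementing $\widehat{\Delta}_{\Omega}$, i.e.\ $Y^*(1\otimes m)Y=\widehat{\Delta}_{\Omega}(m)$, and that it satisfies the conjugation symmetry singling out the left regular representation among all such implementers; uniqueness then forces $Y=\widehat{W}_{\Omega}$. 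This variant shifts the difficulty onto the symmetry check, which again reduces to Lemma \ref{lem2} in the form $\widehat{W}\Omega^*(J_N\otimes J_N)\Sigma=\Sigma V\Sigma(\widehat{J}\otimes J_N)\widehat{W}\Omega^*$ already used above.
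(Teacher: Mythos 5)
Both of your routes stall exactly where the real content of this proposition lies, so there is a genuine gap. In your first route, after substituting $\widehat{\Delta}_{\Omega}(b)=\Omega\widehat{\Delta}(b)\Omega^*$ into $(\Lambda_{\widehat{P}}\otimes\Lambda_{\widehat{P}})\bigl(\widehat{\Delta}_{\Omega}(b)(a\otimes 1)\bigr)$, the outer $\Omega$ can indeed be pulled through the GNS maps by the module property, but the remaining $\Omega^*$ sits \emph{between} $\widehat{\Delta}(b)$ and $(a\otimes 1)$, where no module-type manipulation reaches it; moreover the identity $(\widehat{\Lambda}\otimes\widehat{\Lambda})(\widehat{\Delta}(b)(a\otimes 1))=\widehat{W}^*(\widehat{\Lambda}(a)\otimes\widehat{\Lambda}(b))$ concerns the GNS map $\widehat{\Lambda}$ of $\widehat{\varphi}$, not the GNS map of the \emph{different} weight $\varphi_{\widehat{P}}$, and there is no usable pointwise formula expressing $\Lambda_{\varphi_{\widehat{P}}}$ through $\widehat{\Lambda}$ and the cocycle $u_t$ without a separate unbounded-operator analysis. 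The paper resolves precisely these two difficulties at once with the linking-algebra machinery: Proposition \ref{lem15} (whose page-long proof, via Lemmas \ref{lem13} and \ref{lem11} and Proposition \ref{lem7}, is the actual heavy lifting) gives the \emph{mixed-leg} identity $(\widehat{\Lambda}\otimes\Lambda_{\widehat{O}})(\widehat{\Delta}_{21}(y)(x\otimes 1))=(J\otimes\widehat{J}_{12})\tilde{G}(J_N\otimes\widehat{J}_{21})(\widehat{\Lambda}_{12}(x)\otimes\widehat{\Lambda}_{21}(y))$, and in the cocycle case, where $\widehat{\Lambda}_{12}=\widehat{\Lambda}$, $\widehat{\Lambda}_{21}=\Lambda_{\varphi_{\widehat{P}}}$ and $\widehat{\Delta}_{21}(y)=\widehat{\Delta}(y)\Omega^*=\Omega^*\widehat{\Delta}_{\Omega}(y)$, this immediately yields $\widehat{W}_{\Omega}\Omega=(J_N\otimes\widehat{J})\Omega\widehat{W}^*(J\otimes\widehat{J})$; it is the $\widehat{\Lambda}$ on the first leg that makes all the $\Omega$-bookkeeping trivial. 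Your sketch never invokes Proposition \ref{lem15} itself (only the remark following it), so the crucial identity is asserted (``should collapse'') rather than proved. A symptom of the same problem: your expectation that the modular conjugation of $\varphi_{\widehat{P}}$ is $J_N$ is false; already for $\Omega=1$ one has $N=M$ and $J_N=J$, while $\varphi_{\widehat{P}}=\widehat{\varphi}$ has modular conjugation $\widehat{J}$.

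Your second route does not repair this. The implementation check itself is fine: using $(J\otimes\widehat{J})\widehat{W}(J\otimes\widehat{J})=\widehat{W}^*$ and the fact that $\widehat{J}m\widehat{J}\in\widehat{M}'$ commutes with $\Omega\in\widehat{M}\otimes\widehat{M}$, one gets $Y^*(1\otimes m)Y=\widehat{\Delta}_{\Omega}(m)$ for $Y=(J_N\otimes\widehat{J})\Omega\widehat{W}^*(J\otimes\widehat{J})\Omega^*$. But implementation determines $Y$ only up to left multiplication by a unitary in $B(\mathscr{L}^2(M))\otimes\widehat{M}'$, and there is no citable uniqueness theorem of the shape ``corepresentation $+$ implementation $+$ conjugation symmetry $\Rightarrow$ left regular representation''. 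Worse, the symmetry you would want to check is expressed through the modular conjugations of $(\widehat{P},\Delta_{\widehat{P}})$ and of its dual, which are not available before the structure of the reflected quantum group has been computed, so the argument is circular: the multiplicative unitary is pinned down by its GNS description, which sends you back to your first route and hence to Proposition \ref{lem15}.
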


\noindent \textit{Remark:} This is to be compared with the formula for the multiplicative unitary in \cite{Fim1}. \\

\begin{proof}

\noindent We will use notation as in the previous section. As already noted, the unitary implementation of $\alpha$ is the multiplicative unitary $V$, and the identification of $\mathscr{L}^2(N)$ with $\mathscr{L}^2(M)$ is an identification of left $\widehat{M}'$-modules. Hence the left module structure of $\left(\begin{array}{cc} \widehat{M} & \widehat{M}\\ \widehat{M}&\widehat{M}\end{array}\right)$ on $\left(\begin{array}{cc} \mathscr{L}^2(M) & \mathscr{L}^2(M)\\ \overline{\mathscr{L}^2(M)}&\mathscr{L}^2(M)\end{array}\right)$ is explicitly known: if we identify $\overline{\mathscr{L}^2(M)}$ with $\mathscr{L}^2(M)$ by the map $\widehat{J}\widehat{J}_{21}$, then the module structure is just ordinary matrix multiplication, the module structure on all summands being the standard one. Also, $\widehat{\Lambda}_{12}$ becomes $\widehat{\Lambda}$, and $\widehat{\Lambda}_{21}$ becomes $\Lambda_{\varphi_{\widehat{P}}}$. Then from Lemma \ref{lem15}, and the fact that $\widehat{\Delta}_{21}(x)=\widehat{\Delta}(x)\Omega^*$, it is easy to conclude that $\widehat{W}_{\Omega}\Omega =\tilde{G}_{J}^* = (J_N\otimes \widehat{J})(\Omega\widehat{W}^*)(J\otimes \widehat{J})$. The proposition follows.

\end{proof}

\section{Galois objects and coactions on type I factors}

\noindent We now look at a possible way to create examples. One of the major motivations for me to study Galois objects was the paper \cite{BDV1}. In this article, the authors consider examples of Galois objects (there termed `ergodic coactions of full quantum multiplicity') which were not induced by a 2-cocycle. This was surprising, as Wassermann had shown in \cite{Was2} that for compact groups, any Galois object for the function algebra must come from a 2-cocycle of the dual (a result which was in turn based on the work in \cite{Was1}, and ultimately on the fundamental results of \cite{Lan1}). In fact, in \cite{BDV1} all Galois objects for the compact quantum groups $SU_q(2)$ are classified. There is a whole family of them, parametrized by orthogonal matrices which satisfy some relation w.r.t. $q$, even though there are no non-trivial cocycles for the dual of $SU_q(2)$.\\

\noindent To obtain examples in our wider setting, the following construction would seem to be very helpful. It is a generalization of the fact that any action (and by the work of A. Wassermann, any coaction (\cite{Was2}, Theorem 3)) of a compact group on a type I-factor comes from a cocycle representation. We need some terminology.\\

\begin{Def} Let $(N,\alpha)$ be a (right) Galois object for a locally compact quantum group $(M,\Delta)$. Denote by $\widehat{N}$ the space of $\mathscr{L}^2(M)_{\widehat{M}}$-$\mathscr{L}^2(N)_{\widehat{M}}$-intertwiners as before. Let $\mathscr{H}$ be a Hilbert space. A (unitary) \emph{left $(N,\alpha)$-corepresentation} for $(\widehat{M},\widehat{\Delta})$ is a unitary $\mathcal{G}\in \widehat{N}\otimes B(\mathscr{H})$ such that $(\Delta_{\widehat{N}}\otimes \iota)(\mathcal{G})=\mathcal{G}_{13}\mathcal{G}_{23}$. By a \emph{projective corepresentation} for $(\widehat{M},\widehat{\Delta})$, we mean a left $(N,\alpha)$-corepresentation for $(\widehat{M},\widehat{\Delta})$ and some Galois object $(N,\alpha)$.\end{Def}

\noindent For any Galois object $(N,\alpha)$, there is a regular left $(N,\alpha)$-corepresentation, namely the unitary $(J_N\otimes \widehat{J}_{12})\tilde{G}^*(J\otimes \widehat{J}_{21})$. In case $(M,\Delta)=(\mathscr{L}(\mathfrak{G}),\Delta)$ is the group von Neumann algebra of an ordinary locally compact group $\mathfrak{G}$, and $(N,\alpha)$ is the twisted convolution algebra by a cocycle $\Omega\in \mathscr{L}^{\infty}(\mathfrak{G})\otimes \mathscr{L}^{\infty}(\mathfrak{G})$, we just get back the ordinary notion of a cocycle representation. Of course, one can also easily adapt the definition to find the notion of a right $(N,\alpha)$-corepresentation.\\

\noindent \begin{Theorem}\label{proptypI} Let $(M,\Delta)$ be a locally compact quantum group. If $(N,\alpha)$ is a Galois object for $(M,\Delta)$, then any left $(N,\alpha)$-corepresentation of $(\widehat{M},\widehat{\Delta})$ gives rise to a left coaction of $(\widehat{M},\widehat{\Delta})$ on a type-$I$-factor. Conversely, any left coaction on a type-$I$-factor is induced by a projective corepresentation.\end{Theorem}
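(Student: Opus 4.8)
The plan is to treat the two implications separately: the forward direction is a direct computation with the corepresentation equation, while the converse requires building a Galois object out of the coaction together with an implementing unitary.

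For the forward direction, suppose $\mathcal{G}\in\widehat{N}\otimes B(\mathscr{H})$ is a left $(N,\alpha)$-corepresentation, so $\mathcal{G}$ is a unitary $\mathscr{L}^2(M)\otimes\mathscr{H}\to\mathscr{L}^2(N)\otimes\mathscr{H}$ whose first leg lies in $\widehat{N}$. I would set $\beta(T)=\mathcal{G}^*(1\otimes T)\mathcal{G}$ for $T\in B(\mathscr{H})$. Since $\mathcal{G}^*\in\widehat{O}\otimes B(\mathscr{H})$ and $\widehat{O}\widehat{N}\subseteq\widehat{M}$, the image lands in $\widehat{M}\otimes B(\mathscr{H})$, and unitarity of $\mathcal{G}$ makes $\beta$ a normal unital faithful $*$-homomorphism. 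The only real point is the coaction identity $(\widehat{\Delta}\otimes\iota)\beta=(\iota\otimes\beta)\beta$. Unfolding $(\Delta_{\widehat{N}}\otimes\iota)(\mathcal{G})=\mathcal{G}_{13}\mathcal{G}_{23}$ with $\Delta_{\widehat{N}}(x)=\tilde{G}^*(1\otimes x)\widehat{W}$ gives $\tilde{G}_{12}^*\mathcal{G}_{23}\widehat{W}_{12}=\mathcal{G}_{13}\mathcal{G}_{23}$, whose adjoint rearranges to $\widehat{W}_{12}^*\mathcal{G}_{23}^*=\mathcal{G}_{23}^*\mathcal{G}_{13}^*\tilde{G}_{12}^*$. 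Using $\widehat{\Delta}(m)=\widehat{W}^*(1\otimes m)\widehat{W}$ one finds $(\widehat{\Delta}\otimes\iota)(\beta(T))=\widehat{W}_{12}^*\mathcal{G}_{23}^*T_3\mathcal{G}_{23}\widehat{W}_{12}$, and substituting the identity above (together with the fact that $\tilde{G}_{12}$ commutes with $T_3$, as $T_3$ acts on the third leg only) reduces it to $\mathcal{G}_{23}^*\mathcal{G}_{13}^*T_3\mathcal{G}_{13}\mathcal{G}_{23}$. Since $(\iota\otimes\beta)(\mathcal{G})=\mathcal{G}_{23}^*\mathcal{G}_{13}\mathcal{G}_{23}$, the right-hand side $(\iota\otimes\beta)(\beta(T))$ reduces to the same expression after cancelling $\mathcal{G}_{23}\mathcal{G}_{23}^*=1$. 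Thus $\beta$ is a left coaction on the type $I$ factor $B(\mathscr{H})$.

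For the converse, let $\beta:B(\mathscr{H})\to\widehat{M}\otimes B(\mathscr{H})$ be a left coaction. The two normal faithful representations $T\mapsto 1\otimes T$ and $T\mapsto\beta(T)$ of the type $I$ factor $B(\mathscr{H})$ on $\mathscr{L}^2(M)\otimes\mathscr{H}$ have equal multiplicity, so there is a unitary $\mathcal{G}_0$ with $\beta(T)=\mathcal{G}_0^*(1\otimes T)\mathcal{G}_0$. Feeding this into the coaction identity, exactly as in the forward computation, shows that $\mathcal{G}_{0,13}\mathcal{G}_{0,23}(\mathcal{G}_{0,23}\widehat{W}_{12})^*$ commutes with every $T_3$, hence is an operator $\Omega\in B(\mathscr{L}^2(M)\otimes\mathscr{L}^2(M))$ acting trivially on $\mathscr{H}$, giving $\mathcal{G}_{0,13}\mathcal{G}_{0,23}=\Omega_{12}\mathcal{G}_{0,23}\widehat{W}_{12}$. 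Applying $\widehat{\Delta}$ in an extra leg and invoking its coassociativity yields a pentagon-type (generalized cocycle) relation for $\Omega$, which is exactly the relation making $\tilde{G}:=\widehat{W}\Omega^*$ a candidate Galois unitary. I would then take $N$ to be the von Neumann algebra generated by the second legs of $\tilde{G}$, with $\alpha$ read off from $\tilde{G}$ as in Section~3; equivalently, $N$ can be realized as the reduction $e(\widehat{M}\ltimes_{\beta}B(\mathscr{H}))e$ of the crossed product by a minimal projection $e$ of the fixed point factor $B(\mathscr{H})$, with $\alpha$ the restriction of the bidual right $M$-coaction.

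The remaining work is to verify that $(N,\alpha)$ is genuinely a Galois object — ergodicity (forcing $e$ to be minimal in the fixed point factor, so that $eB(\mathscr{H})e=\mathbb{C}e$), integrability, and the Galois property (inherited because dual coactions on crossed products are Galois, by Proposition 5.12 of \cite{Vae1}) — and then to recover $\mathcal{G}$ as a bona fide element of $\widehat{N}\otimes B(\mathscr{H})$ satisfying $(\Delta_{\widehat{N}}\otimes\iota)(\mathcal{G})=\mathcal{G}_{13}\mathcal{G}_{23}$ and inducing the original $\beta$. The hard part will be precisely the Galois property under reduction together with the identification of the legs of $\mathcal{G}$ with intertwiners in $\widehat{N}$: one cannot shortcut this through the cocycle-twist construction of Section~5, since $\Omega$ need not lie in $\widehat{M}\otimes\widehat{M}$ (there are Galois objects, for instance those for $SU_q(2)$ in \cite{BDV1}, that are not cocycle twists), so the full linking-algebra machinery of Sections~2--4 is what is needed to conclude that the reconstructed datum is an honest Galois object and not merely a coinvolutive Hopf--von Neumann algebra.
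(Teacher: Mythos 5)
Your forward direction is correct and coincides with the paper's (the paper compresses that verification into one line; you spell it out, correctly). The converse, however, has a genuine gap at its very first step. You assert that the representations $T\mapsto 1\otimes T$ and $T\mapsto\beta(T)$ of $B(\mathscr{H})$ on $\mathscr{L}^2(M)\otimes\mathscr{H}$ ``have equal multiplicity'' and so produce an implementing unitary $\mathcal{G}_0$ on $\mathscr{L}^2(M)\otimes\mathscr{H}$. For infinite-dimensional $\mathscr{H}$ this is not a cardinality triviality: both representations are multiples of the identity representation, and the only a priori relation is $m_{\beta}\cdot\dim\mathscr{H}=\dim\mathscr{L}^2(M)\cdot\dim\mathscr{H}$, which determines nothing once $\dim\mathscr{H}$ is infinite. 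The multiplicity of $\beta$ is the rank of $\beta(p)$ for a rank-one projection $p$, and showing that this rank equals $\dim\mathscr{L}^2(M)$ is essentially the spatial implementation statement you are trying to prove; a posteriori it equals $\dim\mathscr{L}^2(N)$ by the theorem itself, so taking it as the starting point is circular. This is precisely why the paper never compares multiplicities: it first constructs $N$ as the relative commutant $\Upsilon(B(\mathscr{H}))'\cap(\widehat{M}\ltimes B(\mathscr{H}))$ (your corner $e(\widehat{M}\ltimes B(\mathscr{H}))e$ is an equivalent description), restricts the dual coaction, proves $(N,\alpha)$ is a Galois object, and only then builds the implementing unitary $u$ concretely from the dual weight and the isomorphism $\Phi:\widehat{M}\ltimes B(\mathscr{H})\to N\otimes B(\mathscr{H})$; the resulting unitary maps $\mathscr{L}^2(M)\otimes\mathscr{H}$ onto $\mathscr{L}^2(N)\otimes\mathscr{H}$ (the descent from $\mathscr{L}^2(B(\mathscr{H}))$ to $\mathscr{H}$ coming from right $B(\mathscr{H})$-linearity of $u$), so no identification of $\mathscr{L}^2(N)$ with $\mathscr{L}^2(M)$ is ever required.

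A second, smaller but real, defect: your justification of the Galois property, namely that it is ``inherited because dual coactions on crossed products are Galois, by Proposition 5.12 of \cite{Vae1},'' does not work as stated. The coaction $\alpha$ on $N$ is not a dual coaction; it is the restriction of one to a corner, and the Galois property does not pass to restrictions in general. The paper's actual argument is the one you will need: by biduality $(\widehat{M}\ltimes B(\mathscr{H}))\rtimes M\cong B(\mathscr{H})\otimes B(\mathscr{L}^2(M))$, while this double crossed product is also $\cong(N\rtimes M)\otimes B(\mathscr{H})$; hence $N\rtimes M$ is a type I factor, and the Galois homomorphism $\rho$, being a normal unital $*$-homomorphism defined on a factor, is automatically faithful. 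Finally, even granting $\mathcal{G}_0$, your $\Omega$-route leaves the substantive verifications undone (that the second legs of $\widehat{W}\Omega^*$ generate a von Neumann algebra carrying an honest Galois coaction, that the first leg of $\mathcal{G}$ lies in the intertwiner space $\widehat{N}$, and the identity $(\Delta_{\widehat{N}}\otimes\iota)(\mathcal{G})=\mathcal{G}_{13}\mathcal{G}_{23}$); in the paper these are carried out by explicit matrix-unit and weight-level computations inside the crossed product, and they do not follow formally from the relation $\mathcal{G}_{0,13}\mathcal{G}_{0,23}=\Omega_{12}\mathcal{G}_{0,23}\widehat{W}_{12}$.
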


\begin{proof} The first statement is easy: if $\mathcal{G}$ is such a corepresentation, define \[\Upsilon: B(\mathscr{H})\rightarrow \widehat{M}\otimes B(\mathscr{H}): x\rightarrow \mathcal{G}^*(1\otimes x)\mathcal{G},\] then this is a coaction by the defining property of $\mathcal{G}$. \\

\noindent Now let $\mathscr{H}$ be a Hilbert space, and $\Upsilon: B(\mathscr{H})\rightarrow \widehat{M}\otimes B(\mathscr{H})$ be a coaction of $(\widehat{M},\widehat{\Delta})$. Denote by $N$ the relative commutant of $\Upsilon(B(\mathscr{H}))$ inside $\widehat{M}\ltimes B(\mathscr{H})$. Then we have a canonical isomorphism $\Phi:\widehat{M}\ltimes B(\mathscr{H})\rightarrow   N\otimes B(\mathscr{H})$. We claim that the dual (right) coaction $\widehat{\Upsilon}: \widehat{M}\ltimes B(\mathscr{H})\rightarrow (\widehat{M}\ltimes B(\mathscr{H}))\otimes M$ restricts to a coaction of $M$ on $N$. Indeed: choose an orthonormal basis $\xi_{i}$ of $\mathscr{H}$, with respective matrix unit system $\{e_{ij}\}$. Then for $x\in N$, we have $x=\sum_{k} \Upsilon(e_{k1})x\Upsilon(e_{1k})$ $\sigma$-strongly. Applying $\widehat{\Upsilon}$, we get $\widehat{\Upsilon}(x)= \sum_k (\Upsilon(e_{k1})\otimes 1)\widehat{\Upsilon}(x)(\Upsilon(e_{1k})\otimes 1)$, whose first leg clearly commutes with $\Upsilon(B(\mathscr{H}))$. \\

\noindent We now show that $(N,\alpha)$ is a Galois object. Ergodicity is clear, since $1\otimes B(\mathscr{H})$ is the fixed point algebra of $\textrm{Ad}(\Sigma)_{23}(\alpha\otimes\iota)=(\Phi\otimes \iota)\widehat{\Upsilon}\circ \Phi^{-1}$. Also integrability follows easily by this, $\widehat{\Upsilon}$ being integrable. Since we have a canonical isomorphism $(\widehat{M}\ltimes B(\mathscr{H}))\rtimes M \cong  (N\rtimes M)\otimes B(\mathscr{H})$, and the first space is $\cong B(\mathscr{H})\otimes B(\mathscr{L}^2(M))$, also $N\rtimes M$ must be a type I factor, from which it follows that the Galois homomorphism for $\alpha$ is necessarily an isomorphism.\\

\noindent We now show that the original coaction is implemented by an $(N,\alpha)$-corepresentation. Denote by $\textrm{Tr}$ the ordinary trace on $B(\mathscr{H})$, by $\widehat{\textrm{Tr}}$ the dual weight on $\widehat{M}\ltimes B(\mathscr{H})$ with respect to $\textrm{Tr}$, and by $\varphi_N$ the weight $(\iota\otimes \varphi)\alpha$ on $N$. Then we have $\widehat{\textrm{Tr}}= (\varphi_N \otimes \textrm{Tr})\circ \Phi$. Hence we obtain a unitary \[u: \mathscr{L}^2(M)\otimes \mathscr{L}^2(B(\mathscr{H}))\rightarrow \mathscr{L}^2(N)\otimes \mathscr{L}^2(B(\mathscr{H}))\] which sends $\Lambda(m)\otimes \Lambda_{\textrm{Tr}}(x)$ to $(\Lambda_N\otimes \Lambda_{\textrm{Tr}})(\Phi(m\otimes 1)(1\otimes x))$ for $m\in \mathscr{N}_{\varphi}$ and $x$ Hilbert-Schmidt. But identifying $\mathscr{L}^2(B(\mathscr{H}),\textrm{Tr})$ with $\mathscr{H}\otimes \overline{\mathscr{H}}$, and observing that $u$ is right $B(\mathscr{H})$-linear, we must have that $u=\mathcal{G}\otimes 1$ for some unitary \[\mathcal{G}: \mathscr{L}^2(M)\otimes \mathscr{H}\rightarrow \mathscr{L}^2(N)\otimes \mathscr{H}.\]

\noindent We proceed to show that $\mathcal{G}$ is indeed an $(N,\alpha)$-corepresentation implementing $\Upsilon$. First of all, it is not difficult to see that $\mathcal{G}\in \widehat{Q}_{12}\otimes B(\mathscr{H})$, since for $m\in \mathscr{N}_{\varphi}$ and $x$ Hilbert-Schmidt, and $\xi,\eta \in \mathscr{L}^2(M)$ with $\xi\in \mathscr{D}(\delta^{-1/2})$, we have, putting $\omega=\omega_{\xi,\eta}$ and $\omega_{\delta}=\omega_{\delta^{-1/2}\xi,\eta}$, denoting by $U$ the unitary corepresentation belonging to $\alpha$ and by $V$ the right regular representation for $(M,\Delta)$, \begin{eqnarray*}u ((\iota\otimes \omega)(V)\otimes 1)(\Lambda(m)\otimes  \Lambda_{\textrm{Tr}}(x)) &=& u (\Lambda((\iota\otimes \omega_{\delta})(\Delta(m)))\otimes\Lambda_{\textrm{Tr}}(x)) \\ &=& (\Lambda_N\otimes \Lambda_{\textrm{Tr}})(\Phi(((\iota\otimes \omega_{\delta})(\Delta(m)))\otimes 1)(1\otimes x)) \\  &=& (\Lambda_N\otimes \Lambda_{\textrm{Tr}})(\Phi((((\iota\otimes \omega_{\delta})(\Delta(m)))\otimes 1)\Upsilon(x))) \\ &=& (\Lambda_N\otimes \Lambda_{\textrm{Tr}})(\Phi((\iota\otimes \omega_{\delta})(\widehat{\Upsilon}((m\otimes 1)\Upsilon(x))))) \\ &=& (\Lambda_N\otimes \Lambda_{\textrm{Tr}})((\iota\otimes \omega_{\delta}\otimes \iota)(\alpha\otimes \iota) \Phi((m\otimes 1)\Upsilon(x))) \\ &=& ((\iota\otimes \omega)(U)\otimes 1)(\Lambda_N\otimes \Lambda_{\textrm{Tr}})( \Phi((m\otimes 1)\Upsilon(x)))\\ &=& ((\iota\otimes \omega)(U)\otimes 1) u (\Lambda(m)\otimes  \Lambda_{\textrm{Tr}}(x)),\end{eqnarray*} so that $\mathcal{G}((\iota\otimes \omega)(V)\otimes 1) = ((\iota\otimes \omega)(U)\otimes 1)\mathcal{G}$, which is sufficient to conclude that the first leg of $\mathcal{G}$ is in $\widehat{Q}_{12}$.\\

\noindent Also, it is easy to see that $\mathcal{G}$ implements $\Upsilon$: since $u\Upsilon(x)=(1\otimes x)u$ on $\mathscr{L}^2(M)\otimes \mathscr{L}^2(B(\mathscr{H}))$, we have $\mathcal{G} \Upsilon(x) = (1\otimes x)\mathcal{G}$ on $\mathscr{L}^2(M)\otimes \mathscr{H}$. \\

\noindent So the only thing left to show, is that $\mathcal{G}$ satisfies $(\widehat{\Delta}_{12}\otimes \iota)(\mathcal{G})=\mathcal{G}_{13}\mathcal{G}_{23}$. Writing out $\widehat{\Delta}_{12}$ and tensoring by $1_{\overline{\mathscr{H}}}$ to the right, this translates into proving that $\tilde{G}_{12}^*u_{23}\widehat{W}_{12}= u_{13}u_{23}$, with $\tilde{G}$ the Galois unitary for $(N,\alpha)$. Moving $\tilde{G}$ to the other side, and multiplying to the left with $\Sigma_{12}$, this becomes $u_{13}W^*_{12}\Sigma_{12}= \Sigma_{12}\tilde{G}_{12}u_{13}u_{23}$. This can again be proven using a simple matrix algebra argument: we can write $\Phi(m\otimes 1)= \sum_{i,j} \Phi_{ij}(m)\otimes e_{ij}$ with $\Phi_{ij}(m)=\sum_{k} \Upsilon(e_{ki})(m\otimes 1)\Upsilon(e_{jk})\in N$, where the sums are in the $\sigma$-strong topology. Then for $m,n\in \mathscr{N}_{\varphi}$ and $x$ Hilbert-Schmidt, we have

\begin{eqnarray*} &&\!\!\!\!\!\!\!\!\!\!\!\!\!\! u_{13}W^*_{12}\Sigma_{12} (\Lambda(m)\otimes\Lambda(n)\otimes \Lambda_{\textrm{Tr}}(x))\\ &=& u_{13} (\Lambda\otimes \Lambda\otimes \Lambda_{\textrm{Tr}}) (\Delta(m)(n\otimes 1)\otimes x)\\ &=& (\Lambda_N\otimes \Lambda_M\otimes \Lambda_{\textrm{Tr}})(\sum_{i,j}((\Phi_{ij}\otimes \iota)(\Delta(m)(n\otimes 1)) \otimes e_{ij}x)),\end{eqnarray*}
 while \begin{eqnarray*} &&\!\!\!\!\!\!\!\!\!\!\!\!\!\!\Sigma_{12}\tilde{G}_{12}u_{13}u_{23}(\Lambda(m)\otimes \Lambda(n)\otimes \Lambda_{\textrm{Tr}}(x))\\ &=& \Sigma_{12}\tilde{G}_{12} u_{13} (\Lambda\otimes \Lambda_N\otimes \Lambda_{\textrm{Tr}})(\sum_{i,j} m\otimes \Phi_{ij}(n)\otimes e_{ij}x)\\&=& \Sigma_{12}\tilde{G}_{12} (\Lambda_N\otimes \Lambda_N\otimes \Lambda_{\textrm{Tr}})(\sum_{i,j,r}(\Phi_{ri}(m)\otimes \Phi_{ij}(n)\otimes e_{rj}x))\\ &=& (\Lambda_N\otimes \Lambda_M\otimes \Lambda_{\textrm{Tr}})(\sum_{i,j,r}((\alpha(\Phi_{ri}(m))\otimes 1)(\Phi_{ij}(n)\otimes 1\otimes e_{rj}x)))\\ &=& (\Lambda_N\otimes \Lambda_M\otimes \Lambda_{\textrm{Tr}})(\sum_{i,j,r} ((\Phi_{ri}\otimes \iota)(\Delta(m))\otimes 1)(\Phi_{ij}(n)\otimes  1\otimes e_{rj}x))\\ &=&  (\Lambda_N\otimes \Lambda_M\otimes \Lambda_{\textrm{Tr}})(\sum_{j,r}((\Phi_{rj}\otimes \iota)(\Delta(m)(n\otimes 1))\otimes e_{rj}x)),\end{eqnarray*} where we have used $\sum_{i} \Phi_{ri}(m)\Phi_{ij}(n) = \Phi_{rj}(mn)$ for $m,n\in M$ in the last step. So we are done.

\end{proof}

\noindent \textit{Remark:} Starting from an $(N_1,\alpha_1)$-corepresentation $\mathcal{G}_1$, one thus obtains a Galois object $(N_2,\alpha_2)$ and an $(N_2,\alpha_2)$-corepresentation $\mathcal{G}_2$. As is to be expected, these Galois objects are isomorphic, in such a way that the corepresentations correspond to each other. Indeed: it's easy to see that $\mathcal{G}_1\mathcal{G}_2^* = v\otimes 1$ for some unitary $v: \mathscr{L}^2(N_2)\rightarrow \mathscr{L}^2(N_1)$. Since $v$ is a right $\widehat{M}$-module map, we can extend the (well-defined) map $\widehat{Q}_{2,12}\rightarrow \widehat{Q}_{1,12}: z\rightarrow vz$ to an isomorphism $\Psi$ of the linking algebras $\widehat{Q}_2$ and $\widehat{Q}_1$. From the fact that $\mathcal{G}_1$ and $\mathcal{G}_2$ are corepresentations, it is easy to deduce that $\widehat{\Delta}_{1,12}(vz)=(v\otimes v)\widehat{\Delta}_{2,12}(z)$ for $z\in \widehat{Q}_{2,12}$. Hence $\Psi$ preserves the comultiplication structure, and thus $(N_1,\alpha_1)$ and $(N_2,\alpha_2)$ are isomorphic by a map $\widehat{\Psi}$, and moreover $(\Psi\otimes \iota)(\mathcal{G}_2)=\mathcal{G}_1$.\\

\noindent Recall that two coactions $\Upsilon_1$ and $\Upsilon_2$ of $(\widehat{M},\widehat{\Delta})$ on a von Neumann algebra $Y$ are called outer equivalent if there exists a unitary element $v\in \widehat{M}\otimes Y$ which satisfies \[(\widehat{\Delta}\otimes \iota)(v)= v_{23}(\iota\otimes \Upsilon_1)(v),\] (i.e., $v$ is an $\Upsilon_1$-cocycle) and such that $\Upsilon_2(x)=v\Upsilon_1(x)v^*$ for $x\in Y$. Then it is easy to see that also the following classical result still holds true:

\begin{Theorem} Suppose $(M,\Delta)$ is a locally compact quantum group for which $M$ has a separable predual. Then there is a natural one-to-one correspondence between outer equivalence classes of coactions of $(\widehat{M},\widehat{\Delta})$ on $B(\mathscr{H})$, with $\mathscr{H}$ a separable infinite-dimensional Hilbert space, and isomorphism classes of right Galois objects (with separable predual) for $(M,\Delta)$.

\end{Theorem}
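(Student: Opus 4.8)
The plan is to set up a map in both directions and check they are mutually inverse. In one direction, a coaction $\Upsilon$ of $(\widehat{M},\widehat{\Delta})$ on $B(\mathscr{H})$ produces, by Theorem \ref{proptypI}, a Galois object $(N,\alpha)$ together with an implementing $(N,\alpha)$-corepresentation $\mathcal{G}$; I would send the outer equivalence class of $\Upsilon$ to the isomorphism class of $(N,\alpha)$. In the other direction, given a Galois object $(N,\alpha)$ with separable predual, I would produce a coaction on $B(\mathscr{H})$ and show its class depends only on the isomorphism class of $(N,\alpha)$.

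For the backward map the natural recipe is: given $(N,\alpha)$, use the first remark of Section 6 to exhibit an $(N,\alpha)$-corepresentation on some $\mathscr{H}$ — but I must first arrange that $\mathscr{H}$ is the fixed separable infinite-dimensional Hilbert space of the statement, so I would tensor the regular $(N,\alpha)$-corepresentation $(J_N\otimes \widehat{J}_{12})\tilde{G}^*(J\otimes \widehat{J}_{21})$ with a trivial corepresentation on an auxiliary infinite-dimensional space to absorb multiplicities (here separability of the predual of $M$, hence of $\widehat{M}$ and of $N$, guarantees that $\mathscr{L}^2(N)$ is separable and the ambient spaces fit into $\mathscr{H}$). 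Applying the first (easy) half of Theorem \ref{proptypI} to this corepresentation yields a coaction $\Upsilon$ on $B(\mathscr{H})$. I would take its outer equivalence class as the image of the class of $(N,\alpha)$.

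Next I would verify the two compositions are the identity. Going coaction $\to$ Galois object $\to$ coaction: the remark following Theorem \ref{proptypI} already shows that the Galois object reconstructed from a corepresentation, together with its corepresentation, is canonically isomorphic to the original pair via an isomorphism $\widehat{\Psi}$ satisfying $(\Psi\otimes\iota)(\mathcal{G}_2)=\mathcal{G}_1$; since conjugating by the corepresentation is exactly how $\Upsilon$ is recovered, the reconstructed coaction is conjugate to $\Upsilon$, hence outer equivalent. Going Galois object $\to$ coaction $\to$ Galois object returns $(N,\alpha)$ (up to the multiplicity padding, which does not change the isomorphism class of the reconstructed Galois object, since the relative commutant construction in the proof of Theorem \ref{proptypI} strips the trivial tensor factor). \textbf{The main obstacle} I expect is the well-definedness of the forward map at the level of equivalence classes: I must show that two \emph{outer equivalent} coactions $\Upsilon_1,\Upsilon_2$ yield \emph{isomorphic} Galois objects. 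Here I would unwind the cocycle $v\in\widehat{M}\otimes B(\mathscr{H})$ intertwining $\Upsilon_1$ and $\Upsilon_2$: the two relative commutants $N_i=\Upsilon_i(B(\mathscr{H}))'\cap(\widehat{M}\ltimes B(\mathscr{H}))$ sit inside the \emph{same} crossed product (the cocycle condition ensures the crossed products agree), and conjugation by $v$ should carry $N_1$ to $N_2$ while intertwining the restricted dual coactions $\alpha_1,\alpha_2$; checking that this conjugation is an isomorphism of Galois objects is the one genuinely technical point, and it is the analogue of the classical argument that outer equivalent actions on a factor have isomorphic fixed-point/commutant data.

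Finally I would remark that separability is used only to fix a single model $\mathscr{H}$ (so that all coactions live on one $B(\mathscr{H})$) and to guarantee that the Hilbert spaces of the Galois-object side are separable, matching the hypothesis ``Galois objects with separable predual''; the bijectivity itself is then formal once the two maps and their well-definedness are in place.
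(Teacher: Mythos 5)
Your architecture is exactly that of the paper's proof: the forward map's well-definedness is handled by conjugating the crossed products by the cocycle $v$ (the dual coactions are intertwined because $V_{13}v_{12}=v_{12}V_{13}$), and the backward map is the amplification of the coaction induced by the regular left $(N,\alpha)$-corepresentation, with separability used to identify all the Hilbert spaces with one fixed $\mathscr{H}$. However, one step of your argument does not go through as stated: in the composition coaction $\to$ Galois object $\to$ coaction you assert that the reconstructed coaction ``is conjugate to $\Upsilon$, hence outer equivalent.'' That implication is not valid in general. If $\Upsilon_2=\mathrm{Ad}(v)\circ\Upsilon_1$ for some unitary $v\in\widehat{M}\otimes B(\mathscr{H})$, then comparing the coassociativity identities for $\Upsilon_1$ and $\Upsilon_2$ only forces the defect unitary $(\widehat{\Delta}\otimes\iota)(v)^*\,v_{23}\,(\iota\otimes\Upsilon_1)(v)$ to commute with $(\iota\otimes\Upsilon_1)\Upsilon_1(B(\mathscr{H}))$, not to equal $1$; outer equivalence demands a unitary that genuinely satisfies the cocycle identity, which conjugacy alone does not provide.

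The paper closes exactly this gap, and it is the one substantive computation your proposal omits. Both coactions in question are implemented by $(N,\alpha)$-corepresentations $\mathcal{G}_1$ and $\mathcal{G}_2$ on the same $\mathscr{H}$ (transporting the amplified regular corepresentation onto $\mathscr{H}$ keeps it an $(N,\alpha)$-corepresentation, since a unitary acting on the multiplicity leg does not disturb the first-leg structure). One then takes precisely $v=\mathcal{G}_2^*\mathcal{G}_1\in\widehat{M}\otimes B(\mathscr{H})$, which implements the conjugation, and verifies the cocycle identity from the corepresentation property and the multiplicativity of $\Delta_{\widehat{Q}}$:
\[(\widehat{\Delta}\otimes\iota)(v)=\mathcal{G}_{2,23}^*\mathcal{G}_{2,13}^*\mathcal{G}_{1,13}\mathcal{G}_{1,23}=v_{23}\,\mathcal{G}_{1,23}^*v_{13}\mathcal{G}_{1,23}=v_{23}(\iota\otimes\Upsilon_1)(v).\]
With this short verification inserted, your two compositions do collapse to the identity on equivalence classes, and the remainder of your proposal (the $\mathrm{Ad}(v)$ argument for well-definedness, the separability bookkeeping, the stripping of the trivial tensor factor) coincides with the paper's proof.
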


\begin{proof} First suppose that $\Upsilon_1$ and $\Upsilon_2$ are two coactions on $B(\mathscr{H})$ which are outer equivalent by a unitary $v$. Then we get an isomorphism \[\Phi: \widehat{M}\underset{\Upsilon_1}{\ltimes} B(\mathscr{H})\rightarrow \widehat{M}\underset{\Upsilon_2}{\ltimes} B(\mathscr{H}): z\rightarrow vzv^*,\] which obviously sends $\Upsilon_1(B(\mathscr{H}))$ to $\Upsilon_2(B(\mathscr{H}))$. Hence if $(N_i,\alpha_i)$ denotes the Galois object constructed from $\Upsilon_i$ as in the previous Theorem, $N_1$ gets sent to $N_2$ by $\Phi$. But $\Phi$ also preserves the dual right coaction, since $V_{13}v_{12}= v_{12}V_{13}$. So $\Phi_{\mid N_1}$ gives an $(M,\Delta)$-equivariant isomorphism from $(N_1,\alpha_1)$ to $(N_2,\alpha_2)$. \\

\noindent Conversely, suppose that $\Upsilon_1$ and $\Upsilon_2$ are two coactions on $B(\mathscr{H})$, which are induced by respective $(N,\alpha)$-corepresentations $\mathcal{G}_1$ and $\mathcal{G}_2$ for some Galois object $(N,\alpha)$ for $(M,\Delta)$. Put $v=\mathcal{G}_2^*\mathcal{G}_1 \in \widehat{M} \otimes B(\mathscr{H})$. Then $v$ is an $\Upsilon_1$-cocycle: \begin{eqnarray*} (\widehat{\Delta}\otimes \iota)(v) &=& \mathcal{G}_{2,23}^*\mathcal{G}_{2,13}^*\mathcal{G}_{1,13}\mathcal{G}_{1,23}\\ &=& v_{23}\mathcal{G}_{1,23}^*v_{13}\mathcal{G}_{1,23}\\ &=& v_{23}(\iota\otimes \Upsilon_1)(v),\end{eqnarray*} and obviously $\Upsilon_2(x)= v\Upsilon_1(x)v^*$ for $x\in B(\mathscr{H})$. Hence $\Upsilon_1$ and $\Upsilon_2$ are outer equivalent.\\

\noindent Now for any right Galois object $(N,\alpha)$ with separable predual, there exists a coaction on $B(\mathscr{H})$ which has $(N,\alpha)$ as its associated Galois object: for example, one can take $\mathscr{H}\cong \overline{\mathscr{L}^2(N)}\otimes \mathscr{H}$ and equip it with the coaction \[\Upsilon: B(\overline{\mathscr{L}^2(N)}\otimes \mathscr{H})\rightarrow \widehat{M}\otimes \overline{\mathscr{L}^2(N)}\otimes \mathscr{H}:\]
\[\Upsilon(x) = (((J_N\otimes \widehat{J}_{21})\tilde{G}(J\otimes \widehat{J}_{12}))\otimes 1) (1\otimes x)(((J_N\otimes \widehat{J}_{12})\tilde{G}^*(J\otimes \widehat{J}_{21}))\otimes 1),\] i.e., take an amplification of the coaction coming from the regular left projective corepresentation of a Galois object. This observation then ends the proof of the proposition.\end{proof}


\noindent \textbf{Acknowledgements:} I would like to thank my thesis advisor Alfons Van Daele who gave me the opportunity (and initial motivation) to study these problems. I would also like to thank L. Va\u{\i}nerman at the university of Caen, where part of this work was made. Finally, I would like to thank P. Hajac, for informing me about the Ehresmann construction, which provides one with the right geometrical intuition in these matters.

\end{document}